\documentclass[12pt]{amsart}
\usepackage{graphicx}
\usepackage{tikz-cd}
\usepackage{mathtools}
\usepackage{amsfonts}
\usepackage{graphicx}
\usepackage{verbatim}
\usepackage{textcomp}\usepackage{amssymb}
\usepackage{cite}
\usepackage{color}
\usepackage[all]{xy}
\usepackage{graphicx}
\usepackage{color}
\usepackage{hyperref}
\usepackage{soul}
\hypersetup{
    colorlinks,
    citecolor=black,
    filecolor=black,
    linkcolor=black,
    urlcolor=black
}
\usepackage{tikz}
\usepackage[all]{xy}
\usepackage{graphicx}
\usetikzlibrary{backgrounds}
\setlength{\oddsidemargin}{.25in} 
\setlength{\evensidemargin}{.25in}
\setlength{\textwidth}{6in}
\hfuzz2pt % Don't report over-full h-boxes if over-edge is small
% THEOREMS -------------------------------------------------------
\newtheorem{theorem}{Theorem}[section]
\newtheorem{lemma}[theorem]{Lemma}
\newtheorem{proposition}[theorem]{Proposition}
\newtheorem{corollary}[theorem]{Corollary}
\newtheorem{problem}[theorem]{Problem}   

\allowdisplaybreaks

\newtheorem{conjecture}[theorem]{Conjecture}

\newtheorem{thmconj}[theorem]{Theorem-Conjecture}

\theoremstyle{definition}
\newtheorem{definition}[theorem]{Definition}
\newtheorem{remark}[theorem]{Remark}

% \newenvironment{proofET}%
%{{\sc Proof of Theorem~\ref{thm:mingrad}}}
%{{\sc q.e.d.} \\}
 \newenvironment{thm:mingrad}
 {{\sc Proof of Theorem~\ref{thm:mingrad}.}}
{{\sc q.e.d.} \\}
\newenvironment{transmeasure}
 {{\sc Proof of Theorem~\ref{transmeasure}.}}
{{\sc q.e.d.} \\}
 \newenvironment{thm:supptmeasure} 
{{\sc Proof of Theorem~\ref{thm:supptmeasure}.}}
{{\sc q.e.d.} \\}
\newenvironment{straightline} 
{{\sc Proof of Theorem~\ref{straightline}.}}
{{\sc q.e.d.} \\}

{{\sc Proof of Lemma~\ref{federeroutermeasure}.}}%
{{\sc q.e.d.} \\}
{{\sc Proof of Lemma~\ref{cd2general}.}}%
{{\sc q.e.d.} \\}
{{\sc Proof of Lemma~\ref{usordv}.}}%
{{\sc q.e.d.} \\}
{{\sc Proof of Corollary~\ref{corolgeodesic}.}}%
{{\sc q.e.d.} \\}
{{\sc Proof of Theorem~\ref{thmgeodesic}.}}%
{{\sc q.e.d.} \\}
{{\sc Proof of Theorem~\ref{hmco2}.}}%
{{\sc q.e.d.} \\}
{{\sc Proof of claim~(\ref{claim:bdvar}).}}%
{{\sc q.e.d.} \\}
{{\sc Proof of Theorem~\ref{conversetomeasure}.}}%
{{\sc q.e.d.} \\}
\numberwithin{equation}{section}
\newtheorem*{theorem*}{Theorem}
{{\sc Proof of Lemma~\ref{tri1}.}}%
{{\qed} \\}
{{\sc Proof of Theorem~\ref{regularity}.}}%
{{\qed} \\}
\newenvironment{proof:main}%
{{\sc Proof of Theorem~\ref{theorem:mainexistence}.}}%
{{\qed} \\}
\newenvironment{proof:pluriharmonic}%
    {{\sc Proof of Theorem~\ref{theorem:pluriharmonic}.}}%
  {{\qed} \\}  
   \newenvironment{proofof(iv)}%
    {{\sc Proof of $(iv)$.}}%
  {{\qed} \\}  
% MATH -----------------------------------------------------------

\newcommand{\R}{\mathbb R}

\newcommand{\Z}{\mathbb Z}

\voffset -.5in

\voffset -.5in

\begin{document}

\title{ Transverse Measures and  Best Lipschitz and Least Gradient Maps}
\author[Daskalopoulos]{Georgios Daskalopoulos}
\address{Department of Mathematics \\
                 Brown University \\
                 Providence, RI}%02912}
%\email{georgios_daskalopoulos@brown.edu}
\author[Uhlenbeck]{Karen Uhlenbeck}
\address{Department of Mathematics, University of Texas, Austin, TX and Distinguished Visiting Professor, Institute for Advanced Study, Princeton, NJ}
%\email{uhlen@math.utexas.edu }

\thanks{
GD supported in part by NSF DMS-2105226.}
\maketitle

\begin{abstract}
Motivated by  work of Thurston on defining a version of Teichm\"uller theory based on best Lipschitz maps between surfaces, we study infinity-harmonic  maps from a hyperbolic manifold to the circle. The best Lipschitz constant is taken on a geodesic lamination.  Moreover, in the surface case the dual problem leads to a function of least gradient  which defines a transverse measure on the lamination.  We also discuss  the construction of least gradient functions from transverse measures via primitives to Ruelle-Sullivan currents.
\end{abstract}

\section{introduction}
%This is the first in a series of papers, where we attempt to connect three different concepts: Best Lipschitz (or $\infty$-harmonic functions), maps of least gradient and measured laminations. The original motivation is to try to understand the work of Thurston \cite{thurston} and eventually develop the analytic techniques 
%This paper fails short of this goal. The analytic subtleties in deal with infinity harmonic maps between surfaces 
%This paper is inspired by the work of Thurston \cite{thurston} on the connection between measured laminations and best Lipschitz maps.

Bill Thurston  in a 1986 preprint which was later revised in 1998 (cf. \cite{thurston})  introduced best Lipschitz maps as a tool of studying an $L^\infty$-version of Teichm\"uller theory. His motivation was to replace the Teichm\"uller distance between two conformal structures $\rho$ and $\sigma$  defined as the dilatation of the Teichm\"uller map by the best Lipchitz constant of a map in the homotopy class of the identity between the corresponding hyperbolic surfaces.  This is what is now known as {\it Thurston's asymmetric metric}. This theory has been further developed by the Thurston school, see for example \cite{papa} or \cite{kassel} and the references therein. We were drawn to  the subject of this paper in part after discussions with Athanase Papadopoulos by the possibility of developing some analytic understanding of this theory. However, even the most basic tools in partial differential equations for approaching this subject are lacking. 

As a starting point, we looked at the subject of this paper: Best Lipschitz maps $u: M^2 \rightarrow S^1$ (or more generally functions  $\tilde u: \tilde M^2 \rightarrow \R$ equivariant under a cohomology class $\rho: \pi_1(M) \rightarrow \R$). This problem does not seem to have been treated in the topological literature (however see Problem~\ref{toyprobm} in the last section on how it fits with Thurston), but our results fit in nicely with existing concepts. In analysis, the subject of $\infty$-harmonic maps, albeit for maps $u: \Omega \rightarrow \R$ for $\Omega$ a domain in $\R^2$ (or even $\R^n$), are in place. See for example: \cite{crandal}, \cite{evans-savin} and \cite{evans-smart}. 

On the opposite side of $\infty$-harmonic maps lies the theory of 1-harmonic maps, which are also known as maps of least gradient. Here, the flavor of the analysis is completely different, see for example \cite{ziemer}, \cite{ziemer2} and \cite{mazon} among other references. See also \cite{juutinen} for a variational construction of least gradient functions for Euclidean domains obtained as limits of $q$-harmonic functions where $q \rightarrow 1$. 

 Thurston  conjectured  that the duality between best Lipschitz maps and measures  should fit well in his theory of geodesic laminations, as stated in the introduction of his paper \cite{thurston}:

{\it I currently think that a characterization of minimal stretch maps should be possible in a considerably more general context (in particular, to include some version for all Riemannian surfaces), and it should be feasible with a simpler proof based more on general principles-in particular, the max flow min cut principle, convexity, and $L^0 \leftrightarrow L^\infty$ duality.}

The goal of this paper is to exhibit the duality proposed by Thurston between best Lipschitz maps ($L^\infty$) and Radon measures ($L^0$) explicitly in the simpler case of functions  and how it fits with the theory of measured laminations and transverse cocycles developed by Thurston and Bonahon. See for example \cite{thurston2}, \cite{bonahon1} and \cite{bonahon2}. In fact,  we will exhibit  the duality  explicitly between the $\infty$-harmonic map $u$ and its dual least gradient map $v \in BV$ inducing the Radon measure $dv$: {\it The map $u$ defines the geodesic lamination, whereas $dv$ the transverse measure on the geodesic lamination defined by $u$.} The duality is more or less given by Hodge duality, namely $*du=dv$ as it will be made precise in this paper.

Following other authors, we first study the limits of the critical points $u_p$ of the functional
\[
u \mapsto \int_M |du|^p *1.
\] 
The $L^\infty$ norms of $du_p$ are uniformly bounded, and we obtain a set of weak limits $u$ of the $\infty$-harmonic equation. The function $u$ is Lipschitz and, as proven by Evans, Savin and Smart in \cite{evans-savin} and \cite{evans-smart}, for Euclidean metrics $u$ is differentiable. However, if $n>2$ it is not known that $du$ is continuous,  and even in the case $n=2$,
$du$ is only H\"older continuous. Since the above results are not available  for non-flat metrics we 
bypass this problem by developing the bare minimum of the theory of comparison with cones for the hyperbolic metric. In particular, we show that, as in the case of Euclidean domains,  there exists a notion of  gradient flow in this setting without assuming any differentiability. 
  
Following Thurston, in order to connect  with topology, in (\ref{normlength}) we introduce the invariant
 \begin{equation}\label{defKref}
  K= \sup  \frac{|<du,\gamma>|}{length(\gamma)}
\end{equation} 
 where  the $\sup$ is over  the set of free homotopy classes of simple closed curves  $\gamma$ in $M$.
Then, we are able to show:
 \begin{theorem}\label{Theorem 1.1} For $M$ compact, hyperbolic in any dimension, and $u$ an $\infty$-harmonic map constructed as a limit of $u_p$,  the local Lipschitz constant $L_u$ of $u$ (see Section~\ref{lipconst}) takes on its maximum $L$ on a geodesic lamination $\lambda_u$. Moreover, $ K$ is equal $L$, the Lipschitz constant of $u$.
 \end{theorem}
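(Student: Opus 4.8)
The plan is to prove the theorem in three stages: first establish that $L_u$ attains its maximum $L$ on a set which is a geodesic lamination; second show $K \le L$; and third show $L \le K$, the reverse inequality being where the real content lies.

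For the first stage, I would work on the universal cover $\tilde M$, where $\tilde u$ is a Lipschitz function with (equivariant) gradient of uniformly bounded $L^\infty$-norm, and invoke the comparison-with-cones theory for the hyperbolic metric developed earlier in the paper. The key structural fact is that the set $\Lambda = \{x : L_u(x) = L\}$ where the local Lipschitz constant is maximal is, by comparison with cones, foliated by geodesic segments along which $\tilde u$ changes at the maximal rate $L$: if $L_u(x)=L$ then there is a unit-speed geodesic through $x$ on which $\tilde u$ is affine with slope $\pm L$, and maximality forces this geodesic to extend indefinitely (a "straight line" in the sense of the named environment \texttt{straightline} in the preamble). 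Projecting these complete geodesics down to $M$ and taking the closure gives a closed set which is a disjoint union of complete geodesics — i.e. a geodesic lamination $\lambda_u$ — because two such geodesics through maximal-stretch points cannot cross transversally without violating the Lipschitz bound $L$ near the crossing (a triangle-inequality/comparison argument on a small quadrilateral). This identifies $\Lambda$ with $\lambda_u$.

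For the inequality $K \le L$: given any simple closed geodesic $\gamma$ of length $\ell(\gamma)$, lift to a path $\tilde\gamma$ in $\tilde M$ from a point $\tilde x_0$ to $g\tilde x_0$ where $g \in \pi_1(M)$ represents $\gamma$. Then $\langle du, \gamma\rangle = \tilde u(g\tilde x_0) - \tilde u(\tilde x_0)$ up to the period, and since $\tilde u$ is $L$-Lipschitz this is at most $L \cdot d(\tilde x_0, g\tilde x_0) = L \cdot \ell(\gamma)$ after minimizing over the lift (using that the geodesic realizes the distance in the homotopy class). Taking sup over $\gamma$ gives $K \le L$. I expect this direction to be routine.

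The hard part is $L \le K$. Here I would argue that the maximal-stretch lamination $\lambda_u$ must "carry" enough topology to be detected by simple closed curves. The idea: on $\lambda_u$, $\tilde u$ increases at rate exactly $L$ along the leaves, so a leaf that stays in a compact part of $M$ and returns close to itself produces, after closing up by a short arc, a closed curve $\gamma$ with $\langle du,\gamma\rangle$ close to $L\cdot \ell(\gamma)$; approximating a recurrent leaf of the lamination by simple closed curves (Anosov-closing / the standard density of closed leaves near a minimal sublamination, as in Thurston–Bonahon) then forces $K \ge L$. The main obstacle is controlling the error terms from the closing-up arcs and from the fact that $du$ need not be continuous (only the limit-of-$u_p$ structure and the cone comparison are available), so one must show the correction is lower-order relative to $\ell(\gamma) \to \infty$; this is precisely where the compactness of $M$ and the uniform geometry of the hyperbolic metric, together with the structure theory of geodesic laminations, are essential. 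An alternative, possibly cleaner route for this direction is a max-flow/min-cut duality argument in the spirit of Thurston's quoted remark: exhibit a measured lamination dual to $\lambda_u$ whose intersection pairing with curves computes $K$, and match it with the least-gradient function $v$ with $*du = dv$; but I would expect the direct closing-lemma argument to be the one carried out here.
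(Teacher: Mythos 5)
Your proposal follows the paper's argument essentially verbatim: comparison with cones yields a maximal-stretch geodesic through every point of $\lambda_u$ and the non-crossing/extension arguments you sketch are exactly those in the proof of Theorem~\ref{straightline}, the inequality $K\leq L$ is the routine Lipschitz estimate, and $L\leq K$ is proved in Theorem~\ref{K=L} by precisely your closing-up construction on a recurrent leaf of $\lambda_u$. The only remark worth adding is that the error control you flag as the main obstacle is immediate: since $u$ is globally $L$-Lipschitz, the closing arc of length $1/n$ perturbs both $|\rho(\gamma_n)|$ and $l_g(\gamma_n)$ by $O(1/n)$ while $t_2-t_1\geq 1$, so no continuity of $du$ is needed and the ratio tends to $L$ directly.
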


This is a combination of Theorems~\ref{straightline} and~\ref{K=L}.
We then turn to the dual problem in dimension 2.  A  duality of this form has been noticed before by Aronson and Lindqvist as far back as 1988 (cf. \cite{aronsonlin}), but it may go even further  back to Werner Fenchel in 1949 (cf. \cite[p.81-82]{temam} and \cite{fenchel}).  From the Euler-Lagrange equations for $u_p$, we know $|du_p|^{p-2}*du_p$ is closed, and we normalize it and set it equal to $dv_q$, where $v_q$ lifts to a function with factors of automorphy varying with $q$. Here $1/p + 1/q = 1$. The function $v_q$ is a critical point of $\int_M |dv|^q*1$ where now $q \rightarrow 1$. This leads to one of our main theorems:
\begin{theorem}\label{Theorem 1.2}
 The set of weak limits $v_q \rightarrow v$ as $q \rightarrow 1$ is nonempty. A limit $v$ is  of least gradient  among maps defining the same homology class. The support of $dv$ is on the  lamination $\lambda_u$  on which $L_u = L$,  obtained from any dual $\infty$-harmonic map $u$.
\end{theorem}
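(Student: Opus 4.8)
The plan is to establish Theorem~\ref{Theorem 1.2} in three movements: first nonemptiness of the family of weak limits, then the least gradient property, and finally the identification of $\mathrm{supp}(dv)$ with the lamination $\lambda_u$. For nonemptiness, recall that $dv_q$ is defined (up to normalization) as $|du_p|^{p-2}*du_p$ with $1/p+1/q=1$, so $q\to 1$ as $p\to\infty$. Since $\|du_p\|_{L^\infty}$ is uniformly bounded (the bound that underlies Theorem~\ref{Theorem 1.1}), one gets $\||du_p|^{p-2}du_p\|_{L^{p/(p-1)}}=\|du_p\|_{L^p}^{p-1}$ controlled, and after normalizing by the relevant cohomological period (so that $[dv_q]$ represents a fixed class in $H^1(M;\R)$) the masses $\int_M|dv_q|*1$ stay bounded. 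Hence $\{v_q\}$ is bounded in $BV(M)$ modulo constants, and by $BV$ compactness a subsequence converges in $L^1$ (with weak-$*$ convergence of the measures $dv_q$) to some $v\in BV$. One should check that the limiting $dv$ still represents the prescribed homology class; this follows from weak-$*$ convergence applied to closed test $1$-forms.

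For the least gradient property, the natural route is $\Gamma$-convergence / lower semicontinuity together with a comparison using the Euler--Lagrange structure. The functionals $v\mapsto \int_M|dv|^q*1$, suitably normalized, converge as $q\to 1$ (on the affine subspace of $BV$ functions representing the fixed homology class) to the total variation $v\mapsto \int_M|dv|$; lower semicontinuity of total variation under $L^1$ convergence gives $\int_M|dv|\le\liminf\int_M|dv_q|$. For the matching upper bound one tests against an arbitrary competitor $w$ in the same homology class: since $v_q$ minimizes $\int|dv|^q$ in its class, $\int|dv_q|^q\le\int|dw|^q$, and Hölder's inequality on the compact $M$ converts the $L^q$ bound into an $L^1$ bound with multiplicative constant $\mathrm{Vol}(M)^{1-1/q}\to 1$. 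Passing to the limit yields $\int_M|dv|\le\int_M|dw|$, so $v$ is of least gradient in its class.

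The identification of the support is the crux and where the duality with $u$ enters. The idea is the pointwise/a.e.\ relation that on the approximating level $|du_p|^{p-2}*du_p=c_p\,dv_q$, equivalently $*du_p$ and $dv_q$ are parallel with $|dv_q|=c_p|du_p|^{p-1}$. As $p\to\infty$, $|du_p|^{p-1}$ concentrates exponentially on the set where $|du_p|$ is maximal, i.e.\ where $L_{u}$ attains its maximum $L$; by Theorem~\ref{Theorem 1.1} this set is (contained in) the geodesic lamination $\lambda_u$. Making this rigorous requires showing that for any compact $K$ disjoint from $\lambda_u$ one has $\sup_K|du_p|\le L-\delta$ for some $\delta>0$ uniformly in large $p$ — this should come from the comparison-with-cones machinery developed earlier (the part of the paper that replaces Evans--Savin--Smart regularity in the hyperbolic setting), which controls $L_{u_p}$ away from the lamination — and hence $\int_K|dv_q|=c_p\int_K|du_p|^{p-1}\to 0$. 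Conversely one must rule out that mass escapes or that $dv$ vanishes on all of $\lambda_u$; here the fixed, nonzero homology class of $dv$ forces $dv\neq 0$, and the relation $*du=dv$ in the limit (together with the fact that $du$ is, in the appropriate weak sense, supported in directions transverse to $\lambda_u$) pins the support onto $\lambda_u$. The Hodge-duality statement $*du=dv$ should be derived by passing to the limit in $*du_p \leftrightarrow dv_q$ using the uniform bounds and testing against smooth forms.

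The main obstacle I expect is precisely this last step: controlling the concentration of $|du_p|^{p-1}$ and showing it happens exactly on $\lambda_u$ and nowhere else, in the absence of $C^1$ regularity for $u$ in the hyperbolic (non-flat) setting. The inequality $\sup_K|du_p|\le L-\delta$ away from the lamination is the delicate quantitative input; it needs the comparison-with-cones estimates to be uniform in $p$, not just in the limit, and it needs to interact correctly with the normalization constants $c_p$ so that the surviving mass is neither zero nor infinite. Handling the possible loss of mass at the limit (making sure $dv_q \rightharpoonup dv$ without concentration escaping to a lower-dimensional set outside $\lambda_u$, which would contradict least gradient) is the technical heart of the argument, and I would organize it around the pair of one-sided bounds: lamination-localization from the cone comparison, and nontriviality from the fixed homology class.
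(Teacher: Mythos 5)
Your argument for nonemptiness tracks the paper's (Proposition~\ref{lemma:limmeasures0}, Theorem~\ref{lemma:limmeasures2}): the mass bound $\int_M|dv_q|*1\lesssim L^{-1}$ from~(\ref{kappanorm0}), Poincar\'e, and BV compactness give the weak limit $v$. One small inaccuracy: you speak of normalizing so that $[dv_q]$ lies in a \emph{fixed} class, but in the paper the period homomorphism $\alpha_q$ genuinely varies with $q$ and only converges in the limit to $\alpha$; the class of $dv_q$ is not fixed along the sequence, which matters below.

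Your route to least gradient (\(\Gamma\)-convergence plus a competitor comparison) is a genuinely different argument from Theorem~\ref{thmlegr}, and it has a gap exactly at the point above: the minimality of $v_q$ is among $\alpha_q$-equivariant sections, so the competitor $w$ you test against lives in a $q$-dependent class and cannot be held fixed as $q\to 1$ without an approximation step you do not supply. The paper instead proves the two one-sided bounds $\|dv\|\leq 1/L$ (from~(\ref{kappanorm0}) and lower semicontinuity) and $\|d(v+\phi)\|\geq 1/L$ (by pairing with $U'_p=c_p^{-1}du_p$ and using Corollary~\ref{withoutcont}), which requires the maximum estimate Proposition~\ref{maxest} and the stronger support estimate Proposition~\ref{kprop 4*}. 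Those inputs do not appear in your sketch.

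The genuine gap is in the support argument. You propose to show that for every compact $K$ disjoint from $\lambda_u$ one has $\sup_K|du_p|\leq L-\delta$ \emph{uniformly in $p$}, so that $|du_p|^{p-1}$ decays exponentially on $K$. This is precisely the estimate the paper cannot obtain and deliberately avoids: such a bound would require uniform-in-$p$ gradient (or $C^{1,\alpha}$) estimates for $p$-harmonic maps on a hyperbolic surface, and the Evans--Savin/Evans--Smart regularity theory is not available outside the flat case, as the paper stresses in Section~\ref{sect:crandal}. The comparison-with-cones machinery developed there is purely about the \emph{limit} $u$ and yields no quantitative, $p$-uniform control of $|du_p|$. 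Instead, the paper's Theorem~\ref{thm:supptmeasure} runs an integral argument that needs only the pointwise information on the limit $|du|$: testing the Euler--Lagrange equation for $u_p$ against $u_p-u$ gives Lemma~\ref{kprop 3}; combining with the elementary inequality of Lemma~\ref{klemma2} yields Proposition~\ref{kprop 4} ($\int|U_p|^{p-2}|U_p-U|^2\to 0$); and Proposition~\ref{prop:supptmeasure0} then shows $\int|U_p|^p|\phi|\to 0$ whenever $\phi$ is supported where $|du|\leq\lambda<L$ — note that the hypothesis is on the limit $du$, which is precisely what being off the stretch set $\lambda_u$ means, not on $du_p$. Your intuition about exponential concentration is right, but the proof mechanism you propose (uniform local sup bounds on $|du_p|$) is not available and is not the one the paper uses; you should replace it with the Euler--Lagrange/$L^1$-smallness argument of Lemmas~\ref{klemma2}--\ref{kprop 3} and Propositions~\ref{kprop 4}, \ref{prop:supptmeasure0}.
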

This is a combination of Theorem~\ref{lemma:limmeasures2}, Theorem~\ref{thm:supptmeasure} and  Theorem~\ref{thmlegr}.

Note that so far we have restricted ourselves to the case of maps to $S^1$. This was done for the sake of simplicity only and everything can be generalized to arbitrary real cohomology classes $\rho \in H^1(M, \R)$. In other words, we can replace best Lipschitz maps $u: M \rightarrow S^1$ by functions $\tilde u$ defined on the universal cover that are equivariant under a homomorphism $\rho:\pi_1(M) \rightarrow \R$. In Section~\ref{sectevp} we describe this extension of our results to the equivariant case.

Finally, we turn to the concepts in the Thurston literature.  We assume $M = M^2$ is a hyperbolic surface and define transverse measures in Section~\ref{sect7}. The next is one of the main results of the paper:
\begin{theorem} A least gradient map $v$ as in Theorem~\ref{Theorem 1.2}  induces a transverse measure on the naturally oriented geodesic lamination on which $L_u$ takes on the best Lipschitz constant $L$.
\end{theorem}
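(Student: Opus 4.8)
The plan is to build the transverse measure directly from the vector‑valued Radon measure $Dv$, by slicing it along arcs transverse to $\lambda_u$, and to read off the orientation from the direction in which $v$ increases. The first point is to isolate the local structure of $v$ near $\lambda_u$. By Theorem~\ref{Theorem 1.2} the support of $dv$ is contained in $\lambda_u$, so $Dv=0$ on the open set $M\setminus\lambda_u$; hence $v$ is locally constant there and assumes a well‑defined value $c_P$ on each component $P$ of $M\setminus\lambda_u$. Moreover $v$ is of least gradient on the hyperbolic surface $M$, so it is standard that the reduced boundaries of the super‑level sets $\{v>t\}$ are length‑minimizing, i.e. geodesic; equivalently, the polar vector of $Dv$ is $|Dv|$‑a.e. normal to the leaves of $\lambda_u$. (This also follows from $*du=dv$ together with the fact that $|du|\equiv L$ on $\lambda_u$ forces $du$ to be a multiple of the leaf co‑tangent there, so $dv$ pairs to zero with leaf tangents.) Thus $v$ is constant along each leaf of $\lambda_u$ and locally constant on the complement: inside a flow box of $\lambda_u$ it is, up to the holonomy homeomorphism onto a transversal, a function of the transverse coordinate alone — a generalized Cantor function in that coordinate.

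Next comes the orientation. Since $\tilde v\colon\tilde M\to\R$ is a genuine single‑valued function, determined by its $\rho$‑equivariance class up to an additive constant, the direction of increasing $v$ is a globally well‑defined unit co‑normal $\nu$ to $\operatorname{supp}(dv)\subseteq\lambda_u$, and $Dv=\nu\,|Dv|$. Rotating $\nu$ by $\pi/2$ — equivalently, orienting each leaf by the direction of increasing $u$, which is nowhere zero along the leaf since $|du|=L$ there — orients $\lambda_u$ compatibly with the orientation of $M$. This is the natural orientation in the statement; in particular the sublamination carrying $dv$ is orientable.

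Then I would carry out the construction and check the axioms. For an arc $k$ transverse to $\lambda_u$ with endpoints off $\lambda_u$ — taken in efficient position, e.g. geodesic, with respect to $\lambda_u$ — set $\mu_k:=D(v|_k)$, a finite Borel measure on $k$. Well‑definedness of $v|_k\in BV(k)$ follows from the $BV$ slicing theorem together with local constancy of $v$ off the null set $\lambda_u$; since the slice of $\operatorname{supp}(dv)$ is $k\cap\operatorname{supp}(dv)$, we get $\operatorname{supp}(\mu_k)\subseteq k\cap\lambda_u$, so only $k\cap\lambda_u$ is charged. Additivity under subdivision of $k$ is immediate. For holonomy (isotopy) invariance: if $k$ and $k'$ are joined by a rectangle $R$ foliated by leaf segments of $\lambda_u$ and by the corresponding constant‑$v$ segments of $R\setminus\lambda_u$, then by the first paragraph $v|_k=v|_{k'}\circ h$ for the holonomy homeomorphism $h\colon k\to k'$, whence $h_*\mu_k=\mu_{k'}$; in particular, using that an efficient transversal is crossed monotonically (the leaves being linearly ordered along it and $\nu$ being globally consistent), $v|_k$ is monotone and $\mu_k(k)=|c_{P_1}-c_{P_0}|$, where $P_0,P_1$ are the components of $M\setminus\lambda_u$ meeting the endpoints of $k$, so the total mass depends only on those components and is unchanged by any transverse isotopy. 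Collecting these properties shows that $\{\mu_k\}$ is a transverse measure on the naturally oriented lamination $\lambda_u$ in the sense of Section~\ref{sect7}; it is non‑trivial whenever $\rho\neq 0$, since then $dv\neq 0$.

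The step I expect to be the real work is the first one: establishing with full rigor that the $BV$ least‑gradient function $v$ is constant on leaves of $\lambda_u$ and locally constant on the complement, strongly enough to restrict cleanly to transverse arcs and to be identified across a foliated rectangle. This rests on the dimension‑two least‑gradient regularity (geodesic level sets) and on $\operatorname{supp}(dv)\subseteq\lambda_u$, and it is what makes the slicing construction simultaneously well‑defined and holonomy‑invariant. The subsidiary point requiring care is the monotonicity of $v$ along efficient transversals and the reduction of an arbitrary transverse arc to such position; once these are in hand, verifying the transverse‑measure axioms and compatibility with the natural orientation is essentially formal.
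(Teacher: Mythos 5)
Your proposal correctly identifies several structural ingredients: that $\operatorname{supp}(dv)\subseteq\lambda_u$ forces $v$ to be locally constant on $M\setminus\lambda_u$ (this is Lemma~\ref{MLemma2} in the paper), that the value differences $c_{P_1}-c_{P_0}$ across plaques define the cocycle on transversals, and that holonomy invariance follows from constancy on plaques. That much matches Section~\ref{tranco} and Theorems~\ref{thm:tranco} and~\ref{transcoismeasthm}. However, there is a genuine gap at exactly the step the paper treats as the heart of the proof.

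\textbf{The gap: positivity.} A function $v$ constant on plaques gives, by the mechanism you describe, a transverse \emph{cocycle}, i.e.\ a finitely additive, holonomy-invariant assignment of real numbers to transversals. What distinguishes a transverse \emph{measure} is that the cocycle is non-negative on positively oriented transversals (equivalently, that $v$ is monotone along them). You assert this monotonicity in the phrase ``using that an efficient transversal is crossed monotonically (the leaves being linearly ordered along it and $\nu$ being globally consistent), $v|_k$ is monotone,'' and then relegate it to ``the subsidiary point requiring care.'' But nothing in the least-gradient property, nor in $\operatorname{supp}(dv)\subseteq\lambda_u$, nor in orientability of $\lambda_u$, forces the jumps of $v$ across plaques to all have the same sign relative to $\operatorname{grad} u$; a priori $v$ could go up across one leaf and down across a neighboring one. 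This is the one substantive claim in the theorem, and it is not proved in your proposal. The paper's proof establishes it by a quantitative comparison back to the $p$-harmonic and $q$-harmonic approximants: one writes $\nu(f)=a_1-a_0$ as a limit $\lim_{q\to 1}\int_R dv_q(\tfrac{\partial F}{\partial s})\,\Xi$, uses $dv_q=|U_p|^{p-2}\,{*U_p}$, and then invokes Proposition~\ref{kprop 4} (the weighted $L^2$ estimate $\int|U_p|^{p-2}|U_p-du|^2\to 0$) to replace $*U_p$ by $*du$ inside the integral up to errors tending to zero. Only at that point does positive transversality ($*du(\tfrac{\partial F}{\partial s})>0$) deliver a non-negative limit. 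There is no way around some version of this approximation argument: positivity is a statement about the specific $v$ obtained as $\lim_{q\to 1}v_q$, not about arbitrary BV primitives of currents supported on $\lambda_u$.

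\textbf{Two secondary issues.} First, you invoke $*du=dv$ as a pointwise relation (in the parenthetical justification that $dv$ is normal to leaves). In the limit $p\to\infty$, $q\to 1$ this is not a pointwise identity: $*du$ is an $L^\infty$ one-form while $dv$ is a singular measure carried by a set of measure zero; the relation ``$*du=dv$'' is only a slogan made precise by the measure-concentration arguments of Section~\ref{consmeasure}. Second, your orientation discussion is circular as written: you first say the direction of increasing $v$ is a globally well-defined co-normal, and use this to orient $\lambda_u$ — but that $v$ increases consistently in one transverse direction is precisely the positivity you have not yet established. The paper avoids this by orienting $\lambda_u$ directly by $\operatorname{grad} u$ (well-defined on $\lambda_u$ since $|du|=L\neq 0$ there), independently of $v$, and then proving $v$ is non-decreasing with respect to that orientation.

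=== END ===
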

This is Theorem~\ref{transmeasure}. We show that $v$ is constant on the connected components of the complement of the lamination $\lambda_u$ from which we can construct a transverse cocycle in the sense of \cite{bonahon2}. We use the approximation by $v_q$ to show that it is non-negative and thus defines a transverse measure.  In addition, we discuss the connection between measured laminations and functions of bounded variation.

 More precisely,  we construct a measure $\nu$ on admissible transversals $f:[c,d] \rightarrow M.$  In the universal cover, we have a function of bounded variation $v.$  We show that for an admissible transversal $f$, $g = f^*v$ is a function of bounded variation and we define the transverse measure $\nu(f)$ as the norm of $g = f^*v$ on the interval.  Every function of bounded variation $g$ on an interval can be written as the sum of a non-increasing function $g^+$ and  non-decreasing function $g^-$, and the norm  is simply $|g^+(d) - g^+(c)| + |g^-(d) - g^-(c)|.$  This norm is invariant under homotopy through admissible transversals.  The difficulty is to match the topological definition of transversal and transverse measure with the analytical definition of  bounded variation.

\begin{theorem}\label{bird} A function $\tilde v$ on the universal cover of a hyperbolic surface that is equivariant, locally bounded and constant on the plaques of an oriented lamination defines a transverse cocycle $\nu$. Moreover,  if  $\nu$  is a transverse measure, then $\tilde v$ is locally of bounded variation.
  \end{theorem}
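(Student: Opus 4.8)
The plan is to split the statement into its two implications and handle them in opposite directions. For the first, suppose $\tilde v$ is equivariant, locally bounded, and constant on the plaques of the oriented lamination $\lambda$. I would first recall the structure of $\lambda$ locally: a small transversal arc meets $\lambda$ in a closed set, and the complement in the arc is a countable union of open intervals, each lying in a single complementary plaque. Given an admissible transversal $f:[c,d]\to M$ (meaning transverse to $\lambda$ in the appropriate topological sense, as defined in Section~\ref{sect7}), pull back to the universal cover along a lift $\tilde f$ and set $g=\tilde f^*\tilde v$. Since $\tilde v$ is constant on plaques and the gaps of $\lambda\cap[c,d]$ map into single plaques, $g$ is constant on each gap; the orientation of $\lambda$ gives a consistent sign to the jump of $g$ across each leaf it crosses, so the assignment $f\mapsto\nu(f):=$ (total variation of the "monotone part" picked out by the orientation) is well-defined once we check it does not depend on the chosen lift (equivariance) and is additive under concatenation and invariant under homotopy of transversals through admissible transversals (because a homotopy can be subdivided into moves supported in a single foliated chart, where it is a routine check that the crossing pattern, hence $\nu$, is unchanged). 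This produces the transverse cocycle $\nu$ — at this stage $\nu$ need not be a measure because $g$ need not be monotone, only the formal combination $|g^+(d)-g^+(c)|+|g^-(d)-g^-(c)|$ is controlled, and local boundedness of $\tilde v$ only bounds $g$, not its variation.

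For the converse, assume in addition that $\nu$ is a genuine transverse measure, i.e. the cocycle takes nonnegative values and is finitely additive/countably additive on transversals. I would show $\tilde v$ is locally of bounded variation by working in a single foliated chart $U\cong (\text{transversal})\times(\text{leaf direction})$, lifted to the cover. Along the transversal direction, nonnegativity of $\nu$ forces the pulled-back function $g$ to be \emph{monotone} on each transversal arc in the chart: the jump across every crossed leaf has a fixed sign, and there are no cancellations, so $\operatorname{Var}(g)=\nu(\text{arc})<\infty$. Since $\tilde v$ is constant in the leaf direction on the plaques, the variation of $\tilde v$ over $U$ is controlled by integrating the transversal variation $\nu(\text{arc})$ against the leafwise Lebesgue measure (Fubini-type estimate), which is finite on relatively compact $U$. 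A partition of unity / covering argument over a relatively compact piece of the surface then upgrades this to: $\tilde v$ is locally $BV$. One technical point is that $\tilde v$ is only assumed constant on plaques and bounded, so on the (measure-zero but possibly large) set $\lambda$ itself one has to argue that $\tilde v$ contributes no extra variation; this follows because $\lambda$ has empty interior and $\tilde v$ restricted to a transversal is determined on a dense set of gap points by monotonicity, so its total variation as a $BV$ function equals $\nu$ of the transversal.

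The main obstacle I expect is precisely the matching of definitions flagged at the end of the excerpt: reconciling the topological notion of admissible transversal and transverse measure with the analytic notion of bounded variation. Concretely, the delicate steps are (i) showing $g=\tilde f^*\tilde v$ is genuinely $BV$ (not merely that its restriction to gap endpoints has bounded fluctuation) — one must rule out wild oscillation of $\tilde v$ accumulating on $\lambda$, which is where local boundedness plus constancy on plaques plus, in the converse direction, nonnegativity of $\nu$ all get used together; and (ii) homotopy invariance of $\nu$ through admissible transversals, where care is needed because admissible transversals may have endpoints on $\lambda$ and the homotopy must be decomposed into elementary moves within foliated charts. I would isolate (i) as a lemma about $BV$ functions on an interval that are constant on the gaps of a closed set, and (ii) as a lemma using the local product structure of $\lambda$, after which the theorem assembles formally.
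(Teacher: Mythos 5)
Your treatment of the second implication is essentially the paper's proof of Theorem~\ref{transcoismeasthm}: work in a flow box, use nonnegativity of $\nu$ to make the pulled-back function monotone (hence $BV$) in the transversal direction, use constancy on plaques to see that $v^\sharp(t,s)-v^\sharp(t,c)$ depends only on $s$, and transport $BV$ back through the bi-Lipschitz chart; you also correctly flag the measure-zero set $\lambda$ itself as the point needing care.

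There is, however, a genuine gap in your first implication: the definition you propose for the cocycle. You set $\nu(f)$ equal to a total-variation-type quantity, invoking the Jordan decomposition $g=g^++g^-$ of $g=\tilde f^*\tilde v$, and you assert that ``the orientation of $\lambda$ gives a consistent sign to the jump of $g$ across each leaf it crosses.'' Under the hypotheses of the first part ($\tilde v$ only locally bounded, equivariant, constant on plaques) neither of these is available. The differences $\tilde v(S)-\tilde v(S')$ between adjacent plaques can have arbitrary signs, and $g$ need not be of bounded variation at all --- Bonahon's transverse H\"older distributions that are not measures arise exactly from such $\tilde v$, and the whole point of the ``Moreover'' clause is that $BV$ is a \emph{consequence} of nonnegativity, not a free input. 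Worse, a total variation is automatically nonnegative and, when finite, automatically a measure, so with your definition the hypothesis ``if $\nu$ is a transverse measure'' becomes vacuous and the second half would wrongly assert that every locally bounded plaque-constant function is locally $BV$. The correct definition (Definition~\ref{MDefinition6} in the paper) uses only the two endpoint plaques: decompose the admissible transversal into finitely many pieces that are alternately positively and negatively transverse (the ``good subdivision'' of Lemma~\ref{MLemma13}, whose existence is itself a small compactness argument you omit), assign to each positively transverse piece the \emph{signed} difference $\tilde v(S_m)-\tilde v(S_l)$ of the plaque constants at its endpoints (and the negative of this on negatively transverse pieces), and sum. This requires no control on the variation of $g$, and the well-definedness, additivity, and homotopy invariance then follow because the endpoints of the subdivision stay in fixed plaques. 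Once the definition is repaired in this way, the rest of your outline for the first part goes through.
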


This is proved in Theorem~\ref{thm:tranco} and Theorem~\ref{transcoismeasthm}. 
We then  prove a partial converse to Theorem~\ref{bird}. In a 1975 paper, Ruelle and Sullivan constructed in a very general setting  closed currents from transverse measures. We show that we have enough regularity to make this rigorous in the setting of transverse measures on geodesic laminations of surfaces.
 More precisely:

 \begin{theorem}The Ruelle-Sullivan current associated to an oriented geodesic lamination in a hyperbolic surface is well defined and closed. A primitive $v$ of the  Ruelle-Sullivan current exists and is locally of bounded variation.
 \end{theorem}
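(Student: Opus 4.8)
I would split the statement into three assertions: (i) the Ruelle–Sullivan current $T$ associated to an oriented geodesic lamination $\lambda$ (equipped with a transverse measure $\mu$) is a well-defined $1$-current; (ii) $T$ is closed; (iii) a primitive $v$ exists and is locally of bounded variation. For (i), the lamination $\lambda$ is covered by finitely many flow boxes $B_i \cong I_i \times \tau_i$, where $I_i$ is an interval along the leaves and $\tau_i$ is a transversal carrying the measure $\mu$; on each box one defines, for a compactly supported $1$-form $\omega$, the local pairing $T_i(\omega) = \int_{\tau_i}\big(\int_{I_i\times\{t\}}\omega\big)\,d\mu(t)$, using the natural orientation of the leaves to integrate $\omega$ along each plaque. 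Patch these together with a partition of unity subordinate to the $B_i$. The regularity point flagged in the excerpt — that geodesic laminations in hyperbolic surfaces have smooth (indeed geodesic) leaves varying continuously with the transverse parameter — is what makes $\int_{I_i\times\{t\}}\omega$ a continuous function of $t$, hence $\mu$-integrable, so $T$ is genuinely a current (continuous on forms) and independent of the choice of flow-box decomposition and partition of unity.

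**Closedness.** To show $dT = 0$, I pair $T$ with $d\phi$ for $\phi \in C_c^\infty(M)$ and must show $T(d\phi) = 0$. Locally on a flow box, $\int_{I_i\times\{t\}} d\phi = \phi(\text{end}) - \phi(\text{start})$ by the fundamental theorem of calculus along the leaf. Summing against the partition of unity, the interior endpoint contributions from adjacent plaques cancel (this is exactly where the product structure of flow boxes and the compatibility of the transverse measure under holonomy — i.e. $\mu$ being a genuine transverse \emph{measure}, invariant under the holonomy maps identifying overlapping transversals — is used), and there is no boundary since $M$ is closed. So $T(d\phi)=0$. Equivalently, one can phrase this as: $T$ restricted to each leaf is the harmonic (closed) $1$-form "integration along the leaf", and the transverse measure glues these consistently; this is the standard Ruelle–Sullivan argument, and the only thing to check carefully is that our laminations have enough regularity for Stokes/Fubini on flow boxes, which the surface geodesic-lamination structure supplies.

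**Existence and BV regularity of the primitive.** Since $T$ is a closed $1$-current on a surface, de Rham theory gives a cohomology class $[T] \in H^1(M,\R)$; passing to the universal cover (or to a cover where the class dies, or working with the equivariant setup of Section~\ref{sectevp}), there is a $0$-current — i.e. a distribution — $v$ with $dv = T$, unique up to a constant. The content is to upgrade "distribution" to "locally of bounded variation", and here I would invoke Theorem~\ref{bird} (equivalently Theorem~\ref{transcoismeasthm}): a function on the universal cover that is equivariant, locally bounded, and constant on the plaques of the oriented lamination, whose associated transverse cocycle is a \emph{measure}, is locally BV. So the strategy is to exhibit $v$ concretely as such a plaque-locally-constant function: fix a basepoint in a complementary region, and for $x$ in another complementary region set $\tilde v(x) = \int_{\text{path from base to }x} T$, i.e. the signed $\mu$-mass of the leaves crossed by a path, with sign determined by the orientation of $\lambda$ and the direction of crossing. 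Closedness of $T$ makes this path-independent; $T$ being carried by $\lambda$ makes $\tilde v$ locally constant off $\lambda$, hence constant on plaques; and the transverse cocycle it induces is by construction the given transverse \emph{measure} $\mu$, so Theorem~\ref{bird} yields $\tilde v \in BV_{\mathrm{loc}}$ and $d\tilde v = T$ in the sense of currents.

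**Main obstacle.** The delicate step is not the formal de Rham argument but reconciling the measure-theoretic regularity needed for (iii) with the topological description of $\lambda$: one must check that the naive primitive "$\mu$-mass of leaves crossed" is well-defined pointwise off $\lambda$, is locally bounded (finiteness of total transverse mass on compacta, using compactness of $M$ and local finiteness of the flow-box cover), and genuinely recovers $\mu$ as its transverse cocycle so that Theorem~\ref{bird} applies — in particular that the possible Cantor-like transverse structure of $\lambda$ does not obstruct local boundedness or the BV estimate. I expect this matching of the analytic ($BV$) and topological (transverse measure) pictures, already identified in the text as "the difficulty," to be where the real work lies; the closedness and well-definedness of the Ruelle–Sullivan current itself are comparatively routine once the flow-box formalism for geodesic laminations is set up.
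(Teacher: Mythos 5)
Your treatment of well-definedness and closedness of $T_\Lambda$ is essentially the paper's: flow boxes, a partition of unity, continuity of $s\mapsto\int_{[a_i,b_i]\times\{s\}}F_i^*\phi$ coming from the Lipschitz regularity of $\partial F_i/\partial t$ (Proposition~\ref{prop:flow}), and the fundamental theorem of calculus along each plaque for closedness. The paper's closedness argument is slightly cleaner than your ``adjacent-plaque cancellation'' phrasing --- it reduces to $\int_{a_i}^{b_i}\partial_t(f\circ F_i)\,dt=0$ for $f$ compactly supported in a single box, and then uses the partition of unity --- but the mechanism is the same and both work.

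For the primitive you take a genuinely different route, and it is here that there is a gap. The paper constructs $v_F:=v^\sharp\circ F^{-1}$ in each flow box from $v^\sharp(t,s)=g^\sharp(s)=\int_c^s d\nu$, observes that this is monotone in $s$ hence $BV$, and then \emph{directly verifies} the identity $dv^\sharp(\phi^\sharp)=T_\Lambda(\phi^\sharp)$ by a Fubini/integration-by-parts computation; it then globalizes via a mollification $T^\epsilon$ and weak compactness in $BV$, producing a section of the flat affine bundle $L$. You instead define $\tilde v$ as the ``signed $\mu$-mass of leaves crossed,'' observe it is equivariant, plaque-constant and locally bounded, and invoke Theorem~\ref{bird} (= Theorem~\ref{transcoismeasthm}). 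That theorem does give you $\tilde v\in BV_{\mathrm{loc}}$, since its associated transverse cocycle is by construction the measure $\nu$. But it says \emph{nothing} about the Ruelle--Sullivan current: your final clause, ``so Theorem~\ref{bird} yields $\tilde v\in BV_{\mathrm{loc}}$ \emph{and $d\tilde v=T$} in the sense of currents,'' misattributes the key identity. Establishing $d\tilde v = T_\Lambda$ is exactly the content of the paper's local Fubini computation (passing the $s$-derivative onto the test form, using that $\phi_2$ has compact support, and identifying the result with the defining integral of $T_\Lambda$); nothing in Theorem~\ref{bird} substitutes for it. Until that identity is proved, you have produced a $BV$ function carried by $\lambda$ whose transverse cocycle is $\nu$, but you have not shown it is a primitive of the Ruelle--Sullivan current, which is the assertion to be proved.

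A secondary imprecision: writing $\tilde v(x)=\int_{\text{path}}T$ is not a meaningful operation for a general $1$-current, and ``closedness of $T$ makes this path-independent'' does not literally parse; what you actually use, and should say, is the holonomy invariance of the transverse measure $\nu$ (which is equivalent to, but not formally the same as, $dT=0$). Also, the sign of a crossing is determined by a co-orientation of $\lambda$, which requires combining the orientation of the leaves with the ambient orientation of $M$ --- the paper handles this via Definition~\ref{orientflow} --- rather than by the leaf orientation alone as you state. Neither of these is fatal, but the missing verification of $d\tilde v=T_\Lambda$ is.
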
 
 This is a combination of Theorem~\ref{welldefcur} and Theorem~\ref{conversetomeasure}. We expect to show that for an appropriate choice of orientation $v$ is always a least gradient in a future paper.

 We also point out that the decomposition of measured laminations into minimal components  corresponds to the decomposition of functions of bounded variation
 \[ 
 v = v_{jump} + v_{cantor},
 \]
 where $dv_{jump}$ has support on closed geodesics in the lamination and $dv_{cantor}$ has support on the minimal components with leaves infinite  geodesics.
 We end the paper by giving a  long list of open problems. 
 \vspace{.1 in}
 
A brief outline of the paper is as follows:
\begin{itemize}
\item Section~\ref{sect:pharmmaps}: {\it$p$-harmonic maps (and their limits).}
This is a review of the properties of the $p$-harmonic equation and its limits as $p \rightarrow \infty$. We also prove a useful maximum estimate needed in Section~\ref{consmeasure}.
\item Section~\ref{sect:conjug}: {\it The conjugate equation for finite $q$.} We define the dual harmonic map for $1/p+1/q=1$ and introduce the adapted coordinate system. 
We also discuss the flat structure induced by the coordinate $(u_p,v_q)$.
\item Section~\ref{qgoesto1}: {\it The limit $q \rightarrow 1$.}
The limiting map of bounded variation is constructed.
\item Section~\ref{sect:crandal}: {\it Geodesic laminations associated to the $\infty$-harmonic map.}
In this section, $M$ is hyperbolic of any dimension. We study comparison with cones and provide a proof of Theorem~\ref{Theorem 1.1}.
\item Section~\ref{consmeasure}:  {\it The concentration of the measure.}
A straightforward but surprising application of the Euler-Lagrange equations for $u_p$ and $u$. The statement is roughly that  small $L^1$-norm implies that the dual measure $dv$ has support on the lamination and is in a weak sense  orthogonal to $du$. We are able to apply this to properties of $v$, for example to show that $v$ is of least gradient.
\item Section~\ref{sect7}: {\it Construction of the transverse measure from the least gradient map.}
This is a tricky section, as it necessitates forming a bridge between the concepts in analysis and the concepts in topology. To evaluate a measure $dv$ on a curve $\gamma$, analysis usually requires the derivative of $\gamma$ to exist, whereas it is important in topology to define the measure on continuous transversals.
\item Section~\ref{ruelles}: {\it From transverse measures to functions of bounded variation.}
 We construct the Ruelle-Sullivan current and show that we have enough regularity to make this rigorous in the setting of transverse measures on laminations. We also construct a primitive to the Ruelle-Sullivan current and  discuss the role of BV functions on  transverse measures.
\item Section~\ref{conjectures}: {\it Conjectures and open problems.}
We give a list of some problems we think we can solve given enough time. The last few problems are enticing. Where there is some analysis, there is little topology, and vice versa.
\end{itemize}
{\bf Acknowledgements.}
Many thanks to those who helped us untangle both the analysis and the topology. Special thanks goes to Craig Evans, Camillo de Lellis, Athanase Papadopoulos, Rafael Poitre and Ovidiu Savin for useful conversations. We have enjoyed working on this project and hope others will appreciate it as well.

\section{$p$-harmonic maps}\label{sect:pharmmaps}
In this section we collect basic facts about different types of harmonic functions. We review the notion of $p$-harmonic functions both for finite $p$ and $p=\infty$. Solutions for finite $p$ obey the theory of elliptic differential equations (cf. \cite{uhlen}), whereas  solutions  to the $\infty$-Laplacian are constructed as limits of  harmonic functions for finite $p$. For the Euclidean domain metric, the local theory of the $\infty$-Laplacian is well-known to analysts. See for example, \cite{arcrju}, \cite{crandal}, \cite{jensen}, \cite{evans-savin}, \cite{evans-smart} and \cite{lindqvist} and all the references therein. The complication in our situation comes from the fact that the maps we are considering take values in $S^1$ instead of $\R$ and also that the domain metric is non-Euclidean. 
%In fact, we will have to work out from scratch  the results we need about infinity harmonic functions from hyperbolic manifolds and avoiding to use the theory that so far has only been developed for Euclidean domains. 

\subsection{$p$-harmonic maps to the circle} Let $(M, g)$ be a closed  smooth Riemannian manifold  of dimension $n \geq 2$ and let $\tilde M$ denote its universal cover with the induced Riemannian metric. By a {\it {  fibration of $M$ over the circle}} we mean a non trivial homotopy class of maps   
\[
f: M \rightarrow S^1.
\]
Note that we are not assuming apriori that $f$ is a submersion.
Equivalently, and for
\[
\rho=f_*:\pi_1(M) \rightarrow \Z=\pi_1(S^1)
\]
we can consider instead the class of $\rho$-equivariant maps
\[
\tilde f: \tilde M \rightarrow \R,
\]
i.e maps satisfying
\begin{equation*}\label{equiccmd}
\tilde f(\gamma \tilde x)=  \tilde f( \tilde x)+ \rho(\gamma), \ \forall \gamma \in \pi_1(M)\  \mbox{and} \   \forall \tilde x \in \tilde M.
\end{equation*}
On the space 
 $W^{1,p}(M, S^1)$, $n<p< \infty$ of maps in a fixed  homotopy class 
% defined by $\rho$ as the space of $W^{1,p}_{loc}$-functions on the universal cover satisfying 
% (\ref{equiccmd}) almost everywhere. Since $|df|$ is invariant under $\pi_1(M)$
 we consider the functional
\begin{equation}\label{pharmfun}
J_p(f)=\int_M|df|^p *1.
\end{equation}
 A unique minimizer $u_p$ of the functional $J_p$ exists in the homotopy class and is 
 called a {\it{$p$-harmonic map.}} It satisfies the equation
\begin{equation}\label{pharm}
div(|\nabla u_p|^{p-2}\nabla u_p)=d^*(|du_p|^{p-2}du_p)=0.
\end{equation}

The existence of $u_p$ is  standard.  Consider a minimizing sequence $u^j$ of $J_p$ in a homotopy class. This makes sense since $p>n$ and hence the maps are continuous. By weak compactness and lower semicontinuity  $u^j$  converge weakly in $W^{1,p}$ to a minimizer
$u_p$ which is in the same homotopy class. 
The argument above can be modified also in the general case $ p>1  $ by minimizing $J_p$  on the space of Lipschitz maps in the given homotopy class.  Since we are only interested in large values of $p$ we  omit the details.

%We can view $u^j$ as equivariant functions $\tilde u^j$ on the universal cover satisfying (\ref{equiccmd}). Fix a smooth fundamental domain $\digamma \subset \tilde M$ for the action of $\pi_1(M)$.  Given $W \subset \subset \tilde M$, we can choose by compactness a finite set of $\gamma \in \pi_1(M)$ such that $W \subset \cup_\gamma \gamma \digamma$. We have a uniform bound of te $L^p$ norm of $du^j$ on $W$.

%By replacing   $\tilde u^j$ by $\tilde u^j-c^j$ where $c^j$ is the average value of $\tilde u^j$ on $W$ and applying the Poincare inequality, we may assume that $\tilde u^j$  are bounded in $W^{1,p}$. Hence there is subsequence converging weakly to a function on $W^{1,p}(W)$minimizer $\tilde u_p$. By a diagonalization argument we can prove that there exists a map $\tilde u_p$ defined on the universal cover such that 
%$\tilde u^j$  converges to $\tilde u_p$ in $W^{1,p}_{loc}$. The pointwise convergence of $\tilde u^j$ to $\tilde u_p$ implies that $u_p$ satisfies the same equivariance property (\ref{equiccmd}) almost everywhere. By restricting to the fundamental domain $\digamma$ and applying semicintinuity of energy we obtain that $\tilde u_p$ is a minimizer.

There is an abundance of literature on regularity of $p$-harmonic functions and $p$-harmonic maps.
%\begin{proposition}[cf. \cite{aless}, Proposition 3.3] Assume $n=2$. There exist constants $\alpha$, $k$, $0<\alpha \leq 1$, $0\leq k <1$
% depending only  on $p$, and $g$ such that for any coordinate neighborhood $\Omega$ there exist and $s, h \in  C^\alpha(\Omega)$ such that 
% \[
% (u_p)_z = e^sh, 
% \]
% where $h$ is a $k$-quasiregular mapping in $\Omega$.
%\end{proposition}
 
% By  \cite{aless}, Proposition 2.5 the real and imaginary part of $h$ have isolated $k$-prong singularities with $k>0$. By an Euler characteristic argument we conclude that there are finitely many singular points. Thus,

\begin{theorem} \label{nonsingular} Let
$u_p: (M, g) \rightarrow S^1$
denote the $p$-harmonic map in the homotopy class of $f: M \rightarrow S^1$. Then,  $u_p \in C^{1, \alpha}$ and if $\Omega \subset \subset \{|du_p| \neq 0\}$, then $u \in  C^\infty(\Omega)$ (cf. \cite{uhlen}). If $n=2$, then  the number of singular points $|du_p|= 0$ is finite and bounded by the Euler characteristic of $M$ (cf. \cite{manfredi} and \cite{aless}).  
\end{theorem}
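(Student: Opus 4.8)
The three assertions amount to known local regularity facts combined with a global index count, and my plan is to assemble them as follows.

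\emph{Step 1: $C^{1,\alpha}$ regularity and smoothness off the zero set.} Since $u_p$ is $S^1$-valued, I would first pass to a local real model: around any point of $M$ choose a simply connected coordinate neighbourhood $U$, on which $u_p$ admits a real-valued lift $\tilde u_p:U\to\R$ with $|d\tilde u_p|=|du_p|$; restricting \eqref{pharm} shows that $\tilde u_p$ solves $d^*(|d\tilde u_p|^{p-2}d\tilde u_p)=0$, the (Riemannian) $p$-Laplace equation with smooth coefficients. The interior $C^{1,\alpha}$ theory for scalar $p$-harmonic functions (cf. \cite{uhlen} and the references therein) then applies verbatim, the smoothness of $g$ causing no difficulty, and covering the compact $M$ by finitely many such $U$ gives $u_p\in C^{1,\alpha}(M)$. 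On the open set $\{|du_p|\neq 0\}$, expanding the $p$-Laplacian yields
\[
|\nabla u_p|^{p-2}\Big(\Delta_g u_p+(p-2)\,\mathrm{Hess}\,u_p\big(\tfrac{\nabla u_p}{|\nabla u_p|},\tfrac{\nabla u_p}{|\nabla u_p|}\big)\Big)=0,
\]
so on any $\Omega\subset\subset\{|du_p|\neq 0\}$ the operator in parentheses is uniformly elliptic with $C^{0,\alpha}$ coefficients; Schauder estimates give $u_p\in C^{2,\alpha}(\Omega)$, and then a routine bootstrap---each gain in the regularity of $u_p$ improving the coefficients---gives $u_p\in C^\infty(\Omega)$.

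\emph{Step 2: finiteness of the singular set when $n=2$.} Because $f$ is not null-homotopic, $u_p$ is non-constant on the connected surface $M$; by the two-dimensional theory of critical points of solutions of equations with $p$-Laplacian structure (cf. \cite{manfredi}, \cite{aless}), a non-constant such solution has isolated critical points, and at each of them the index of the gradient vector field $\nabla u_p$ is a negative integer, hence $\leq -1$. (These results are stated for Euclidean-structure equations, but they transfer: in an isothermal chart the equation for the local lift $\tilde u_p$ still satisfies the $p$-ellipticity and growth conditions, the conformal factor being absorbed into the coefficients.) Since $M$ is compact and the critical set $Z=\{|du_p|=0\}$ is discrete, $Z$ is finite. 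Finally, $du_p$ is a globally defined closed $1$-form on $M$, so via $g$ it determines a vector field $X=\nabla u_p$ on $M$ with zero set exactly $Z$; Poincar\'e--Hopf gives $\sum_{x\in Z}\mathrm{ind}_x(X)=\chi(M)$, and since each term is $\leq -1$ we conclude $\#Z\leq -\chi(M)=|\chi(M)|$ (with $Z=\emptyset$ unless $\chi(M)<0$). For the hyperbolic surfaces relevant here this is the claimed bound.

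\emph{On the main obstacle.} The only genuinely non-formal input is the planar local structure theorem---that critical points of a $p$-harmonic function in two variables are isolated with negative integer index---which is precisely the content of \cite{manfredi} and \cite{aless}, obtained from a quasiregularity (Beltrami-type) argument for the complex gradient $\partial_x\tilde u_p-i\,\partial_y\tilde u_p$. Everything else is bookkeeping: a local lifting to replace the $S^1$-valued map by a real-valued one, the standard elliptic bootstrap on $\{|du_p|\neq 0\}$, and Poincar\'e--Hopf. The one point worth a sentence of care is that these planar results and the smoothness of Step 1 survive the passage to a non-Euclidean metric, which is routine because the Riemannian $p$-Laplacian in any smooth chart is a quasilinear elliptic operator of $p$-Laplacian type with smooth leading structure.
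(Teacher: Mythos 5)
Your reconstruction is correct and matches what the paper intends: the paper gives no proof of this theorem, only the citations \cite{uhlen}, \cite{manfredi}, \cite{aless}, and your argument (local real lifting of the $S^1$-valued map, Uhlenbeck's interior $C^{1,\alpha}$ theory, Schauder bootstrap on $\{|du_p|\neq 0\}$, and for $n=2$ the quasiregularity of the complex gradient giving isolated critical points of index $\le -1$, combined with Poincar\'e--Hopf applied to the globally defined vector field dual to $du_p$) is exactly the standard way to assemble the content of those references. The only point worth noting is that the theorem's phrase ``bounded by the Euler characteristic'' should be read, as you do, as $\#\{|du_p|=0\}\le -\chi(M)=|\chi(M)|$, which is the bound Poincar\'e--Hopf actually yields.
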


\subsection{ Best Lipschitz maps and $\infty$-harmonic maps}\label{lipconst}
For K  a subset of a Riemannian manifold  $(M, g)$,  and $f: K \rightarrow S^1$,  its   {\it{Lipschitz constant}} in $K$  is defined by
\[
L_f(K):= \inf \{ L \in \R : d_{S^1}(f(x), f(y)) \leq Ld_g(x,y) \  \forall x, y \in K \} .
\]
In the above, $\inf \emptyset = +\infty$. We say $f$ is Lipschitz in $K$, if 
$L_f(K) < +\infty$. We write 
\[
L_f=L_f(M)
\]
for the {\it global Lipschitz constant.}
For $U$ be an open subset of $M$ and $x \in U$, we define the {\it{local Lipschitz constant }}
\[
L_f(x) := \lim_{r \rightarrow 0} L_f(B_r (x)).
\]
Clearly, if $f$ has Lipschitz constant $L$ in $U$, then
$L_f(x) \leq L.$

\begin{proposition}\label{crandal0}[\cite{crandal}, Lemma 4.3] For any function $f:U \rightarrow \R$, 
\begin{itemize}
\item $(i)$ the map $x \mapsto L_f(x)$ is upper semicontinuous.
\item $(ii)$  $ df \in  L^\infty(U)$ holds in the sense of distributions if and only if $L_f(x)$ is bounded
on $U$;  then
\[
\sup_{x \in U} L_f(x)= |df|_{L^\infty(U)}\ \mbox{and} \ L_f(x)=\lim_{r \rightarrow 0}|df|_{L^\infty(B_r(x))}.
\]
\item $(iii)$ If $U$ is convex, then
\[
 L_f(U)=|df|_{L^\infty(U)}.
\]
\end{itemize}
\end{proposition}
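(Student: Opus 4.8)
\emph{Proof sketch.} The plan is to reduce all three assertions to the behaviour of $L_f$ on small geodesic balls, principally to the identity $L_f(x)=\lim_{r\to0}|df|_{L^\infty(B_r(x))}$ valid whenever $df\in L^\infty_{\mathrm{loc}}$ near $x$, together with Rademacher's theorem, and then to assemble $(i)$, $(ii)$, $(iii)$ by soft arguments. Note first that $r\mapsto L_f(B_r(x))$ is non-decreasing, so $L_f(x)=\lim_{r\to0}L_f(B_r(x))=\inf_{r>0}L_f(B_r(x))$ is well defined in $[0,+\infty]$; this already yields $(i)$, since for $x_0\in U$ and $\eps>0$ one picks $r$ with $L_f(B_r(x_0))\le L_f(x_0)+\eps$ and then $L_f(x)\le L_f(B_{r/2}(x))\le L_f(B_r(x_0))\le L_f(x_0)+\eps$ for every $x\in B_{r/2}(x_0)$. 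For the key identity, fix $x_0$ and take $r$ so small that $B_r(x_0)$ is geodesically convex and that geodesic normal coordinates furnish a $(1+\eps)$-bi-Lipschitz chart from $(B_r(x_0),g)$ onto a Euclidean ball; on the Euclidean ball the statement is precisely \cite{crandal}, Lemma~4.3, and passing it through the chart changes both $L_f$ and the $L^\infty$-norm of the gradient only by factors tending to $1$ as $r\to0$. (Alternatively one establishes on the convex ball the sharper $L_f(B_r(x_0))=|df|_{L^\infty(B_r(x_0))}$ directly: "$\le$" by mollifying $f$ in the chart and integrating $df$ along the minimizing geodesics inside the ball before passing to the limit, and "$\ge$" because a Lipschitz function satisfies $|\nabla f|\le L_f(B_r(x_0))$ a.e.\ by Rademacher, its a.e.\ gradient representing the distributional one.)

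Granting this, $(ii)$ follows quickly. If $df\in L^\infty(U)$ then $L_f(x)=\lim_{r\to0}|df|_{L^\infty(B_r(x))}\le|df|_{L^\infty(U)}$ for every $x$, so $L_f$ is bounded and $\sup_{x\in U}L_f(x)\le|df|_{L^\infty(U)}$; conversely, at each Lebesgue point $x$ of $|df|$ the average of $|df|$ over $B_r(x)$ is at most $|df|_{L^\infty(B_r(x))}$ and converges to $|df|(x)$, so $|df|(x)\le L_f(x)$ for a.e.\ $x$, giving $|df|_{L^\infty(U)}\le\sup_{x\in U}L_f(x)$, hence equality. In the other direction, if $L_f$ is bounded on $U$ then every point has a neighbourhood on which $f$ is Lipschitz, so $f$ is locally Lipschitz, whence $f\in W^{1,\infty}_{\mathrm{loc}}(U)$ with $|\nabla f|(x)\le L_f(x)$ a.e.\ by Rademacher, i.e.\ $df\in L^\infty(U)$.

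For $(iii)$, if $|df|_{L^\infty(U)}=\infty$ then $L_f$ is unbounded on $U$ by $(ii)$, and since $L_f(x)\le L_f(U)$ for every $x$ (the remark preceding the Proposition) we get $L_f(U)=\infty=|df|_{L^\infty(U)}$. If instead $|df|_{L^\infty(U)}=:L<\infty$, then $|df|_{L^\infty(U)}\le L_f(U)$ by Rademacher as above, while for arbitrary $x,y\in U$ we join them by a minimizing geodesic $\sigma\colon[0,1]\to U$ of constant speed $d(x,y)$ (using convexity of $U$), cover its image by finitely many balls on which $f$ is Lipschitz with constant at most $L+\eps$ (possible since $L_f(\sigma(t))\le L$ for all $t$ by $(ii)$), and chain the resulting estimates along a sufficiently fine partition of $[0,1]$ to get $|f(x)-f(y)|\le(L+\eps)\,d(x,y)$; letting $\eps\to0$ gives $L_f(U)\le L$.

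The step I expect to be the main obstacle is the local identity on a curved ball, and within it the inequality $L_f(B_r(x))\le|df|_{L^\infty(B_r(x))}$: one must apply the fundamental theorem of calculus to the merely $W^{1,\infty}$ function $f$ along \emph{the} minimizing geodesic between two prescribed points, not merely along almost every geodesic. I plan to sidestep this through the $(1+\eps)$-bi-Lipschitz reduction to \cite{crandal}, Lemma~4.3 on small balls, after which every remaining step is routine.
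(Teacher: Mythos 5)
The paper offers no proof of this proposition: it is stated with the citation to Crandall's Lemma 4.3 and used thereafter as a known fact. There is therefore no argument in the text to compare against; what you have supplied is a proof of a cited black box, and the real content of the exercise is whether your reduction from the Riemannian case (where the paper needs the result) to the Euclidean case (where Crandall proves it) is sound.

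It is. Part $(i)$ is correct and entirely metric-space-general: monotonicity of $r\mapsto L_f(B_r(x))$ gives $L_f(x)=\inf_r L_f(B_r(x))$, and the inclusion $B_{r/2}(x)\subset B_r(x_0)$ for $x\in B_{r/2}(x_0)$ yields upper semicontinuity with no smoothness input at all. Your reduction of the key identity $L_f(x)=\lim_{r\to0}|df|_{L^\infty(B_r(x))}$ to the flat case via $(1+\eps)$-bi-Lipschitz normal-coordinate charts is the right move: both $L_f(B_r(x))$ and $|df|_{L^\infty(B_r(x))}$ transform by the same bi-Lipschitz factors, which tend to $1$ as $r\to0$, so the limit as $r\to0$ is chart-independent. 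Part $(ii)$ then follows from this identity together with the Lebesgue-point inequality $|df|(x)\le L_f(x)$ a.e.\ in one direction, and from ``$L_f$ locally finite $\Rightarrow$ $f$ locally Lipschitz $\Rightarrow$ Rademacher'' in the other; that is exactly what is needed. Part $(iii)$ is the one place where a little care is genuinely required: one cannot integrate $df$ along a single fixed geodesic without a mollification or Fubini argument, and you have correctly replaced that by covering the geodesic $\sigma$ with finitely many balls on which $f$ is $(L+\eps)$-Lipschitz (using $L_f(\sigma(t))\le L$ from $(ii)$ and compactness) and chaining, which uses convexity of $U$ only to guarantee that the minimizing geodesic stays in $U$ and that the chained distances sum to $d(x,y)$. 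Altogether the proposal is a correct and reasonably economical justification of the Riemannian version of Crandall's lemma that the paper tacitly assumes.
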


%In this subsection $(M, g)$ continues to denote a closed Riemannian manifold of dimension $n>1$  and  
%$\tilde M \simeq \R^n$ denotes its universal cover with the metric induced from $g$.
\begin{definition}\label{defbl} 
The infimum of the global Lipschitz constant $L_f$ for all $f:M \rightarrow S^1$ in a fixed homotopy class is called the {\it{best Lipschitz constant}}. A Lipschitz map
\[
u: M \rightarrow S^1
\]
is called {\it{a best Lipschitz  map}},  if for any  Lipschitz map $f: M \rightarrow S^1 $  homotopic to $u$ 
\begin{equation*}
L_u \leq L_f. 
\end{equation*}

\end{definition}

%\begin{definition}\label{infinharm} A Lipschitz map
%\[
%u: M \rightarrow S^1.
%\]
%is called $\infty$-harmonic (resp. $\infty$-subharmonic or $\infty$-superharmonic) if its lift to the universal cover
%\[
%\tilde u: \tilde M \rightarrow \R
%\]
%is  an $\infty$-harmonic (resp. $\infty$-subharmonic or $\infty$-superharmonic) function. By this we mean a viscosity solution (resp. subsolution or supersolution) of the 
%\emph{$\infty$-Laplace equation}
%\[
%\triangle^\infty \tilde u= \frac{1}{2} <grad(\tilde u),  grad(|grad(\tilde u)|^2)>_g= 0, 
%\]
%where $<,>_g$ denotes the inner product associated with $g$. For more details on the notion of viscosity solutions we refer to \cite{crandal} or \cite{lindqvist}.
%\end{definition}
%
%In this paper we will only deal with a special type of viscosity solutions that are also variational solutions to the $\infty$-Laplace equation in a fixed homotopy class. The basic construction  is summarized in the following

\begin{theorem}\label{thm:limminimizer}
Let $(M, g)$ be a closed  smooth Riemannian manifold  of dimension $n \geq 2$.  
For each $p>n$, let $u_p$ be the $p$-harmonic map homotopic to  
 a Lipschitz map $f: M \rightarrow S^1$. Given a sequence $p \rightarrow \infty$, there exists a subsequence (denoted again by $p$)  and a  Lipschitz map $u: M \rightarrow S^1$ such that:
\begin{itemize}
\item $(i)$ $u_p \rightarrow u$ uniformly.
\item $(ii)$ $u$ is  best Lipschitz with Lipschitz constant equal to the best Lipschitz constant. Furthermore, $u$ also minimizes the Lipschitz constant for the local  Dirichlet problem subject to its own boundary conditions.
\item $(iii)$ $du_p  \rightharpoonup du \ \mbox{and} \ *du_p \rightharpoonup *du \ \mbox{weakly in} \  L^s \ \forall s>n$.
\item  $(iv)$ $du$ is closed. 
\end{itemize}
\end{theorem}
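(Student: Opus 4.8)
The plan is to establish the four items in order, extracting everything from uniform $L^\infty$ bounds on $du_p$ together with minimality of $u_p$ for $J_p$.

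First I would derive a uniform Lipschitz bound. Since $u_p$ minimizes $J_p$ in the homotopy class of $f$, H\"older's inequality gives
\[
\left(\frac{1}{\mathrm{Vol}(M)}\int_M |du_p|^p *1\right)^{1/p} \le \left(\frac{1}{\mathrm{Vol}(M)}\int_M |du_f|^p *1\right)^{1/p} \le \|df\|_{L^\infty},
\]
valid whenever $f$ is Lipschitz. Fixing any $s>n$ and letting $p\to\infty$, the normalized $L^p$ norms of $du_p$ are bounded by $\|df\|_{L^\infty}+o(1)$, so $\|du_p\|_{L^s} \le C$ uniformly in $p\ge s$. Hence a subsequence has $du_p \rightharpoonup \omega$ weakly in $L^s$ for every $s$; passing to a diagonal subsequence handles all $s>n$ simultaneously. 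Since $d^*(|du_p|^{p-2}du_p)=0$ means $|du_p|^{p-2}du_p$ is coclosed, and trivially $du_p$ is closed, I get $(iv)$ for free in the limit once I identify $\omega=du$: each $du_p$ is a closed $1$-form, closedness is preserved under weak limits of distributions, so $\omega$ is closed. Because $u_p$ are equibounded (mod $S^1$) and equi-Lipschitz in any fixed chart (from the $L^\infty$ bound on $du_p$ via Morrey/Sobolev embedding $W^{1,s}\hookrightarrow C^{0,1-n/s}$, then optimizing the H\"older estimate using the uniform bound for all large $p$), Arzel\`a--Ascoli yields a uniformly convergent subsequence $u_p\to u$ with $u$ Lipschitz; this is $(i)$. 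Standard identification of weak $W^{1,s}$ limits with the limit function's differential gives $\omega=du$, completing $(iii)$ and $(iv)$.

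For $(ii)$, I would argue that the global Lipschitz constant $L_u$ equals the best Lipschitz constant $L_\infty := \inf\{L_g : g\simeq f\}$. The inequality $L_u \ge L_\infty$ is definitional. For $L_u \le L_\infty$: by lower semicontinuity of $\|\cdot\|_{L^s}$ under weak convergence and the bound above, $\|du\|_{L^s} \le \liminf_p \|du_p\|_{L^s} \le \mathrm{Vol}(M)^{1/s}(L_\infty+o(1))$ for each $s$; letting $s\to\infty$ and using Proposition~\ref{crandal0}(ii), $\sup_x L_u(x) = \|du\|_{L^\infty} \le L_\infty$. Then a covering/chaining argument (or directly Proposition~\ref{crandal0} applied along geodesics, using that $d_{S^1}(u(x),u(y))$ is controlled by integrating $|du|$ along a minimizing path) upgrades the local bound to the global bound $L_u \le L_\infty$, so $L_u = L_\infty$. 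The local Dirichlet minimality statement follows the same way: if $w$ agrees with $u$ on $\partial\Omega$ and were strictly better on $\Omega$, one could splice $w$ into $u$ to contradict $L_u = L_\infty$ on the nose — here one must be slightly careful that splicing does not increase the constant across $\partial\Omega$, which holds because $u$ and $w$ share boundary values and the local constant is computed from an $L^\infty$ norm of the gradient.

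The main obstacle is the passage from the local Lipschitz bound $\sup_x L_u(x) \le L_\infty$ to the \emph{global} statement that $u$ is best Lipschitz, i.e.\ controlling $d_{S^1}(u(x),u(y))$ by $L_\infty\, d_g(x,y)$ for all pairs, not just infinitesimally. On a general closed Riemannian manifold this requires integrating the pointwise bound along minimizing geodesics and knowing $u$ (or its lift) is absolutely continuous on such paths — true since $u$ is already Lipschitz, but the bookkeeping with the $S^1$-valued target and the homotopy class (so that one works with the $\R$-valued equivariant lift $\tilde u$ on $\tilde M$) needs care. A secondary subtlety is making the diagonal-subsequence extraction clean so that the \emph{same} subsequence works for $(i)$--$(iv)$ and for the identification $*du_p \rightharpoonup *du$ (immediate once $du_p\rightharpoonup du$, since $*$ is a bounded linear operator). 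I expect the rest to be routine functional analysis.
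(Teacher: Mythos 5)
Your argument for items $(i)$, $(iii)$, $(iv)$ and for the first half of $(ii)$ is essentially the paper's: the H\"older/minimality bound $J_s(u_p)^{1/s}\lesssim \|df\|_{L^\infty}$, weak compactness and a diagonal subsequence, lower semicontinuity, $s\to\infty$, and then passage to the equivariant lift $\tilde u$ on $\tilde M$ so that the pointwise bound $\|d\tilde u\|_{L^\infty}\le L_\infty$ integrates along minimizing geodesics to give the global bound (the paper invokes Proposition~\ref{crandal0}$(iii)$ for exactly this). That part is fine.

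There is, however, a genuine gap in your proof of the second assertion of $(ii)$, the local Dirichlet minimality. Your splicing argument does not produce a contradiction: if $w=u$ on $\partial\Omega$ and $L_w(\Omega)<L_u(\Omega)$, the spliced competitor $u'$ satisfies only $L_{u'}\le\max\bigl(L_u(M\setminus\Omega),\,L_w(\Omega)\bigr)$, and since $u'$ is still homotopic to $f$ one automatically has $L_{u'}\ge L_\infty$; typically $L_u(M\setminus\Omega)=L_\infty$ already (the maximum stretch set need not meet $\Omega$), so $L_{u'}=L_\infty=L_u$ and nothing is contradicted. This is precisely the well-known point that being a global minimizer of the Lipschitz constant is strictly weaker than being an absolute minimizer (AMLE) --- the property cannot be derived from global minimality by splicing. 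The paper's proof obtains it instead from the approximation: each $u_p$ minimizes $J_p$ on every subdomain subject to its own boundary values (this is what $p$-harmonicity gives locally), and one passes this local minimality to the limit $p\to\infty$ as in \cite{lindqvist}, Theorem 3.2, using the uniform convergence $u_p\to u$ to adjust boundary values. To repair your proof you must route this step through the $p$-harmonic approximants rather than through the limiting map alone.
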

\begin{proof} We follow directly the proof of the existence of $\infty$-harmonic functions for the Dirichlet problem (cf. \cite{lindqvist}, Chapter 3).  We only give a sketch:
Take a sequence $p\rightarrow \infty$ and $\epsilon >0$. By H\"older, and the fact that $u_p$ is a minimizer of $J_p$, we have for $n<s<p$ large,   
\begin{eqnarray*}
\frac{1}{vol(M)^{1/s}}J_s(u_p)^{1/s} &\leq& \frac{1}{vol(M)^{1/p}} J_p(u_p)^{1/p} \\
&\leq& \frac{1}{vol(M)^{1/p}} J_p(f)^{1/p} \\
&\leq&  |df|_{L^\infty}.
\end{eqnarray*}
 Hence, $|du_p|_{L^s}$ is uniformly bounded and thus, after passing to a subsequence 
(denoted again by $p$),
\[
du_p \rightharpoonup du \ \mbox{weakly in} \  L^s.
\]
By semicontinuity,
\[
 |du|_{L^s} \leq \liminf  |du_p |_{L^s} \leq |df|_{L^\infty}.
\]
By a diagonalization argument, we can choose a single subsequence $p$ such that
\[
du_p \rightharpoonup du  \ \mbox{weakly in}  \ L^s,\ \forall s
\]
 and by taking $s \rightarrow \infty$,
\[
|du |_{L^{\infty}} \leq |df|_{L^\infty}.
\]
By going to the universal cover, the same inequality holds for $\tilde u$ and any $\tilde f$. Thus, by the convexity of $\tilde M$ and
 the mean value theorem (cf. Proposition~\ref{crandal0}, $(iii)$), it follows that $u$ is a best Lipschitz map with best Lipschitz constant $L_u=L$. 

Moreover, as in \cite{lindqvist}, Theorem 3.2, $u$ is also a local minimizer for the Dirichlet problem subject to its own boundary conditions.  
Properties $(iii)$-$(iv)$ follow immediately from the argument sketched above.
%To show that $du$ is unique, suppose $u$ and $u'$ are two best Lipschitz maps obtained as weak sequential limits of $u_p$. Lift to the universal cover, to $\tilde u$ and $\tilde u'$. In a fundamental domain
%\[
%\max \tilde u' - \min \tilde u = m
%\]
%and because $\tilde u$ and $ \tilde u'$ are equivariant with respect to the same representation, this is true on all of $\tilde M$. Hence
%\[ 
%\max \tilde u' \leq \tilde u +m \ \mbox{on} \ \tilde M.
%\]  
% By continuity there is some $m'\leq m$ such that $\tilde u' - \tilde u\leq m'$ and some non-empty set of points $x$ such that $\tilde u'(x) - \tilde u(x)=m'$. But $\tilde u'-\tilde u$ descends to an infinity harmonic function on $M$. This violates the maximum principle for infinity harmonic functions (cf. \cite{lindqvist}, Proposition 6.2), unless 
%$\tilde u'(x) = \tilde u(x) + m'$ for all $x$. Hence $du = du'$.

\end{proof}

\begin{definition}\label{infinharm}
We call $u$ as in the previous theorem {\it{$\infty$-harmonic}}. Notice that in this paper we are only concerned with solutions that are limits of $p$-harmonic maps to $S^!$. Sometimes these are called {\it{variational solutions of the $\infty$-Laplace equation}}. If the domain is Euclidean then variational solutions are also {\it{viscosity solutions}} of the $\infty$-Laplace equation. We will not attempt to develop such a notion for non-Euclidean metrics in the present paper. For more details on the notion of viscosity solutions in Euclidean space we refer to \cite{crandal} or \cite{lindqvist}. For open problems we ask the reader to go to the last section.

\end{definition}

\begin{remark}If the domain metric is Euclidean it has been shown that $u$ has the additional properties
\begin{itemize}
\item  If $n=2$, then $du$ and $*du$ are in $C^\alpha$ (cf. \cite{evans-savin}).
\item  If $n>2$, then $du$  exists everywhere but is not known to be continuous (cf. \cite{evans-smart}).
\end{itemize}
It is very likely that these results also hold for the hyperbolic metric but since they only have been written  down carefully for the Euclidean metric  we will not use them in this paper.
\end{remark}
 
 \begin{lemma}\label{pintconvto}
\[
\lim_{p \rightarrow \infty} \left( \int_M |du_p|^p*1 \right )^{1/p} =  L.
\]
\end{lemma}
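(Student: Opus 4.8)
The plan is to sandwich the quantity $\left(\int_M |du_p|^p*1\right)^{1/p}$ between two expressions both converging to $L$, using the Hölder-type monotonicity of normalized $L^p$ norms on a finite measure space together with the convergence properties already established in Theorem~\ref{thm:limminimizer}.

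\textbf{Upper bound.} First I would observe that since $u_p$ minimizes $J_p$ within the homotopy class and $f$ is a fixed Lipschitz competitor,
\[
\left( \int_M |du_p|^p*1 \right)^{1/p} \leq \left( \int_M |df|^p*1 \right)^{1/p} \leq vol(M)^{1/p}\,|df|_{L^\infty}.
\]
Taking $p \rightarrow \infty$, the right-hand side tends to $|df|_{L^\infty}$. But this is true for \emph{every} Lipschitz $f$ homotopic to $u$, so taking the infimum over such $f$ gives $\limsup_{p\to\infty}\left(\int_M |du_p|^p*1\right)^{1/p} \leq L$, since $L$ is the best Lipschitz constant (Theorem~\ref{thm:limminimizer}(ii)). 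Actually one can be slightly more careful and just use $f=u$ itself as the competitor at the end, or note $L_u = L$.

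\textbf{Lower bound.} For the reverse inequality I would use the lower semicontinuity of the $L^s$ norm under weak convergence. Fix $s > n$. For $p > s$, the normalized Hölder inequality on the finite-measure space $(M,*1)$ gives
\[
\frac{1}{vol(M)^{1/s}}\left(\int_M |du_p|^s *1\right)^{1/s} \leq \frac{1}{vol(M)^{1/p}}\left(\int_M |du_p|^p *1\right)^{1/p}.
\]
Since $du_p \rightharpoonup du$ weakly in $L^s$ (Theorem~\ref{thm:limminimizer}(iii)), lower semicontinuity yields $|du|_{L^s} \leq \liminf_{p\to\infty} |du_p|_{L^s}$, hence
\[
\frac{1}{vol(M)^{1/s}}\,|du|_{L^s} \leq \liminf_{p\to\infty}\left(\int_M |du_p|^p*1\right)^{1/p}.
\]
Now let $s \rightarrow \infty$: the left-hand side converges to $|du|_{L^\infty} = L$ (standard fact that normalized $L^s$ norms on a finite measure space increase to the $L^\infty$ norm, combined with $L_u = L$ from Proposition~\ref{crandal0}(ii)). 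Therefore $\liminf_{p\to\infty}\left(\int_M |du_p|^p*1\right)^{1/p} \geq L$.

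Combining the two bounds gives the limit equals $L$. \textbf{The main obstacle} I anticipate is a minor bookkeeping point rather than a conceptual one: ensuring the subsequence along which $du_p \rightharpoonup du$ weakly in every $L^s$ is the same subsequence appearing in the statement (this is exactly the diagonalization already performed in the proof of Theorem~\ref{thm:limminimizer}), and making sure the $\liminf$/$\limsup$ arguments are applied consistently so that the full limit exists. One should also double-check that the normalized $L^s$ norm genuinely increases to $|du|_{L^\infty}$ and not merely to some value bounded by it — but this is immediate since $du \in L^\infty$ (indeed $|du|_{L^\infty}\le |df|_{L^\infty}<\infty$) on a finite measure space.
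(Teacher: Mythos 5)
Your proof is correct and follows essentially the same route as the paper: the upper bound is identical, and your lower bound unwinds explicitly what the paper compresses into a one-line contradiction (``if $\liminf = a < L$, proceeding as in the proof of the previous theorem produces a Lipschitz map $u$ with $|du|_{L^\infty} \leq a < L$''), namely normalized H\"older monotonicity plus weak lower semicontinuity and $s \to \infty$. The bookkeeping point you flag — that the direct argument establishes $\liminf \geq L$ only along the subsequence on which $du_p \rightharpoonup du$ — is real but benign: combined with $\limsup \leq L$ for the full family and a subsequence-of-subsequences extraction it yields the full limit, which is exactly what the paper's contradiction phrasing quietly handles.
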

\begin{proof}If $f$ denotes a  best Lipschitz map in the homotopy class, the fact that $u_p$ is a minimizer for $J_p$ implies
\[
\left( \int_M |du_p|^p*1 \right )^{1/p}\leq \left( \int_M |df|^p*1 \right )^{1/p} \leq (vol M)^{1/p} L.
\]
Hence the $\limsup$ is less than equal to $L$. On the other hand, if $\liminf=a<L$, then proceeding as in the proof of the previous theorem, there exists a Lipschitz map $u$ such that
\[
|du|_{L^\infty}\leq a<L
\]
which contradicts the best Lipschitz constant.
\end{proof}

\subsection{The Maximum estimate}

We know that the $p$-harmonic maps $u_p$ are smooth away from their critical points.  However, in Section~\ref{consmeasure} we will need the following result:

\begin{proposition}\label{maxest}  $\lim_{p \rightarrow \infty} \max |du_p| = L.$
\end{proposition}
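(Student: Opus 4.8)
The plan is to prove the two inequalities $\limsup_{p\to\infty}\max|du_p|\le L$ and $\liminf_{p\to\infty}\max|du_p|\ge L$ separately. The lower bound is the easy direction: since $M$ is compact, $\max_M|du_p|\ge \left(\frac{1}{\mathrm{vol}(M)}\int_M|du_p|^p*1\right)^{1/p}$, and by Lemma~\ref{pintconvto} the right-hand side tends to $L$. Hence $\liminf_{p\to\infty}\max|du_p|\ge L$, and in particular $L$ is finite.

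For the upper bound, the natural approach is a Harnack/Moser-type iteration (or a De Giorgi level-set argument) applied to the quantity $w:=|du_p|^2$, exploiting that $u_p$ is smooth where $du_p\ne0$ and satisfies the $p$-Laplace equation there. One shows that $w$ is a subsolution of a suitable (degenerate) elliptic equation — this is the classical Bochner-type computation for $p$-harmonic functions, where on the regular set $|du_p|^2$ satisfies an inequality of the form $\mathrm{div}(|du_p|^{p-2}A\nabla w)\ge -C|du_p|^{p}$ with $A$ uniformly elliptic and $C$ depending only on the Ricci curvature of $(M,g)$, i.e.\ on the hyperbolic metric and hence on $n$ alone. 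A local boundedness estimate then gives, on a ball $B_r(x)$,
\[
\max_{B_{r/2}(x)}|du_p|^{p}\ \le\ \frac{C(n,r)}{|B_r(x)|}\int_{B_r(x)}|du_p|^{p}*1,
\]
with a constant $C(n,r)$ that does \emph{not} degenerate as $p\to\infty$ (this uniformity in $p$ is the crucial point and the place where one must be careful — the usual constants in Moser iteration a priori blow up with the exponent, so one needs the sharper form available for $p$-Laplacian subsolutions, cf.\ the regularity references \cite{uhlen}, \cite{manfredi}). Taking $p$-th roots, covering $M$ by finitely many such balls, and using Lemma~\ref{pintconvto} again, we get
\[
\limsup_{p\to\infty}\max_M|du_p|\ \le\ \limsup_{p\to\infty}\Big(C(n)^{1/p}\Big(\int_M|du_p|^{p}*1\Big)^{1/p}\Big)\ =\ L.
\]
Combining the two bounds yields $\lim_{p\to\infty}\max|du_p|=L$.

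The main obstacle is obtaining the local sup bound with a constant uniform in $p$; on flat domains this is standard (and in fact in \cite{evans-savin}, \cite{evans-smart} much more is known), but for the hyperbolic metric one must track the curvature contributions in the Bochner formula and verify they do not spoil the $p$-independence. An alternative route that sidesteps delicate iteration is to use comparison with cones for $u_p$ (a $p$-harmonic analogue, which degenerates to the genuine cone comparison for $u$ developed in Section~\ref{sect:crandal}): bounding $|du_p|$ at a near-maximum point by the slope of an explicit hyperbolic cone comparison function whose slope is controlled by $L_u=L$ up to an error $o(1)$ as $p\to\infty$. Either way, the content is a uniform-in-$p$ gradient maximum principle; once that is in hand, the $p$-th-root trick and Lemma~\ref{pintconvto} finish the argument.
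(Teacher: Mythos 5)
Your proposal matches the paper's approach in its essentials: prove the lower bound trivially from $L$ being the best Lipschitz constant, and prove the upper bound by a Bochner-formula-plus-Moser-iteration argument followed by taking $p$-th roots and invoking Lemma~\ref{pintconvto}. The only curvature input is the Ricci term from the Bochner identity, exactly as you anticipate.

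There is, however, a point where your framing is slightly off, and it happens to be the crux. You identify the ``crucial point'' as obtaining a sup bound with a constant $C(n,r)$ that does \emph{not} degenerate as $p\to\infty$, and you worry that the usual Moser constants blow up with the exponent. But a $p$-independent constant is not needed, and the paper does not obtain one. What the paper's lemma actually proves, working with the normalized quantity $w=|du_p/L|^p$ and test functions $\phi=w^{2l-1}$, is a bound of the form $\max w \le C\,p^{a/(a-1)}$ where $C$ depends on the Sobolev constant and the Ricci curvature but not on $p$, while $a$ is the Sobolev exponent. The constant therefore \emph{does} grow with $p$ — polynomially — and that is perfectly acceptable, because $\max|du_p| = L\,(\max w)^{1/p}\le L\,(Cp^{a/(a-1)})^{1/p}\to L$. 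So the real work is not to defeat the blow-up but merely to \emph{track} the $p$-dependence through the iteration (the one-step estimate reads $|w|_{L^{2la}}\le (Cpl)^{1/2l}|w|_{L^{2l}}$, and iterating gives geometric sums that converge) and verify it is subexponential. If you try instead to produce a genuinely $p$-uniform De Giorgi/Moser constant for $|du_p|^2$ you will likely get stuck, and in any case that strength is unnecessary. Your alternative route via comparison with cones for $u_p$ itself is not what the paper does and is untested here; the Bochner/Moser route with explicit bookkeeping of the $p$-powers is the argument that closes.
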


Since $L$ is the best Lipschitz constant $\max |du_p| \geq L$, so we need prove an upper bound.
Let $s = u_p/L.$   This simplifies the normalizations.  The size of $S^1$ does not enter into the calculations. %and
%\[
% ||w||_q = \left(\int_M |w|^q *1 \right)^{1/q}.
%\]

\begin{lemma} Let 
\[
w = |ds|^p = (|du_p|/L)^p.
\] 
Let $W^{1,2} (M) \subset L^{2a}(M)$, where $a$ is arbitrary for $dim M = 2$ and $n/(n-2)$ when $dim M > 2$.  Then
\[
     \max w = \lim_{l \rightarrow \infty} |w|_{L^l}  \leq Cp^{a/a-1}. 
\]
The constant $C$ depends only on the norm of the embedding and the Ricci curvature of $M$ and not on $p.$
\end{lemma}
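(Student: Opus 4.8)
The plan is to derive a differential inequality for $w = |ds|^p$ and then run a Moser iteration (or Nash--Moser / De Giorgi) argument to bound $\max w$ by an $L^2$-norm of $w$, keeping careful track of the $p$-dependence of all constants.

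First I would recall the Bochner/Weitzenb\"ock identity for the $p$-harmonic map $u_p$. Writing the equation $d^*(|du_p|^{p-2}du_p)=0$ and computing $\Delta |du_p|^2$ (or better $\Delta |du_p|^{\beta}$ for a well-chosen exponent $\beta$ tied to $p$), one obtains a term involving the Hessian of $u_p$, a term with the Ricci curvature of $M$ acting on $du_p$, and the lower-order terms coming from differentiating the weight $|du_p|^{p-2}$. The Hessian term has a favorable sign (it is the one that produces the subsolution property after using Kato's inequality), the curvature term is controlled by $-C|du_p|^2$ times $|du_p|^{p}$-type quantities since $M$ is hyperbolic so $\mathrm{Ric}\ge -(n-1)g$, and one arranges the algebra so that $w=|ds|^p$ is a weak subsolution of an inequality of the schematic form $\Delta w \ge -Cp\,w$ — the single power of $p$ in front being exactly what the final estimate $Cp^{a/(a-1)}$ is designed to absorb. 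This is essentially the standard computation behind $L^\infty$ gradient bounds for $p$-harmonic functions (as in Uhlenbeck's work, \cite{uhlen}), adapted to the circle-valued, hyperbolic setting; the circle target is harmless because everything is local and $du_p$ is a genuine $1$-form on $M$.

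Next I would run Moser iteration on the subsolution $w$. Testing the inequality $\Delta w \ge -Cpw$ against $w^{\theta}\phi^2$ for cut-offs $\phi$ and exponents $\theta\to\infty$, using the Sobolev embedding $W^{1,2}(M)\hookrightarrow L^{2a}(M)$ with $a$ arbitrary when $\dim M = 2$ and $a = n/(n-2)$ when $\dim M > 2$, one gets the reverse-H\"older chain that upgrades control of $|w|_{L^l}$ to $|w|_{L^{la}}$ at the cost of a factor that is a power of $(Cp)$ times a geometric series in the iteration index. Summing the geometric series gives $\max w = \lim_{l\to\infty}|w|_{L^l} \le C' (Cp)^{a/(a-1)} |w|_{L^{l_0}}$ for the starting exponent $l_0$, and since $M$ is compact one can take $l_0$ as small as one likes (or simply track the $L^2$ norm and absorb $|w|_{L^2}$; note $|w|_{L^2} = |ds|^{p}_{L^{2p}}$ stays bounded by Lemma~\ref{thm:limminimizer}-style estimates, or one normalizes it away). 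The constant $C$ in the statement then depends only on the Sobolev constant of the embedding and on the Ricci lower bound entering the Bochner term, and crucially not on $p$, because every appearance of $p$ has been collected into the explicit factor $p^{a/(a-1)}$.

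The main obstacle I anticipate is the bookkeeping of the $p$-dependence through the Bochner computation and the iteration: one must choose the auxiliary exponent (the power of $|du_p|$ one applies the Laplacian to) so that the degenerate weight $|du_p|^{p-2}$ is tamed, Kato's inequality is applied with the right constant, and the resulting coefficient in front of $w$ on the right-hand side grows no faster than linearly in $p$. A secondary technical point is that $u_p$ is only $C^{1,\alpha}$ and smooth away from the finitely many critical points, so the Bochner identity and the integrations by parts have to be justified near $\{|du_p|=0\}$ — this is handled by the usual regularization $|du_p|^2 + \delta$, deriving the inequality for the regularized quantity and letting $\delta\to 0$, which works because the bad set has measure zero and the test functions can be cut off away from it with negligible error. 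Once the differential inequality with an $O(p)$ coefficient is in hand, the Moser iteration is routine and yields the stated bound, after which Proposition~\ref{maxest} follows by taking $p\to\infty$: $\max|du_p| = L(\max w)^{1/p} \le L(Cp^{a/(a-1)})^{1/p}\to L$.
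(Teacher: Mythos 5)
Your proposal follows essentially the same route as the paper: apply the Bochner formula to the $p$-harmonic equation, drop the nonnegative Hessian terms, and run Moser iteration on powers of $w = |ds|^p$, tracking the single factor of $p$ so that the geometric sum of exponents yields $p^{a/(a-1)}$. One small caution: the Bochner computation does not quite yield the weak inequality $\Delta w \ge -Cpw$ for arbitrary nonnegative test functions $\phi$ (the cross term $(p-2)\,q^{p-3}\langle d|ds|,ds\rangle\langle d\phi,ds\rangle$ can be negative and is of order $p$), so the clean subsolution claim overstates what is true; what one actually obtains is either a subsolution inequality for the anisotropic operator $\operatorname{div}((I+(p-2)e\otimes e)\nabla w)$ with $e=ds/|ds|$, or — as in the paper — the Caccioppoli estimate after directly inserting $\phi=w^{2l-1}$, for which that cross term is a square and hence nonnegative. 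Since Moser iteration only ever tests against such powers (no cutoffs needed on the compact $M$), the iteration goes through exactly as you describe, and the rest of your outline matches the paper's proof.
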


\begin{proof}
The Proposition follows easily from the lemma, as
\[ 
\max |du_p| = \max w^{1/p}L \leq (Cp^{a/a-1})^{1/p} L \rightarrow L.
\]
The proof of the lemma is standard, using the Bochner formula and Moser iteration and only needs to be included to keep track of $p.$ So we will be brief. 

In the usual way, we integrate the Euler-Lagrange equations $d^*|ds|^{p-2}du = 0$ against a term $d^*\phi ds$ where $\phi$ is a non-negative function on $M.$ We integrate by parts and complete the Laplacian to obtain
\begin{eqnarray*}
\int_M<\triangle (|ds|^{p-2}ds), \phi ds> *1&=&\int_M<d |ds|^{p-2} \wedge ds, d\phi \wedge ds>*1\\
&=&\int_M<d |ds|^{p-2}, d\phi> |ds|^2-<d |ds|^{p-2}, ds> <d\phi, ds>*1.
\end{eqnarray*}
Next use the Bochner formula for 1-forms and integrate by parts to obtain
\begin{eqnarray*}\label{myst1}
\int_M<\nabla(|ds|^{p-2}ds), \nabla(\phi ds)>*1&=&-\int_M Ricc(ds,ds) \phi |ds|^{p-2}*1 \nonumber\\
&+&\int_M<d |ds|^{p-2}, d\phi>  |ds|^2-<d |ds|^{p-2}, ds> <d\phi, ds>*1.
\end{eqnarray*}
\\
After expanding the left hand side and bringing the second term to the left hand side, we obtain an expression
%\begin{eqnarray}\label{myst1}
%-\int_M Ricc(ds,ds)|ds|^{p-2}\phi*1=\int_M <D_2(|ds|^{p-2}d_1s),D_1(\phi d_2s)>*1
% \end{eqnarray} 
%%-------------------------------------------
%%
%%The proof of the Lemma is standard, using the Bochner formula and Moser iteration and only needs to be included to keep track of $p.$ So we will be brief. 
%%In the usual way, we integrate the Euler-Lagrange equations $d^*|ds|^{p-2}du = 0$ against a term $d^*\phi ds$ where $\phi$ is a non-negative function on M. We integrate by parts, exchange $\{D,D\}$ for a term involving Ricci curvature, and integrate back. \\
%% -----------
%% 
%%Alternatively, we can 
%%%start with the Bochner formula for 1-forms $dd^*+d^*d=\nabla^*\nabla+ Ricc$ and integrate $|ds|^{p-2}ds $ against a term $\phi ds $. Note that in both cases 
%We use the fact that $|ds|^{p-2}ds  \in W^{1,2}$.
%
%
%
%
%When  worked out we obtain an expression
%\begin{eqnarray}\label{myst1}
%-\int_M Ricc(ds,ds)|ds|^{p-2}\phi*1=\int_M <D_2(|ds|^{p-2}d_1s),D_1(\phi d_2s)>*1
% \end{eqnarray}
% Here the numbers 1 and 2 refer to the pairing in the inner product.\\
% -----------
%Integrate by parts, using the fact that $|ds|^{p-2}ds  \in W^{1,2}$ and the Euler-Lagrange equations
%\[ 
%d*|ds|^{p-2}ds = 0,
%\]
%to obtain an expression
 \begin{eqnarray}\label{myst11}
 \int_M A*1 &=&- \int_M  Ricc(ds,ds)\phi |ds|^{p-2}*1.
 \end{eqnarray} 
% In order to assure that the integrals above converge we use the fact that by \cite[Proposition 2]{ivan} $|ds|^{(p-2)/2}ds \in W^{1,2}$. By taking $d$ of the above expression 
% $d(|ds|^{(p-2)/2})ds \in L^2$ and hence also $\nabla(|ds|^{(p-2)/2}ds)-d(|ds|^{(p-2)/2})ds=|ds|^{(p-2)/2}\nabla ds \in L^2$.
% From this and the fact that $ds \in C^\alpha$ we can deduce that $|ds|^rds \in W^{1,2}$ for all $r \geq (p-2)/2$. 
% This justifies that the integral in (\ref{myst11})  converges.
% 
% We now proceed with the calculation.
%The integrand on the left-hand side has four terms when worked out
%\begin{eqnarray}
%A &=&  \phi |ds|^{p-2}((|\nabla ds|^2 +(p-2)|d|ds|)^2 + |ds|^{p-1}(d|ds|,d\phi) \nonumber\\
%   &+& <d|ds|^{p-2}ds, d\phi ds>.
%\end{eqnarray}
%\begin{eqnarray*}
%\lefteqn{ \phi (|ds|^{p-2}|\nabla ds|^2 +<d |ds|^{p-2}ds, \nabla ds>)} \nonumber\\
%   &+& |ds|^{p-2}<\nabla ds,d\phi ds> + <d|ds|^{p-2}ds, d\phi ds>\nonumber\\
%   &=& \phi |ds|^{p-2}(|\nabla ds|^2 +(p-2)(d|ds|)^2)  \\
%   &+& |ds|^{p-2}<\nabla ds,d\phi ds>  + <d|ds|^{p-2}ds, d\phi ds>\nonumber.
%\end{eqnarray*}
%\begin{eqnarray*}
%\lefteqn{ \phi (|ds|^{p-2}|\nabla ds|^2 +<d |ds|^{p-2}ds, \nabla ds>)} \nonumber\\
%   &+& |ds|^{p-2}<\nabla ds,d\phi ds> + <d|ds|^{p-2}ds, d\phi ds>\nonumber\\
%   &=& \phi |ds|^{p-2}(|\nabla ds|^2 +(p-2)(d|ds|)^2)  \\
%   &+& |ds|^{p-2}<\nabla ds,d\phi ds>  + <d|ds|^{p-2}ds, d\phi ds>\nonumber.
%\end{eqnarray*}
%\\
%-------
The integrand on the left-hand side has four terms when worked out.
\begin{eqnarray*}
A &=& \phi |ds|^{p-2}(|Dds|^2 +(p-2)|d|ds||^2) \\
&+& |ds|^{p-1}<d|ds|,d\phi>+ <d|ds|^{p-2},ds><d\phi,ds>.
\end{eqnarray*}
Recall $w = |ds|^p.$ We insert $\phi = w^{2l-1}, l\geq1/2$ in the expression.  Note
\[
dw = p|ds|^{p-1}d|ds|, \ d\phi = p(2l-1)|ds|^{2pl-p-1}d|ds|.
\]
and that all four terms in  $A$ are non-negative. We ignore the last term.
Using  the inequality $|d|ds|| \leq |D ds|$ we obtain,
\begin{eqnarray}\label{expA}
A &\geq&  |ds|^{p(2l-1)} |ds|^{p-2}(|D ds|^2 +(p-2)|d|ds||^2)  \nonumber\\
&+& |ds|^{p-1}<d|ds|,p(2l-1)|ds|^{2pl-p-1}d|ds|> \\
   &\geq&(p-1+p(2l-1))|ds|^{2pl-2}|d|ds||^2.  \nonumber
   \end{eqnarray}
   Note
\begin{eqnarray}\label{expw}
1/pl |dw^l|^2 &=&1/pl|lw^{l-1}dw|^2  \nonumber\\
&=&1/pl (l |ds|^{p(l-1)} p|ds|^{p-1}d|ds|)^2\\
&=& pl |ds|^{2pl-2}|d|ds||^2.  \nonumber
   \end{eqnarray}
 Using (\ref{expA}) and (\ref{expw})
 \begin{eqnarray}\label{myst3}
 \frac{1}{pl} |dw^l|^2 \leq \frac{(p-1) +(2l-1)p}{(pl)^2} |dw^l|^2 \leq A.
\end{eqnarray}
Also the right-hand side of (\ref{myst11}) is bounded
by
 \begin{eqnarray}\label{myst345}
 - \int_M  Ricc(ds,ds)\phi |ds|^{p-2}*1 \leq R |w^l|_{L^2}^2
 \end{eqnarray}
 and $R$ is the maximum of the negative Ricci curvature.
Combining (\ref{myst1}), (\ref{expA}), (\ref{myst3}) and (\ref{myst345}) with the Sobolev embedding theorem we get
\begin{eqnarray*}\label{myst4}
|w|_{L^{2la}}^{2l} &\leq& \gamma (|dw^l|_{L^2}^2 + |w^l|_{L^2}^2  )\\
&\leq& \gamma (pl R + 1) |w^l|_{L^2}^2 .
\end{eqnarray*}
Here $\gamma$ refers comes from norm of the Sobolev embedding.  We next simple take $1/2l$-root of this inequality to get
\[
      |w|_{L^{2la}} \leq (Cpl)^{1/2l} |w|_{L^{2l}}.
\]
Now let $l_0 = 1/2$ and $l_{i+1} = a l_i,$ and iterate the inequality. It is an easy exercise to see that 
\[
   |w|_{L^{a^{j+1}}} \leq (Cp)^{\sum_0^j  \frac{1}{a^i}} a^{\sum_0^j   \frac{i}{a^i}} |w|_{L^1}.
\]
The result follows from this.
\end{proof}

\section{The conjugate equation for finite $q$}\label{sect:conjug}
In this section $dim (M)=n=2$. Let $1<q \leq p< \infty$ such that $1/p + 1/q = 1$.
 For each $p$-harmonic map $u_p$, we construct  dual harmonic functions $\tilde v_q$ defined on the universal cover of $M$ and equivariant with respect to representations $\alpha_q: \pi_1(M) \rightarrow \R$. For functions on the plane, this duality has already appeared in  \cite{aronsonlin}. Our main result  is to show that the functions $\tilde v_q$ are locally uniformly  bounded and the representations $\alpha_q$ are uniformly bounded. Together, away from the zeroes of $\tilde u_p$,  the two functions $\tilde u_p$ and $\tilde v_q$ define a convenient coordinate system on the universal cover, called the {\it{adapted coordinate system.}}
\subsection{The conjugate harmonic equation}
Fix $2\leq p < \infty$ and define $0< q \leq 2$ by
\begin{equation}\label{form:conjugate}
\frac{1}{p}+\frac{1}{q}=1.
\end{equation}
Let $u_p$ be a minimizer of the functional 
$J_p$ in the homotopy class of a Lipschitz map $f: M \rightarrow S^1$.
Let 
\[
\tilde u_p: \tilde M \rightarrow \R
\]
be the lift of $u_p$ to the universal cover, equivariant under 
$\rho: \pi_1(M) \rightarrow \Z$. We define the dual 1-form $ \tilde \Psi_q$
\begin{equation}\label{dualform}
\tilde \Psi_q=|d \tilde u_p|^{p-2}*d\tilde u_p.
\end{equation}
\begin{lemma}  $ \tilde \Psi_q$ is a closed, invariant form under the action of $\pi_1(M)$, hence there exists a unique primitive
\[
\tilde w_q: \tilde M \rightarrow \R, \ \ d\tilde w_q= \tilde \Psi_q
\]
equivariant under the period homomorphism
\[
 \beta_q: \pi_1(M) \rightarrow \R; \ \ \beta_q(\gamma)=\int_z^{\gamma z} \tilde \Psi_q
\]
and normalized as
\[
\int_\digamma \tilde w_q *1=0
\]
where $\digamma$ is a fixed fundamental  domain in $\tilde M$.
\end{lemma}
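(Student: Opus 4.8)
The plan is to verify in turn the four assertions: $\pi_1(M)$-invariance of $\tilde\Psi_q$, closedness of $\tilde\Psi_q$, existence and uniqueness of the normalized primitive $\tilde w_q$, and equivariance of $\tilde w_q$. Invariance is immediate: each $\gamma\in\pi_1(M)$ acts on $\tilde M$ as an isometry, so it commutes with the Hodge star and with $|\cdot|$, while equivariance of $\tilde u_p$ gives $\gamma^{*}d\tilde u_p=d(\tilde u_p+\rho(\gamma))=d\tilde u_p$; hence $\gamma^{*}\tilde\Psi_q=|\gamma^{*}d\tilde u_p|^{p-2}*\gamma^{*}d\tilde u_p=\tilde\Psi_q$. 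In particular $\tilde\Psi_q$ descends to a continuous $1$-form $\Psi_q$ on $M$ (continuity uses $p\ge2$, so that $|du_p|^{p-2}$ is continuous, and $du_p\in C^{\alpha}$ by Theorem~\ref{nonsingular}).

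For closedness: by Theorem~\ref{nonsingular} the zero set $Z=\{du_p=0\}$ is finite and $u_p$ is smooth off $Z$; since in dimension $2$ one has $d^{*}\omega=-{*}d{*}\omega$ on $1$-forms and ${*}(|du_p|^{p-2}du_p)=\Psi_q$, equation \eqref{pharm} reads $d\Psi_q=0$ on $M\setminus Z$. Lifting, $\tilde\Psi_q$ is closed on $\tilde M\setminus\tilde Z$, where $\tilde Z$ is the discrete preimage of $Z$, and closedness across $\tilde Z$ — which is what is needed for a primitive on $\tilde M$ — follows from the vanishing of periods: for a lift $x_0\in\tilde Z$ the integral $\int_{\partial B_r(x_0)}\tilde\Psi_q$ is independent of $r$ and bounded by $Cr\,\big(\sup_{\partial B_r(x_0)}|du_p|\big)^{p-1}\le Cr^{1+\alpha(p-1)}\to0$ as $r\to0$, using $du_p(x_0)=0$ and $du_p\in C^{\alpha}$. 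Equivalently, $u_p$ is a genuine weak solution of \eqref{pharm} on all of $M$, so $\int_M d\phi\wedge\Psi_q=0$ for every smooth $\phi:M\to\R$, i.e.\ $d\tilde\Psi_q=0$ as a current on $\tilde M$.

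Granting this, since $\tilde M$ is simply connected and $\tilde\Psi_q$ is continuous, closed, and has vanishing periods around the isolated points of $\tilde Z$, the path integral $\tilde w_q(x):=\int_z^x\tilde\Psi_q$ is independent of the path, satisfies $d\tilde w_q=\tilde\Psi_q$, is $C^1$ (and smooth off $\tilde Z$), and is unique up to an additive constant. Invariance of $\tilde\Psi_q$ then gives $d(\tilde w_q\circ\gamma-\tilde w_q)=\gamma^{*}\tilde\Psi_q-\tilde\Psi_q=0$, so $\tilde w_q\circ\gamma-\tilde w_q$ is a constant $\beta_q(\gamma)$; evaluating at the basepoint $z$ gives $\beta_q(\gamma)=\int_z^{\gamma z}\tilde\Psi_q$, and $\beta_q\colon\pi_1(M)\to\R$ is a homomorphism because $\R$ is abelian and $\tilde w_q\circ(\gamma_1\gamma_2)-\tilde w_q=\big((\tilde w_q\circ\gamma_1-\tilde w_q)\circ\gamma_2\big)+(\tilde w_q\circ\gamma_2-\tilde w_q)=\beta_q(\gamma_1)+\beta_q(\gamma_2)$. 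Finally, choosing $\digamma$ with compact closure, $\tilde w_q$ is bounded on $\digamma$ and $\mathrm{vol}(\digamma)=\mathrm{vol}(M)<\infty$, so the shifted primitive $\tilde w_q-\mathrm{vol}(M)^{-1}\int_{\digamma}\tilde w_q*1$ satisfies the stated normalization; it is the unique normalized primitive, since changing the additive constant shifts the normalization integral by a nonzero multiple of that constant (and leaves $\beta_q$ unchanged).

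The only step with real content is the global existence of the primitive on $\tilde M$: because $du_p$ may vanish, one must rule out a nonzero period of $\tilde\Psi_q$ around each point of $Z$, and this is precisely where Theorem~\ref{nonsingular} (finiteness of $Z$ when $\dim M=2$) together with the $C^{1,\alpha}$ regularity of $u_p$ is used — equivalently, it is the assertion that $\tilde\Psi_q$ is closed as a current through $\tilde Z$, which is built into $u_p$ being a bona fide weak solution of \eqref{pharm}. Everything else is bookkeeping with the deck-transformation action.
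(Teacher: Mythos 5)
Your proof is correct and follows essentially the same route as the paper: closedness from the (weak) $p$-harmonic equation, invariance from the fact that $\tilde u_p$ descends from $u_p$, and the standard construction of a primitive on the simply connected cover together with its period homomorphism (which the paper delegates to Forster). You are more careful than the paper in one place — explicitly checking that the periods of $\tilde\Psi_q$ around the zeros of $du_p$ vanish, either by the $C^{1,\alpha}$ estimate or by observing that the distributional closedness is built into the weak formulation — which is a worthwhile precision but not a different argument.
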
 
\begin{proof} The condition $d \tilde \Psi_q=0$ is just the $p$-harmonic equation (\ref{pharm}). The invariance of $\tilde \Psi_q$ follows from the fact that 
$\tilde u_p$ is the pullback of $ u_p$ to the universal cover. For the equivariance under the period homomorphism, see for example \cite{foster}, Section 10.
\end{proof}

\begin{lemma}\label{conjugate} The map $\tilde w_q$  satisfies the $q$-harmonic map equation (\ref{pharm}), for $q$ as in (\ref{form:conjugate}).
\end{lemma}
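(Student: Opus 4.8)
The plan is to verify directly that $\tilde w_q$ satisfies equation~(\ref{pharm}) with exponent $q$, namely $d^*\bigl(|d\tilde w_q|^{q-2}\,d\tilde w_q\bigr)=0$, by expressing everything in terms of the known $p$-harmonic form $\tilde\Psi_q=|d\tilde u_p|^{p-2}*d\tilde u_p$. The computation is local and takes place away from the zero set of $d\tilde u_p$, where by Theorem~\ref{nonsingular} the form $\tilde u_p$ is smooth; since in dimension two the zero set is finite, this suffices to establish the equation in the distributional sense on all of $\tilde M$ (one would remark that a finite set has zero capacity, or invoke that $\tilde w_q\in C^{1,\alpha}$, to rule out contributions supported on the singular set).

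First I would record the pointwise relations. Since $d\tilde w_q=\tilde\Psi_q=|d\tilde u_p|^{p-2}*d\tilde u_p$ and $*$ is a pointwise isometry on $1$-forms on a surface, we get $|d\tilde w_q| = |d\tilde u_p|^{p-1}$. Next, $|d\tilde w_q|^{q-2}\,d\tilde w_q = |d\tilde u_p|^{(p-1)(q-2)}\,|d\tilde u_p|^{p-2}*d\tilde u_p$. The key arithmetic is that $(p-1)(q-2)+(p-2) = (p-1)q-2(p-1)+p-2 = (p-1)q - 2p + 2 + p - 2 = (p-1)q - p$, and using $1/p+1/q=1$, i.e. $(p-1)q = p$, this exponent collapses to $0$. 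Hence
\[
|d\tilde w_q|^{q-2}\,d\tilde w_q = *\,d\tilde u_p.
\]
Therefore $d^*\bigl(|d\tilde w_q|^{q-2}\,d\tilde w_q\bigr) = d^*(*\,d\tilde u_p) = \pm *\,d(d\tilde u_p) = 0$, using the standard identity $d^*\omega = \pm * d * \omega$ on forms and $d^2=0$. This is exactly the $q$-harmonic equation for $\tilde w_q$.

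The only genuine subtlety — and the step I expect to require the most care — is the behavior at the finitely many critical points of $\tilde u_p$, where $|d\tilde u_p|$ vanishes and the identity $|d\tilde w_q|^{q-2}d\tilde w_q = *d\tilde u_p$ was derived only on the complement. Because $q<2$, the weight $|d\tilde w_q|^{q-2}$ blows up near such points, so I would argue that $*d\tilde u_p$ is nonetheless the correct $L^1_{loc}$ (indeed $C^{0,\alpha}$) representative of $|d\tilde w_q|^{q-2}d\tilde w_q$, and then note that a distributional identity $d^*(\cdot)=0$ which holds on the complement of a finite set and whose left-hand side is a well-defined current of suitable integrability automatically holds across the finite set, since a point has zero $1$-capacity in a surface. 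With that technical point dispatched, the lemma follows. I would also remark that equivariance of $\tilde w_q$ under $\beta_q$ is already established in the previous lemma, so $\tilde w_q$ is genuinely the kind of equivariant function to which the ``$q$-harmonic map'' terminology of~(\ref{pharm}) applies.
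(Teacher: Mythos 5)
Your proof is correct and takes essentially the same route as the paper: you exploit the algebraic identity $(p-1)(q-2)+(p-2)=0$ (equivalently $(p-1)q=p$) coming from $1/p+1/q=1$ to collapse $|d\tilde w_q|^{q-2}d\tilde w_q$ back to $*d\tilde u_p$ (the paper writes the same thing as $|d\tilde w_q|^{q-2}*d\tilde w_q=-d\tilde u_p$, the two being equivalent via $*^2=-1$ on $1$-forms in dimension $2$), and then $d^*$ of a coclosed form vanishes. One small simplification to your closing remark: no capacity or removable-singularity argument is actually needed, because $|d\tilde w_q|^{q-2}d\tilde w_q$ has pointwise norm $|d\tilde w_q|^{q-1}=|d\tilde u_p|^{(p-1)(q-1)}=|d\tilde u_p|$, which vanishes continuously at the critical points; so the identity $|d\tilde w_q|^{q-2}d\tilde w_q=*d\tilde u_p$ holds everywhere as an equality of $C^\alpha$ forms (both sides being zero at critical points), and $d^*(*d\tilde u_p)=0$ is immediate from $d\tilde u_p$ being a closed continuous form.
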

\begin{proof} 
Notice that  equation (\ref{form:conjugate}) implies
\[
(p-1)(q-2)+p-2=0,
\]
hence
\begin{eqnarray*}
|d\tilde w_q|^{q-2}*d\tilde w_q &=&
 \left| |d\tilde u_p|^{p-2}*d\tilde u_p \right|^{q-2}|d\tilde u_p|^{p-2}*^2d\tilde u_p  \\
&=&  |d\tilde u_p|^{(p-1)(q-2)+p-2} *^2 d\tilde u_p  \\
&=&   -d\tilde u_p.  
\end{eqnarray*}
Thus
\[
d^*(|d\tilde w_q|^{q-2}d\tilde w_q)=0. 
\]
 \end{proof}
 
 \begin{remark}\label{dual-eqn}  Note the duality
\[
 d\tilde w_q=|d\tilde u_p|^{p-2}*d\tilde u_p, \ \ \ -d\tilde u_p=|d\tilde w_q|^{q-2}*d\tilde w_q.
\]
 This can also be explained by means of Fenchel's duality for convex variational integrals. See \cite{fenchel}, \cite{temam} and \cite{aronsonlin} for more details on this kind of analysis.
 Motivated by the case $p=2$, we call $\tilde w_q$ the {\it conjugate harmonic} to $\tilde u_p$. 
 \end{remark}

 \subsection{The normalization} For the rest of the paper we will make the following normalizations:
 
Choose a factor $k_p$ so that
\begin{equation}\label{normintv1}
  \int_M  | k_p d u_p|^p*1 =   k_p.
\end{equation}
Let 
\begin{equation}\label{normintv2}
 U_p = k_p d  u_p  \ \mbox{ and} \ \    V_q = |  U_p|^{p-2}*  U_p.
\end{equation}
 Let $\tilde U_p$ and $\tilde V_q$ denote the lifts to the universal cover and let $\tilde v_q: \tilde M \rightarrow \R$
such that
\begin{equation}\label{normintv3}
 d\tilde v_q=\tilde V_q; \ \ \int_\digamma\tilde v_q *1=0.
 \end{equation}
Notice that $\tilde v_q$ is a rescaling of the conjugate harmonic function $\tilde w_q$ defined in the previous section, $\tilde v_q=k_p^{p-1} \tilde w_q$.
Under the normalizations above, in a fundamental domain $\digamma \subset \tilde M$,
\begin{eqnarray} \label{kappavolform0}
\int_\digamma d\tilde u_p \wedge d \tilde v_q 
&=&
\int_\digamma k_p^{-1} \tilde U_p \wedge | \tilde U_p|^{p-2}* \tilde U_p \nonumber\\
&=&\int_\digamma k_p^{-1} | \tilde U_p|^p *1\\
  &=&1\nonumber.
\end{eqnarray}

\begin{lemma} \label{klemma1}Under the normalizations above, 
$\lim_{p \rightarrow \infty} k_p = L^{-1}. $
%(I conjecture that $k_p$ is about $p^{1/p}$ if we ever need it.) 
\end{lemma}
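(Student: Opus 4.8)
The plan is to solve the defining equation for $k_p$ explicitly and then feed it into Lemma~\ref{pintconvto}. Write $A_p = \int_M |du_p|^p *1$. Since the homotopy class of $f$ is nontrivial, $du_p \not\equiv 0$, so $A_p > 0$. The normalization (\ref{normintv1}) reads $k_p^p A_p = k_p$, and since we want $k_p \ne 0$ this is equivalent to $k_p^{\,p-1} = 1/A_p$, which has a unique positive solution
\[
k_p = A_p^{-1/(p-1)} = \left( \int_M |du_p|^p *1 \right)^{-1/(p-1)}.
\]
So the whole statement reduces to showing $A_p^{1/(p-1)} \to L$.

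First I would note that $L > 0$: indeed $L$ is the best Lipschitz constant of a nontrivial homotopy class $f: M \to S^1$, and were $L = 0$ the $\infty$-harmonic limit $u$ of Theorem~\ref{thm:limminimizer} would be constant, contradicting nontriviality of the homotopy class. Next, by Lemma~\ref{pintconvto} we have $A_p^{1/p} \to L \in (0, \infty)$. To pass from the $1/p$-th root to the $1/(p-1)$-th root, I would simply write
\[
\log A_p^{1/(p-1)} = \frac{p}{p-1}\cdot \frac{1}{p}\log A_p = \frac{p}{p-1}\,\log A_p^{1/p},
\]
and observe that $\tfrac{p}{p-1}\to 1$ while $\log A_p^{1/p} \to \log L$ (finite, since $L>0$); hence $\log A_p^{1/(p-1)} \to \log L$, and therefore $A_p^{1/(p-1)} \to L$ by continuity of the exponential. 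Combining with the formula for $k_p$ above gives $k_p \to L^{-1}$, as claimed.

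There is essentially no serious obstacle here: the argument is a one-line algebraic rearrangement of the normalization combined with the already-established limit in Lemma~\ref{pintconvto}. The only points that require a moment's care are confirming that $k_p$ is well defined and positive (which uses $A_p>0$), and that $L>0$ so that the logarithm manipulation is legitimate; both are immediate from nontriviality of the homotopy class. If one prefers to avoid logarithms, the same conclusion follows from writing $A_p^{1/(p-1)} = \bigl(A_p^{1/p}\bigr)^{p/(p-1)}$ and using that $t \mapsto t^{s}$ is jointly continuous for $t$ in a compact subset of $(0,\infty)$ and $s$ near $1$.
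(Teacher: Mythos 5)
Your proposal is correct and follows essentially the same route as the paper: solve the normalization $k_p^{p-1}\int_M |du_p|^p *1 = 1$ for $k_p$, invoke Lemma~\ref{pintconvto}, and pass to logarithms to handle the harmless discrepancy between the $1/p$-th and $1/(p-1)$-th roots. Your explicit checks that $A_p>0$ and $L>0$ are sensible hygiene that the paper leaves implicit, but there is no substantive difference in the argument.
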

 \begin{proof} 
 By (\ref{normintv1}) and Lemma~\ref{pintconvto}  
\[ 
\lim_{p \rightarrow \infty} k_p^{(1/p)-1}=\lim_{p \rightarrow \infty}  \left( \int_M |du_p|^p*1 \right )^{1/p} = L.
\]
%  (1-p)/p \ln k_p
By taking logarithms, $ \lim_{p \rightarrow \infty} \ln k_p=-\ln L$,
which implies the Lemma.
\end{proof}

Also, 
\begin{eqnarray} \label{kappanorm0}
 \int_\digamma |d\tilde v_q|*1&=& \int_\digamma | \tilde U_p|^{p-1}*1\nonumber\\
 &\leq &(vol M)^{1/p}  \left( \int_M | \tilde U_p|^p*1\right)^{\frac{p-1}{p}} \ \mbox{(by H\"older)} \\
   &= &(vol M)^{1/p} k_p^{\frac{p-1}{p}} \nonumber\\
   &\leq &(vol M)^{1/p}(L^{-1}+\epsilon_p) \ (\mbox{where $\epsilon_p \rightarrow 0$, \ by Lemma~\ref{klemma1}}).\nonumber\\
   &\approx& L^{-1} \ (\mbox{for $p$ large}).\nonumber
\end{eqnarray}

 We denote by
\[
\alpha_q: \pi_1(M) \rightarrow \R; \ \  \alpha_q(\gamma)=\int_z^{\gamma z}  \tilde V_q
\]
the period homomorphism of the rescaled form $\tilde V_q=d \tilde v_q$. Notice that by definition, $\tilde v_q$ is equivariant under $\alpha$, i.e
\[
\tilde v_q(\gamma z)=\tilde v_q(z)+\alpha_q(\gamma).
\]
It follows that the closed 1-form $\tilde V_q=d \tilde v_q$ is invariant under the action of $\pi_1(M)$ and descends to a closed 1-form $ V_q$ on $M$. 
The representation $\alpha_q: \pi_1(M) \rightarrow \R$ acting on $\R$ via affine isometries, defines a flat  fiber bundle $\tilde M \times_{\alpha_q} \R \rightarrow M$ (specifically a flat affine bundle) and $\tilde v_q$ defines a  section $v_q : M  \rightarrow \tilde M \times_{\alpha_q} \R$. Sometimes it is common to call $v_q$ a {\it{twisted map}}. Under this notation, $V_q=dv_q$.

Note that (\ref{kappavolform0}) and (\ref{kappanorm0}) imply,
\begin{equation} \label{kappavolform}
\int_Md  u_p \wedge dv_q=1
\end{equation}
and
\begin{equation} \label{kappanorm}
 |dv_q|_{L^1(M)} \approx L^{-1}
\end{equation}
for $q$ close to 1. Furthermore,
\begin{equation} \label{alphagamma}
\alpha_q(\gamma)=\int_{\gamma}  dv_q=\int_M \omega_\gamma \wedge dv_q 
\end{equation}
where  
$\omega_\gamma \in \Omega^1(M)$ denotes the closed form Poincare dual to the homology class defined by $\gamma$.
Notice that
for any $\gamma \in \pi_1(M)$ and $0<c_\gamma :=|\omega_\gamma|_{L^\infty(M)}$, we have from (\ref{kappanorm}) for $q$ close to 1
\begin{equation}\label{boundtrlength}
|\alpha_q(\gamma)| \leq c_\gamma \int_M  \left| dv_q \right|*1 \approx c_\gamma L^{-1}.
\end{equation}

 \subsection{The adapted coordinate system} \label{adapted} The pair of functions $(\tilde u_p, \tilde v_q)$ can be used to define a convenient coordinate system on  $\tilde M \backslash \mathcal \{| d\tilde u_p|=0\}$ which we call {\it{the adapted coordinate system}}. More precisely, in the coordinate system $(\tilde u_p, \tilde v_q)$,
the metric $g$ is given by
\begin{equation}
g=\begin{pmatrix} \label{metric}
 \tau_1^2 & 0 \\
 0 & \tau_2^2
\end{pmatrix}
\end{equation}
with
\begin{equation}\label{metric2}
|d \tilde u_p|=\tau_1^{-1}; \ \ \tau_2=  (\tau_1/{k_p})^{p-1}.
\end{equation}
To prove the statement above, it is better to think in terms of the co-metric. Set
\[
\tau_1^{-1}=\left| d\tilde u_p \right |
\]
and note that
\[
\tau_2^{-1}=|d\tilde v_q|=| \tilde U_p|^{p-1}= (\tau_1/{k_p})^{1-p}
\]
and
\[
d\tilde u_p \wedge *d\tilde v_q=  {k_p}^{p-1}d\tilde u_p \wedge |d  \tilde u_p|^{p-2}d \tilde u_p=0.
\]
We have thus proven (\ref{metric}) and (\ref{metric2}). 
For future reference we also choose an orientation on M consistent with the orientation of $d \tilde u_p \wedge d\tilde v_q$. With this orientation $u_p$ is an orientation preserving map to $S^1$ with its standard (counterclockwise) orientation.

\subsection{The normalized flow of $u$}
Let    $\frac{\partial}{\partial u_p}$ denote the vector field dual to the 1-form $du_p$. By Theorem~\ref{nonsingular}, this vector field is globally $C^{\alpha}$ and smooth away from its zeroes. 
 The {\it{normalized gradient flow}} is the flow $\psi_t$  of the vector field $\frac{\partial}{\partial u_p}$. The flow $\psi_t$ lifts to the  universal cover and is given in the 
 local adapted coordinates $(\tilde u_p, \tilde v_q)$,  by
\[ 
(\tilde u_p, \tilde v_q) \mapsto \tilde \psi_t(\tilde u_p, \tilde v_q)=(\tilde u_p+t, \tilde v_q).
\]
Notice that  the  1-forms $d\tilde u_p$ and $d\tilde v_q$ are  invariant under the normalized gradient flow. 

The flow $\psi_t$ on $M$ has interesting dynamics. Choose a regular fiber $\Xi$ and for example assume that $\Xi$ is connected. The normalized gradient flow $\psi_1$ of $u_p$ at time 1 maps $\Xi \backslash$ (points which flow into critical points) to $\Xi \backslash$ (points which flow out of critical points). The map $\psi_1$ gives an interval exchange map of $\Xi$ to itself which is of interest in itself,  though we will exploit it more in this article. See Problem~\ref{Problem 8}.
We next prove:

%$(x,y) \mapsto \psi_t(x,y)=(x+t, y)$ of $u$  (the fibers of $u$ are not preserved by the gradient flow, because the solution $x(t)$ depends on $y$). Furthermore,
%\[
%dy=\frac{\sigma_1^{2-p}}{\kappa}*du=\frac{|du|^{p-2}}{\kappa}*du
%\]
%and
%\[
%dx \wedge dy=\frac{|du|^{p}}{\kappa}*1.
%\]
%\end{lemma}
%\begin{proof} The  statement about invariance follows immediately from Lemma~\ref{gradflow} since
%\[
%\phi_t^*dy=d (y \circ \phi_t)=dy. 
%\]
%For the second notice,
%\[
%dx \wedge *dx  = |dx|^2 \sigma_1\sigma_2 dx \wedge dy =  \kappa \sigma_1^{p-2} dx \wedge dy
%\]
%which implies
%\[
%dy=\frac{\sigma_1^{2-p}}{\kappa}*dx
%\]
%and 
%\[
%dx \wedge dy=\frac{\sigma_1^{2-p}}{\kappa}dx \wedge *dx=\frac{\sigma_1^p}{\kappa}*1.
%\]
%Notice that the statement that dy is closed is equivalent to the $p$-harmonic map equation (\ref{pharm}).
%\end{proof}
\begin{proposition}\label{locbd}The $ \tilde v_q$ are locally uniformly  bounded in $L^\infty$ for all $q$.
\end{proposition}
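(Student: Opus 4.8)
The plan is to derive the local $L^\infty$ bound on $\tilde v_q$ from the geometry of the adapted coordinate system together with the uniform $L^1$ bound (\ref{kappanorm}) on $dv_q$ and the $L^\infty$ bound on $du_p$. The key point is that $\tilde v_q$, restricted to a leaf of the normalized gradient flow of $\tilde u_p$, is constant (since $d\tilde v_q$ annihilates $\partial/\partial u_p$), so the variation of $\tilde v_q$ is entirely transverse to the flow. First I would fix a fundamental domain $\digamma$ and observe that the normalization $\int_\digamma \tilde v_q\, {*1}=0$ forces $\tilde v_q$ to take the value zero somewhere in $\digamma$ (or at least to be small in some averaged sense on $\digamma$); combined with a bound on the total variation of $\tilde v_q$ across a compact region, this pins down $\tilde v_q$ pointwise.

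The main estimate I would carry out is the following: for two points $x, y$ in a fixed compact set $\tilde K \subset \tilde M$ (large enough to contain a fundamental domain and finitely many translates), connect them by a path and estimate $|\tilde v_q(x) - \tilde v_q(y)| = |\int_\gamma d\tilde v_q| \le \int_\gamma |d\tilde v_q|$. To control $\int_\gamma |d\tilde v_q|$ uniformly in $q$, I would use that $\tilde V_q = |\tilde U_p|^{p-2}{*}\tilde U_p$ with $|\tilde U_p| = k_p |du_p|$, and $k_p \to L^{-1}$, $|du_p|_{L^\infty}$ uniformly bounded (by $|df|_{L^\infty}$, from Theorem~\ref{thm:limminimizer}, or alternatively by Proposition~\ref{maxest}). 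The difficulty is that $|\tilde U_p|^{p-1}$ involves a $(p-1)$-st power, so an $L^\infty$ bound on $|\tilde U_p|$ slightly above $1$ would blow up. This is exactly why one wants the normalization (\ref{normintv1}): it makes $\int_M |\tilde U_p|^p {*1} = k_p \approx L^{-1}$ bounded, hence $\int_M |dv_q|{*1} = \int_M |\tilde U_p|^{p-1}{*1} \le (\mathrm{vol}\,M)^{1/p} k_p^{(p-1)/p} \approx L^{-1}$ as in (\ref{kappanorm}). So $\int_M |dv_q| {*1}$ is uniformly bounded, but this is an $L^1$ bound over all of $M$, not a pointwise bound, and a naive path integral $\int_\gamma |d\tilde v_q|$ over a one-dimensional curve is not controlled by an $L^1$-in-area bound.

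The way around this — and the step I expect to be the main obstacle — is to use the flow structure to convert the area bound into a transversal bound. Using the normalized gradient flow $\psi_t$ of $\tilde u_p$, foliate $\tilde K$ by flow lines; on each flow line $\tilde v_q$ is constant, so $\tilde v_q$ descends to a function on the space of flow lines, parametrized (locally) by a regular fiber / transversal $\Xi$. The coarea-type identity $\int_{\tilde K} |d\tilde v_q|\, {*1} \gtrsim \int_{\Xi} |\tilde v_q'|\, (\text{length of flow line}) $, together with the fact that flow lines crossing $\tilde K$ have length bounded below (away from critical points one has $|d\tilde u_p| = \tau_1^{-1}$ bounded, so the flow moves at controlled speed, and $\tilde u_p$ ranges over an interval of definite length on a long enough flow segment), gives a uniform bound on $\int_\Xi |d(\tilde v_q|_\Xi)|$, i.e.\ on the total variation of $\tilde v_q$ along the transversal $\Xi$. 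Since $\tilde v_q$ vanishes somewhere (by the zero-average normalization) and has bounded total variation along transversals that sweep out $\tilde K$, it is uniformly bounded on $\tilde K$; equivariance under $\alpha_q$ together with the uniform bound (\ref{boundtrlength}) on $|\alpha_q(\gamma)|$ then propagates the bound to all of $\tilde M$ locally. The technical care needed is handling the critical points of $\tilde u_p$ (finitely many, by Theorem~\ref{nonsingular}) where the flow is not defined, and making the coarea estimate rigorous with the $C^{1,\alpha}$ regularity available; I would treat the critical points by excising small balls and using that $\int_M |dv_q|{*1}$ is finite to control the contribution near them.
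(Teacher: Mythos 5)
Your overall scaffolding matches the paper's: bound the total variation of $\tilde v_q$ along a regular fiber $\Xi$ of $u_p$, propagate that bound across a fundamental domain using flow-invariance of $\tilde v_q$, and extend to all of $\tilde M$ using the uniform bound (\ref{boundtrlength}) on $\alpha_q$. You also correctly isolate the crux: converting the $L^1(M)$ bound (\ref{kappanorm}) on $dv_q$ into a bound on the one-dimensional transversal variation $\int_\Xi|d\tilde v_q|$. It is exactly here that your mechanism diverges from the paper's. The paper observes that $dv_q = |U_p|^{p-2}*U_p$ restricted to the level set $\Xi$ never vanishes, so $\tilde v_q$ is strictly monotone along $\tilde\Xi\cap\digamma$; therefore $\int_{\tilde\Xi\cap\digamma}|d\tilde v_q| = |\int_{\tilde\Xi\cap\digamma}d\tilde v_q| = |\int_\Xi dv_q| = |\int_M\omega_\Xi\wedge dv_q| \le c_\Xi\,|dv_q|_{L^1(M)}$, i.e.\ the transversal variation collapses to a topological period, handled by Poincar\'e duality in one line. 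Your route instead uses a coarea-type estimate $\int_{\tilde K}|d\tilde v_q|\,{*1}\gtrsim\int_\Xi|d\tilde v_q|\cdot(\text{flow-line length})$ together with a claimed lower bound on the lengths of flow lines through $\tilde K$.

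That lower bound is where your argument has a genuine soft spot. It is not true uniformly that flow lines crossing a compact set have length bounded below: flow lines entering $\tilde K$ near $\partial\tilde K$ and flow lines running into a critical point of $u_p$ can be arbitrarily short, and you must control the $dv_q$-measure of the set of such lines, which does not drop out for free. Your suggestion of excising small balls around the finitely many critical points is plausible but requires a separate estimate showing the $dv_q$-mass carried by trajectories terminating there tends to zero, and also requires enlarging $\tilde K$ so that generic flow lines have a guaranteed $u_p$-interval before they can escape. All of this is avoidable: the paper's monotonicity observation is the simple idea your write-up is circling around, and I would recommend replacing the coarea step with it. Everything else in your outline --- flow-invariance, the zero-average normalization to fix the additive constant, and the use of (\ref{boundtrlength}) to pass from $\digamma$ to $\tilde M$ --- is in line with the paper.
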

%\begin{proof} Choose a regular fiber $\Xi$ and assume that $\Xi$ is connected. Then $v_q$ can be chosen locally in the fiber $0\leq v_q \leq 1$ by the choice of normalization. The normalized gradient flow $\psi_1$ of $u_p$ at time 1 maps $\Xi \backslash$ (points which flow into critical points) to $\Xi \backslash$ (points which flow out of critical points). The function $v_q$ is constant on this normalized gradient flow. The map $\psi_1$ gives an interval exchange map of $\Xi$ to itself which is of interest in itself. But we have now given a fundamental domain for $\Xi$ in $\tilde M$ on which $0\leq v_q \leq 1$. Let $m$ be the length of the shortest fiber. If we calculate $v_q(\gamma(t))$ on an arbitrary curve, if $v_q(\gamma(1)) - v_q(\gamma(0) = D$, the curve will have to cross the fiber at least $D-1$ times, and therefore it must be of length at least $(D-1)/m$. This gives a bound on $v_q$ along curves of bounded length.
%The proof is easily modified if there is no connected fibers.
%It is important to note the lack of continuity implied by the bound $(D-1)/m$, not $D/m$.
%\end{proof}

\begin{proof} Choose a regular fiber $\Xi$ of $u_p$ and let $[\Xi] \in H_1(M, \Z)$ denote its homology class. Let $ \omega_\Xi$ be a  closed 1-form representing the Poincare dual of $[\Xi]$ 
%can be represented by the closed form $\omega_\Xi=adu_p$, where $a \in \R$. Indeed, let $\tilde \Xi$ denote the lift of $\Xi$ to the universal cover $\tilde M$ of $M$. 
%Since  $\Xi$ is a regular fiber, it does not contain any critical point of $du_p$, hence, with respect to the adapted coordinates,  we can write in a neighborhood of $\tilde \Xi$,
%\[
%\sigma^*(\omega_\Xi)=\tilde ad\tilde u_p+\tilde bd\tilde v_q
%\] 
%for some locally defined functions $\tilde  a$ and $\tilde  b$. Thus,
%\[
%\omega_\Xi=\tilde adu_p+\tilde bdv_q
%\] 
%for some locally defined smooth functions $  a$ and $b$ near $\Xi$.
%
%Since
%\[
%\int_\Xi \omega_\Xi=\Xi \cdot 
%\Xi=0
%\]
and let  $ \digamma$ denote a fundamental domain in $\tilde M$. Then,
\begin{eqnarray*}
\int_{\tilde \Xi \cap \digamma }|d \tilde v_q| &=& \left |\int_{\tilde \Xi \cap \digamma} d \tilde v_q \right| \ (\mbox{because $d \tilde v_q$ is nonzero on $\Xi$})\\
&=& \left |\int_{ \Xi } d  v_q \right| \ (\mbox{because $d \tilde v_q$ descents to $d v_q$ in $M$})\\
&=&\left | \int_{M } \omega_\Xi \wedge d  v_q \right| \ (\mbox{because $\omega_\Xi$ is  Poincare dual of $\Xi$})\\
&\leq &c_\Xi(L^{-1}+1) \ (\mbox{by} \ (\ref{kappanorm})),
\end{eqnarray*}
where, since all $u_p$ are homotopic, $c_\Xi$ is a topological constant.
This implies that $d\tilde v_q$ are uniformly bounded in $L^1(\Xi \cap \digamma)$ and hence  $\tilde v_q$ are uniformly bounded in $L^\infty(\Xi \cap \digamma)$.
Since $\tilde v_q$ is invariant under the normalized gradient flow, it follows that $\tilde v_q$ is uniformly bounded on the open dense set of the fundamental domain $\digamma$ consisting of all non-critical trajectories. Hence, by continuity, $\tilde v_q$  is uniformly bounded on the closure of $\digamma$. Since the representation $\alpha_q$ is also uniformly bounded by (\ref{boundtrlength}), the local boundedness of $\tilde v_q$ in $\tilde M$ follows.
\end{proof}
%Corollary 1: Existence of representation 
%Corollary 2: $v_q \rightarrow v$ in $L^s$ for all $s$.

\section{The limit $q \rightarrow 1$}\label{qgoesto1}
In this section we construct a DeRham 1-current $V=dv$ obtained as a limit as $q \rightarrow 1$ of the closed forms $dv_q$ associated to the normalized conjugate harmonic functions to $u_p$. We  show that there exists a limiting representation $\alpha: \pi_1(M) \rightarrow \R$ and an $\alpha$-equivariant function $\tilde v$ whose derivative induces the current $V$. We further show that the function $\tilde v$ is locally in $L^\infty$ and locally of bounded variation. In Section~\ref{consmeasure} we will show that $\tilde v$ is  of least gradient (1-harmonic).

\begin{proposition} \label{lemma:limmeasures0} Given a sequence $q \rightarrow 1$, there exists a subsequence (denoted again by $q$) such that:
\begin{itemize}
\item $(i)$ There exists a closed  1-current $ V \in \mathcal D_1( M)$ such that 
$dv_q \rightharpoonup  V.$\\
\item $(ii)$ There exists a closed  1-current $ \tilde V \in \mathcal D_1( \tilde M)$ such that 
$d\tilde v_q \rightharpoonup  \tilde V.$ Furthermore, if $\sigma: \tilde M \rightarrow M$ denotes the universal covering map, then
$\sigma^*(V)= \tilde V.$\\
\item $(iii)$ There exists a representation 
$\alpha: \pi_1(M) \rightarrow \R$
such that for any $\gamma \in \pi_1(M)$,
$\alpha(\gamma)=\lim_{q \rightarrow 1} \alpha_q(\gamma).$
Furthermore,
$\alpha(\gamma)=   V( \omega_\gamma)$
 where $\omega_\gamma$ is the Poincare dual to the homology class defined by $\gamma$.
 \item $(iv)$ The homology class $[V] \in H_1(M, \R)$ is dual to $\alpha$.
 \end{itemize}
 \end{proposition}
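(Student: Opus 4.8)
The plan is to establish the four items by exploiting the uniform bounds already in hand from Section~\ref{sect:conjug}, namely that $|dv_q|_{L^1(M)} \approx L^{-1}$ (estimate~\eqref{kappanorm}), that the periods $\alpha_q(\gamma)$ are uniformly bounded (estimate~\eqref{boundtrlength}), and that the forms $dv_q$ are closed. The essential tool is weak-$*$ compactness of currents with uniformly bounded mass: since each $dv_q$, viewed as a $1$-current on $M$, has mass $\mathbf{M}(dv_q) = |dv_q|_{L^1(M)} \le L^{-1}+1$, the sequence lies in a bounded subset of $\mathcal{D}_1(M)$, which is sequentially weak-$*$ compact (the space of test $1$-forms on a compact manifold is separable). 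Thus a subsequence converges, $dv_q \rightharpoonup V$. Closedness passes to the limit: for any test function $\phi$, $V(d\phi) = \lim_q dv_q(d\phi) = \lim_q \int_M d\phi \wedge dv_q = 0$ by Stokes and closedness of $dv_q$; hence $\partial V = 0$, proving $(i)$.

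For $(ii)$, I would lift: on $\tilde M$ the forms $d\tilde v_q$ are closed and $\pi_1(M)$-invariant, and on each fundamental domain $\digamma$ they have uniformly bounded $L^1$-mass by the computation in Proposition~\ref{locbd} (or directly from~\eqref{kappanorm} translated by deck transformations). Exhausting $\tilde M$ by translates of $\digamma$ and using a diagonal argument over this exhaustion together with weak-$*$ compactness on each compact piece, one extracts a subsequence with $d\tilde v_q \rightharpoonup \tilde V$ in $\mathcal{D}_1(\tilde M)$; after refining we may assume this is the same subsequence as in $(i)$. Since $\sigma^* (dv_q) = d\tilde v_q$ for every $q$ and pullback by the covering map is continuous for the weak-$*$ topology on test forms with compact support (any compactly supported test form on $\tilde M$ is supported in finitely many translates of $\digamma$, where $\sigma$ is a diffeomorphism onto its image), we get $\sigma^* V = \tilde V$. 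Closedness of $\tilde V$ follows either from closedness of $V$ or by the same Stokes argument as above.

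For $(iii)$ and $(iv)$: by~\eqref{boundtrlength} the sequence $\alpha_q(\gamma)$ is bounded in $\R$ for each fixed $\gamma$; since $\pi_1(M)$ is countable, a further diagonal extraction gives a subsequence along which $\alpha_q(\gamma)$ converges for every $\gamma$ simultaneously, and we call the limit $\alpha(\gamma)$. That $\alpha$ is a homomorphism is immediate since each $\alpha_q$ is and the relations are preserved under pointwise limits. To identify $\alpha(\gamma) = V(\omega_\gamma)$, I use the identity~\eqref{alphagamma}, $\alpha_q(\gamma) = \int_M \omega_\gamma \wedge dv_q = dv_q(\omega_\gamma)$; since $\omega_\gamma$ is a fixed smooth closed test $1$-form, the weak-$*$ convergence $dv_q \rightharpoonup V$ gives $\alpha_q(\gamma) \to V(\omega_\gamma)$, so $\alpha(\gamma) = V(\omega_\gamma)$. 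Finally $(iv)$ is essentially a restatement: the de Rham homology class $[V] \in H_1(M,\R)$ is, by Poincar\'e duality, the functional on $H^1(M,\R)$ sending $[\omega] \mapsto V(\omega)$; evaluating on the basis of classes $\omega_\gamma$ dual to loops $\gamma$ shows $[V]$ pairs with cohomology exactly as $\alpha$ does under the identification $H_1(M,\R) \cong \mathrm{Hom}(\pi_1(M),\R)$, so $[V]$ is dual to $\alpha$.

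The only genuinely delicate point is the bookkeeping of the successive subsequence extractions so that a single subsequence works for $(i)$, $(ii)$, and $(iii)$ at once; this is handled by one master diagonal argument over the countable data (an exhaustion of $\tilde M$ by finite unions of translates of $\digamma$, together with the countable set $\pi_1(M)$), and the uniform mass bound on $\digamma$ is what makes each individual extraction possible. The weak-$*$ convergence statements themselves, the passage of closedness to the limit, and the identification of periods are all soft once the compactness is set up, so I expect no real obstacle beyond organizing the diagonalization cleanly.
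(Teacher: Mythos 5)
Your proposal is correct and follows essentially the same route as the paper: uniform mass bounds from~\eqref{kappanorm} plus weak-$*$ sequential compactness of bounded currents, closedness passing to the limit, a locality/partition-of-unity argument for $\sigma^* V = \tilde V$, and Poincar\'e duality for $(iv)$. The only inefficiency is in $(iii)$: once $dv_q \rightharpoonup V$ you already have $\alpha_q(\gamma) = \int_M \omega_\gamma \wedge dv_q \to V(\omega_\gamma)$ for \emph{every} $\gamma$ along the same subsequence, so the separate Bolzano--Weierstrass diagonalization over the countable group $\pi_1(M)$ is redundant (the paper simply defines $\alpha(\gamma)$ as this limit).
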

\begin{proof} 
For $\phi \in  \Omega^1(M)$ a test function and $|\phi|_{L^\infty} \leq 1$, we have by (\ref{kappanorm}) that
\[
  \left | \int_M \phi \wedge dv_q  \right |  \leq C.
\]
  By weak compactness (cf. \cite[Lemma 2.15]{simon}), there exists  $V \in  \mathcal D_1( M)$ such that (after passing to a subsequence)
  \[
  dv_q \rightharpoonup  V 
  \]
  and $V$ is closed, being the  distributional limit of closed forms. 

For $(ii)$, the proof of the convergence is exactly the same as the proof of $(i)$. In order to prove the statement about the pullback, 
consider an open cover of $M$ given by basic sets and let $ \{ V_i \}$ be the cover of $\tilde M$ obtained by the preimage of the sets in $M$. Let $\zeta_i$ be a partition of unity subordinate to $ \{ V_i \}$. By definition, after identifying $V_i \simeq \sigma (V_i)$ and $dv_q= d\tilde v_q$,
\begin{eqnarray*}
\sigma^*(V)(\phi)&=&\sum_i  V(\zeta_i \phi)
= \lim_{q \rightarrow 1} \int_{\tilde M}  \sum_i \zeta_i \phi \wedge dv_q\\
&= & \lim_{q \rightarrow 1} \int_{\tilde M}   \phi \wedge d \tilde v_q
=  \tilde V(\phi).
\end{eqnarray*}

To prove $(iii)$, note that by the weak convergence of $dv_q$,
\[
\alpha_q(\gamma)= \int_\gamma  dv_q =\int_M  \omega_\gamma \wedge dv_q  \rightarrow  V( \omega_\gamma) =\alpha(\gamma).
\]

For  $(iv)$ notice that $\alpha$ factors through the abelianization of $\pi_1(M)$ to define an element in $H_1(M, \R)^*=H^1(M, \R)$ which is dual to $[V]$ by $(iii)$.
\end{proof}

\begin{definition}\label{def:bdvar} Let $U \subset M$ an open set and $f \in L^1(U)$. We define
\[
||df||_U=\sup\{ \int_M d\phi  \wedge f: \phi \in \mathcal D^1(U), \ \max|\phi| \leq 1 \} 
\]
 and set
\[
|f|_{BV(U)}= |f|_{L^1(U)}+||df||_U.
\]
We say that $f$ is of {\it{bounded variation}} in $U$ if $|f|_{BV(U)} < \infty$. 
\end{definition}

\begin{theorem} \label{lemma:limmeasures2} There exists a sequence  $q \rightarrow 1$  and $ \tilde v:  \tilde M \rightarrow  \R$ such that $\tilde v_q$ converges to $\tilde v$ {\it weakly} in $BV_{loc}(\tilde M)$ and {\it strongly} in $L^s_{loc}(\tilde M)$ for all $s \geq 1$.
Furthermore, $\tilde v$ has the following properties:
\begin{itemize}
\item $(i)$ $\tilde v$ is locally in $L^\infty$ and locally of bounded variation
%\item $(ii)$ $\tilde v$ is normalized so that
%  \[
%  \int_\digamma \tilde v *1=0.
%  \]
\item $(ii)$ $\tilde v$ is equivariant under $\alpha$, i.e for every $ \gamma \in \pi_1(\tilde M)$ and a.e. $ z \in \tilde M$
  \[
  \tilde v(\gamma z)=\tilde v(z)+ \alpha(\gamma)
  \]
%  \item $(iii)$ $\tilde v$ is locally a map of least gradient.
\end{itemize} 

\end{theorem}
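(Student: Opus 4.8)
The plan is to extract the limit $\tilde v$ from the local uniform bounds already established and to identify its two asserted properties by passing to the limit in the corresponding facts for $\tilde v_q$. First I would fix an exhaustion of $\tilde M$ by bounded open sets $\Omega_1 \subset\subset \Omega_2 \subset\subset \cdots$ with $\bigcup_k \Omega_k = \tilde M$, each $\Omega_k$ contained in finitely many fundamental-domain translates. On each $\Omega_k$, Proposition~\ref{locbd} gives $|\tilde v_q|_{L^\infty(\Omega_k)} \leq C_k$ uniformly in $q$, so in particular $|\tilde v_q|_{L^1(\Omega_k)}$ is uniformly bounded; and Proposition~\ref{lemma:limmeasures0}$(ii)$ (together with the local finiteness of the mass of $\tilde V$, which follows from $dv_q \rightharpoonup V$ and (\ref{kappanorm})) shows $\|d\tilde v_q\|_{\Omega_k}$ is uniformly bounded. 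Hence $\tilde v_q$ is bounded in $BV(\Omega_k)$ uniformly in $q$. By the standard compactness theorem for $BV$ (e.g. \cite{simon}), on $\Omega_1$ a subsequence converges strongly in $L^1(\Omega_1)$ and weakly in $BV(\Omega_1)$ to some limit; iterating over $k$ and diagonalizing produces a single subsequence $q \rightarrow 1$ and a function $\tilde v \in BV_{loc}(\tilde M)$ with $\tilde v_q \rightarrow \tilde v$ in $L^1_{loc}$ and weakly in $BV_{loc}$. Strong $L^s_{loc}$ convergence for every $s \geq 1$ then follows by interpolating the $L^1_{loc}$ convergence against the uniform $L^\infty_{loc}$ bound.

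For property $(i)$: the uniform $L^\infty(\Omega_k)$ bound is preserved under a.e.-convergent subsequences (pass to a further subsequence converging a.e., using the $L^1_{loc}$ convergence), so $|\tilde v|_{L^\infty(\Omega_k)} \leq C_k$, giving $\tilde v \in L^\infty_{loc}$. Lower semicontinuity of total variation under $L^1_{loc}$ convergence gives $\|d\tilde v\|_{\Omega_k} \leq \liminf_q \|d\tilde v_q\|_{\Omega_k} < \infty$, so $\tilde v$ is locally of bounded variation; moreover the limiting $BV$ derivative is exactly the current $\tilde V$ of Proposition~\ref{lemma:limmeasures0}$(ii)$, since for $\phi \in \mathcal D^1(\tilde M)$ we have $\int d\phi \wedge \tilde v = \lim_q \int d\phi \wedge \tilde v_q = -\lim_q \int \phi \wedge d\tilde v_q = -\tilde V(\phi)$, using the $L^1_{loc}$ convergence on the left and the weak convergence of currents on the right.

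For property $(ii)$: each $\tilde v_q$ satisfies $\tilde v_q(\gamma z) = \tilde v_q(z) + \alpha_q(\gamma)$ identically. Fix $\gamma \in \pi_1(M)$. Along the chosen subsequence, $\tilde v_q \rightarrow \tilde v$ in $L^1_{loc}$, hence (after a further subsequence) a.e.; also $(\gamma)_*\tilde v_q \rightarrow (\gamma)_*\tilde v$ a.e. since left translation by $\gamma$ is a diffeomorphism of $\tilde M$; and $\alpha_q(\gamma) \rightarrow \alpha(\gamma)$ by Proposition~\ref{lemma:limmeasures0}$(iii)$. Taking the a.e. limit in the equivariance identity yields $\tilde v(\gamma z) = \tilde v(z) + \alpha(\gamma)$ for a.e. $z$. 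A countable intersection over a generating set of $\pi_1(M)$ (and its words) gives a single full-measure set on which the identity holds for all $\gamma$, so $\tilde v$ is $\alpha$-equivariant.

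The main obstacle is the bookkeeping in the first step: ensuring that the uniform $BV$ bound on each $\Omega_k$ is genuinely uniform in $q$, which requires combining the $L^\infty$ bound from Proposition~\ref{locbd} with a uniform bound on $\|d\tilde v_q\|_{\Omega_k}$. The latter is not literally $|dv_q|_{L^1(M)} \approx L^{-1}$ pulled back — a fundamental domain's worth of translates of $\Omega_k$ must be covered and the total-variation mass estimated on each, using that $\tilde v_q = k_p^{p-1}\tilde w_q$ and the period bound (\ref{boundtrlength}) control the jumps between translates — but it is exactly the estimate already used inside the proof of Proposition~\ref{locbd}, so no new analytic input is needed, only care in assembling it into a $q$-uniform local $BV$ bound. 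Everything else is soft: $BV$ compactness, lower semicontinuity, and passing to a.e. limits.
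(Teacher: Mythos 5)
Your proposal follows essentially the same route as the paper: establish a $q$-uniform local $BV$ bound, invoke $BV$ compactness and diagonalize, upgrade to $L^s_{loc}$ convergence using the uniform local $L^\infty$ bound from Proposition~\ref{locbd}, and pass to an a.e.\ limit in the equivariance identity. The one place where the paper differs in detail is that it obtains the $L^1(W)$ bound on $\tilde v_q$ itself via the Poincar\'e inequality applied to $\tilde v_q$ minus its mean (controlling the mean through the period bound and the normalization $\int_\digamma\tilde v_q*1=0$), rather than citing Proposition~\ref{locbd} directly; both work.

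There is, however, one logical misstep you should repair. You claim the uniform bound on $\|d\tilde v_q\|_{\Omega_k}$ follows from Proposition~\ref{lemma:limmeasures0}$(ii)$ together with finiteness of the mass of $\tilde V$. That is backwards: weak convergence $d\tilde v_q \rightharpoonup \tilde V$ plus finiteness of the limit's mass does \emph{not} give an a priori uniform bound on the masses of the approximants (lower semicontinuity runs the other way). The uniform local bound on $\|d\tilde v_q\|_{\Omega_k}$ is in fact elementary and independent of any convergence: the form $d\tilde v_q$ is $\pi_1(M)$-invariant (it descends to $dv_q$ on $M$), so for $\Omega_k\subset\bigcup_{i=1}^{N_k}\gamma_i(\digamma)$ one has $\int_{\Omega_k}|d\tilde v_q|\leq N_k\,|dv_q|_{L^1(M)}$, and the right side is uniformly bounded by (\ref{kappanorm0}). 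No period bound or factorization through $\tilde w_q$ is needed for this; those enter only in controlling $\tilde v_q$ itself (not its differential), as in Proposition~\ref{locbd}. Your closing paragraph partially self-corrects, but the stated attribution should be replaced by the direct $\pi_1$-invariance argument. With that fixed, the rest of the argument — lower semicontinuity of total variation for $(i)$, identification of $d\tilde v$ with $\tilde V$ by testing against $\phi\in\mathcal D^1(\tilde M)$, and the a.e.\ limit over a countable generating set for $(ii)$ — is sound and matches the paper.
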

\begin{proof} Fix $W \subset  \subset \tilde M$, and choose a finite number $\gamma_1,...,\gamma_N \in \pi_1(M)$ such that
\[
W \subset \subset \bigcup_{i=1}^N \gamma_i(\digamma).
\]
Since, by Proposition~\ref{lemma:limmeasures0}(ii),  $| \alpha_q(\gamma_i)| \leq C$  for $i=1,...,N $ and $j=1,2,...$, we obtain by the equivariance of $\tilde v_q$, (\ref{normintv3})
 and the fact that 
$\gamma_i$ act as isometries on $\tilde M$ that
\begin{equation}\label{estW}
\left| \frac{1}{vol(W)} \int_W \tilde v_q(z)dz \right| \leq  \frac{NC |\digamma|}{vol(W)} \leq C'.
\end{equation}
%Hence, we can choose a further subsequence so that
%\[
%\int_W \tilde v_j \rightarrow \bar v_0.
%\]
Now set,
\[
w_q(x)=\tilde v_q( x)- \frac{1}{vol(W)} \int_W \tilde v_q( z)dz; \ \ dw_q=d\tilde v_q.
\]
Similarly, by the Poincare inequality and (\ref{kappanorm0})
\[
|w_q|_{L^1(W)} \leq c |dw_q|_{L^1(W)}= c|d\tilde v_q|_{L^1} \leq C,
\]
which combined with (\ref{estW}), implies
\[
|\tilde v_q|_{W^{1,1}(W)} \leq C.
\]
Hence, there exists a subsequence (denoted again by $\tilde v_q$) and $\tilde v^W \in BV(W)$
such that
\[
\tilde v_q \xrightharpoonup{BV(W)} \tilde v^W.
\]
By a diagonalization argument we can define $\tilde v \in BV_{loc}(\tilde M)$ such that
\[
\tilde v_q \xrightharpoonup{BV_{loc}(\tilde M)} \tilde v.
\]
By the Rellich Lemma and the fact that $\tilde v_q$ are locally uniformly bounded by Proposition~\ref{locbd}, 
\[
\tilde v_q  \rightarrow \tilde v \in L^s_{loc} \ \ \forall s \geq 1.
\]
%Furthermore, by definition of $BV_{loc}(\tilde M)$,
%\[
%d\tilde v_q \rightharpoonup d \tilde v.
%\]
To show that  $\tilde v$ is locally bounded, fix $W \subset \tilde M$ compact. Again, since   
$|\tilde v_q|_{L^\infty} \leq C$  by Proposition~\ref{locbd}, and  $\tilde v_q  \rightarrow \tilde v$  in $L^s(W)$ for all $s$, it follows that $|\tilde v|_{L^s(W)} \leq C$ uniformly in $s$ and thus $\tilde v \in L^\infty(W)$.

Statement $(ii)$ follows from the equivariance 
$\tilde v_q(\gamma z)=\tilde v_q(z)+ \alpha_q(\gamma)$ 
and the fact that $L^s_{loc}$ convergence implies a.e convergence. 
 Since we have already shown that the functions $\tilde v_q$ converge strongly to $\tilde v$ in $L^s_{loc}$ for $s>1$. 
\end{proof}

\begin{remark} We will see in Section~\ref{consmeasure} that $\tilde v$ is a locally a function of least gradient. For Euclidean domains this follows also from  \cite{juutinen}, Proposition 4.5. We will give a proof of this fact in Theorem~\ref{thmlegr}.
\end{remark}

\begin{definition} Let $ L=\tilde M \times_\alpha \R$ be the flat affine bundle associated to the representation $\alpha$ and $v$  the section of  $ L$ induced from $\tilde v$. For an $L^1$-section $\xi: M \rightarrow L$, set $||d\xi||=||d\xi||_M$ and $|\xi|_{BV}=|\xi|_{BV(M)}$ as in Definition~\ref{def:bdvar}.  
 With this definition, $v$ becomes a  {\it section (twisted map) of bounded variation. } 
In view of Theorem~\ref{lemma:limmeasures2}, $d \tilde v=\tilde V$ and we  will denote
\[
V=dv.
\]
\end{definition}
\begin{remark}For the rest of the paper we fix sequential limits $u=\lim_{p \rightarrow \infty} u_p$ and $v=\lim_{q \rightarrow 1} v_q$ in the appropriate function spaces described above. We conjecture that $u$ and $v$ are essentially unique, though we are unable to prove this. See Conjectures~\ref{Conjecture 2} and~\ref{Conjecture 3}.
\end{remark} 

\begin{remark}\label{radonms}
Recall that, by the Riesz representation theorem \cite[Chapter 6, (2.14)]{simon},   given a $p$-current $S \in \mathcal D_p (U)$ of finite mass,
we can write
\[
S(\phi)=\int_{ U} \phi \wedge \vec {S} \ |dS|; \ \ \phi \in \mathcal D^{n-p} ( U) 
\]
for a Radon measure $|d S|$ and a measurable section $ \vec {S}$ of $\Lambda^p(\tilde M)$  where
$| \vec {S}|=1$ $|d S|$-a.e. It is customary to write the $p$-form valued Radon measure $\vec {S} \ |dS|$ by $S$ and use the notation
\[
S(\phi)=\int_{ U} \phi \wedge S.
\]
We will use this notation throughout the rest of the paper.
\end{remark}

\section{The geodesic lamination associated to the $\infty$-harmonic map}\label{sect:crandal}
For this section we allow $(M, g)$ to be a closed hyperbolic manifold of any dimension $n \geq 2$. We show that the gradient lines of the $\infty$-harmonic map $u$ at the points of maximum stretch define a geodesic lamination. The major difficulty lies in defining the gradient lines of $u$, because $grad(u)$ is not even known to be continuous.  We overcome this issue by adapting to the hyperbolic metric an argument due to Crandall for Euclidean space. This is a hyperbolic version of what is known as {\it{comparison with cones}} and which for Euclidean metrics is  equivalent to the notion of viscosity solutions of the $\infty$-Laplace equation. (\cite{crandal} or \cite{lindqvist}). We will not attempt to develop such a theory in this paper and we only prove the bare minimum that we need for our topological applications. For more details on open problems see Section~\ref{conjectures}.

\subsection{Statement of the theorem} We start by recalling the notion of a geodesic lamination.
%We give a proof modeled on a proof shown to us by Craig Evans for Euclidean space. We first clarify our use of the expression $|du(x)|$ for a Lipschitz map $u$. Here $L$ is the Lipschitz constant of the $\infty$ harmonic map $u.$
%
% Let  
% \[
% L_(B,x)  = (max y in B : |u(x) - u(y)|/dist(x,y)).
% \]
%\begin{definition}   $|dux)| = \lim_{ r \rightarrow 0} L_{B_(x)}$ for $B_r(x) = (y in M: distance (x,y) <=r}.$
%\end{definition}
%\begin{definition}  the set $\lambda$ is a geodesic lamination of $M$ if
%     1) $\lambda$ is closed
%     2) for $x \in \lambda$, $\lambda(x)$ is an embedded geodesic containing $x$. 
%     3) $\lambda(x)$ intersect $\lambda(y)$ is either $ \lambda(x) = \lambda(y)$ or empty.
%\end{definition}

\begin{definition}A geodesic lamination $\lambda$ is a closed subset of  $(M, g)$   which is a disjoint union of simple, complete geodesics. 
\end{definition}

The next theorem is the main result of the section. Recall from Section~\ref{lipconst} that $L_u(x)$ denotes the  local Lipschitz constant at $x$.

\begin{theorem}\label{straightline} Let  $(M, g)$ be a closed hyperbolic manifold of dimension $n \geq 2$   and let $ u: M \rightarrow S^1$ be $\infty$-harmonic (i.e a limit of $p$ harmonic maps for $p \rightarrow \infty$) with  Lipschitz constant $L:=|du|_{L^\infty(M)}$. Then,
\[
 \lambda_u=\{ x \in M: L_u(x)= L \} 
\]
is a geodesic lamination in $M$.
\end{theorem}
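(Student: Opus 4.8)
The plan is to show that $\lambda_u$ is (a) closed, (b) locally a union of geodesic arcs, and (c) that these arcs do not cross, so that $\lambda_u$ is a disjoint union of complete simple geodesics. Closedness is immediate: by Proposition~\ref{crandal0}$(i)$ the function $x \mapsto L_u(x)$ is upper semicontinuous, and $L$ is its maximum value, so $\lambda_u = \{L_u(x) \ge L\}$ is closed. The heart of the matter is to produce, through each $x_0 \in \lambda_u$, a geodesic along which $u$ achieves the maximal stretch $L$. For this I would develop the hyperbolic analogue of Crandall's comparison-with-cones. Passing to the universal cover $\tilde M = \mathbb{H}^n$, one works with the equivariant lift $\tilde u$. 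The key input is that $\tilde u$ is a variational (hence, one wants, a viscosity-type) solution of the $\infty$-Laplace equation, obtained as a uniform limit of the $p$-harmonic $\tilde u_p$; from the uniform gradient bounds in Theorem~\ref{thm:limminimizer} and the maximum estimate (Proposition~\ref{maxest}) one should be able to run the standard argument: $\tilde u$ enjoys comparison with the ``cones'' $x \mapsto a + b\, d_{\mathbb{H}^n}(x, x_1)$ (distance functions in hyperbolic space play the role of Euclidean cones), which yields that the function $r \mapsto \max_{d(x,x_0)=r} \tfrac{\tilde u(x)-\tilde u(x_0)}{\text{(geometric factor)}}$ is monotone, and in particular that the local Lipschitz constant is controlled by increments along geodesics emanating from $x_0$.

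From comparison with cones one gets the decisive structural fact: if $L_u(x_0) = L$, then there is a point $x_1$ (on the boundary of a small ball, or arbitrarily far in $\mathbb{H}^n$) with $\tilde u(x_1) - \tilde u(x_0) = L\, d(x_0, x_1)$, and moreover every point on the \emph{geodesic segment} $[x_0, x_1]$ realizes equality; hence $L_u \equiv L$ along that segment. Since this can be done on both sides of $x_0$ and the segments produced are forced to agree (two geodesics in $\mathbb{H}^n$ through $x_0$ with the same derivative of $\tilde u$ along them at $x_0$ must coincide — the $\infty$-harmonicity pins down the direction uniquely at a point of maximal stretch, because $|d\tilde u(x_0)| = L$ by Proposition~\ref{crandal0}$(ii)$ and $d\tilde u$ exists there), one can concatenate to obtain a complete geodesic $\ell_{x_0} \subset \lambda_u$ through each $x_0 \in \lambda_u$. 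Equivariance under $\pi_1(M)$ then pushes this down to $M$.

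Finally, disjointness/simplicity: suppose two such geodesics $\ell_{x_0}, \ell_{x_1}$ meet at a point $y$. At $y$ both are ``gradient lines'' of $\tilde u$ in the sense above, so $d\tilde u(y)$ (which exists and has norm $L$ at $y \in \lambda_u$) is tangent to both; since a geodesic in $\mathbb{H}^n$ is determined by a point and a tangent direction, $\ell_{x_0} = \ell_{x_1}$. The same argument shows a single leaf has no self-intersections. Thus $\lambda_u$ is a closed set that is a disjoint union of complete simple geodesics, i.e.\ a geodesic lamination.

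I expect the main obstacle to be step two: rigorously establishing comparison with cones for the hyperbolic metric without the Euclidean regularity theory of Evans--Savin--Smart. One must verify that the limiting $\infty$-harmonic $u$ inherits enough of a comparison principle purely from being a uniform limit of $p$-harmonic maps — in particular that the $p$-harmonic $u_p$ satisfy comparison with the hyperbolic distance cones up to errors vanishing as $p \to \infty$ (this is where the curvature of $\mathbb{H}^n$ enters, via the Bochner/Ricci terms already appearing in Proposition~\ref{maxest}), and that one may pass to the limit. The uniqueness of the gradient direction at points of $\lambda_u$, needed for concatenation and disjointness, is the other delicate point; it should follow from $du$ existing with $|du| = L$ there together with the cone comparison forcing the ``stretch direction'' to be unambiguous, but making this precise for the non-smooth limit $u$ will require care.
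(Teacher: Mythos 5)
Your outline matches the paper's strategy up to and including the production, through every $x_0\in\lambda_u$, of a geodesic segment on which $u$ stretches by exactly $L$ (closedness via upper semicontinuity, hyperbolic comparison with cones, monotonicity of $r\mapsto\max_{d(x,x_0)=r}(u(x)-u(x_0))/r$, and the rigidity along the segment $[x_0,x]$). One reassurance on the point you flag as the main obstacle: no error terms from curvature are needed in the comparison step, because the radial functions $f_p(d(\cdot,x_0))$ with $f_p'(t)=(\sinh t)^{-(n-1)/(p-1)}$ are \emph{exactly} $p$-harmonic in $\mathbb{H}^n$, so comparison holds exactly at each finite $p$ by the maximum principle, and the only errors come from the uniform convergences $u_p\to u$ and $f_p(t)\to t$.

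The genuine gap is in your concatenation and disjointness step. You invoke the pointwise existence of $d\tilde u(x_0)$ with $|d\tilde u(x_0)|=L$ at points of $\lambda_u$ to claim that the stretch direction is unique, and hence that two maximal-stretch geodesics through a common point must coincide. This is exactly the regularity that is \emph{not} available here: everywhere-differentiability of $\infty$-harmonic limits is an Evans--Smart theorem for the Euclidean metric only, and the whole point of this section is to avoid it; Proposition~\ref{crandal0}$(ii)$ gives only the identification of $\sup_x L_f(x)$ with the essential sup norm of $df$, not the existence of a differential at a prescribed point $x_0\in\lambda_u$. The correct replacement is a purely metric rigidity argument: if $\lambda_1,\lambda_2$ are unit-speed maximal-stretch geodesics through $x_0$ with $u(\lambda_i(t))=u(x_0)+Lt$, then the global Lipschitz bound gives
\[
L\,d\bigl(\lambda_1(t),\lambda_2(s)\bigr)\;\geq\;\bigl|u(\lambda_1(t))-u(\lambda_2(s))\bigr|\;=\;L\,|t-s|,
\]
while the concatenated path from $\lambda_1(t)$ to $\lambda_2(s)$ through $x_0$ has length at most $|t|+|s|$, and in the relevant configuration exactly $|t-s|$; equality of the two forces the concatenation to be a length-minimizing geodesic, i.e.\ the angle at $x_0$ is $0$ or $\pi$. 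The same inequality, applied near an endpoint of the maximal subsegment of a leaf contained in $\lambda_u$ (together with the two-sided attainment of the Lipschitz constant from cone comparison, as in Crandall's Lemma~4.6), extends each segment to a complete geodesic and rules out transverse crossings, with no appeal to differentiability of $u$.
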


First, note the following straightforward:
\begin{lemma} \label{realized} Let  $(M, g)$ be a closed Riemannian manifold  and $ f: M \rightarrow S^1$ a  Lipschitz map  with global Lipschitz constant $L=L_f(M)$. Then, the set
\[
 \lambda_f=\{ x \in M: L_f(x)= L \} 
\]
is non-empty and closed.
\end{lemma}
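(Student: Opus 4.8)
The statement is that $\lambda_f = \{x \in M : L_f(x) = L\}$ is non-empty and closed, where $L = L_f(M)$ is the global Lipschitz constant of a Lipschitz map $f : M \to S^1$ on a closed Riemannian manifold. The plan is to handle the two assertions separately, starting with closedness since it is purely formal, then establishing non-emptiness by a compactness argument.

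For closedness, I would invoke Proposition~\ref{crandal0}$(i)$: the function $x \mapsto L_f(x)$ is upper semicontinuous. Hence for any $c \in \R$ the superlevel set $\{x : L_f(x) \geq c\}$ is closed. Since $L_f(x) \leq L$ for all $x$ (because $f$ has Lipschitz constant $L$ on all of $M$, so in particular on every small ball $B_r(x)$, giving $L_f(x) = \lim_{r \to 0} L_f(B_r(x)) \leq L$), the set $\lambda_f = \{x : L_f(x) = L\} = \{x : L_f(x) \geq L\}$ is exactly this superlevel set with $c = L$, hence closed.

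For non-emptiness, I would argue by contradiction or directly by a supremum/compactness argument. Suppose $L_f(x) < L$ for every $x \in M$. By upper semicontinuity of $x \mapsto L_f(x)$ on the compact manifold $M$, the function attains its maximum, say at some $x_0$, with value $L_0 := L_f(x_0) = \sup_{x \in M} L_f(x) < L$. The idea is then that the local Lipschitz constant being bounded by $L_0 < L$ everywhere forces the global Lipschitz constant to be at most $L_0$, contradicting $L_f(M) = L$. To make this precise I would cover $M$ by finitely many geodesically convex balls $B_1, \dots, B_N$ on each of which, by Proposition~\ref{crandal0}$(iii)$ (convexity) together with $(ii)$, the Lipschitz constant $L_f(B_i) = |df|_{L^\infty(B_i)} = \sup_{x \in B_i} L_f(x) \leq L_0$. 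Then a standard chaining argument using a Lebesgue number for the cover and the geodesic distance on $M$ upgrades these local bounds to a global bound $L_f(M) \leq L_0 < L$ — more carefully, for $x, y$ close one uses a single convex ball, and for $x,y$ far one subdivides a minimizing geodesic into short segments each lying in one $B_i$ and sums the estimates, using that $d_{S^1}$ satisfies the triangle inequality and $d_g$ is additive along the geodesic. This contradiction shows $\lambda_f \neq \emptyset$.

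The main obstacle is the chaining step in the non-emptiness argument: passing from local Lipschitz bounds to a global one is routine in $\R^n$ but requires a little care on a Riemannian manifold, since one must control the geodesic distance along subdivided minimizing geodesics and ensure each short segment is contained in a single convex ball of the finite cover — this is where the Lebesgue number of the cover and compactness of $M$ enter. One must also be slightly careful that the target is $S^1$ rather than $\R$, so that $d_{S^1}(f(x), f(y))$ is controlled by summing $d_{S^1}$-distances along the subdivision via the triangle inequality on $S^1$; this causes no real difficulty since all the estimates are local and the intermediate points have well-defined images. Everything else follows immediately from Proposition~\ref{crandal0}.
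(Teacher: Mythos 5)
Your proof is correct and takes essentially the same route as the paper: closedness via upper semicontinuity of $x \mapsto L_f(x)$ (Proposition~\ref{crandal0}$(i)$), and non-emptiness by showing $\sup_x L_f(x) = L$ and then using compactness of $M$ together with upper semicontinuity to pass to a realizing point. The paper states this more tersely by directly taking a sequence $x_i$ with $L_f(x_i) \nearrow L$ (citing Proposition~\ref{crandal0}) and extracting a convergent subsequence, whereas you phrase it as a contradiction and spell out the chaining-over-convex-balls argument that underlies the identity $\sup_x L_f(x) = L_f(M)$; the extra detail you give is correct and merely unwinds what the paper leaves implicit.
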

\begin{proof} It follows  from Proposition~\ref{crandal0} $(i)$, on the upper semicontinuity of the local Lipschitz constant. Here are the details: By Proposition~\ref{crandal0}, take a sequence $x_i$ such that  $L_f(x_i) \nearrow L$. By compactness, we may assume $ x_i \rightarrow x$ and by upper semicontinuity $L_f(x) \geq \lim_i L_f(x_i) = L$. Thus, $x \in  \lambda_f$ and hence $ \lambda_f \neq \emptyset$. By  upper semicontinuity 
\[
 \lambda_f=\{ x \in M: L_f(x)= L \}=\{ x \in M: L_f(x) \geq L \} 
\]
is closed.
\end{proof}

%Theorem 5.3:  Let $\lambda = (x in M such that |du(x)| = L)$ . If the infinity harmonic map $u$ is the limit of a sequence of p harmonic maps for $p \rightarrow \infty$, then $\lambda$  is a geodesic lamination.

%Proposition 5.4 (your 5.3). The set $\lambda$ is non-empty and closed.
%
%(Put in your proof).
\subsection{Comparison with cones}
%\begin{definition}\label{copco} Let $M$ be a compact, hyperbolic manifold, i.e $\tilde M =H^n$ and denote by
%$d(.,.)$ the distance function.
%A cone function with vertex $x_0 \in H^n$ is a function of the form
%\[
%c(x)=A + B d(x,x_0),
%\]
% where $a,b \in \R.$ 
% Let $\Omega \subset H^n$ an open, connected subset (possibly $\Omega = H^n$). 
% A function $u \in C(\Omega)$ satisfies
%{\it{comparisons with cones from above}} if it possesses the following property: 
%For every open set $V$ with 
% $\bar V \subset  \Omega$ compact and every cone function $c$ with vertex $x_0 \in H^n \backslash V$,
%\[
%u  \leq c \  \mbox{in} \ \partial V \Longrightarrow u \leq c \ \mbox{in} \ V.
%\]
%We say that $u$ satisfies {\it{comparisons with cones from below}} if $-u$ satisfies  comparisons with cones from above. Finally, $u$ satisfies {\it{comparisons with cones }} if it satisfies comparisons with cones from above and below.
%\end{definition}
In this section we prove that our minimizers satisfy comparison with cones. For Euclidean metrics this is known to be equivalent to the notion of viscosity solution of the $\infty$-Laplace equation (cf. \cite{crandal}). In the present article we deal primarily with hyperbolic metrics and we expect every local result known for the Euclidean metric to also hold for our case as well. Below we will only prove the bare minimum necessary to prove our theorem on geodesic laminations, leaving most analytic aspects for a future project.

We first note that the
 map $d(x,x_0)$ can be approximated by  cone $ p$-harmonic functions $c_p(x) = f_p(d(x_0,x))$. In Euclidean space $\R^n$,
 \[ 
 f_p(t) = t^{\frac{p-n}{p-1}}=t^{1-\frac{n-1}{p-1}}
 \] 
 and in hyperbolic space $H^n$, by a function $f_p(t)$ satisfying 
 \[
 \frac{df_p(t)}{dt} = (1/sinh(t))^{\frac{n-1}{p-1}}. 
 \]
 
\begin{lemma}The function $f_p(t)$ is $p$-harmonic and $f_p(t) \rightarrow t$  uniformly on compact sets of $H^n$.  
\end{lemma}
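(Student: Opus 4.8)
The plan is to verify directly that $f_p$ solves the radial $p$-Laplace equation on $H^n$ and then to analyze the limit $p\to\infty$. For the first part, I would write the hyperbolic metric in geodesic polar coordinates centered at $x_0$, namely $g = dt^2 + \sinh^2(t)\, g_{S^{n-1}}$, so that a radial function $c_p(x)=f_p(t)$ with $t=d(x_0,x)$ has $|dc_p| = |f_p'(t)|$ and the $p$-Laplacian reduces to the ODE
\[
\bigl(\sinh(t)\bigr)^{1-n}\,\frac{d}{dt}\!\left(\bigl(\sinh(t)\bigr)^{n-1}\,|f_p'(t)|^{p-2}f_p'(t)\right) = 0.
\]
This says $\bigl(\sinh(t)\bigr)^{n-1}|f_p'(t)|^{p-1}$ is constant; choosing $f_p' = (1/\sinh(t))^{(n-1)/(p-1)}>0$ makes this constant equal to $1$, so $f_p$ is $p$-harmonic away from $x_0$. (One should note $f_p'$ is positive and the expression is smooth for $t>0$; the cone point $x_0$ is a removable issue handled as in the Euclidean case, since near $t=0$ one has $f_p'(t)\sim t^{-(n-1)/(p-1)}$ which is integrable and the standard cone-function argument applies — I would cite the Euclidean treatment in \cite{crandal} or \cite{lindqvist} and remark the same computation works here.)

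For the convergence statement, fix $t>0$ and write $f_p(t) = \int_0^t (1/\sinh(s))^{(n-1)/(p-1)}\,ds$, normalizing $f_p(0)=0$. As $p\to\infty$ the exponent $(n-1)/(p-1)\to 0$, so for each fixed $s>0$ the integrand $(1/\sinh(s))^{(n-1)/(p-1)}\to 1$. I would then upgrade pointwise convergence of the integrand to uniform convergence of $f_p(t)\to t$ on a compact interval $[0,T]$: split the integral at a small $\delta>0$. On $[\delta,T]$ the integrand converges uniformly to $1$ (it is monotone in $p$ for $\sinh(s)\ge 1$ and controlled by $\max(\sinh(s)^{-\epsilon_0},1)$ for a fixed small $\epsilon_0$ on all of $[\delta,T]$, so dominated convergence plus monotonicity gives uniformity), hence $\int_\delta^t \to (t-\delta)$ uniformly in $t\in[\delta,T]$. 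On $[0,\delta]$ one estimates $\int_0^\delta (1/\sinh(s))^{(n-1)/(p-1)}\,ds$: since $\sinh(s)\ge s$ near $0$, this is at most $\int_0^\delta s^{-(n-1)/(p-1)}\,ds = \frac{\delta^{1-(n-1)/(p-1)}}{1-(n-1)/(p-1)}$, which is $\le 2\delta$ for $p$ large and tends to $0$ with $\delta$ uniformly in $p$. Combining, $\limsup_p \sup_{t\in[0,T]}|f_p(t)-t| \le C\delta$ for every $\delta$, giving uniform convergence on $[0,T]$, and since $T$ was arbitrary this is uniform convergence on compact subsets of $H^n$ (i.e. on $\{t\le T\}$).

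The main obstacle is the handling of the small-$t$ (near the cone point) regime: both the claim that $f_p$ is genuinely $p$-harmonic across $x_0$ in the weak sense and the uniform control of $f_p$ near $t=0$ require the integrability estimate $(n-1)/(p-1)<1$, i.e. $p>n$, which is exactly the range we work in. Everything else is a routine ODE computation and a dominated-convergence argument; I expect no difficulty away from $x_0$. If one wants to avoid the cone-point subtlety entirely, an alternative is to only assert $p$-harmonicity on $H^n\setminus\{x_0\}$ and note that this suffices for the comparison-with-cones application, where the cone functions are used as barriers on punctured balls.
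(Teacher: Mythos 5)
Your proof is correct and follows the same overall strategy as the paper: verify $p$-harmonicity from the radial form of the $p$-Laplace ODE in geodesic polar coordinates $g = dt^2 + \sinh^2 t\, d\theta^2$, then show $f_p \to t$ uniformly on compacts. The one place the two arguments genuinely diverge is the convergence step. The paper compares $f_p'$ directly to the Euclidean cone derivative $h_p'(t)$, where $h_p(t) = t^{1-(n-1)/(p-1)}$, by factoring
\[
f_p'(t) = h_p'(t)\,\bigl(t/\sinh t\bigr)^{(n-1)/(p-1)}\bigl(1-(n-1)/(p-1)\bigr)^{-1},
\]
which gives $a_p h_p'(t) \le f_p'(t) \le b_p h_p'(t)$ with $a_p, b_p \to 1$ on a fixed compact interval, hence $a_p t^{1-(n-1)/(p-1)} \le f_p(t) \le b_p t^{1-(n-1)/(p-1)}$, and then concludes from the elementary fact that $t^{1-(n-1)/(p-1)} \to t$ uniformly on $[0,T]$. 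You instead split the integral $\int_0^t (\sinh s)^{-(n-1)/(p-1)}\,ds$ at a small $\delta$, get uniform convergence of the integrand to $1$ on $[\delta,T]$, and control the piece near $s=0$ by the integrability estimate $\int_0^\delta s^{-(n-1)/(p-1)}\,ds \le 2\delta$ for large $p$. Both routes are valid. The paper's pointwise sandwich is more economical; your $\delta$-split is slightly longer but makes the role of $p>n$ (so $(n-1)/(p-1)<1$) and the behavior near the cone point explicit, and you are also more careful than the paper about the removable singularity at $x_0$, which the paper leaves implicit under ``it follows immediately.''
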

\begin{proof} The metric on $H^n$ can be written in polar coordinates as
\[
g=dt^2+\sinh^2 t d\theta^2
\]
where $dt$ is hyperbolic length and $d \theta$ is the metric on $S^{n-1}$. From this, it follows immediately that $f_p(t)$ is $p$-harmonic.
To show the second statement, write
\[
f_p'(t)=h_p'(t)(t/sinh(t))^{(n-1)/(p-1)}\left (1-(n-1)/(p-1)\right )^{-1} \ \mbox{where} \ h_p(t)=t^{1-(n-1)/(p-1)}
\]
from which we obtain
\[
a_p h_p'(t) \leq f_p'(t) \leq h_p'(t) b_p 
\]
where $a_p$ and $b_p$ are constants converging to 1 as $p \rightarrow \infty$. Thus
\[
a_p t^{1-(n-1)/(p-1)} \leq \int_0^t f_p'(s)ds \leq b_p t^{1-(n-1)/(p-1)} 
\]
hence, by normalizing $f_p$ so that $f_p(0)=0$,
\[
a_p t^{1-(n-1)/(p-1)} \leq  f_p(t) \leq b_p t^{1-(n-1)/(p-1)} 
\]
from which the convergence follows.
\end{proof}

Since we can approximate both the $\infty$-harmonic function and the cone by $p$-harmonic maps, we get the proof of the following 
\begin{proposition}\label{comocones1}  If
\[ 
u(x) \leq A + B d(x,x_0) = c(x)
\]
for $ x \in \partial B_r(x_0)$  and at $x = x_0$, then 
\[
u(x) \leq c(x) \ \forall x \in B_r(x_0).
\] 
\end{proposition}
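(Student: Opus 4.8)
\textbf{Proof proposal for Proposition~\ref{comocones1}.}

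The plan is to reduce the statement about the $\infty$-harmonic map $u$ to a comparison principle for the $p$-harmonic approximants $u_p$ and the $p$-harmonic cone approximants, and then pass to the limit $p\to\infty$. First I would recall that $u$ is, by construction, a local uniform limit of the $p$-harmonic maps $u_p$ (Theorem~\ref{thm:limminimizer}(i)), and that by the previous lemma the hyperbolic cone function $c(x)=A+Bd(x,x_0)$ is a local uniform limit, on compact subsets of $B_r(x_0)$, of scaled and translated $p$-harmonic cone functions $c_p(x)=A+B\,f_p(d(x_0,x))$ (with $f_p(0)=0$, $f_p(t)\to t$). Working on the universal cover to make sense of everything as $\R$-valued functions, both $\tilde u_p$ and $\tilde c_p$ solve the same quasilinear elliptic equation $d^*(|d\cdot|^{p-2}d\cdot)=0$ on the punctured ball $B_r(x_0)\setminus\{x_0\}$, where they are smooth (away from critical points of $u_p$, and away from $x_0$ for the cone).

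The key step is the classical comparison principle for $p$-harmonic functions: if two $p$-harmonic functions on a domain $\Omega$ satisfy $w_1\le w_2$ on $\partial\Omega$, then $w_1\le w_2$ in $\Omega$. I would apply this on the annular-type domain $\Omega=B_r(x_0)\setminus \overline{B_\delta(x_0)}$ for small $\delta>0$: on the outer boundary $\partial B_r(x_0)$ we need $u_p\le c_p$, which follows (up to an error $o_p(1)$ that we can absorb by slightly enlarging the constant $A$, since $u_p\to u\le c$ uniformly there and $c_p\to c$ uniformly there); on the inner sphere $\partial B_\delta(x_0)$ we use that $u_p(x)\le u_p(x_0)+o_\delta(1)$ by continuity of $u_p$, while $c_p(x)=A+B f_p(\delta)\to c(x_0)=A$ as $\delta\to 0$, and $u(x_0)\le c(x_0)$ by hypothesis; again a slight enlargement of $A$ handles the errors. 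Then the comparison principle gives $u_p\le c_p+(\text{small})$ on $\Omega$, and letting $\delta\to0$, then $p\to\infty$, then removing the small perturbation of $A$, yields $u\le c$ on $B_r(x_0)$.

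The main obstacle I anticipate is the bookkeeping of the two independent limits ($\delta\to 0$ handling the puncture at $x_0$, and $p\to\infty$ handling the passage from $u_p$ to $u$ and from $c_p$ to $c$), together with the fact that the normalization constants $a_p,b_p$ in the cone lemma only converge to $1$ rather than being equal to $1$; one must choose the order of limits and the size of the auxiliary perturbation of $A$ carefully so that all errors are controlled uniformly. A secondary technical point is that $u_p$ need not be $C^\infty$ across its (finitely many, in dimension $2$, but possibly larger) critical set, so one should either invoke the comparison principle for $p$-harmonic functions in the weak $W^{1,p}$ sense—which holds in full generality by the standard monotonicity argument testing the two equations against $(u_p-c_p-\varepsilon)^+$—or note that in the relevant regime ($n\ge 2$, hyperbolic) the needed comparison is exactly the statement in the Euclidean/Riemannian $p$-Laplacian literature and cite it. I would favor the weak formulation since it sidesteps all regularity worries: subtract the two weak equations, test with $(\tilde u_p-\tilde c_p-\varepsilon)^+$ extended by zero, and use the strict monotonicity of $\xi\mapsto|\xi|^{p-2}\xi$ to conclude the test function vanishes.
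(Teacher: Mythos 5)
Your proposal is correct and essentially reproduces the paper's argument: approximate both $u$ and the cone $c$ by $p$-harmonic functions $u_p$ and $c_p$, compare them on the punctured ball $B_r^*(x_0)$ via the $p$-harmonic comparison/maximum principle, and let $p \to \infty$. The paper simply invokes the strong maximum principle on the punctured disc and tracks the uniform error $\epsilon(p)$, whereas you make the removal of the singularity at $x_0$ explicit through a $\delta$-excision and the weak $W^{1,p}$ test-function argument; these are just more detailed renditions of the same step.
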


\begin{proof}  Both the function $u$ and the cone $c $ are  uniform limits in $C^0$ of $p$-harmonic functions $u_p$ and $c_p$ respectively. Hence, for $x \in \partial B_r(x_0)$ and also for $x = x_0$. 
\begin{eqnarray*}
 u_p(x) &<& \epsilon + u(x) \\
 &\leq& \epsilon + A + B d(x,x_0)\\
 &<& \epsilon(1 + Br) + A + B f_p(d(x_0,x)) \\
 &\leq& \epsilon(1 + Br)  +c_p(x)
  \end{eqnarray*}
 Here $\epsilon = \epsilon(p) \rightarrow 0$ as $p \rightarrow \infty$. However, both $u_p$ and the cone $c_p$ are $p$-harmonic functions. By the strong maximum principle for $p$-harmonic functions applied to the punctured disc $B_r^*(x_0)$, we obtain
 \begin{eqnarray*}
u(x) &\leq &u_p(x) + \epsilon \\
&\leq & c_p(x) + \epsilon(2 + Br)\\
&\leq & c(x) + 2\epsilon(1 + Br).
  \end{eqnarray*}
Since $\epsilon = \epsilon(p)\rightarrow 0$ as $p \rightarrow \infty$, this finishes the proof. 
\end{proof}

%\begin{proposition}\label{comocones1} Let $u_p \rightarrow u$ be a variational solution as in Definition~\ref{infinharm}. Then, 
%$u$ satisfies comparison with cones.
%\end{proposition}
%
%\begin{proof} Let $V  \subset H^n$ open with $\bar V$ compact, and $u$, $c$ as in Definition~\ref{copco}. Both $u$ and the cone $c $ are  local uniform limits  of $p$-harmonic functions $u_p$ and $c_p=A+Bf_p(d(x_0,.))$ respectively. Hence, for $x \in \partial V$ 
%\begin{eqnarray*}
% u_p(x) &<& \epsilon + u(x) \\
% &\leq& \epsilon + A + B d(x,x_0)\\
% &<& \epsilon(1 + B) + A + B f_p(d(x_0,x)) \\
% &=& \epsilon(1 + B)  +c_p(x)
%  \end{eqnarray*}
% Here $\epsilon = \epsilon(p) \rightarrow 0$ as $p \rightarrow \infty$. However, both $u_p$ and the cone $c_p$ are $p$-harmonic functions. By the strong maximum principle for $p$-harmonic functions, we obtain
% \begin{eqnarray*}
%u(x) &\leq &u_p(x) + \epsilon \\
%&\leq & c_p(x) + \epsilon(2 + B)\\
%&\leq & c(x) + \epsilon(3 + B)
%  \end{eqnarray*}
%on $V$. Since $\epsilon = \epsilon(p)\rightarrow 0$ as $p \rightarrow \infty$, this finishes the proof. 
%\end{proof}
%
%By applying Proposition~\ref{comocones1} for the punctured ball of radius $r$ we immediately obtain (see also \cite[formula (6.1)]{lindqvist})
%
\begin{corollary}\label{comocones2}
The ratio
\[
\max_{d(x,x_0)=r}\frac{u(x)-u(x_0)}{r}
\]
is increasing in $r$. The same holds with $u$ replaced by $-u$.
\end{corollary}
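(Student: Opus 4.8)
\textbf{Proof proposal for Corollary~\ref{comocones2}.}

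The plan is to deduce the monotonicity of the ratio directly from comparison with cones, Proposition~\ref{comocones1}. Fix $x_0$ and abbreviate $d(x) = d(x,x_0)$. For a radius $r>0$ write
\[
\Phi(r) = \max_{d(x)=r} \frac{u(x)-u(x_0)}{r},
\]
and let $x_r \in \partial B_r(x_0)$ be a point realizing this maximum, which exists by compactness of the sphere and continuity of $u$. Set $B := \Phi(r)$ and $A := u(x_0)$, so that the cone $c(x) = A + B\, d(x)$ satisfies $c(x_0) = u(x_0)$ and, for every $x$ with $d(x) = r$,
\[
u(x) \le A + B r = c(x),
\]
with equality at $x = x_r$. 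Thus $u \le c$ on $\partial B_r(x_0)$ and at $x_0$, so Proposition~\ref{comocones1} applies and gives $u(x) \le c(x) = A + B\, d(x)$ for all $x \in B_r(x_0)$.

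Now take any $r' < r$. For any $x$ with $d(x) = r'$ we have, from the conclusion just obtained, $u(x) - u(x_0) \le B\, d(x) = B r'$, hence
\[
\frac{u(x)-u(x_0)}{r'} \le B = \Phi(r).
\]
Taking the maximum over the sphere $d(x) = r'$ yields $\Phi(r') \le \Phi(r)$, which is exactly the claimed monotonicity. For the statement with $u$ replaced by $-u$, observe that $-u$ is also a uniform $C^0$ limit of $p$-harmonic functions (namely $-u_p$, which are $p$-harmonic since the $p$-Laplacian is odd), so comparison with cones holds for $-u$ as well, and the identical argument applies. Alternatively, one notes that the same upper bound $u(x) \le A + B\,d(x)$ on $B_r(x_0)$, specialized appropriately, controls the minimum of $u$ on inner spheres; but the cleanest route is simply to rerun the argument with $-u$.

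I do not expect a genuine obstacle here: the only point requiring a word of care is the justification that comparison with cones applies to $-u$, i.e. that $-u$ is again a variational $\infty$-harmonic function; this follows because $u_p$ being $p$-harmonic forces $-u_p$ to be $p$-harmonic (the equation $d^*(|du_p|^{p-2}du_p)=0$ is invariant under $u_p \mapsto -u_p$), and $-u_p \to -u$ uniformly by Theorem~\ref{thm:limminimizer}$(i)$. Everything else is a direct, one-line application of Proposition~\ref{comocones1} together with compactness of geodesic spheres.
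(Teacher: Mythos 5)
Your proof is correct and is exactly the intended derivation: the paper states this corollary as an immediate consequence of Proposition~\ref{comocones1} (it is the classical ``comparison with cones implies monotone slopes'' argument from Crandall), and your choice of cone $c(x)=u(x_0)+\Phi(r)\,d(x,x_0)$ followed by restriction to inner spheres is precisely that argument. Your remark that $-u$ is again a variational $\infty$-harmonic function because $-u_p$ is $p$-harmonic and $-u_p\to -u$ uniformly correctly justifies the second assertion.
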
 

\begin{proposition}\label{comocones2}  Let $x_0 \in \lambda_u$ be arbitrary and $B_r(x_0) \subset M$ (we can lift to the covering space if we choose). Assume that as $ x_i \rightarrow x_0$, 
$ \frac{u(x_i)-u(x_0)}{d(x_i,x_0)} \rightarrow +L \ (\mbox{resp.} -L).$  
Then 
\[
u(x)= u(x_0) + Lr \ (\mbox{resp.} -Lr)
\]
 for some point $x \in \partial B_r(x_0)$, and the geodesic between $x_0$ and $x$ lies in $\lambda_u.$
\end{proposition}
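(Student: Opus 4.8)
The plan is to use comparison with cones (Proposition~\ref{comocones1}, stated just above) together with the monotonicity in Corollary~\ref{comocones2}, and then argue that the curve achieving the maximal ratio at every scale is a geodesic that stays in $\lambda_u$.

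First I would set up the geometry. Work in $B_r(x_0)$, lift to $\tilde M = H^n$ if needed so distances are realized by unique geodesics. For each $0 < s \le r$ let $m(s) = \max_{d(x,x_0)=s} \big( u(x) - u(x_0) \big)/s$. By Corollary~\ref{comocones2}, $m(s)$ is nondecreasing in $s$, and by definition of the local Lipschitz constant and the hypothesis $x_0 \in \lambda_u$ (together with $\frac{u(x_i)-u(x_0)}{d(x_i,x_0)} \to L$ along some sequence $x_i \to x_0$), we get $\lim_{s \to 0^+} m(s) = L$; hence $m(s) \ge L$ for all $s \in (0,r]$. On the other hand $L = |du|_{L^\infty}$ is the global Lipschitz constant, so $u(x) - u(x_0) \le L\, d(x,x_0)$ everywhere, giving $m(s) \le L$. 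Therefore $m(s) = L$ for all $s \in (0,r]$: at every radius $s$ there is a point $x_s \in \partial B_s(x_0)$ with $u(x_s) - u(x_0) = Ls$. In particular, taking $s = r$ produces the point $x \in \partial B_r(x_0)$ with $u(x) = u(x_0) + Lr$ asserted in the statement.

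Next I would show the geodesic $\sigma$ from $x_0$ to $x$ lies in $\lambda_u$. Apply Proposition~\ref{comocones1} with the cone $c(y) = u(x_0) + L\, d(y,x_0)$: since $u(y) \le c(y)$ on $\partial B_r(x_0)$ and at $x_0$, we get $u(y) \le c(y)$ on all of $B_r(x_0)$. Symmetrically, comparing $-u$ against the cone $\tilde c(y) = -u(x) + L\, d(y,x)$ on the ball $B_r(x)$ (so that $-u(y) \le -u(x) + L d(y,x)$, i.e. $u(y) \ge u(x) - L d(y,x)$ on $B_r(x)$, using that $u(x_0) = u(x) - Lr \le u(x) - L d(x_0,x)$ holds with equality) pins $u$ from below along the part of $\sigma$ inside $B_r(x)$. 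Along the geodesic $\sigma$ these two bounds coincide: for $y = \sigma(t)$ at distance $t$ from $x_0$ along $\sigma$ we have $d(y,x_0) = t$ and $d(y,x) = r - t$ (this uses that $\sigma$ is a length-minimizing geodesic), so $u(x_0) + Lt \le u(y) \le u(x) - L(r-t) = u(x_0) + Lt$, forcing $u(\sigma(t)) = u(x_0) + Lt$ for all $t \in [0,r]$. Thus $u$ increases at unit speed $\times L$ along $\sigma$, so for any $y = \sigma(t)$ and small $\delta$, $\frac{u(\sigma(t+\delta)) - u(\sigma(t))}{d(\sigma(t+\delta),\sigma(t))} = L$, which shows $L_u(y) \ge L$, hence $L_u(y) = L$ and $y \in \lambda_u$. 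Therefore $\sigma \subset \lambda_u$.

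The main obstacle I expect is the two-sided comparison: making precise that the lower cone bound centered at $x$ is legitimately available. Proposition~\ref{comocones1} is stated for upper cones dominating $u$ on the boundary sphere \emph{and} at the center; to get the lower bound along $\sigma$ I need to apply it to $-u$ on a ball centered at $x$, and I must check the hypotheses there — namely that $-u(y) \le -u(x) + L\, d(y,x)$ holds on $\partial B_r(x)$ and at $x$, which is just the global Lipschitz bound, so this is fine, but one should be careful that $B_r(x)$ may stick out of the original ball or (upstairs) that we have room in $\tilde M$; passing to the universal cover resolves this since $H^n$ has infinite injectivity radius. A secondary subtlety is the degenerate case where $u$ is not strictly increasing — but the computation above shows equality is forced along $\sigma$, so there is no slack to worry about. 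Everything else is the elementary distance identity $d(\sigma(t),x_0) + d(\sigma(t),x) = d(x_0,x)$ for points on a minimizing geodesic.
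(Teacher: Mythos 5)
Your proof is correct and takes essentially the same route as the paper: the monotonicity of the spherical maximum ratio (the corollary on comparison with cones) combined with the sequence $x_i$ forces the maximum on $\partial B_s(x_0)$ to equal $L$ for every $s\le r$, and then a pinching argument along the geodesic forces $u(\sigma(t))=u(x_0)+Lt$, hence $\sigma\subset\lambda_u$. The only cosmetic difference is that you phrase the lower bound in the pinching step as a second cone comparison centered at $x$, whereas the paper obtains both bounds directly from the global $L$-Lipschitz inequality along the geodesic, which is all that is needed.
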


\begin{proof} Let $B = \max_{x \in \partial B_r} \frac{u(x)- u(x_0)}{r}.$  Let $x \in \partial B_r(x_0)$ on which $B$ is taken on.  Since $L$ is the  Lipschitz constant $B \leq L.$
Suppose $B < L$, then  by Corollary~\ref{comocones2} $u (x) \leq u(x_0) +Bd(x,x_0)$. But 
\[
\lim_i \frac{u(x_i) - u(x_0)}{d(x_i,x_0)} = L > B,
\] 
and for some $x_i$,
\[ 
u(x_i) - u(x_0) > Bd(x_i,x_0).
\] 
This gives a contradiction to the statement that $u$ lies under the cone $c$. So $B = L.$

Let $\lambda_0$ be the geodesic parameterized by arc length between $x_0$ and $x$.  Then,  since $L$ is the best Lipschitz constant, for $0 \leq s < t \leq r$
\[
 u(\lambda_0(t)) -u(\lambda_0(s)) \leq L(t-s).
 \]
But 
\[
u(x)- u(x_0) = u(\lambda_0(r) ) - u(\lambda_0(0)) = Lr. 
\]   
This gives estimates above and below on $u(\lambda_0(t))$ that shows  $u(\lambda_0(t)) = u(x_0) + Lt.$
For the case of $-L$, apply the same procedure to $-u$.
\end{proof}

\begin{straightline} We have shown that every point $x \in \lambda_u$ lies in a geodesic in $\lambda_u.$ We need only show that a) the entire geodesic lies in $\lambda_u$ and b) if the geodesics intersect, they form an angle of 0 or $\pi$.  We show b) first, as it is part of a). Suppose two geodesics $\lambda_1$ and $\lambda_2 \in \lambda_u$ meet at $x_0$, and that $x_0 = \lambda_1(0)$ is an interior point of $ \lambda_1.$ Assume also that $\lambda_2(0) = x_0$ and that the geodesics are parameterized by arc length.  Then
\[ 
u(\lambda_1(t)) = u(x_0) + Lt
\]
 for $t$ of both signs and 
 \[
 u(\lambda_2(s)) = u(x_0) + Ls
 \]
  for either $s > 0$ or $s < 0.$ Using the fact that $ L$ is the best Lipschitz constant, 
  \[ 
  Ld (\lambda_1 (t), \lambda_2(s)) \geq
 |u(\lambda_1(t)) - u(\lambda_2(s))| = |L(t - s)|. 
  \] 
It follows that  $|t-s|$ is the distance from $\lambda_1(t)$ to $\lambda_2(s)$ along the geodesics and must be greater than or equal to the actual distance. But we already know from the inequality that it is less than or equal to the distance between them on $M$. Equality follows; hence the geodesics meet at angle 0 if we parameterize them both in the direction of increasing $u.$

To show that the entire geodesic $\lambda_0$ lies in $\lambda_u$, we suppose not.  Then, choose $x_0$ near but not at the end the part of of geodesic $\lambda_0$ which lies in $\lambda_u.$  The Lipschitz constant at $x_0$ is taken on in both directions. More precisely, there are sequences 
$ x_i^\pm \rightarrow x_0$, 
$ \frac{u(x_i^\pm)-u(x_0)}{d(x_i^\pm,x_0)} \rightarrow \pm L .$ This follows by a straightforward argument using comparison with cones (cf. \cite[Lemma 4.6]{crandal}).
Hence by Proposition~\ref{comocones2}, there are two geodesic rays emanating from $x_0$ on which take on the best Lipschitz constant from above and below, both of  which are in $\lambda_u$ until they reach the boundary of $B_r(x_0).$  By the previous argument, these rays must make an angle of either 0 or $\pi $ with $\lambda_0.$ Hence $\lambda_0$ intersect $B_r(x_0)$ in $\lambda_u.$
\end{straightline}

\subsection{Another interpretation of the best Lipschitz constant} 
In this section we fix $(M,g)$ a closed hyperbolic manifold. Let $u: M \rightarrow S^1$ be an $\infty$-harmonic map in a given homotopy class  with best Lipchitz constant $L=L_u$. Let
$\tilde u : \tilde M \rightarrow \R$
denote the lift to the universal cover, equivariant  under the homomorphism $\rho:  \pi_1(M) \rightarrow \Z$. 
%i.e 
%\[
%\tilde u(\gamma z)=  \tilde u( z)+ \rho(\gamma), \ \forall \gamma \in \Gamma\  \mbox{and} \   \forall z \in \tilde M.
%\]
%For $\gamma \in \pi_1(M)$, let $l_g(\gamma)$ denote the length of the geodesic homotopic to $\gamma$. Equivalently, via the identification $\Gamma \simeq \pi_1(M)$,
%\[
%l_g(\gamma)=\inf_{\tilde x \in H^n}d_{H^n}(\tilde x, \gamma(\tilde x)).
%\]
Let $\mathcal S$ denote the set of free homotopy classes of simple closed curves in $M$.  Given $\gamma \in \mathcal S$,
let $l_g(\gamma)$ denote the length of the geodesic representative of $\gamma$ and
define the functional
\begin{equation}\label{normlength}
K: \mathcal S \rightarrow \R_{\geq 0}, \ \  K(\gamma)=  \frac{|\rho(\gamma)|}{l_g(\gamma)}.
\end{equation}
In the above, by a slight abuse of notation, we denote by $\gamma$ also the element in $\pi_1(M)$ corresponding to the free homotopy class $\gamma$.
Let 
\[
K= \sup_{\gamma \in  \mathcal S} K(\gamma)
\]
and note that 
\begin{equation}\label{ineq0}
K \leq L.
\end{equation} 
Indeed, given $\gamma \in \mathcal S$ denote by $\tilde \gamma: [0, T] \rightarrow \tilde M$ the lift of any loop in  $\gamma$ parametrized by arc length. Note that,
\begin{eqnarray}\label{water}
|\rho(\gamma)|&=&  |\tilde u(\tilde \gamma(0))-  \tilde u(\tilde \gamma(T))| 
 = \left |\int_0^T  \frac{d(\tilde u \circ \tilde \gamma)}{dt}  dt  \right |.
 \end{eqnarray}
By taking $\tilde \gamma$ a lift of the geodesic representative in the free homotopy class $\gamma$, and noting $T=l_g(\gamma)$,
\begin{eqnarray}\label{water2}
|\rho(\gamma)|
 &\leq& \int_0^T \left |d \tilde u_{ \tilde \gamma(t)} \right | dt  \leq LT. \nonumber
\end{eqnarray}
Hence
\[
\frac{|\rho(\gamma)|}{l_g(\gamma)} \leq L,
\]
which implies (\ref{ineq0}). 

The following theorem is a version of \cite[Theorem 8.5]{thurston}. 
  \begin{theorem}\label{K=L} $K=L$.
\end{theorem}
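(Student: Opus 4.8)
The plan is to establish the reverse inequality $K \geq L$; combined with \eqref{ineq0} this gives the theorem. The strategy is to exploit the structure theorem already proven: by Theorem~\ref{straightline}, the maximal stretch set $\lambda_u$ is a geodesic lamination, so $u$ increases at the maximal rate $L$ along the leaves of $\lambda_u$. The idea is to produce, for any $\epsilon>0$, a simple closed curve $\gamma$ whose geodesic representative follows a leaf of $\lambda_u$ closely enough that $K(\gamma) = |\rho(\gamma)|/l_g(\gamma) \geq L - \epsilon$.

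The key steps, in order. First, pick a leaf $\ell$ of $\lambda_u$ through some point $x_0 \in \lambda_u$, passing to the universal cover so that $\ell$ lifts to a geodesic line $\tilde\ell$ along which $\tilde u(\tilde\ell(t)) = \tilde u(x_0) + Lt$ (this is exactly what Proposition~\ref{comocones2} and the proof of Theorem~\ref{straightline} give us). Second, use recurrence/density properties of geodesic laminations on a compact hyperbolic manifold: the leaf $\ell$ either closes up (it is itself a closed geodesic) or its closure contains a minimal sublamination whose leaves come back arbitrarily close to themselves in nearly the same direction. In either case, for large $T$ one finds points $\tilde\ell(0)$ and $\tilde\ell(T)$ whose projections to $M$ are within distance $\delta$ of each other with nearly-aligned tangent directions; closing up the segment $\tilde\ell([0,T])$ by a short arc of length $O(\delta)$ produces a loop whose free homotopy class $\gamma$ is a simple closed curve (simplicity is inherited for $\delta$ small because the leaf is embedded and the short closing arc can be chosen transverse and short). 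Third, estimate: $|\rho(\gamma)| = |\tilde u(\text{endpoint}) - \tilde u(\text{startpoint})| \geq LT - O(\delta)$ using that $\tilde u$ increases at rate exactly $L$ along $\tilde\ell$ and is $L$-Lipschitz on the short closing arc, while the geodesic length $l_g(\gamma) \leq T + O(\delta)$ since the broken path of length $T + O(\delta)$ is homotopic to the geodesic representative and geodesics minimize length in their free homotopy class. Hence $K(\gamma) \geq (LT - O(\delta))/(T + O(\delta))$, which exceeds $L - \epsilon$ once $T$ is large and $\delta$ small.

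The main obstacle I expect is the topological bookkeeping in step two: guaranteeing that the closed-up curve $\gamma$ is actually \emph{simple} (embedded), since $\mathcal{S}$ consists only of simple closed curves, and controlling the length of the closing arc relative to $T$. In dimension $2$ one can lean on the fact that leaves of a geodesic lamination are disjoint simple geodesics and that a sufficiently short transversal closing arc keeps the loop embedded; if the leaf through $x_0$ is already closed the argument is immediate with $\delta = 0$. One must also handle the possibility that no leaf is closed — here the minimal-lamination recurrence argument is essential, and one should cite the standard theory of geodesic laminations (Thurston, Bonahon) for the fact that a minimal geodesic lamination on a compact hyperbolic surface has leaves that are dense in it and hence strongly recurrent. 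A cleaner alternative, which avoids simplicity issues entirely, is to first prove the inequality with $\mathcal{S}$ replaced by all (not necessarily simple) closed geodesics, and then invoke that any filling/recurrent configuration can be approximated by simple closed curves in the measured-lamination topology; but the recurrence-and-close-up argument above is the most direct route and is essentially the content of \cite[Theorem 8.5]{thurston}.
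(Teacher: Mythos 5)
Your argument is essentially the paper's own proof: follow a leaf $\beta$ of $\lambda_u$ for a long time, use compactness of $M$ to find a near-return $d_g(\beta(t_2),\beta(t_1))<1/n$ with $t_2-t_1\geq 1$, close up with a short geodesic arc, and compare $|\rho(\gamma_n)|\geq L(t_2-t_1-1/n)$ against $l_g(\gamma_n)\leq t_2-t_1+1/n$. The only differences are cosmetic: the paper gets the near-return by bare pigeonhole rather than invoking minimality/recurrence (and does not need $T\to\infty$, only $1/n\to 0$), and it passes over in silence the simplicity issue you rightly flag, so your discussion there is, if anything, more careful than the printed proof.
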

\begin{proof}  
Let $\beta$ be a leaf of the maximum stretch lamination $\lambda_u$  parameterized according to arc length. Because $M$ is compact, for any $n$, we can find $t_1 < t_2 - 1$ such that 
$d_g(\beta(t_2), \beta(t_1))< 1/n$. (If $\beta$ is closed this holds trivially for any $n$ by taking $\beta(t_2)=\beta(t_1))$. Choose the closed geodesic $\gamma_n$ to be the geodesic homotopic to the broken  geodesic $\beta_n$ made up by following $\beta$ from $t_1$ to $t_2$ and then connecting $\beta(t_2)$ to $\beta(t_1)$ by a short geodesic of length less than $1/n$.
Note that
 $l_g(\gamma_n) \leq l_g(\beta_n) < t_2 - t_1 + 1/n$.
By (\ref{water}) and noting that $\beta$ is a curve of stretch $L$ for $u$, 
\begin{eqnarray}\label{water2}
|\rho(\gamma_n)|=|\rho(\beta_n)| \geq L(t_2 - t_1 - 1/n).
\end{eqnarray}
Hence,
\[
K \geq K(\gamma_n) > \frac{L(t_2 - t_1 - 1/n)}{t_2 - t_1 + 1/n} \rightarrow L
\]
as $n \rightarrow \infty$.
\end{proof}

\section{The concentration of the Measure}\label{consmeasure}
In this section, we will use the Euler-Lagrange equations to determine properties of the limiting measures on the maps which take on the best Lipschitz constants. 
The statements are actually statements about $L^1$ norms being small, which implies that the limiting measures are zero away from the set of maximum stretch $\{ L_u=L \}=\lambda_u$.  They make sense in the limit applications only when a continuous function is inserted in the integrals. However, the limits are still zero, since the sup norm of a test function is bounded by the modulus of continuity. Note that in this section we will not make use of the results of Section~\ref{sect:crandal} that $\lambda_u$ is a geodesic lamination. Also the results about the concentration of the measure work in any dimensions and any Riemannian metric.

In Section~\ref{LGRad} we will specialize to the case $n=2$ and show that the map $v$ obtained as a limit of the maps $v_q$ as $q \rightarrow 1$ is a map of least gradient. We will not explore this property further in this paper, however in Section~\ref{conjectures} we will indicate how this property can be used together with results about minimizing currents to give another proof that $\lambda_u$ is a geodesic lamination on the support of the measure $dv$. 

Finally in Section~\ref{sectevp} we will show how the results of the previous sections can be generalized to cover the equivariant problem for a general real valued homomorphism $\rho$.  There are no real changes. Our paper could have been written to include this more general situation from the start.  We did not do this, as many in our target audience would have found it a source of added confusion in a paper that already contains unfamiliar topics.

\subsection{The support of $V=dv$} Let  $(M,g)$ be a Riemannian manifold  of dimension $n \geq 2$ and 
let $u: M \rightarrow S^1$ be an $\infty$-harmonic map obtained as a sequential limit $u=\lim_{p \rightarrow \infty}u_p$ as in Theorem~\ref{nonsingular}. In order to simplify the notation, for this section only, we  renormalize the measure on $M$ so that the best Lipschitz constant $L=L_u= 1$.  Carrying factors of this constant around makes everything more difficult to write and read.

 As with the case of dimension 2, we continue  with the normalization $k_p$ as in  (\ref{normintv1}) 
 and by Lemma~\ref{klemma1}, 
\begin{equation}\label{klemma11}
\lim_{p \rightarrow \infty} k_p = 1.
\end{equation}
We define the 1-form $ U_p = k_p d  u_p$ and the closed $n-1$ form $V_q = |  U_p|^{p-2}*  U_p$. As in  (\ref{kappanorm0}) and (\ref{kappavolform}),
\begin{equation} \label{kappavolform21}
\int_M |V_q|*1=\int_M |  U_p|^{p-1}*1
 \approx 1 \ \ \mbox{(for $p$ large)}
\end{equation}
and
\begin{equation} \label{kappavolform22}
\int_M d  u_p \wedge V_q=1.
\end{equation}
As in Proposition~\ref{lemma:limmeasures0},
for $\phi \in  \Omega^1(M)$ a test function and $|\phi|_{L^\infty} \leq 1$,  (\ref{kappavolform21}) implies,
  $\left | \int_M \phi \wedge V_q  \right | $ is uniformy bounded,  
  hence 
  \[
  V_q \rightharpoonup  V, 
  \]
  where $V$ is a closed, $n-1$ current. In our previous notation, for $n=2$, $V=dv$.

The main result of this section is the following theorem:
\begin{theorem}\label{thm:supptmeasure} The support of the current $V$  is contained in the locus of maximum stretch $ \lambda_u$ of $u$.
\end{theorem}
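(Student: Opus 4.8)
The plan is to show that for any point $x_0 \notin \lambda_u$, there is a ball $B_r(x_0)$ on which the mass of $V$ vanishes, by exploiting the Euler--Lagrange equation $d^*(|U_p|^{p-2}U_p)=0$ together with the fact that $|du_p|$ stays strictly below $L=1$ near such a point. Concretely, since $L_u(x_0) < 1$ and $x \mapsto L_u(x)$ is upper semicontinuous (Proposition~\ref{crandal0}(i)), there is a ball $B_{2r}(x_0)$ and a constant $\delta>0$ with $L_u(x) \leq 1-\delta$ for all $x \in B_{2r}(x_0)$. The first step is to upgrade this to a statement about the approximating maps: I would argue that $|du_p| \leq 1-\delta/2$ on $B_r(x_0)$ for all large $p$. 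This should follow from comparison-with-cones-type control, or more simply from the local Lipschitz bound: on a slightly smaller ball the uniform convergence $u_p \to u$ plus interior gradient estimates for $p$-harmonic functions (Theorem~\ref{nonsingular}, plus the Euclidean/hyperbolic interior estimates) force $|du_p|$ to be close to the local Lipschitz constant of $u$, which is bounded by $1-\delta$ there.

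The second step is the $L^1$-concentration estimate, which is the heart of the section. On $B_r(x_0)$ we have $|V_q| = |U_p|^{p-1} = k_p^{p-1}|du_p|^{p-1} \leq k_p^{p-1}(1-\delta/2)^{p-1}$. Now $k_p^{p-1} \to L^{1-p}\cdot(\text{something bounded})$; more carefully, from (\ref{normintv1}) and Lemma~\ref{klemma1}, $k_p \to L^{-1}=1$, and in fact $\int_M |U_p|^{p-1}*1 \approx 1$ stays bounded, so $k_p^{p-1}$ grows at most like a fixed geometric-type factor that is swamped by $(1-\delta/2)^{p-1}$. Thus
\[
\int_{B_r(x_0)} |V_q|*1 \;\leq\; \mathrm{vol}(B_r(x_0))\, k_p^{p-1}(1-\delta/2)^{p-1} \;\longrightarrow\; 0
\]
as $p \to \infty$ (equivalently $q \to 1$). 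Hence for any test form $\phi \in \Omega^1(M)$ supported in $B_r(x_0)$,
\[
\Big| \int_M \phi \wedge V \Big| = \lim_{q \to 1} \Big| \int_M \phi \wedge V_q \Big| \leq |\phi|_{L^\infty} \lim_{q\to1}\int_{B_r(x_0)} |V_q|*1 = 0,
\]
so $V$ vanishes on $B_r(x_0)$. Since $M \setminus \lambda_u$ is covered by such balls, $\mathrm{supp}(V) \subseteq \lambda_u$.

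The main obstacle is the first step: transferring the bound $L_u \leq 1-\delta$ from the limit $u$ to a uniform-in-$p$ bound $|du_p| \leq 1-\delta/2$ on a fixed ball. The subtlety is that $du$ is not known to be continuous in higher dimensions, so one cannot simply say $|du_p| \to |du|$ pointwise; instead one must work with $L^\infty$ norms of $du$ on balls and the characterization in Proposition~\ref{crandal0}(ii), namely $L_u(x) = \lim_{r\to 0}|du|_{L^\infty(B_r(x))}$. Combined with the weak convergence $du_p \rightharpoonup du$ in every $L^s$ and lower semicontinuity, plus the fact that $u_p$ are actual minimizers of $J_p$ (so their gradients cannot concentrate in a region where the limit has small Lipschitz constant), one gets $\limsup_p |du_p|_{L^\infty(B_r(x_0))} \leq |du|_{L^\infty(B_{2r}(x_0))} \leq 1-\delta$; a clean way to see this is to run the comparison-with-cones argument of Section~\ref{sect:crandal} locally, bounding $u_p$ above and below by cones of slope $1-\delta$ on $B_{2r}(x_0)$, which pins $|du_p|$ on the smaller ball. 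Once that uniform gradient bound is in hand, the remaining estimates are the routine exponential-decay computation sketched above.
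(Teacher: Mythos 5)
Your proposal splits into two steps: (a) upgrade the bound $L_u\le 1-\delta$ near $x_0$ to a uniform-in-$p$ pointwise bound $|du_p|\le 1-\delta/2$ on a fixed ball, and (b) feed that into the trivial exponential estimate $\int_{B_r}|U_p|^{p-1}\lesssim k_p^{p-1}(1-\delta/2)^{p-1}\to 0$. Step (b) is fine (one just needs $k_p\to 1$ from Lemma~\ref{klemma1}, so that $k_p(1-\delta/2)<1-\delta/4$ eventually). The genuine gap is step (a), which you yourself flag as ``the main obstacle,'' and the proposed fix does not close it. Weak $L^s$ convergence $du_p\rightharpoonup du$ with lower semicontinuity gives $|du|_{L^\infty(W)}\le\liminf |du_p|_{L^\infty(W)}$, which is the \emph{wrong direction}: it does not prevent $|du_p|$ from exceeding $1-\delta/2$ on a set shrinking as $p\to\infty$. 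Comparison with cones in the paper (Proposition~\ref{comocones1}) is a statement about the limit $u$, not a gradient estimate for the approximants $u_p$; bounding $u_p$ above and below by cones of slope $1-\delta$ controls $u_p$ in $C^0$, not $|du_p|$ pointwise, and the $C^{1,\alpha}$ bounds of Theorem~\ref{nonsingular} degenerate as $p\to\infty$. So your first step would require a new, nontrivial local gradient estimate uniform in $p$, which the paper does not have and does not need.

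The paper's route is genuinely different and deliberately avoids any local $L^\infty$ control of $|du_p|$. It uses the Euler--Lagrange equation as a \emph{global integral identity}: testing $d^*(|du_p|^{p-2}du_p)=0$ against $u_p-u$ gives $\int_M|U_p|^{p-2}\langle U_p,U_p-k_pU\rangle*1=0$, which, via the elementary pointwise inequality of Lemma~\ref{klemma2} (namely $e_p^{p-2}(e^2-e_p^2)<2/(p-2)$ for $0\le e_p\le e\le 1$), yields Proposition~\ref{kprop 4}: $\int_M|U_p|^{p-2}|U_p-U|^2*1\to 0$. From this, Proposition~\ref{prop:supptmeasure0} shows that on the set $\{|U|\le\lambda<1\}$ the quantity $\int|U_p|^p|\phi|*1\to 0$, by splitting into the region $\{|U|^2\le\lambda|U_p|^2+|U-U_p|^2\}$ (controlled by the integral identity) and its complement, where the pointwise bound $|U_p|\le\lambda^{-1/2}|U|\le\lambda^{1/2}$ kills $|U_p|^p$ directly. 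Note that this only requires control of $|U|=|du|$ on $W$, not of $|du_p|$. The theorem then follows from lower semicontinuity of mass under weak convergence. Your sketch would be correct as a \emph{conditional} argument granted step (a), but as written it needs the missing local gradient estimate, and the paper's integral/duality argument is the way around it.
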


\begin{lemma}  \label{klemma2}  Suppose $0 \leq e_p \leq e \leq 1$. Then 
\[
 e_p^{p-2}(e^2 - e_p^2) < 2/(p-2).
 \]
\end{lemma}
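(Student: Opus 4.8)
The plan is to treat this as a single-variable calculus estimate. Fix $e \in [0,1]$ and regard the left-hand side as a function of $e_p$ on the interval $[0,e]$, namely $h(t) = t^{p-2}(e^2 - t^2)$ for $t \in [0,e]$. Since $h(0) = h(e) = 0$ and $h \geq 0$ on $[0,e]$, the maximum is attained at an interior critical point, so it suffices to bound $h$ at that critical point. First I would differentiate: $h'(t) = (p-2)t^{p-3}(e^2 - t^2) + t^{p-2}(-2t) = t^{p-3}\bigl((p-2)(e^2 - t^2) - 2t^2\bigr)$, which vanishes (for $t > 0$) when $(p-2)e^2 = p\,t^2$, i.e. at $t_\ast^2 = \tfrac{p-2}{p}e^2$.

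Next I would substitute $t_\ast$ back in. At the critical point, $e^2 - t_\ast^2 = e^2\bigl(1 - \tfrac{p-2}{p}\bigr) = \tfrac{2}{p}e^2$, so
\[
h(t_\ast) = t_\ast^{p-2}\cdot \frac{2}{p}e^2 = \Bigl(\tfrac{p-2}{p}\Bigr)^{(p-2)/2} e^{p-2}\cdot \frac{2}{p}e^2 \leq \frac{2}{p}\,e^p \leq \frac{2}{p} \leq \frac{2}{p-2},
\]
using $\bigl(\tfrac{p-2}{p}\bigr)^{(p-2)/2} \leq 1$, then $e \leq 1$, and finally $p > p-2$. Since $e_p \le e$ forces $e_p^{p-2}(e^2-e_p^2) = h(e_p) \le h(t_\ast)$, this gives the claimed bound. (One should note $e_p^{p-2}(e^2-e_p^2) \le h(t_\ast) < 2/(p-2)$ is in fact strict unless degenerate, consistent with the strict inequality in the statement; if $e_p = e$ or $e_p = 0$ the left side is $0 < 2/(p-2)$ trivially.)

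I expect no serious obstacle here — this is a routine optimization. The only minor care needed is the endpoint/degenerate cases ($p = 2$ is excluded since the statement presumes $p > 2$; $e_p = 0$; $e = 0$), all of which make the left-hand side vanish and hence are trivially fine. A slicker alternative avoiding calculus would be the weighted AM–GM / Young's inequality applied to $e_p^{p-2}\cdot e_p^2$ versus $e^2$, but the direct critical-point computation above is cleanest and I would present that.
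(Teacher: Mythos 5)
Your proposal is correct and is essentially the same argument as the paper's: both reduce to optimizing the expression via a critical-point computation and then use $e \le 1$. The paper first substitutes $s_p = e_p/e$ to normalize the variable to $[0,1]$, while you keep $e_p$ on $[0,e]$ and absorb the factor $e^p \le 1$ at the end, but these are cosmetic variants of the identical calculus estimate and yield the same bound $\bigl(\tfrac{p-2}{p}\bigr)^{(p-2)/2}\tfrac{2}{p} < \tfrac{2}{p-2}$.
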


\begin{proof}  Let  $s_p = \frac{e_p}{e}$.    Then the expression we are trying to bound can be written as
\[
 e^p {s_p}^{p-2}(1 - s_p^2).
 \]
   But by calculus, the maximum of $s_p^{p-2}(1-s_p^2)$ is less than $2/(p-2)$. Since $e \leq 1$, we are done.
\end{proof}

\begin{lemma}  \label{kprop 3} Let  $U_p=k_pdu_p$, $U=du$ and
\[ 
G(p) =  2<U_p ,U_p -U> =|U_p|^2+ |U_p -U|^2 - |U|^2.
\]
  Define $Y_p$ as the set on which $G(p) \geq 0$.    Then
 \[   
 \lim_{p\rightarrow \infty}  \int_{Y_p}  |U_p|^{p-2}G(p)*1 = 0.
 \]
\end{lemma}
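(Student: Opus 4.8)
The plan is to prove first that the \emph{full} integral $\int_M |U_p|^{p-2}G(p)*1$ already tends to $0$ --- this is where the Euler--Lagrange equation produces essentially complete cancellation --- and then to show that discarding the region $M\setminus Y_p=\{G(p)<0\}$ changes the integral only by a quantity that also tends to $0$, for which Lemma~\ref{klemma2} is exactly the tool. Since the integrand $|U_p|^{p-2}G(p)$ is nonnegative on $Y_p$, an upper bound for $\int_{Y_p}$ tending to $0$ will suffice.

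First I would compute the global integral. The form $V_q=|U_p|^{p-2}*U_p$ is closed by the $p$-harmonic equation~(\ref{pharm}), and $du$ is closed by Theorem~\ref{thm:limminimizer}$(iv)$; moreover $u$ and $u_p$ are homotopic maps to $S^1$, so $du-du_p$ is exact and therefore pairs to zero against the closed form $V_q$, i.e. $\int_M(du-du_p)\wedge V_q=0$. Unwinding this using $U_p=k_p\,du_p$, the normalization~(\ref{normintv1}) in the form $\int_M|U_p|^p*1=k_p$, and~(\ref{kappavolform22}) in the form $\int_M du_p\wedge V_q=1$, I get
\[
\int_M|U_p|^{p-2}\langle U_p,U\rangle*1=\int_M du\wedge V_q=\int_M du_p\wedge V_q=1,
\qquad
\int_M|U_p|^{p-2}\langle U_p,U_p\rangle*1=\int_M|U_p|^p*1=k_p .
\]
Since $G(p)=2\langle U_p,U_p-U\rangle$, subtracting gives $\int_M|U_p|^{p-2}G(p)*1=2(k_p-1)$, which tends to $0$ by Lemma~\ref{klemma1} (cf.~(\ref{klemma11})).

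Next I would control the contribution of $M\setminus Y_p$. There $G(p)<0$, i.e. $|U|^2>|U_p|^2+|U_p-U|^2\ge|U_p|^2$; since $|U|=|du|\le L=1$ almost everywhere (with the normalization of this section), we have $0\le|U_p|<|U|\le1$ on $M\setminus Y_p$. Applying Lemma~\ref{klemma2} with $e_p=|U_p|$ and $e=|U|$, and discarding the nonnegative term $|U_p-U|^2$, I obtain the pointwise bound
\[
|U_p|^{p-2}\,|G(p)|
=|U_p|^{p-2}\bigl(|U|^2-|U_p|^2-|U_p-U|^2\bigr)
\le |U_p|^{p-2}\bigl(|U|^2-|U_p|^2\bigr)
<\frac{2}{p-2}
\quad\text{on } M\setminus Y_p ,
\]
hence $\int_{M\setminus Y_p}|U_p|^{p-2}|G(p)|*1<\dfrac{2\,\mathrm{vol}(M)}{p-2}$, which tends to $0$. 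Putting the pieces together, split $\int_M=\int_{Y_p}+\int_{M\setminus Y_p}$ and use $G(p)=-|G(p)|$ on $M\setminus Y_p$:
\[
0\le\int_{Y_p}|U_p|^{p-2}G(p)*1
=2(k_p-1)+\int_{M\setminus Y_p}|U_p|^{p-2}|G(p)|*1\;\longrightarrow\;0 ,
\]
which is the assertion. The only point that is not pure bookkeeping --- and the reason Lemma~\ref{klemma2} is isolated --- is that $G(p)$ is not sign-definite, so the cancellation in the global integral does not automatically localize to $Y_p$; the substance is the uniform $O(1/p)$ bound on the negative part of $|U_p|^{p-2}G(p)$, and the only input this requires is the elementary inequality $|U_p|\le|U|\le1$ on $\{G(p)<0\}$.
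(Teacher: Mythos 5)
Your proposal is correct and follows essentially the same two-step structure as the paper: show the full integral $\int_M|U_p|^{p-2}G(p)*1$ tends to $0$ via the Euler--Lagrange/closedness input, then control $\int_{M\setminus Y_p}$ with Lemma~\ref{klemma2}. The only difference is cosmetic but pleasant: by pairing the closed form $V_q$ against the exact form $du-du_p$ directly, you get the \emph{exact} identity $\int_M|U_p|^{p-2}G(p)*1=2(k_p-1)$, whereas the paper applies the weak Euler--Lagrange equation $\int_M|du_p|^{p-2}\langle du_p,du_p-du\rangle*1=0$ and then estimates a residual term of size $(1-k_p)\int_M|U_p|^{p-1}|U|*1$ to reach the same limit.
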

\begin{proof}   The difference $u_p - u$ is a function on $M$.  Hence, from the Euler-Lagrange equations for $u_p$ we have
\[
 \int_M  |du_p|^{p-2}<du_p,du_p - du>*1 = 0.
\]
Multiply by $k_p^p$ and substitute the expressions for $U_p$ and $U$ to get
\begin{equation}\label{kpU}
\int_M |U_p|^{p-2}<U_p, U_p - k_pU>*1 = 0.
\end{equation}
By (\ref{klemma11}) and (\ref{kappavolform21}),
\begin{eqnarray*}
\lefteqn{ \lim_{p\rightarrow \infty}  \int_M |U_p|^{p-2}(<U_p, U_p - k_pU>-1/2G(p))*1} \nonumber\\
&=&  \lim_{p\rightarrow \infty}  \int_M |U_p|^{p-2}<U_p, (U - k_pU)>*1 \nonumber\\
&\leq& \lim_{p\rightarrow \infty} (1-k_p) \int_M |U_p|^{p-1}|U|*1\\
&=& 0 \nonumber.
\end{eqnarray*}  
Combining with (\ref{kpU}),
\begin{equation}\label{kpU234}
 \lim_{p\rightarrow \infty} \int_M |U_p|^{p-2}G(p)*1 = 0
\end{equation}
and our proposition is proved if we can show that
\[
\lim_{p\rightarrow \infty} \int_{M \backslash Y_p} |U_p|^{p-2}G(p)*1 = 0.
\] 
Therefore, we need to bound the integral of
\[
 |U_p|^{p-2}(|U|^2 - |U_p|^2 - |U_p - U|^2)
 \]
  over the set where it is positive.
But this expression is bounded by 
 \[
 |U_p|^{p-2}(|U|^2 - |U_p|^2) < 2/p-2
 \]
on the larger set where $|U| \geq |U_p|$ by applying Lemma~\ref{klemma2} (for $e_p=|U_p|$ and $e=|U|$). This gives the desired bound.
\end{proof}

\begin{proposition} \label{kprop 4}  
\[
\lim_{p\rightarrow \infty}\int_M |U_p|^{p-2}|U_p - U|^2 *1 = 0.
\]
\end{proposition}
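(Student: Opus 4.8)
The plan is to compare the desired integral with the one over $Y_p$ that was just controlled in Lemma~\ref{kprop 3}, and to handle the complementary region $M \setminus Y_p$ separately. Recall $G(p) = |U_p|^2 + |U_p - U|^2 - |U|^2$, and on $Y_p$ we have $G(p) \geq 0$, so in particular $|U_p - U|^2 \leq G(p) + |U|^2$. However, that alone is not enough: I want a bound of the form $|U_p|^{p-2}|U_p-U|^2 \lesssim |U_p|^{p-2}G(p) + (\text{something small})$. On $Y_p$, since $|U_p|^2 \geq 0$, we actually get $|U_p|^{p-2}|U_p-U|^2 \leq |U_p|^{p-2}G(p) + |U_p|^{p-2}(|U|^2 - |U_p|^2)$, and the last term, on the further subset where $|U| \geq |U_p|$, is bounded by $2/(p-2)$ by Lemma~\ref{klemma2} (with $e_p = |U_p|$, $e = |U|$; note the normalization $L=1$ gives $|U| = |du| \leq 1$ a.e.\ and $|U_p| = k_p|du_p|$, which is $\leq 1 + o(1)$, so after a harmless adjustment of constants the lemma applies). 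On the part of $Y_p$ where $|U_p| \geq |U|$, the term $|U_p|^{p-2}(|U|^2 - |U_p|^2)$ is negative and can simply be dropped. Hence $\int_{Y_p} |U_p|^{p-2}|U_p-U|^2 *1 \leq \int_{Y_p}|U_p|^{p-2}G(p)*1 + \frac{2}{p-2}\mathrm{vol}(M) \to 0$ by Lemma~\ref{kprop 3}.

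It remains to bound $\int_{M\setminus Y_p} |U_p|^{p-2}|U_p - U|^2 *1$, i.e.\ the integral over the set where $G(p) < 0$, equivalently $|U_p|^2 + |U_p - U|^2 < |U|^2$. On this set, trivially $|U_p - U|^2 < |U|^2 \leq 1$, and also $|U_p|^2 < |U|^2 \leq 1$, so $|U_p| < 1$. But I need decay in $p$. The key observation is that on $M \setminus Y_p$ one has $|U_p|^2 < |U|^2 - |U_p - U|^2$; combined with $|U_p - U|^2 \geq (|U| - |U_p|)^2$, a short computation shows $|U_p| < |U|$ and moreover $|U_p|^2 + (|U|-|U_p|)^2 < |U|^2$, which forces $|U_p| < |U| \leq 1$ with a quantitative gap only when $|U_p - U|$ is bounded away from $0$. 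The cleanest route: write $|U_p|^{p-2}|U_p-U|^2 \leq |U_p|^{p-2}|U|^2 \leq |U_p|^{p-2}$ on $M\setminus Y_p$, and observe that there $|U_p|^{p-2} \leq (|U|^2 - |U_p-U|^2)^{(p-2)/2}$. I would split $M \setminus Y_p$ further according to whether $|U_p - U| \geq \delta$ or $< \delta$ for a small fixed $\delta$: on the first piece $|U_p|^{p-2} \leq (1-\delta^2)^{(p-2)/2} \to 0$ uniformly, contributing at most $(1-\delta^2)^{(p-2)/2}\mathrm{vol}(M) \to 0$; on the second piece $|U_p - U|^2 < \delta^2$, and since $|U_p|^{p-2} \leq 1$ there, the contribution is at most $\delta^2 \,\mathrm{vol}(M)$. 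Letting $p \to \infty$ and then $\delta \to 0$ gives that this part also vanishes in the limit.

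Putting the two estimates together yields $\lim_{p\to\infty}\int_M |U_p|^{p-2}|U_p-U|^2*1 = 0$, as claimed. The main obstacle is the region $M \setminus Y_p$: unlike $Y_p$, it is not directly controlled by the Euler–Lagrange identity, so one must exploit the pointwise geometric constraint $G(p) < 0$ to extract smallness of $|U_p|^{p-2}$, and the case split by the size of $|U_p - U|$ is what makes this work. One should double-check that the normalization $L=1$ indeed gives $|U|_{L^\infty} \leq 1$ (this is $|du|_{L^\infty(M)} = L = 1$ from Theorem~\ref{thm:limminimizer}) and that $\sup_M |U_p| = k_p \max|du_p| \to 1$ by Proposition~\ref{maxest} and (\ref{klemma11}), so that the ``$e \leq 1$'' hypothesis of Lemma~\ref{klemma2} holds up to a factor $1 + o(1)$ that is absorbed into the error terms.
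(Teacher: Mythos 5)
Your proof is correct, and it coincides with the paper's argument on the set $Y_p$: both decompose $|U_p|^{p-2}|U_p-U|^2 = |U_p|^{p-2}G(p) + |U_p|^{p-2}(|U|^2-|U_p|^2)$, invoke Lemma~\ref{kprop 3} on the first term, and handle the second via Lemma~\ref{klemma2} (where $|U| \geq |U_p|$) or the sign (where $|U| < |U_p|$) --- you just spell it out more explicitly than the paper does. On the complement $M\setminus Y_p$ you diverge: the paper observes that there $|U_p-U|^2 < |U|^2-|U_p|^2$ and $|U_p|<|U|$, so a \emph{second} direct application of Lemma~\ref{klemma2} gives the pointwise bound $|U_p|^{p-2}|U_p-U|^2 < |U_p|^{p-2}(|U|^2-|U_p|^2) < 2/(p-2)$, yielding the same $O(1/p)$ rate as on $Y_p$. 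Your $\delta$-split ($|U_p-U|\geq\delta$ versus $<\delta$) is a valid, more elementary alternative that avoids re-using Lemma~\ref{klemma2}, but it only produces qualitative decay ($\limsup \leq \delta^2\,\mathrm{vol}(M)$ for every $\delta$) rather than the explicit $2/(p-2)$ rate; the paper's route is therefore a bit sharper and shorter. One small remark: your parenthetical worry about $|U_p|$ being only $\leq 1+o(1)$ is unnecessary --- you apply Lemma~\ref{klemma2} only on the subset where $|U_p|\leq|U|\leq 1$, so its hypothesis $e_p\leq e\leq 1$ holds exactly, with no adjustment of constants.
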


\begin{proof}   We have from Lemma~\ref{kprop 3} that
\[
\lim_{p\rightarrow \infty} \int_{Y_p} |U_p|^{p-2}(|U_p|^2 +|U_p -U|^2-|U|^2)*1  = 0.
\]

On the set ${|U|^2 \leq |U_p|^2 + |U_p -U|^2}$, this gives the desired estimate of the integral over that set.
On the compliment ${|U|^2 > |U_p|^2 + |U_p-U|^2}$  from Lemma~\ref{klemma2} (for $e_p=|U|_p$,  $e=|U|$)  we have that point-wise
\[
|U_p|^{p-2}|U_p - U|^2 < |U_p|^{p-2}(|U|^2 -|U_p|^2) < 2/(p-2).
\]
This bounds the integral on the entire manifold.
\end{proof}

%\begin{corollary} \label{uniquedu} For any two choices for $U = du$ and $U'=du'$,  we have
%\[ 
% \lim_{p\rightarrow \infty} \int_M |U_p|^{p-2}|U-U'|^2*1 = 0.
% \] 
%Hence  the derivatives of any two functions realizing the best Lipschitz constant must vanish on the support of $T$. 
%\end{corollary}
% 

%\item $(c)$ We can also show that 
%\[
%\lim_{p\rightarrow \infty} \int_M |<dv_p,*U>|*1 = 0
%\]
%since $<U_p,*U_p> = 0. $ This partly shows  that the measure $dv$ is supported along the fiber.

\begin{proposition} \label{prop:supptmeasure0}   If $\phi$ has support on the set where $|U| \leq \lambda < 1$, then
\[
\lim_{p\rightarrow \infty} \int_M |U_p|^p| \phi| *1 = 0.
 \]
\end{proposition}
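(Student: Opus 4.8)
The plan is to exploit the uniform bound $\int_M |U_p|^{p-1}*1 \approx 1$ from (\ref{kappavolform21}) together with the pointwise control on the region where $|U|$ is small, playing off the two competing regions $\{|U_p| \le |U|\}$ and $\{|U_p| > |U|\}$ against each other, exactly in the spirit of Lemmas~\ref{klemma2}, \ref{kprop 3} and Proposition~\ref{kprop 4}. Fix $\lambda < 1$ and let $\phi$ be supported on $S_\lambda := \{|U| \le \lambda\}$. We want $\int_{S_\lambda} |U_p|^p |\phi|*1 \to 0$, and since $|\phi|$ is bounded it suffices to show $\int_{S_\lambda} |U_p|^p *1 \to 0$.

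First I would split $S_\lambda$ into $A_p := S_\lambda \cap \{|U_p| \le 1\}$ and $B_p := S_\lambda \cap \{|U_p| > 1\}$. On $B_p$ we have $|U_p|^p = |U_p|^{p-2}\cdot |U_p|^2$, but more usefully $|U_p - U| \ge |U_p| - |U| \ge |U_p| - \lambda \ge (1-\lambda)|U_p|$ there (using $|U_p| > 1$), so $|U_p|^{p} \le (1-\lambda)^{-2} |U_p|^{p-2}|U_p - U|^2$; hence $\int_{B_p} |U_p|^p *1 \le (1-\lambda)^{-2}\int_M |U_p|^{p-2}|U_p-U|^2 *1 \to 0$ by Proposition~\ref{kprop 4}. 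On $A_p$, where $|U_p| \le 1$, I would again use $|U_p - U| \ge |U_p| - |U|$; if in addition $|U_p| \ge \lambda' := (1+\lambda)/2$ then $|U_p - U| \ge \lambda' - \lambda = (1-\lambda)/2 > 0$, so $|U_p|^p \le |U_p|^{p-2} \le ((1-\lambda)/2)^{-2}|U_p|^{p-2}|U_p-U|^2$ and this part again goes to zero by Proposition~\ref{kprop 4}. On the remaining piece $\{|U_p| \le \lambda'\}$ we have the crude bound $\int |U_p|^p *1 \le (\lambda')^{p-1}\int |U_p| *1$ (since $|U_p| \le \lambda' \le 1$), and $\int_M |U_p|*1$ is uniformly bounded (by H\"older from (\ref{kappavolform21}), or just because $|U_p|^{p-1}$ is bounded in $L^1$ and $|U_p|$ is dominated by a power of it on bounded sets — concretely $|U_p| \le 1 + |U_p|^{p-1}$), while $(\lambda')^{p-1} \to 0$ since $\lambda' < 1$. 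Summing the three pieces gives the claim.

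The main obstacle I expect is purely bookkeeping: making sure the threshold $\lambda'$ is chosen strictly between $\lambda$ and $1$ so that \emph{both} the ``$|U_p|$ large'' estimate (which needs $|U_p - U|$ bounded below, hence $|U_p|$ bounded away from $|U| \le \lambda$) and the ``$|U_p|$ small'' estimate (which needs $\lambda' < 1$ so that $(\lambda')^{p-1} \to 0$) go through simultaneously. There is no analytic subtlety beyond Proposition~\ref{kprop 4} and the $L^1$-bound on $U_p$; everything is a careful region decomposition. One should also note, as the section's preamble does, that the conclusion is a statement about $L^1$-smallness, so it is legitimate to pull $|\phi|_{L^\infty}$ out at the start and reduce to the $\phi \equiv 1$ case on $S_\lambda$.
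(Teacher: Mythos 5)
Your argument is correct, but it follows a genuinely different decomposition from the paper's. The paper splits $M$ according to whether $|U|^2 \leq \lambda|U_p|^2 + |U-U_p|^2$ or not: on the first set it reuses the quantity from Lemma~\ref{kprop 3}, writing $|U_p|^2 = (1-\lambda)|U_p|^2 + \lambda|U_p|^2$ inside $G(p)$ so that the $(1-\lambda)|U_p|^p$ piece is dominated by an integral already known to vanish; on the complement, the inequality $\lambda|U_p|^2 < |U|^2 \leq \lambda^2$ gives the pointwise bound $|U_p|^p \leq \lambda^{p/2} \to 0$ directly. You instead stratify by the size of $|U_p|$ (above $1$, between $\lambda'=(1+\lambda)/2$ and $1$, below $\lambda'$), handling the first two strata by showing $|U_p - U|$ is bounded below by a fixed multiple of $|U_p|$ (resp.\ by a constant) so that Proposition~\ref{kprop 4} applies as a black box, and the last stratum by the geometric decay $(\lambda')^{p-1}\to 0$ against the uniform $L^1$ bound on $|U_p|$ (which indeed follows from H\"older and (\ref{kappavolform21}), or from $|U_p| \leq 1 + |U_p|^{p-1}$). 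All three estimates check out, including the threshold bookkeeping you flag. The trade-off: the paper's route is shorter and needs no $L^1$ bound, but requires going back inside the proof of Lemma~\ref{kprop 3} to insert the $(1-\lambda)$ split; yours is more modular, using only the stated conclusion of Proposition~\ref{kprop 4} plus elementary pointwise inequalities, at the cost of one extra region and one extra (easy) uniform bound. Both ultimately rest on the same Euler--Lagrange input.
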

\begin{proof} 
We go again to the estimate from Lemma~\ref{kprop 3} 
\[
\int_{Y_p} |U_p|^{p-2}\left((1-\lambda)|U_p|^2 +( \lambda|U_p|^2 +|U_p-U|^2 - |U|^2) \right )*1 \rightarrow 0.
\]
This provides a bound for the 
 integral of $(1 -\lambda)|U_p|^p$ over the set $|U|^2 \leq \lambda|U_p|^2 + |U-U_p|^2$.   In general, over the complimentary set, we do not have a bound.  However, if we are integrating over a set where  $|U| \leq \lambda$, just using the inequality that on that set 
 \[
 |U_p| \leq  \lambda^{-1/2} |U|
 \] 
 we have the pointwise bound
\[ 
|U_p|^p \leq |U|^p \lambda^{-p/2} \leq \lambda^{p/2}.
\]
Since $\lambda < 1$, the point-wise limit is 0. This bounds the integral over the entire manifold.
\end{proof}

\begin{thm:supptmeasure} 
As in Remark~\ref{radonms}, let $|V| $ denote the Radon measure associated to the distribution $V$.
By \cite[Chapter 6, (2.14)]{simon}, the weak convergence $V_q  \rightharpoonup V$
implies that for any open set $W \subset M \backslash  \lambda_u$
\begin{eqnarray*}
|V|(W) &\leq& \liminf_{q \rightarrow 1} | V_q|(W) \\
&=&\liminf_{p \rightarrow \infty} \int_W |U_{p}|^{p-1}*1\\
&\leq& \liminf_{p \rightarrow \infty} \left( \int_W |U_p|^p*1\right)^{\frac{p-1}{p}}\\
&=& 0
\end{eqnarray*}
The last equality is from Proposition~\ref{prop:supptmeasure0}.
\end{thm:supptmeasure} 

\begin{corollary} \label{currentsupportfiber} There is a sequence $p \rightarrow \infty$ (or equivalently $q \rightarrow 1$) such that 
\[
\lim_{q\rightarrow 1} \int_M |*du \wedge dv_q|*1 = 0.
\]
%Furthermore, if $n=2$,
%\[
%*du \wedge dv=0.
%\]
\end{corollary}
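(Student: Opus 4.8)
The plan is to reduce everything to Proposition~\ref{kprop 4} via a pointwise algebraic identity and Cauchy--Schwarz. First I would record the key identity in dimension two. Writing $U = du$ and $U_p = k_p du_p$ as in Section~\ref{consmeasure}, we have $dv_q = V_q = |U_p|^{p-2} * U_p$, so
\[
* du \wedge dv_q = |U_p|^{p-2}\,(* du)\wedge(* U_p).
\]
On an oriented Riemannian surface, for any two $1$-forms $\alpha,\beta$ one has $(*\alpha)\wedge(*\beta) = \alpha\wedge\beta$ (check in an orthonormal coframe, using $*^2 = -1$ on $1$-forms), hence $* du\wedge dv_q = |U_p|^{p-2}\,du\wedge U_p$. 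Since $du\wedge du = 0$, this equals $|U_p|^{p-2}\,du\wedge(U_p - du)$, and therefore, pointwise as densities against $*1$,
\[
\bigl|\,* du\wedge dv_q\,\bigr|\;\le\;|U_p|^{p-2}\,|du|\,|U_p - du|\;\le\;|U_p|^{p-2}\,|U_p - du|,
\]
where in the last step I used $|du|\le L = 1$ (the renormalization of this section, together with Theorem~\ref{thm:limminimizer}(ii) and Proposition~\ref{crandal0}).

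Next I would integrate and apply the Cauchy--Schwarz inequality with weight $|U_p|^{p-2}$:
\[
\int_M \bigl|\,* du\wedge dv_q\,\bigr|\,*1\;\le\;\left(\int_M |U_p|^{p-2}*1\right)^{1/2}\left(\int_M |U_p|^{p-2}|U_p - du|^2\,*1\right)^{1/2}.
\]
The second factor tends to $0$ by Proposition~\ref{kprop 4}. For the first factor, H\"older's inequality with exponents $\tfrac{p}{p-2}$ and $\tfrac{p}{2}$ gives $\int_M |U_p|^{p-2}*1 \le (\operatorname{vol}M)^{2/p}\bigl(\int_M |U_p|^p*1\bigr)^{(p-2)/p} = (\operatorname{vol}M)^{2/p}\,k_p^{(p-2)/p}$, using the normalization $\int_M |U_p|^p*1 = k_p$ from \eqref{normintv1} (equivalently \eqref{kappavolform21}). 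Since $k_p\to 1$ by Lemma~\ref{klemma1}, this factor is bounded (indeed converges to $1$). Multiplying the two estimates yields $\int_M |\,* du\wedge dv_q\,|\,*1 \to 0$ along the subsequence fixed in Proposition~\ref{kprop 4}, which is the claim.

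The only genuinely non-routine point is the first step: recognizing that $* du\wedge dv_q$, which a priori could look like a ``full'' pairing, in fact only sees the component of $U_p$ orthogonal to $du$, so that it is controlled pointwise by $|U_p|^{p-2}|U_p - du|$ rather than by $|U_p|^{p-1}$. Once that identity is in hand the corollary is immediate from the $L^2$-type estimate already established in Proposition~\ref{kprop 4}, and the boundedness of $\int_M |U_p|^{p-2}*1$ is a direct consequence of the normalization $\int_M|U_p|^p*1 = k_p\to 1$.
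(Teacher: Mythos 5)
Your proposal is correct and follows essentially the same route as the paper: both reduce $*du\wedge dv_q$ to $|U_p|^{p-2}\,du\wedge(U_p-du)$ (using $(*\alpha)\wedge(*\beta)=\alpha\wedge\beta$ in dimension two and $du\wedge du=0$, resp.\ $U_p\wedge U_p=0$), then apply a weighted Cauchy--Schwarz inequality and invoke Proposition~\ref{kprop 4}. The only (cosmetic) difference is the distribution of weights in Cauchy--Schwarz: you first drop a factor $|du|\le 1$ and then need an extra H\"older step to bound $\int_M|U_p|^{p-2}*1$, whereas the paper keeps that factor and pairs $|U_p|^{(p-2)/2}|du-U_p|$ with $|U_p|^{p/2}$ so that the bounded factor is exactly $\int_M|U_p|^p*1=k_p$.
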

\begin{proof}
We have
\begin{eqnarray*}
\lefteqn{\lim_{q\rightarrow 1} \int_M |*du \wedge dv_q|*1}\\
&=&\lim_{p\rightarrow \infty}\int_M |U_p|^{p-2} |du \wedge U_p| *1\\
&\leq& \lim_{p\rightarrow \infty}\int_M |U_p|^{p-2}| U_p \wedge U_p| 
+\lim_{p\rightarrow \infty}\int_M |U_p|^{p-2}|<du-U_p, U_p>| *1\\
&=& \lim_{p\rightarrow \infty}\int_M |U_p|^{p-2}|<du-U_p, U_p>| *1\\
&\leq&\lim_{p\rightarrow \infty} \int_M |U_p|^{p-1}|du-U_p| *1\\
&\leq&\lim_{p\rightarrow \infty} \int_M |U_p|^{(p-2)/2}|du-U_p||U_p|^{p/2} *1\\
&\leq&\lim_{p\rightarrow \infty} \left(\int_M |U_p|^{p-2}|du-U_p|^2*1\right)^{1/2} \left(\int_M |U_p|^p *1\right)^{1/2}\\
&=& 0 \ \ \mbox{(by Proposition~\ref{kprop 4}, (\ref{klemma11}) and  (\ref{normintv1}))}.
\end{eqnarray*}
\end{proof}

\subsection{Stronger version of the support argument} In this section we show that  Proposition~\ref{kprop 4} and Proposition~\ref{prop:supptmeasure0} can be modified to cover the case where we replace $U=du$ by the derivative of any Lipschitz map $u'$ in the same homotopy class of $u$.
More precisely, let
\[
u': \tilde M \rightarrow \R
\]
be a $\rho$-equivariant  Lipschitz map and let 
\[
c'=\max |du'|, \  \ U'=\frac{1}{c'} du'.
\]
We continue with the normalization of the best Lipschitz constant $L = 1$ and since $u'$ is in the same homotopy class of $u$ we have $c' \geq 1$.
\begin{proposition} \label{kprop 4*}  
\[
\lim_{p\rightarrow \infty}\int_M |U_p|^{p-2}|U_p - U'|^2 *1 \leq C (c'-1).
\]
\end{proposition}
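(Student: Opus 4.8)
The plan is to mimic the proof of Proposition~\ref{kprop 4} almost verbatim, replacing $U = du$ by $U' = \frac{1}{c'}du'$ throughout, and carefully tracking the extra error terms that arise because $u'$ is merely Lipschitz (and not, say, the actual minimizer), so that its Euler--Lagrange identity no longer vanishes exactly but only up to a term controlled by $c' - 1$.

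First I would record the analogue of the key identity. Since $u_p - c' u'$ is a genuine function on $M$ (both are $\rho$-equivariant with the same $\rho$, so the difference descends), testing the Euler--Lagrange equation $d^*(|du_p|^{p-2}du_p) = 0$ against $u_p - c'u'$ gives
\[
\int_M |du_p|^{p-2}\langle du_p, du_p - c' du'\rangle *1 = 0.
\]
Multiplying by $k_p^p$ and using $U_p = k_p du_p$, $c' du' = c'^2 U'$... actually more cleanly, writing $du' = c' U'$ so $c' du' = c'^2 U'$ — here I must be careful: the natural substitution is $du_p = k_p^{-1} U_p$ and $du' = c' U'$, giving
\[
\int_M |U_p|^{p-2}\langle U_p,\ U_p - k_p c'^2 U'\rangle *1 = 0.
\]
Since $k_p \to 1$ and, by the best-Lipschitz normalization, $c' \geq 1$, the scalar $k_p c'^2$ tends to $c'^2$, which differs from $1$ by $O(c'-1)$ up to a $p$-dependent error that vanishes. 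Then, exactly as in Lemma~\ref{kprop 3}, I would introduce $G'(p) = 2\langle U_p, U_p - U'\rangle = |U_p|^2 + |U_p - U'|^2 - |U'|^2$, let $Y_p'$ be the set where $G'(p) \geq 0$, and split $\int_M |U_p|^{p-2}G'(p)*1$ into the difference of the vanishing-in-the-limit quantity above and a correction $\int_M |U_p|^{p-2}\langle U_p, (k_p c'^2 - 1)U' \rangle *1$, which by $|U'| \leq 1$ and $\int_M |U_p|^{p-1}*1 \approx 1$ is bounded by $|k_p c'^2 - 1|\cdot(1 + o(1)) \to (c'^2 - 1)$, hence by a constant times $(c' - 1)$.

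Next, to pass from $Y_p'$ to all of $M$, I would invoke Lemma~\ref{klemma2} with $e_p = |U_p|$, $e = |U'|$ (valid since $|U'| \leq 1$ and on the relevant set $|U_p| \leq |U'|$): on $M \setminus Y_p'$ the integrand $|U_p|^{p-2}(|U'|^2 - |U_p|^2 - |U_p - U'|^2)$ is pointwise $\leq |U_p|^{p-2}(|U'|^2 - |U_p|^2) < 2/(p-2) \to 0$. Combining, $\limsup_p \int_M |U_p|^{p-2} G'(p)*1 \leq C(c'-1)$ for some constant $C$. Finally, to conclude the stated inequality, I would split again: on $\{|U'|^2 \leq |U_p|^2 + |U_p - U'|^2\}$ the bound on $\int |U_p|^{p-2}G'(p)$ directly controls $\int |U_p|^{p-2}|U_p - U'|^2$; on the complement $\{|U'|^2 > |U_p|^2 + |U_p - U'|^2\}$, Lemma~\ref{klemma2} again gives the pointwise bound $|U_p|^{p-2}|U_p - U'|^2 < |U_p|^{p-2}(|U'|^2 - |U_p|^2) < 2/(p-2)$, which is negligible. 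Adding the two pieces yields $\limsup_p \int_M |U_p|^{p-2}|U_p - U'|^2 *1 \leq C(c'-1)$, and absorbing the harmless constant (or tracking it precisely) gives the claim.

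The main obstacle I anticipate is bookkeeping rather than conceptual: making sure that the scalar factor $k_p c'^2$ (as opposed to $k_p$ alone in the original proof) is handled so that the discrepancy is genuinely $O(c'-1)$ and not something worse, and checking that all the sign conditions needed to apply Lemma~\ref{klemma2} still hold with $U'$ in place of $U$ — in particular that $|U'| \leq 1$ everywhere, which is exactly the normalization $c' = \max|du'|$, so that the roles of $e$ and $e_p$ in the lemma are correctly assigned on the relevant subsets. There is also a minor subtlety in that the constant $C$ in the statement is left implicit; I would either chase it explicitly through the estimates or simply remark that any fixed constant suffices for the applications in the next section.
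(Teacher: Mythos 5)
Your plan is the right one---mirror the proof of Proposition~\ref{kprop 4}, replacing $U = du$ by $U' = \tfrac{1}{c'}du'$ and tracking the $O(c'-1)$ slippage in the Euler--Lagrange identity---and this is exactly what the paper does. However, your very first step contains an error that would invalidate the argument as written. You claim you may test the Euler--Lagrange equation against $u_p - c'u'$, asserting it ``is a genuine function on $M$ (both are $\rho$-equivariant with the same $\rho$, so the difference descends).'' But that parenthetical justifies testing against $u_p - u'$, not $u_p - c'u'$: since $\tilde u_p(\gamma z) = \tilde u_p(z) + \rho(\gamma)$ and $\tilde u'(\gamma z) = \tilde u'(z) + \rho(\gamma)$, the combination $\tilde u_p - c'\tilde u'$ is $(1-c')\rho$-equivariant and does \emph{not} descend to $M$ when $c' > 1$ and $\rho \neq 0$. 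Hence it is not an admissible test function. The paper instead tests against the honest function $u_p - u'$, yielding
\[
\int_M |U_p|^{p-2}\langle U_p,\ U_p - k_p c'\,U'\rangle *1 = 0
\]
(with a single factor $c'$, coming from $du' = c'U'$), and then estimates
\[
\int_M |U_p|^{p-2}\langle U_p,\ U_p - k_p U'\rangle *1 \leq \int_M |U_p|^{p-1}\, |k_p U' - k_p c' U'| *1 \leq C k_p(c'-1).
\]
Your identity with $k_p c'^2$ is the output of the illegal test and should be discarded. Once this is corrected, the remainder of your argument---defining $G'(p)$, passing from $Y'_p$ to $M$ via Lemma~\ref{klemma2} with $e_p = |U_p|$, $e = |U'| \leq 1$, and concluding as in Proposition~\ref{kprop 4}---matches the paper, which simply notes that this $O(c'-1)$ error ``persists through the rest of the proof without any additional changes.'' Your care in checking the sign conditions for Lemma~\ref{klemma2} is correct; the only fix needed is the choice of test function.
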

\begin{proof} We have to adapt the proof of Lemma~\ref{kprop 3} and Proposition~\ref{kprop 4}. First, we set
\[ 
G(p) =  2<U_p ,U_p -U'> =|U_p|^2+ |U_p -U'|^2 - |U'|^2.
\]
Equation (\ref{kpU}) has to be modified to
\begin{equation*}
\int_M |U_p|^{p-2}<U_p, U_p - k_pc'U'>*1 = 0,
\end{equation*}
hence
\begin{eqnarray*}\label{kpUc}
\int_M |U_p|^{p-2}<U_p, U_p - k_pU'>*1 &\leq& \int_M |U_p|^{p-1}| k_pU' - k_pc'U'|*1 \\
&\leq& Ck_p (c'-1)
\end{eqnarray*}
and equation (\ref{kpU234}) to
\begin{eqnarray}\label{kpU234c}
\lim_{p\rightarrow \infty} \int_M |U_p|^{p-2}G(p)*1 \leq C (c'-1).
\end{eqnarray} 
This  error persists through the rest of the proof without any additional changes, from which the result follows.
\end{proof}

A  consequence is the following generalization of Theorem~\ref{thm:supptmeasure} about the stretch locus of any best Lipchitz map:
\begin{corollary}\label{strsuppp}
 The support of the current $V$  is contained in the locus of maximum stretch $ \lambda_{u'}$ for any best Lipschitz map $u'$.
\end{corollary}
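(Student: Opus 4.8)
The goal is to deduce Corollary~\ref{strsuppp} from Proposition~\ref{kprop 4*} in exactly the same way that Theorem~\ref{thm:supptmeasure} was deduced from Proposition~\ref{kprop 4} and Proposition~\ref{prop:supptmeasure0}. So the plan is first to upgrade Proposition~\ref{prop:supptmeasure0} to a statement with an error term proportional to $c'-1$, then show that, because $u'$ may be chosen in its homotopy class with $c'$ arbitrarily close to the best Lipschitz constant, the error can be made negligible, and finally run the lower semicontinuity argument on the open set $W \subset M \setminus \lambda_{u'}$.

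First I would revisit the proof of Proposition~\ref{prop:supptmeasure0} with $U$ replaced by $U' = (1/c')du'$ and $Y_p$ replaced by the set where the modified $G(p) = 2\langle U_p, U_p - U'\rangle$ is non-negative. Exactly as there, the estimate from the modified Lemma~\ref{kprop 3} (that is, inequality (\ref{kpU234c})) bounds $\int_{Y_p}|U_p|^{p-2}\big((1-\lambda)|U_p|^2 + (\lambda|U_p|^2 + |U_p - U'|^2 - |U'|^2)\big)*1$ up to an error $C(c'-1)$, which controls $(1-\lambda)|U_p|^p$ over the set $|U'|^2 \le \lambda|U_p|^2 + |U'-U_p|^2$; and on a set where $|U'| \le \lambda$ the pointwise bound $|U_p|^p \le \lambda^{p/2} \to 0$ still holds. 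Hence, for $\phi$ supported where $|U'| \le \lambda < 1$,
\[
\limsup_{p\to\infty}\int_M |U_p|^p |\phi|*1 \le \frac{C(c'-1)}{1-\lambda}.
\]
Then for a fixed open set $W$ with $\overline W \subset M \setminus \lambda_{u'}$, by definition of $\lambda_{u'}$ and upper semicontinuity of the local Lipschitz constant there is some $\lambda = \lambda(W) < 1$ with $|U'| \le \lambda$ on $W$ (taking $U'$ differentiable we may use its modulus; more carefully one uses that $L_{u'}(x) < 1$ on the compact set $\overline W$, so $|du'| \le \lambda c'$ there), and the weak convergence $V_q \rightharpoonup V$ together with Hölder gives
\[
|V|(W) \le \liminf_{p\to\infty}\Big(\int_W |U_p|^p *1\Big)^{(p-1)/p} \le \Big(\frac{C(c'-1)}{1-\lambda}\Big)^{1}.
\]
Since the right-hand side can be made arbitrarily small by choosing $u'$ with $c' = \max|du'|$ close enough to $1$ (possible because $u$ itself realizes $c'=1$, and $\lambda(W)$ can be kept bounded away from $1$ uniformly under small perturbations — or, more simply, one first fixes $u'$, notes $\lambda_{u'} \subseteq \lambda_{u''}$ for nearby $u''$ is false in general, so instead one argues $W$ fixed and shrinks $c'-1$), we conclude $|V|(W) = 0$, hence $\mathrm{supp}\,V \subseteq \lambda_{u'}$.

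The main obstacle is the interplay between the two limits: $W$ is determined by the fixed map $u'$, yet the error term $C(c'-1)$ only vanishes if $c' \to 1$, which changes $u'$ and hence $\lambda_{u'}$ and $W$. The clean way around this, which I would adopt, is to not send $c' \to 1$ at all but instead observe that Proposition~\ref{kprop 4*}'s error is spurious in the relevant integral: on $W \subset M \setminus \lambda_{u'}$ one has $|U'| \le \lambda < 1$, and the term $\int_{Y_p}|U_p|^{p-2}(\lambda|U_p|^2 + |U_p - U'|^2 - |U'|^2)*1$ that carries the $C(c'-1)$ error is being used only to bound $(1-\lambda)|U_p|^p$ over a set \emph{disjoint} in spirit from $W$; on $W$ itself the brute pointwise bound $|U_p|^p \le \lambda^{p/2}$ suffices with no error term whatsoever. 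Thus the corollary follows \emph{verbatim} from the Theorem~\ref{thm:supptmeasure} argument with $U$ replaced by $U'$, the only role of Proposition~\ref{kprop 4*} being to guarantee the off-$W$ estimate stays finite. I would therefore present the proof as: "The proof of Theorem~\ref{thm:supptmeasure} applies word for word, using Proposition~\ref{kprop 4*} in place of Proposition~\ref{kprop 4} and the modified Proposition~\ref{prop:supptmeasure0} above, because on any $W \subset \subset M \setminus \lambda_{u'}$ one has $|U'| \le \lambda < 1$ and the pointwise bound $|U_p|^p \le \lambda^{p/2} \to 0$ already forces $\int_W |U_p|^p *1 \to 0$, so the extra $C(c'-1)$ never enters."
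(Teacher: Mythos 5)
You have overlooked the single observation that makes this corollary immediate: a \emph{best} Lipschitz map $u'$ in the same homotopy class has, by definition, global Lipschitz constant equal to the best Lipschitz constant $L$, which has been normalized to $1$. Hence $c' = \max|du'| = L_{u'} = L = 1$ \emph{exactly}, and the error term $C(c'-1)$ in Proposition~\ref{kprop 4*} (and in the modified Proposition~\ref{prop:supptmeasure0}) is identically zero. There is no tension between the two limits you describe, no need to send $c' \to 1$, and no need for an alternative argument: the proof of Theorem~\ref{thm:supptmeasure} applies verbatim with $U$ replaced by $U'$, which is precisely what the paper does.

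Moreover, the ``clean way around'' you adopt at the end is not actually correct. The pointwise bound $|U_p| \le \lambda^{-1/2}|U'|$ (hence $|U_p|^p \le \lambda^{p/2}$ on $\{|U'|\le\lambda\}$) is valid only on the set where $|U'|^2 > \lambda|U_p|^2 + |U_p-U'|^2$, i.e.\ on the \emph{complement} of the set $A_p := \{|U'|^2 \le \lambda|U_p|^2 + |U_p-U'|^2\}$. On $W \cap A_p$ one has no such pointwise control of $|U_p|$ --- indeed any point of $W$ where $|U_p| \gtrsim \lambda^{1/2}$ can lie in $A_p$ --- and the integral of $|U_p|^p$ there is controlled only by the Euler--Lagrange estimate of Lemma~\ref{kprop 3}/Proposition~\ref{kprop 4*}. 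So $A_p$ is not ``disjoint in spirit'' from $W$, and the brute pointwise bound alone does not force $\int_W |U_p|^p *1 \to 0$. You still need both halves of the argument; the error is harmless only because it is zero, not because it can be ignored.
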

\begin{proof}With the normalization $L=1$ the best Lipschitz map $u'$ has $c'=1$. We continue the proof of Theorem~\ref{thm:supptmeasure} by using 
Proposition~\ref{kprop 4*} instead of Proposition~\ref{kprop 4}. Since $c'=1$ both Propositions yield the same answer, so there is no difference in the argument.
 \end{proof}
 
 We can rephrase the corollary above as follows: 
  Following \cite[Definition 1.2]{kassel}, let $\mathcal F$ denote the collection of $\rho$-equivariant  best Lipschitz functions $u': \tilde M \rightarrow \R$ and define 
  \[
\lambda=\cap_{u' \in \mathcal F} \lambda_{u'}.
\]
By \cite[Lemma 5.2]{kassel}, $\lambda$ is a geodesic lamination which
 plays the role of Thurston's chain recurrent lamination (cf. \cite[Theorem 8.2]{thurston}).
 Corollary~\ref{strsuppp} can be restated by saying that the support of the current $V$  is contained in $\lambda$.

\subsection{Maps of least gradient}\label{LGRad} In this section we assume $n=2$ and write $V=dv$ where $v: M \rightarrow L$ is the section corresponding to $\tilde v: \tilde M \rightarrow \R$ equivariant under $\alpha$.
Least gradient is usually defined with respect to the Dirichlet problem in a domain.  When we have a section  $v$  we can, of course, define it with respect to the Dirichlet problem on domains in M, but we will give a more global definition. We will first prove the following consequences of Proposition~\ref{kprop 4}.

\begin{corollary}\label{withoutcont}  If $n = 2$ then,
\[
 \ \lim_{q\rightarrow 1} \int_M du \wedge dv_q = 1 \ \mbox{and} \ \lim_{p \rightarrow \infty} \int_M du_p \wedge dv =1.
\]
\end{corollary}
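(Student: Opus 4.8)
The plan is to exploit the two normalizations $\int_M du_p \wedge dv_q = 1$ (from (\ref{kappavolform}), equivalently (\ref{kappavolform22})) together with the strong $L^2$-type control provided by Proposition~\ref{kprop 4}. Recall that with the normalization $L=1$ we have $U_p = k_p\, du_p$, $U = du$, $V_q = |U_p|^{p-2}*U_p = k_p^{p-1}\,dv_q$, so that $du_p \wedge dv_q = k_p^{-1}\, U_p \wedge k_p^{1-p} V_q$... more cleanly, I would work directly with the pairing $\int_M du \wedge dv_q$ and compare it to $\int_M du_p \wedge dv_q$.

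First I would write
\[
\int_M du \wedge dv_q - \int_M du_p \wedge dv_q = \int_M (du - du_p)\wedge dv_q.
\]
Now $dv_q = k_p^{1-p} V_q = k_p^{1-p} |U_p|^{p-2} * U_p$, and $|{*}U_p| = |U_p|$, so pointwise $|(du-du_p)\wedge dv_q| \le k_p^{1-p}|U_p|^{p-1}|du - du_p|$. Since $du - du_p = U - k_p^{-1}U_p = (U - U_p) + (1-k_p^{-1})U_p$, the triangle inequality bounds the error by
\[
k_p^{1-p}\!\int_M |U_p|^{p-1}|U-U_p|*1 \;+\; |1-k_p^{-1}|\,k_p^{1-p}\!\int_M |U_p|^p *1.
\]
The second term is $|1-k_p^{-1}|\,k_p^{1-p} k_p \to 0$ by (\ref{normintv1}) and (\ref{klemma11}). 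For the first term I would split $|U_p|^{p-1} = |U_p|^{(p-2)/2}\,|U_p|^{p/2}$ and apply Cauchy--Schwarz, exactly as in the proof of Corollary~\ref{currentsupportfiber}:
\[
\int_M |U_p|^{p-1}|U-U_p|*1 \le \Big(\int_M |U_p|^{p-2}|U-U_p|^2 *1\Big)^{1/2}\Big(\int_M |U_p|^p*1\Big)^{1/2},
\]
which tends to $0$ by Proposition~\ref{kprop 4} together with $\int_M|U_p|^p*1 = k_p \to 1$. Hence $\int_M du \wedge dv_q - \int_M du_p\wedge dv_q \to 0$; since $\int_M du_p\wedge dv_q = 1$ for every $q$ by (\ref{kappavolform}), the first limit follows. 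The argument is symmetric in $u \leftrightarrow u_p$ and $v \leftrightarrow v_q$, but for the second identity I would instead pass to the limit in $q$ first: fix a representative and use the weak convergence $dv_q \rightharpoonup dv$ of Proposition~\ref{lemma:limmeasures0} against the fixed smooth form $du_p$ — wait, $du_p$ is only $C^\alpha$, so this needs the mass bound (\ref{kappanorm}) plus a density/approximation of $du_p$ by smooth forms, which is routine. That gives $\int_M du_p \wedge dv = \lim_{q\to 1}\int_M du_p\wedge dv_q$, and then the same error estimate as above (now with the roles adjusted) shows $\lim_{q\to1}\int_M du_p\wedge dv_q - \lim_{q\to 1}\int_M du\wedge dv_q = 0$, hence equals $1$ after a diagonal choice of the sequences $p\to\infty$, $q\to 1$.

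The main obstacle I anticipate is purely bookkeeping: matching the two limiting processes $p\to\infty$ and $q\to1$ consistently, since $q$ is tied to $p$ by $1/p+1/q=1$ and the currents $V$, $v$ are only extracted along subsequences. I would handle this by fixing once and for all the single diagonal sequence along which all the convergences of Theorem~\ref{lemma:limmeasures2}, Proposition~\ref{lemma:limmeasures0}, Proposition~\ref{kprop 4} and (\ref{klemma11}) hold simultaneously — this is exactly the sequence already fixed in the remark following that theorem — so that "$\lim_{q\to1}$" and "$\lim_{p\to\infty}$" refer to the same indexing. The only genuine analytic input beyond Proposition~\ref{kprop 4} is the legitimacy of testing the weak limit $dv$ against the merely Hölder form $du_p$, which follows from the uniform mass bound (\ref{kappanorm}) on $dv_q$ and smoothing $du_p$.
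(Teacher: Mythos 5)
Your proof of the first identity is correct and is exactly the paper's argument: split off $\int_M (du-du_p)\wedge dv_q$, use (\ref{kappavolform22}) for the main term, and kill the error term with the Cauchy--Schwarz estimate of Corollary~\ref{currentsupportfiber}, which rests on Proposition~\ref{kprop 4}.

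The second identity is where your plan has a genuine gap. You claim that ``the same error estimate as above (now with the roles adjusted)'' yields $\lim_{q\to 1}\int_M (du_p - du)\wedge dv_q = 0$ for a \emph{fixed} $p$, and then invoke a diagonal argument. But Proposition~\ref{kprop 4} only controls $\int_M |U_{p_j}|^{p_j-2}|U_{p_j}-U|^2\,*1$, i.e.\ it compares $U_{p_j}$ to $U=du$ along the single coupled index $j$; it says nothing about comparing $U_{p_j}$ to the fixed form $du_{p_l}$ when $l\neq j$. The quantity you need to control is $\int_M (du_{p_l}-du_{p_j})\wedge dv_{q_j}$ \emph{uniformly in $j$} for $l$ fixed, and this does not vanish for fixed $l$ — it is merely small of order $(c_l-1)^{1/2}$ where $c_l=\max|du_{p_l}|$. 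The paper handles this by two additional inputs you omit: Proposition~\ref{maxest} ($\max|du_{p_l}|\to L$, so $c_l\to 1$), and the \emph{stronger} two-parameter estimate Proposition~\ref{kprop 4*}, which replaces $U=du$ by the normalized derivative $U_l'=du_{p_l}/c_l$ of an arbitrary Lipschitz competitor and gives $\lim_{j}\int_M |U_{p_j}|^{p_j-2}|U_{p_j}-U_l'|^2\,*1\le C(c_l-1)$. Running Cauchy--Schwarz as in your first part then gives the uniform-in-$j$ bound $\left|1-\int_M du_{p_l}\wedge dv_{q_j}\right|\le (c_l-1)+C(c_l-1)^{1/2}$, after which one first sends $j\to\infty$ using $dv_{q_j}\rightharpoonup dv$ against the fixed form $du_{p_l}$, and only \emph{then} $l\to\infty$. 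So the second limit is genuinely a double limit controlled by $c_l-1$, not a fixed-$p$ identity plus a diagonalization; and the ``only genuine analytic input beyond Proposition~\ref{kprop 4}'' is not the Hölder-vs.-smooth approximation you flag, but Proposition~\ref{maxest} together with Proposition~\ref{kprop 4*}.
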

\begin{proof}  
The proof of the first equality is similar to Corollary~\ref{currentsupportfiber}:
\begin{eqnarray*}
\lefteqn{\lim_{q\rightarrow 1} \int_M du \wedge dv_q}\\
&=&\lim_{p\rightarrow \infty} \left( \int_M (du-du_p) \wedge dv_q+ \int_M du_p \wedge dv_q \right)\\
&=&\lim_{p\rightarrow \infty} \int_M (du-du_p) \wedge dv_q  +1 \ \ \mbox{(by (\ref{kappavolform22}))} \\
&=& 1 \ (\mbox{as in the proof of Corollary~\ref{currentsupportfiber}}).
\end{eqnarray*}

For the second, 
continue the normalization of $L = 1$.  We use Proposition~\ref{maxest}
\begin{equation}\label{normcp}
c_l := max|du_l| \rightarrow L=1 \ \mbox{as} \  l \rightarrow \infty.
\end{equation}
Let 
\[
{U_l}'=  \frac{1}{c_l}du_l. 
\]
By Proposition~\ref{kprop 4*},
\begin{equation}\label{variant64}
 \lim_ {p \rightarrow \infty} \int_M |U_p|^{p-2} |U_p - U'_l|^2 *1 \leq C(c_l-1).
\end{equation}
Note,
\begin{eqnarray}\label{cuo13}
\lefteqn{\left |\int_M (U_p - U'_l)  \wedge dv_q  \right |} \nonumber \\
&= & \left |\int_M |U_p|^{p-2} <U_p - U'_l, U_p> *1  \right | \nonumber \\
&\leq & \int_M |U_p|^{p-1}|U_p-U'_l| *1   \\
 &\leq &\left( \int_M |U_p|^p *1 \right)^{1/2}\left( \int_M |U_p|^{p-2} |U_p - U'_l|^2 *1 \right)^{1/2}   \nonumber\\
 &\leq & {k_p} ^{1/2}C (c_l-1)^{1/2}  \ (\mbox{by (\ref{variant64}) and (\ref{normintv1})}). \nonumber
\end{eqnarray}
Thus, (\ref{klemma1}) implies
\begin{equation}\label{cuo1}
 \lim_ {p \rightarrow \infty}  \left|\int_M (du_p - U'_l)  \wedge dv_q  \right | \leq C (c_l-1)^{1/2}.
\end{equation}
 By (\ref{kappanorm}),
\begin{equation}\label{cuo2}
\left| \int_M (U'_l - du_l )  \wedge dv_q \right | \leq \max |U'_l - du_l| \leq c_l-1.
\end{equation}
By combining (\ref{cuo1}) and (\ref{cuo2})
\begin{equation}\label{cuo3}
\left| \int_M (du_p - du_l )  \wedge dv_q \right |  \leq  c_l-1+C (c_l-1)^{1/2}.
\end{equation}
Take now $p \rightarrow \infty$ ($q \rightarrow 1$) and use $dv_q  \rightharpoonup dv$,
 and (\ref{kappavolform}) to obtain
 \begin{equation}\label{cuo4}
\left| 1 -\int_M du_l   \wedge dv \right |  \leq c_l-1+C (c_l-1)^{1/2}.
\end{equation}
By (\ref{normcp}) $c_l \rightarrow 1$, hence  
the result follows.
\end{proof}

\begin{theorem}\label{thmlegr} The section $v$ is a section of least gradient in the sense that for  all functions $\phi$ on $M$ of bounded variation
\[
||dv|| \leq ||d (v+\phi)||. 
\]
\end{theorem}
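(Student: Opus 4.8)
The plan is to exploit the characterization of $V=dv$ as a weak limit of the forms $dv_q$ together with the duality pairing $\int_M du \wedge dv_q \to 1$ from Corollary~\ref{withoutcont}, and to play this off against the $L^1$-bound $|dv_q|_{L^1(M)} \approx L^{-1} = 1$ from (\ref{kappanorm}). The key point is that $v$ and $v+\phi$ induce the same homology class (adding a genuine function $\phi$ on $M$ does not change the periods $\alpha$), so $d(v+\phi)$ is still a closed current representing the class dual to $\alpha$, and hence $\int_M du \wedge d(v+\phi) = \int_M du\wedge dv = 1$ as well, since $du$ is closed and the pairing only depends on cohomology/homology classes. This is the replacement for the usual ``same boundary values'' condition in the Dirichlet formulation of least gradient.

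First I would record that $\int_M du \wedge d(v+\phi) = 1$ for every $\phi \in BV(M)$: write $\int_M du \wedge d(v+\phi) = \int_M du\wedge dv + \int_M du \wedge d\phi$; the second term is $\int_M d(u\,d\phi) = 0$ by Stokes (or, more carefully, approximate $\phi$ by smooth functions in $BV$-weak / $L^1$-strong topology and use that $du$ is a fixed $L^\infty$ closed form, so $\int_M du\wedge d\phi_k \to \int_M du \wedge d\phi$ while each $\int_M du\wedge d\phi_k = 0$). Next, since $|du|_{L^\infty} = L = 1$, for any BV section $w$ on $M$ we have the pointwise/weak estimate $\left|\int_M du\wedge dw\right| \le \int_M |du|\, |dw| \le \|dw\|$, using Definition~\ref{def:bdvar} and approximating $du$ by test forms with sup-norm $\le 1+\epsilon$. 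Applying this to $w = v+\phi$ and using the first step gives
\[
1 = \left|\int_M du \wedge d(v+\phi)\right| \le \|d(v+\phi)\|.
\]
Finally, I must show $\|dv\| = 1$, i.e. that the inequality is sharp at $v$ itself. Lower semicontinuity of mass under weak convergence gives $\|dv\| \le \liminf_{q\to 1}\|dv_q\| = \liminf_q |dv_q|_{L^1(M)} = L^{-1} = 1$ by (\ref{kappanorm}); and the reverse inequality $\|dv\| \ge \left|\int_M du\wedge dv\right| = 1$ is the $w=v$ case of the estimate above together with Corollary~\ref{withoutcont}. Hence $\|dv\| = 1 \le \|d(v+\phi)\|$, which is exactly the claim.

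The main obstacle is the justification of $\left|\int_M du\wedge dw\right| \le \|dw\|$ when $du$ is merely $L^\infty$ (not continuous) and $dw$ is a genuine measure: one cannot directly pair an $L^\infty$ form against an $L^1$-valued measure without care. I would handle this by a mollification argument — convolve $dw$ (or rather $w$) to smooth test forms, use that $du \in L^\infty$ pairs continuously against $L^1$ forms, and pass to the limit, controlling everything by $|du|_{L^\infty} = 1$; the fact that $u$ is the same fixed map throughout (a limit of the $u_p$) and that the homology class is unchanged keeps the pairing value pinned at $1$. A secondary technical point is making precise that adding $\phi \in BV(M)$ — a function, not a twisted section — leaves the $\alpha$-equivariance and hence the class $[V]$ intact; this is immediate from the definitions but should be stated. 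Once these are in place the theorem follows with no further computation.
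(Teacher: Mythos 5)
Your overall strategy is the right one, and it parallels the paper's: establish the two-sided estimate $\|dv\| \le 1/L \le \|d(v+\phi)\|$, getting the upper bound on $\|dv\|$ from lower semicontinuity of mass and the $L^1$ bound (\ref{kappanorm}) on $dv_q$, and getting the lower bound on $\|d(v+\phi)\|$ by pairing against a closed $1$-form of sup-norm one that captures the cohomological normalization $\int_M du_p \wedge dv_q = 1$. However, there is a genuine gap in the pairing step.

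You propose to pair against $du$ itself, writing $\int_M du\wedge d(v+\phi)$ and in particular $\int_M du\wedge dv$. This pairing is not well-defined in the present setting: $du$ is only in $L^\infty$ (the paper pointedly does \emph{not} rely on the Evans--Savin or Evans--Smart regularity results, and for the hyperbolic metric no continuity of $du$ is available), while $dv$ is a Radon measure supported on the maximum-stretch lamination $\lambda_u$ by Theorem~\ref{thm:supptmeasure}, a set of Lebesgue measure zero. Thus $du$ is not defined $|dv|$-a.e., and $\int_M du\wedge dv$ has no a priori meaning; the same problem arises for the singular part of $d\phi$ when $\phi$ is merely $BV$. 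You flag this as ``the main obstacle'' and suggest a generic mollification, but smoothing $u$ or $v$ by convolution gives no control over the behavior of the mollified $du$ on the Lebesgue-null lamination, which is exactly where $dv$ lives, so the limit of $\int_M (du)_\epsilon\wedge dv$ is not pinned down by soft arguments. In particular, the first identity of Corollary~\ref{withoutcont}, $\lim_{q\to 1}\int_M du\wedge dv_q = 1$, which you cite, does \emph{not} pass to the limit to give $\int_M du\wedge dv = 1$, precisely because $du$ is not an admissible test form against the limiting measure.

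The paper avoids all of this by pairing against $U'_p = du_p/c_p$ rather than $du$: by Theorem~\ref{nonsingular} $du_p$ is $C^{1,\alpha}$, by Proposition~\ref{maxest} the normalization gives $\max|U'_p| = 1$ exactly, and since $U'_p$ is smooth and closed one has $\int_M U'_p\wedge d\phi = 0$ by BV integration by parts, so $\left|\int_M U'_p\wedge (dv+d\phi)\right| \le \|d(v+\phi)\|$ is unambiguous. The nontrivial input is then the \emph{second} identity of Corollary~\ref{withoutcont}, $\lim_{p\to\infty}\int_M du_p\wedge dv = 1$, which is a hard estimate proved from Proposition~\ref{kprop 4*} and the maximum estimate, not a consequence of generic mollification. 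If you replace $du$ by $U'_p$ and invoke that second identity (rather than the first), your argument becomes the paper's proof; as written, the key step rests on an undefined integral.
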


\begin{proof} First of all, we show that $||d(u + \phi)|| \geq 1/L$.
  We pick a sequence $p$ such that $v_q \rightarrow v$.  Let $U'_p = \frac{1}{c_p}du_p$, where $c_p$ as in (\ref{normcp})
is a normalizing factor which sets  $\max |U'_p|=1$.  Then
\begin{eqnarray*} 
\lim_{p \rightarrow \infty} \int_M U'_p \wedge (dv + d\phi)   &=& \lim_{p \rightarrow \infty} \int_M U'_p \wedge dv \\
&=& \lim_{p \rightarrow \infty} 1/{c_p} \int_M    du_p \wedge dv\\
&=& 1/L  \ (\mbox{by   Corollary~\ref{withoutcont} and (\ref{normcp})}).
\end{eqnarray*}
We are going to complete the proof by showing that $||dv|| = 1/L$. Indeed, for any $\Phi \in\Omega^1(M)$ with $\max | \Phi| \leq 1$,
\begin{eqnarray*}
\int_M   \Phi \wedge dv &=& 
\lim_{p \rightarrow \infty}    \int_M   \Phi \wedge dv_q  \\
    &\leq&   \lim_{p \rightarrow \infty}  \int_M |dv_q|*1\\
    &\leq& 1/L \ (\mbox{by} \  (\ref{kappanorm0})).
\end{eqnarray*}
\end{proof}

\subsection{The equivariant problem}\label{sectevp} The results of the previous sections generalize in a straightforward way if we replace the map $f: M \rightarrow S^1$ by a 
$\rho$-equivariant map. 
More precisely, let $\rho \in H^1(M, \R)$. We can view $\rho$ as a homomorphism $\rho: \pi_1(M) \rightarrow \R$ and consider maps 
\[
\tilde f: \tilde M \rightarrow \R
\] 
satisfying the equivariance relation
\begin{equation}\label{equivfr}
\tilde f(\gamma \tilde x)=  \tilde f( \tilde x)+ \rho(\gamma), \ \forall \gamma \in \pi_1(M)\  \mbox{and} \   \forall \tilde x \in \tilde M. 
\end{equation}
In the case when $\rho$ is integer valued the map $\tilde f$ descends to a map $f:M \rightarrow S^1$ with induced homomorphism $f_*=\rho$ on the fundamental groups as studied in the previous sections. We will denote by $f$ the induced section of the flat affine bundle
$\tilde M \times_\rho \R$.

Next note that because of (\ref{equivfr}), the 1-form $d\tilde f$ is invariant under $\rho$. Hence it descends to a closed 1-form on $M$ which we denote by $df$.  We can proceed as before with minimizing  integral (\ref{pharmfun}) to obtain a $\rho$-equivariant map $\tilde u_p: \tilde M \rightarrow \R$ satisfying the $p$-harmonic map equation (\ref{pharm}). Furthermore, by taking $p \rightarrow \infty$ we obtain an infinity harmonic $\rho$-equivariant map $\tilde u: \tilde M \rightarrow \R$.  The map $\tilde u$ is a best Lipschitz map in the sense that it minimizes the Lipschitz constant among all $\rho$-equivariant maps. Theorem~\ref{thm:limminimizer} generalizes to this case.

The definition of the dual harmonic function $\tilde v_q$ in Section~\ref{sect:conjug} goes unchanged since its definition is purely in terms of $du_p$. The same goes with the convergence results as $q \rightarrow 1$ in Section~\ref{qgoesto1}.

The definition of the maximum stretch set $\lambda_u$ and proof of Lemma~\ref{straightline} only involves $L=|du|_{L^\infty}$ and thus makes sense for any equivariant map $\tilde u$. The theory on comparison with cones is local and thus it is not affected by going to equivariant maps.  The proof of Theorem~\ref{realized} remains unchanged. Finally, the results of this section on the support of the measure $V=dv$ and the least gradient property only involve the equivariant map $v$ and are not dependent on where $du$ came from. Thus there are no changes here as well. We state this in the form of the following theorem:

\begin{theorem}\label{thm:equivsit} Fix a homomorphism $\rho: \pi_1(M) \rightarrow \R$. There exists a $\rho$-equivariant infinity harmonic function $\tilde u: \tilde M \rightarrow \R$ and a least gradient function $\tilde v: \tilde M \rightarrow \R$ equivariant under a representation $\alpha: \pi_1(M) \rightarrow \R$. Furthermore, the support of the measure $dv$ is in the maximum stretch lamination defined by $\tilde u$.
\end{theorem}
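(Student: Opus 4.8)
The plan is to verify that Theorem~\ref{thm:equivsit} follows by re-running the entire construction of Sections~\ref{sect:pharmmaps}--\ref{consmeasure} with the single cosmetic change of replacing maps $f:M\to S^1$ by $\rho$-equivariant functions $\tilde f:\tilde M\to\R$, and checking at each stage that nothing in the argument used the integrality of $\rho$. First I would set up the function spaces: let $\rho:\pi_1(M)\to\R$ be a homomorphism and minimize $J_p$ over the space of $\rho$-equivariant $W^{1,p}_{loc}$-functions on $\tilde M$ (equivalently sections of the flat affine bundle $\tilde M\times_\rho\R$ of class $W^{1,p}$), noting that $|d\tilde f|$ descends to $M$ so $J_p$ is well-defined and the direct method applies verbatim to give a minimizer $\tilde u_p$ satisfying (\ref{pharm}). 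The uniform $L^s$-bounds in the proof of Theorem~\ref{thm:limminimizer}, the H\"older/semicontinuity argument, and the convexity--mean-value argument that identifies the limit $\tilde u$ as best Lipschitz all go through on $\tilde M$ unchanged, since they only ever use $|d\tilde u_p|$ and distances in $\tilde M$; this gives the $\rho$-equivariant $\infty$-harmonic $\tilde u$.

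Next I would point out that the conjugate construction of Section~\ref{sect:conjug} is intrinsically equivariant already: $\tilde\Psi_q=|d\tilde u_p|^{p-2}*d\tilde u_p$ is defined from $d\tilde u_p$, is closed and $\pi_1(M)$-invariant for the same reason as before, and its primitive $\tilde v_q$ is equivariant under the period homomorphism $\alpha_q$. The normalization (\ref{normintv1}), Lemma~\ref{klemma1}, the bounds (\ref{kappanorm0}), (\ref{boundtrlength}), and the local-boundedness Proposition~\ref{locbd} all rest only on Poincar\'e duality of $\gamma$ against $dv_q$ and the normalized gradient flow of $u_p$, none of which sees the target $S^1$ versus $\R$. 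Hence the limit $q\to1$ of Section~\ref{qgoesto1} produces, exactly as in Theorem~\ref{lemma:limmeasures2}, a closed $1$-current $V=dv$, a limiting representation $\alpha:\pi_1(M)\to\R$, and a locally bounded, locally $BV$, $\alpha$-equivariant $\tilde v:\tilde M\to\R$.

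For the maximum-stretch lamination claim I would invoke Theorem~\ref{straightline} (stated for hyperbolic $M$ of any dimension) applied to $\tilde u$ — its proof via comparison with cones in Section~\ref{sect:crandal} is purely local, works on balls in $\tilde M$, and only uses $L=|d\tilde u|_{L^\infty}$, so $\lambda_{\tilde u}=\{x:L_{\tilde u}(x)=L\}$ descends to a geodesic lamination in $M$ — and then Theorem~\ref{thm:supptmeasure}, whose proof (Lemmas~\ref{klemma2}, \ref{kprop 3}, Propositions~\ref{kprop 4}, \ref{prop:supptmeasure0}) uses only the Euler--Lagrange equations for $u_p$ against the scalar function $u_p-u$ on $M$ and the normalizations (\ref{klemma11}), (\ref{kappavolform21}); again no target-specific input. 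Thus the support of $dv$ lies in the maximum stretch lamination of $\tilde u$, and the least-gradient property of $\tilde v$ follows from Theorem~\ref{thmlegr} in the same way. I do not expect a genuine obstacle here — the content of the theorem is precisely that there is none — so the ``hard part'' is purely expository: being careful that every period-homomorphism and Poincar\'e-duality statement (e.g.\ (\ref{alphagamma}), (\ref{boundtrlength})) still makes sense when the cohomology class $\rho$ is real rather than integral, which it does since those identities are statements about de Rham cohomology and pairings of closed forms, and that the flat affine bundle $\tilde M\times_\rho\R$ is the correct receptacle for $\tilde u$, $\tilde v$ when $\rho$ is not integer-valued.
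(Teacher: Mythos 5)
Your proposal is correct and takes exactly the approach the paper does: Section~\ref{sectevp} is precisely a walk-through of Sections~\ref{sect:pharmmaps}--\ref{consmeasure} confirming that no step used integrality of $\rho$, and your itemized verification (direct method for $\tilde u_p$, conjugate construction depending only on $d\tilde u_p$, limits $p\to\infty$ and $q\to1$, locality of comparison with cones, and the support/least-gradient arguments using only Euler--Lagrange against a scalar variation) matches the paper's reasoning point for point. If anything you supply slightly more detail than the paper, which simply asserts that the earlier arguments go through unchanged.
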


\section{Construction of the transverse measure from the least gradient map}\label{sect7} 
In this section we assume $M$ is a closed hyperbolic surface, i.e $\tilde M=H^2$. We first review the concepts we need from topology to get the result about transverse measures.  These include Definition~\ref{flowbox}  flow boxes, Definition~\ref{orientflow} orientation, Definition~\ref{MDefinition5A}  transversals and Definition~\ref{deftransversecocycle} transverse cocycle. Following Bonahon, we connect the notion of  functions $\tilde v$ which are $\pi_1(M)$-equivariant and locally constant on $\tilde M \backslash \tilde \lambda$ with transverse cocycles.  In Theorem~\ref{transmeasure}, we use his theorem that a transverse cycle is a transverse measure if and only if it is non-negative to show that the least gradient map $v$ constructed in Theorem~\ref{lemma:limmeasures2} defines a transverse measure on the maximum stretch lamination $\lambda_u$ associated with the $\infty$-harmonic map $u$.   The definition of a transverse measure is equivalent to a function   on the universal cover with the right properties fits in well with our function of bounded variation $v$ (or $\tilde v$) which is constant on the components of $ M \backslash  \lambda$. See Theorem~\ref{transcoismeasthm}.

  \subsection{Flow boxes}
  
We start with the following elementary lemma from hyperbolic geometry
\begin{lemma}\label {lemma:flow} Let $\lambda$ be a lamination, and $f$ a geodesic orthogonal to a leaf $\lambda_0$. For $k \in \lambda \cap Im(f)$ and $\lambda_k$ be the leaf of $\lambda$ through $k$, let $n(k) =e^{ i \kappa(k)}$ be the unit normal direction of $\lambda_k$ when it intersects the  geodesic $f$ at $k$ and the same for $k'$. Then, there is a constant $c>0$ such that $|\kappa(k) - \kappa(k')| \leq cd_{H^2}(k,k')$.
\end{lemma}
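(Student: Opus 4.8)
The plan is to reduce the statement to a quantitative estimate on how fast the direction of the leaves of a geodesic lamination can vary as one moves along a transverse geodesic $f$. The starting point is the following elementary fact from hyperbolic geometry: two distinct complete geodesics in $H^2$ that both cross a fixed geodesic segment cannot cross it at wildly different angles unless they come close to intersecting each other, which they cannot do since leaves of a lamination are disjoint. So the disjointness of $\lambda_k$ and $\lambda_{k'}$ is the crucial hypothesis that will be exploited.

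Concretely, I would work in the upper half-plane model, normalizing so that $f$ lies along a convenient geodesic (say the positive imaginary axis, reparametrized by hyperbolic arclength $t$, so that $k = f(t)$ and $k' = f(t')$). A complete geodesic crossing the imaginary axis is a semicircle centered on the real axis (or a vertical line), and the angle $\kappa$ it makes with $f$ at the crossing point is a smooth function of the two endpoints on $\partial H^2$ and of the height at which it crosses. The first step is to write $\kappa(k)$ explicitly in terms of these data for the leaf $\lambda_k$. The second step is to observe that if $\lambda_k$ and $\lambda_{k'}$ are disjoint and both cross the axis, then their endpoint pairs on $\partial H^2$ are "unlinked," which forces a monotonicity/ordering relation; combined with the fact that they cross the axis at the specified heights, one gets that the angle can only change by a controlled amount. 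The third step is to turn this into the Lipschitz bound: since everything is smooth and we only need the estimate for $k, k'$ in a compact portion of $f$ (the lamination meets $Im(f)$ in a compact set, being closed, and the bound is local in nature), the map $k \mapsto \kappa(k)$ restricted to $\lambda \cap Im(f)$ extends to, or is dominated by, a Lipschitz function, giving $|\kappa(k) - \kappa(k')| \le c\, d_{H^2}(k,k')$ with $c$ depending only on the ambient geometry of the configuration.

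An alternative and perhaps cleaner route: use the fact that the leaves of $\lambda$ crossing $f$ define, via their unit normals, a map into the unit tangent bundle $T^1 H^2$ along $f$, and the geodesic flow / the structure of laminations gives that this assignment is Lipschitz because a lamination is, locally, a Lipschitz family of geodesics (the "flow box" structure the paper is about to introduce). One can make this precise by a triangle comparison: drop the two geodesics $\lambda_k$, $\lambda_{k'}$ and estimate, using the hyperbolic law of sines or a direct computation in Fermi coordinates along $\lambda_0$, the angle defect in terms of the separation $d_{H^2}(k, k')$ along $f$. The uniformity of the constant $c$ comes from curvature $-1$ being constant.

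The main obstacle I anticipate is not any single estimate but rather pinning down the correct geometric configuration that makes the bound uniform — specifically, ensuring that the constant $c$ does not degenerate as the leaves $\lambda_k$ become nearly tangent to $f$ (so that the "angle" is measured near $0$ or $\pi$) or as $k$ ranges over all of the potentially non-compact set $\lambda \cap Im(f)$. I expect the resolution is that $f$ is a fixed geodesic segment (a transversal, hence of finite length, or at least that we only care about a compact subsegment), so $\lambda \cap Im(f)$ is compact and the angles stay in a compact subinterval of $(0,\pi)$ bounded away from the endpoints, after which the explicit formula for $\kappa$ in the half-plane model is manifestly Lipschitz in $k$. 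So the real content is: (i) set up coordinates, (ii) write down $\kappa$ explicitly, (iii) differentiate or estimate directly using disjointness of leaves, (iv) invoke compactness for uniformity.
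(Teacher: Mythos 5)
Your high-level plan matches the paper's: place yourself in a concrete model, normalize $f$ along a coordinate axis with $k$ at the origin, exploit the disjointness of $\lambda_k$ and $\lambda_{k'}$, and use locality/compactness for uniformity. But the proposal leaves the actual quantitative step — turning ``disjoint, unlinked leaves'' into an explicit bound $|\kappa(k)-\kappa(k')|\le c\,d_{H^2}(k,k')$ — as an unproved ``observation.'' Smoothness of the angle as a function of the leaf's ideal endpoints does not by itself give Lipschitz dependence on $k$, because you have no a priori control on how the endpoints of $\lambda_k$ move as $k$ moves; that control is exactly what the lemma asserts. So saying the map ``extends to, or is dominated by, a Lipschitz function'' is circular as stated.

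The paper fills that gap with a short interpolation argument in the disk model. With $k=0$ and $f$ along the real axis, $\lambda_k=\{n(k)t\}$ is a diameter, and $\lambda_{k'}$ is the M\"obius translate $\{n(k')\,\frac{s+w}{1+\bar w s}\}$ with $w=\overline{n(k')}\,k'$. The diameter $\{n(k')t\}$ through the origin meets $\lambda_k$ (both pass through $0$), while $\lambda_{k'}$ does not (disjointness). By continuity there is an intermediate $w''=\overline{n(k')}\,k''$ with $0<k''\le k'$ at which the translated geodesic is asymptotic to $\lambda_k$, i.e.\ shares an endpoint on the circle: $\pm n(k)=\pm n(k')\,\frac{1\mp w''}{1\pm\bar w''}$. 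This algebraic identity immediately gives $|1-e^{i(\kappa(k)-\kappa(k'))}|\le \frac{2k'}{1-k'}$, and since $d_{H^2}(k,k')=\tanh^{-1}k'$ you get the Lipschitz bound for $d_{H^2}(k,k')$ bounded above. That intermediate-value step is the content you should supply; once you have it, the rest of your outline (including the compactness remark, and not worrying about angles degenerating, since the bound is local) is fine.
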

\begin{proof} We use the unit disk model of hyperbolic space, and place the geodesic formed by $f $ on the $x$ axis, and the point $k$ at the origin. In other words, write 
\[
f: (-1,1) \rightarrow H^2 \simeq D^2; \ f(t)=t, \ k=0
\]
and the geodesic $\lambda_k$ is the straight line
\[
\lambda_k = \{n(k)t: \ -1 < t < 1\}.
\]
Similarly, the geodesic $\lambda_{k'}$ through $k'=f(k')>0$ (if $k'<0$ reverse the role of $k$ and $k'$) is the geodesic
\[
\lambda_{k'}= \{n(k')\frac{s +w}{1 + \bar w s}: \   -1 < s < 1\},
\]
where $w =\overline{ n(k')} k'$.  The geodesics $n(k) t$ and $n(k') s$ intersect, and $\lambda_k$ and $\lambda_{k'}$ do not, so for some $w'' = \overline{ n(k')}k''$, $0 < k'' \leq k'$, the geodesics $n(k) t$ and $n(k')\frac{s + w''}{1 + \bar w'' s}$ intersect at the endpoints on the unit circle $t=s=1$ or $t = s = -1$. Then
\[
\pm n(k) = \pm n(k')\frac{1 \mp w''}{1 \pm \bar w''}
\]
or equivalently,
\[
\frac{n(k)}{n(k')} = 1 \mp \frac{w'' + \bar w'' }{1 \pm \bar w''}.
\]
Then $| 1 - e^{i(\kappa(k) - \kappa(k'))}| \leq 2 \frac{|w''|}{1 - |w''|} \leq  \frac{2k'}{1-k'}$.
Since $d_{H^2}(k,k')= \tanh^{-1}(k') $, this inequality converts to the inequality in the Lemma provided $d_{H^2}(k,k')$ is not too large.
\end{proof}

\begin{definition}\label{flowbox} By a {\it flow box} or a {\it chart} for a geodesic lamination $\lambda$ we mean a bi-Lipschitz homeomorphism 
\begin{equation}\label{eqn:flowbox}
F: R^\sharp=[a,b] \times [c,d]  \rightarrow  F(R^\sharp) =R\subset M; \ F=F(t,s)
\end{equation}
such that there exists a closed set $K \subset (c,d)$ of Hausdorff dimension 0 such that
\[
F^{-1}(\lambda)= [a,b] \times K.
\]
\end{definition}
%{\bf{Change 4: These are classical flow boxes. Should I include a proof they exist and are Lipschitz????}}

 \begin{proposition} \label{prop:flow} Any geodesic lamination  on a closed hyperbolic surface (M,g) has an open neighborhood covered by a finite number of flow boxes (\ref{eqn:flowbox}). Furthermore,
   $F$  can be chosen so that  $\frac{\partial F}{\partial t}$  is a Lipschitz vector field along $F$.  \end{proposition}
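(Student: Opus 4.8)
The plan is to construct the flow boxes by taking a finite collection of geodesic arcs transverse to the lamination and thickening them using the normal exponential flow, then verifying that the resulting charts satisfy the two requirements: that the preimage of $\lambda$ is of the product form $[a,b] \times K$ with $K$ closed of Hausdorff dimension $0$, and that the coordinate vector field $\partial F/\partial t$ is Lipschitz along $F$.

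\smallskip

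\textbf{Step 1: Local construction at a single point.} Fix $x_0 \in \lambda$ and let $\lambda_0$ be the leaf through $x_0$. Choose a short geodesic arc $f\colon [c,d] \to M$ passing through $x_0$ and orthogonal to $\lambda_0$ at $x_0$; by taking it short enough, $f$ is embedded and meets every leaf it encounters transversally (leaves are complete geodesics, so near $x_0$ they are nearly parallel to $\lambda_0$, hence nearly orthogonal to $f$). Set $K = f^{-1}(\lambda) \subset [c,d]$. Because $\lambda$ is closed, $K$ is closed; because $\lambda$ is a disjoint union of simple complete geodesics, $K$ is totally disconnected, and the standard fact (Thurston, Bonahon) that a geodesic lamination on a compact hyperbolic surface meets a transverse arc in a set of Hausdorff dimension $0$ gives the dimension statement. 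Now define $F(t,s)$ to be the point at signed arc-length $t$ along the leaf through $f(s)$ (using the leaf through $f(s)$ when $s \in K$, and the geodesic through $f(s)$ normal to $f$ when $s \notin K$). Concretely, $F(t,s) = \exp_{f(s)}\!\bigl(t\, n(s)\bigr)$ where $n(s)$ is the unit normal to $f$ at $f(s)$ rotated continuously (using the disk model, $n(s)$ rotated by the angle $\kappa$ of Lemma~\ref{lemma:flow} when $s\in K$). For $|t|\le b-a$ and $[c,d]$ small this is a bi-Lipschitz embedding, and by construction $F^{-1}(\lambda) = [a,b] \times K$.

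\smallskip

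\textbf{Step 2: Lipschitz regularity of $\partial F/\partial t$.} The vector field $\partial F/\partial t$ at $F(t,s)$ is the unit tangent to the geodesic emanating from $f(s)$ in direction $n(s)$, parallel-transported a distance $t$. Its variation in $s$ is controlled by the variation of the initial direction $n(s)$, which is exactly the content of Lemma~\ref{lemma:flow}: $|\kappa(s) - \kappa(s')| \le c\, d_{H^2}(f(s), f(s'))$. Combined with the fact that geodesic flow and parallel transport in $H^2$ depend smoothly (hence locally Lipschitz) on initial data over the bounded parameter range $t \in [a,b]$, this shows $s \mapsto (\partial F/\partial t)(t,s)$ is Lipschitz; the $t$-dependence is smooth. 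Hence $\partial F/\partial t$ is a Lipschitz vector field along $F$. (One must be slightly careful that the direction field $n(s)$, though defined leaf-by-leaf on $K$ and by the normal condition off $K$, still extends to a Lipschitz field in $s$ on all of $[c,d]$; for $s \notin K$, $F(\cdot,s)$ is a geodesic arc transverse to $\lambda$ meeting no leaf, so we are free to interpolate, and Lemma~\ref{lemma:flow} guarantees the leaf directions already vary Lipschitz-continuously on $K$, which extends to a Lipschitz extension on the gaps.)

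\smallskip

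\textbf{Step 3: Finiteness.} The above gives, for each $x_0 \in \lambda$, a flow box whose image is an open neighborhood $R$ of $x_0$ in $M$. The union of these open sets covers the compact set $\lambda$, so a finite subcollection $R_1, \dots, R_N$ covers $\lambda$, and $\bigcup_i R_i$ is the desired open neighborhood covered by finitely many flow boxes.

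\smallskip

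I expect the main obstacle to be \textbf{Step 2 near the gaps of $K$}: one has genuine control of the leaf direction only along leaves (i.e. over $K$), via Lemma~\ref{lemma:flow}, whereas off $K$ the definition of $F$ involves a choice. The point is to make that choice so that the global direction field $s \mapsto n(s)$ is Lipschitz on the whole interval $[c,d]$ — this is possible precisely because the constraint coming from the leaves is itself Lipschitz, so a Kirszbraun-type / piecewise-linear interpolation over the complementary intervals preserves the Lipschitz bound — and then to check that the resulting $F$ is still bi-Lipschitz and that $\partial F/\partial t$ inherits the bound. A secondary technical point is confirming that the leaves stay transverse to $f$ throughout the (small but fixed) box, which follows from continuity of the unit tangent field of the lamination together with compactness.
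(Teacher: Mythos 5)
Your proposal is correct and follows essentially the same route as the paper: take a geodesic transversal $f$, use Lemma~\ref{lemma:flow} to get a Lipschitz bound on the leaf-tangent direction $n(s)$ over $K = f^{-1}(\lambda)$, extend $n$ Lipschitzly to all of $[c,d]$, set $F(t,s) = \exp_{f(s)}(t\,n(s))$, and cover $\lambda$ by finitely many such boxes using compactness. Your identification of the Lipschitz extension off $K$ as the key point is exactly where the paper invokes the same interpolation.
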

\begin{proof} Let $f: [c,d] \rightarrow M$ be a Lipschitz transversal. We can assume as in Lemma~\ref{lemma:flow} that $f$ is a geodesic and let
\begin{equation}\label{normal}
n: [c,d] \rightarrow \R
\end{equation}
 denote the Lipschitz function defined as follows. Let $K = \{ k \in [c,d]: f(k) \in \lambda_u \}$ and $n(k) \in T_{f(k)}(M)$ be the unit tangent vector to the leaf of $\lambda$ through $k$, $k \in K$. By Lemma~\ref{lemma:flow},  $|n(k)- n(k')| \leq cd_{H^2}(k,k')$.  Extend $n$ to a Lipschitz function  on the interval. 
Define 
\[
F(s,t) = exp_{f(s)}(t n(s)). 
\]
Then
\[
\frac{\partial F}{\partial t}=dexp_{f(s)}(t n(s))n(s),
\]
is Lipschitz, and hence $\frac{\partial F}{\partial t}$   is Lipschitz. 
 \end{proof}
 
 \begin{remark}\label{nonsmooth}
 Note that $\frac{\partial F}{\partial s}$ is in $L^\infty$ but not necessarily continuous unless $\frac{d n} {ds} $ is. At the moment we are unable to 
 obtain such regularity for $\frac{d n} {ds} $. 
 \end{remark}
 Note that by construction,
 \begin{equation}\label{norone} 
 \left  |\frac{\partial F}{\partial t} (s,t) \right |=1 \ \ \mbox{for} \ \ s \in K.
 \end{equation}

 \begin{definition}\label{orientflow} A geodesic lamination $\lambda$ is called {\it orientable}, if there exists a Lipschitz unit vector field $n$ defined in a neighborhood of $\lambda$ and transverse to the leaves. 
  \end{definition}

 Note that by Lemma~\ref{lemma:flow}  a normal vector field exists locally, so the issue is existence of a global vector field. Also note that together with a choice of an ambient orientation for $M$, a choice of $n$ determines an orientation of the leaves. More precisely, the direction of the leaves followed $n$ must coincide with the orientation of $M$.
 
 Definition~\ref{orientflow} is clearly equivalent to any of the following conditions:\\
  $(i)$ There is a cover of a neighborhood of $\lambda$ with flow boxes $F$ as in Definition~\ref{flowbox} such that $F$ are orientation preserving
 with respect to  the ambient orientation of the manifold $M$ and the product orientation on $R^\sharp$. \\
 $(ii)$ Given $x \in \lambda$ and $\beta: (-\infty, \infty) \rightarrow M$ an orientation preserving parametrization of the leaf through $x$ with $\beta(0)=x$, there exists $\epsilon >0$ such that the map
 \begin{equation}\label{p1}
p_1\circ F^{-1}\circ  \beta: [-\epsilon, \epsilon] \rightarrow [a,b]
\end{equation}
 is orientation preserving, where $p_1$ denotes projection onto $[a,b]$.
 A cover of a neighborhood of $\lambda$ by flowboxes as above is called an {\it{oriented atlas}} of the lamination. An oriented atlas determines completely the orientation of $\lambda$.

 Throughout the section we fix an oriented atlas for $\lambda$ consisting of flow boxes $\{F\}$.

\begin{definition}\label{MDefinition5A}
For a continuous path $f: [l, m] \rightarrow M$, we let
\begin{equation}\label{defK}
K=f^{-1}\left(f([l, m]) \cap \lambda \right)
\end{equation}
and call $f$  {\it{transverse to the lamination}} $\lambda$ if for every $k \in K$ there exists a flow box 
$F: R^\sharp=[a,b] \times [c,d]  \rightarrow  F(R^\sharp) =R\subset M $
at $f(k)$ and $\eta=\eta(k)>0$ such that 
\begin{equation}\label{p2}
 p_2 \circ F^{-1} \circ f: [k-\eta, k+\eta] \rightarrow [c,d]
\end{equation}
is a homeomorphism onto its image, where $p_2$ denotes projection onto $[c,d]$.
We call $f$  an {\it{admissible transversal}}, if in addition $f(l), f(m)\in M_0=M \backslash \lambda$.
\end{definition}

\begin{definition}\label{MDefinition5} Let $f: [l, m] \rightarrow M$ be an admissible transversal. We say that 
  $f$ is {\it{positively (resp. negatively) transverse}} to $ \lambda$ if for every $k \in K$ and every oriented flow box $F$ at $f(k)$ the map (\ref{p1}) is increasing (resp. decreasing) function of s.

Note that all our definitions are clearly seen to be independent of the parameterization. 
\end{definition}

%This follows immediately from the definition of $\nu$ in terms of $\beta$. \begin{definition}\label{MDefinition8} Let $F: R^\sharp=[a, b] \times [0,1]  \rightarrow M$ be a smooth diffeomorphism into its image $R=F(R^\sharp)$. We say that $F=F(t,s)$ is a {\it{smooth homotopy  positively transverse to $ \lambda_u$}} if
%\begin{itemize}
%\item $(i)$$ F([a,b] \times \{ 0,1\})  \subset M_0$
%\item $(ii)$ The family of transversals $s \mapsto f_t(s):=F(t,s)$ depending smoothly on $t$ are positively oriented for all $t$
%\item $(iii)$ $dt \wedge ds = J(F) * 1$, where the Jacobian $J(F) > 0$.
%\end{itemize}
%We define the notion of {\it{smooth homotopy  negatively transverse to $ \lambda_u$}} if we replace condition $(ii)$ with negatively oriented instead of positively oriented.
%\end{definition}
%\begin{remark}
%  It is important to note that we don't require $F$ to be a homotopy in the usual sense i.e $F([a,b] \times  \{s\})$ to be parallel to the leaves of the lamination (cf. Definition~\ref{flowbox}), since we are unable to obtain enough regularity in the $s$-direction for these more restrictive flow boxes (cf. Remark~\ref{nonsmooth}).
%\end{remark}

\begin{definition}\label{MDefinition12} Let $f:[l,m] \rightarrow M$ be an admissible transversal. If $l =l_0 <l_1 < ...< l_n = m$ is a division of $[l,m]$ into intervals on which $f(l_i) \in M_0$ and $f_i(s) = f(s)$ $l_{i-1}\leq s \leq l_i$ is alternatively positively and negatively transverse to $ \lambda$, we say $[l,m] = \bigcup_i [l_{i-1},l_i]$ is a good subdivision for $f$.
\end{definition}
\begin{lemma}\label{MLemma13}  Let $f: [l,m] \rightarrow M$ be an admissible transversal to an oriented lamination $ \lambda$. Then $[l,m]$ has a good subdivision for $f$.
\end{lemma}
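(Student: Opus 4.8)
\textbf{Proof proposal for Lemma~\ref{MLemma13}.}

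The plan is to produce the good subdivision by analyzing how $f$ passes through $\lambda$ using the oriented atlas of flow boxes, and then coarsening a naive subdivision. First I would observe that since $f$ is an admissible transversal, the set $K = f^{-1}(f([l,m]) \cap \lambda)$ is a closed subset of the open interval $(l,m)$ (using that $f(l), f(m) \in M_0 = M \backslash \lambda$), hence compact. Around each point $k \in K$, Definition~\ref{MDefinition5A} gives a flow box $F$ at $f(k)$ and an $\eta(k) > 0$ such that $p_2 \circ F^{-1} \circ f$ restricted to $[k-\eta(k), k+\eta(k)]$ is a homeomorphism onto its image; in particular, on that subinterval $f$ is monotone in the flow-box coordinate transverse to the leaves, so on each such interval $f$ is either positively or negatively transverse (this uses that the oriented atlas is compatible, so the sign is well defined by Definition~\ref{MDefinition5}). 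The intervals $(k-\eta(k), k+\eta(k))$ cover $K$; by compactness I extract a finite subcover, and by shrinking and relabelling I get finitely many points $l < k_1 < \dots < k_r < m$ and open intervals $I_j$ on which $f$ restricted to $\overline{I_j} \cap [l,m]$ has a definite transversality sign.

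The second step is to turn this finite cover into an honest partition $l = l_0 < l_1 < \dots < l_n = m$ with $f(l_i) \in M_0$. Between consecutive ``flow-box'' intervals, and near the endpoints $l$ and $m$, $f$ stays in the complement $M_0$ (since those parts of the domain are disjoint from $K$), so I can pick the division points $l_i$ in those gaps — there I have freedom to choose $l_i$ with $f(l_i) \in M_0$, which is an open dense condition in those gap intervals. I would choose the $l_i$ so that each $[l_{i-1}, l_i]$ either lies in one of the flow-box intervals (where $f_i$ is purely positively or purely negatively transverse) or lies entirely in $M_0$ (where $f_i$ is vacuously both, hence may be assigned either sign, or better, merged into a neighbor).

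The third and final step is to enforce the \emph{alternating} condition. After step two I have a subdivision where each piece is positively transverse, negatively transverse, or trivial (maps into $M_0$). A trivial piece can be absorbed into an adjacent non-trivial piece without changing its sign, because concatenating a transversal with a path in $M_0$ on either end preserves admissibility and the transversality sign (the added part contributes nothing to $K$). After absorbing all trivial pieces I am left with pieces that are alternately... not yet necessarily alternating: two consecutive pieces could have the same sign. But two consecutive pieces of the same sign concatenate to a single piece of that same sign (again because their union is still monotone-through-$\lambda$ in the appropriate sense — more carefully, positive-transverse followed by positive-transverse, with $f$ landing in $M_0$ at the junction, is again positive-transverse since the transversality condition is checked pointwise at each $k \in K$). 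So I can merge same-sign neighbors repeatedly; this process terminates and leaves a subdivision into pieces of strictly alternating sign, with all $f(l_i) \in M_0$, which is precisely a good subdivision.

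\textbf{Main obstacle.} The delicate point I expect to be the crux is the claim in step three that the concatenation of two same-sign (or a trivial and a non-trivial) admissible transversals across a point of $M_0$ is again an admissible transversal of the same sign. One has to check this strictly from Definition~\ref{MDefinition5A} and Definition~\ref{MDefinition5}: for each $k$ in the $K$-set of the concatenation, $k$ lies in the $K$-set of exactly one of the two pieces (since the junction point maps to $M_0$), so the required flow box and $\eta$ are inherited, and the sign is inherited — the potential trap is making sure the $\eta$-neighborhoods don't spill across the junction, which is handled by shrinking $\eta$ so it stays within the relevant piece. A secondary subtlety is ruling out ``infinitely oscillating'' behavior: a priori $f$ could cross $\lambda$ on a set with infinitely many components accumulating somewhere, but the compactness-plus-finite-subcover argument in step one already confines all of $K$ to finitely many intervals on each of which the behavior is monotone, so no genuine oscillation survives.
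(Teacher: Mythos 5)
Your proposal is correct and follows essentially the same route as the paper: cover the compact set $K$ by finitely many intervals on which the flow-box coordinate $p_2\circ F^{-1}\circ f$ is strictly monotone (hence of definite sign), use the finiteness to rule out oscillation, and place the division points $l_i$ in the $M_0$-gaps at sign changes. The paper phrases the last step as a greedy search for the next opposite-sign crossing (with the uniform $\epsilon$ guaranteeing termination) rather than your merge-same-sign-neighbors formulation, but this is only a cosmetic difference.
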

\begin{proof} Since there are finitely  many flow boxes we may assume without loss of generality that the image of $f$ is contained in $R$ for some oriented flow box
$F: R^\sharp=[a,b] \times [c,d]  \rightarrow  F(R^\sharp) =R\subset M $. Consider the
continuous map
\[
g=p_2 \circ F^{-1} \circ f: [l, m] \rightarrow [c,d].
\]
Given a point $k \in K$, consider open interval $[k-\epsilon(k), k+\epsilon(k)]$ around $k$ such that $g$ is strictly monotone. By compactness, we can cover $K$ with finitely many such intervals and let $\epsilon=\min \epsilon(k)$.
  
We now construct the good subdivision $l =l_0 <l_1 < ...< l_n = m$. For each $k \in K$ assign  $+=sign(k)$ to the interval $[k-\epsilon(k), k+\epsilon(k)]$ if $f$ defines a positive transversal and $-=sign(k)$ if it defines a negative transversal.
 If $K =\emptyset$, $n = 1$. Let $k_1 = \min_{k \in K} k$ and assign the sign of $sign(k_1)$ to the first interval. Let $k_2 = \min_{k \in K} k$ such that $sign(k_2)$ has the opposite sign. If there is no such $k_2$, $n = 1$. If there is such a $k_2$, choose $l_1 < k_2$ as the largest point less than $k_2$ on for which $f$  is strictly monotone in the interval $[ l_1, k_2]$. Proceed inductively. The process is finite as there is a lower bound $\epsilon$ on the size of the intervals.
\end{proof}

\subsection{Transverse cocycles}\label{tranco} Let $\lambda$ be an oriented geodesic lamination and let  $M_0=M \backslash \lambda$. We write $M_0=\bigcup S$ for {\it finitely many} connected components $S$ called the {\it principal regions} or {\it open plaques} (cf. \cite[Lemma 4.3]{casson}). Lifting to the universal cover we denote $\tilde {M_0}= \tilde M \backslash \tilde \lambda=\bigcup \tilde S$ where  $\tilde S$ is the preimage of $S$. Each component $\tilde S_j$ of $\tilde S$  is also called an open plaque and the projection map $\tilde {S_j} \rightarrow S$ is the universal cover of $S$. Furthermore, the closure of $\tilde {S_j}$ in $H^2$ is a contractible surface with geodesic boundary (cf. \cite[Lemma 4.1]{casson}) and its boundary is contained in the preimage of the boundary leaves of $\lambda$ (cf. \cite[definition and remark on p.61]{casson}).   In this section we start with a map  
   \[
   \tilde v: \tilde M \rightarrow \R
   \]
   with the following properties:
   
   \begin{itemize}
   \item $(i)$ $\tilde v$ is equivariant under a representation $\alpha :\pi_1(M) \rightarrow \R$
   \item $(ii)$ $\tilde v \equiv a_j$ is constant on each plaque $\tilde S_j \subset \tilde M \backslash \tilde \lambda$
   \item $(iii)$ $ \tilde v$ is locally bounded.
   \end{itemize}

 For $S$ and $S'$ open plaques, we set
\begin{equation}\label{MDefinition3}
\beta (S, S') = \tilde v(S)- \tilde v(S').
\end{equation}
Note that since $\tilde v$ is equivariant under  $\alpha$, it follows that $\beta$ is invariant under the action of $\pi_1$. The goal of this section is to define a transverse cocycle $\nu$ induced by $\beta$.

\begin{definition}\label{MDefinition6} For $f :[l,m] \rightarrow M $ an admissible transversal positively oriented, define
$\nu(f) = \beta(S_m, S_l)$ where $\tilde f(l) \in S_l$,  $\tilde f(m) \in S_m$ are the open
plaques containing the endpoints of a lift $\tilde f$. For $f $ an admissible transversal negatively oriented, define
$\nu(f) = -\beta(S_m, S_l)$. Since $\beta$ is invariant under $\pi_1$, this is independent of the lift. 
Finally for an admissible transversal $f$ and  a good subdivision, we define 
\[
\nu(f) = \sum_i  \nu(f_i).
\]  
\end{definition}
\begin{lemma}\label{Mproposition7} $\nu(f)$ does not depend on the choice of $l_i$ in a good subdivision. If $f$ is split into two sub-arcs $f(s) = f_1(s), \ l \leq s  \leq  p$ and $f(s) = f_2(s), \  p \leq s \leq m$ with $f(p) \in M_0$, then $\nu(f) = \nu (f_1) + \nu (f_2)$. Moreover, $\nu$ is invariant under homotopies of $f$  which preserve the lamination and are transverse to the lamination. Finally, $\nu(f) = \nu(f^-)$ where $f^-$ denotes $f$ with the reverse parametrization.
\end{lemma}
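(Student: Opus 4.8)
The plan is to verify each of the four assertions in turn, all of which reduce to elementary bookkeeping with the definition $\nu(f)=\sum_i\nu(f_i)$ and the additivity of $\beta$ built into \eqref{MDefinition3}. First I would prove independence of the choice of division points $l_i$. Given two good subdivisions of $[l,m]$, I would pass to a common refinement; so it suffices to show that refining a single subinterval $[l_{i-1},l_i]$ by inserting an extra point $l'$ with $f(l')\in M_0$ does not change the sum. On that subinterval $f_i$ is, say, positively transverse, so both $f_i\big|_{[l_{i-1},l']}$ and $f_i\big|_{[l',l_i]}$ are positively transverse as well, and the contribution is $\beta(S_{l'},S_{l_{i-1}})+\beta(S_{l_i},S_{l'})$. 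Writing $\beta(S,S')=\tilde v(S)-\tilde v(S')$ one sees this telescopes to $\beta(S_{l_i},S_{l_{i-1}})$, i.e.\ the original contribution; the only subtlety is to check the lifts match up, which follows because one may lift $f_i$ continuously and the plaque $\tilde f_i(l')$ lies in is determined by the lift. (A purely trivial subinterval with $K=\emptyset$ contributes $\beta(S,S)=0$, consistent with either convention.)

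Next I would prove the splitting statement $\nu(f)=\nu(f_1)+\nu(f_2)$ when $f(p)\in M_0$: this is immediate once we observe that a good subdivision for $f_1$ concatenated with a good subdivision for $f_2$, after possibly merging the two subintervals adjacent to $p$ in case they carry the same sign and glue to a monotone piece, is a good subdivision for $f$; and by the previous paragraph $\nu$ is insensitive to whether that merge is performed. Then I would treat homotopy invariance, which is the step I expect to be the main obstacle. The issue is that along a homotopy $f_u$, $u\in[0,1]$, through admissible transversals transverse to $\lambda$, the combinatorial pattern of positive/negative subintervals and the plaques containing the endpoints can change; what must not change is $\nu$. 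I would argue that since there are finitely many flow boxes and each transversal meets $\lambda$ in a compact set, one can subdivide $[0,1]$ so that on each piece the homotopy stays inside a single flow box (or a bounded chain of them); inside a flow box $F\colon[a,b]\times[c,d]\to R$, transversality to $\lambda=F([a,b]\times K)$ means the map $g_u=p_2\circ F^{-1}\circ f_u$ is locally monotone near $K$, and the value $\beta(S_m,S_l)$ on a positively-transverse piece depends only on which complementary intervals of $K\subset[c,d]$ contain $g_u(l)$ and $g_u(m)$; since the endpoints stay in $M_0$ throughout (admissibility), $g_u(l)$ and $g_u(m)$ stay in fixed components of $[c,d]\setminus K$, so $\beta$ is literally constant in $u$. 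A good subdivision can be chosen to vary continuously, and by the first two parts $\nu$ is unchanged under the finitely many sign-pattern changes that occur as $u$ varies, because each such change happens through a configuration where a subinterval degenerates to one with $f(l')\in M_0$, exactly the situation already handled.

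Finally, for $\nu(f)=\nu(f^-)$: reversing the parametrization turns a positively-transverse subinterval into a negatively-transverse one and swaps its endpoints, so the contribution $\beta(S_m,S_l)$ becomes $-\beta(S_l,S_m)=\beta(S_m,S_l)$; summing over a good subdivision (read in reverse order, which is a good subdivision for $f^-$) gives $\nu(f^-)=\nu(f)$. I anticipate that writing the homotopy-invariance argument carefully — in particular making precise the reduction to ``homotopy inside one flow box'' and the claim that a good subdivision can be chosen continuously in the homotopy parameter — is where the real work lies; everything else is a telescoping computation using \eqref{MDefinition3} and the $\pi_1$-invariance of $\beta$ noted after \eqref{MDefinition3}, which guarantees independence of the chosen lift $\tilde f$.
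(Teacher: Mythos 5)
Your proposal is correct and follows essentially the paper's strategy, with two minor points of divergence worth flagging. For the independence of the good subdivision you pass to a common refinement and telescope, which is cleaner than the paper's more informal ``move the endpoints in $M_0$'' argument and handles the case of subdivisions of different cardinalities without fuss; the paper treats the subdivision and reversal properties as immediate, as do you, so there is no substantive gap there.

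The one place where your argument is murkier than necessary is homotopy invariance. You raise the worry that the combinatorial pattern of positive/negative subintervals may change along the homotopy, and then dispose of it via the loose claims that ``a good subdivision can be chosen to vary continuously'' and that ``finitely many sign-pattern changes occur'' through degenerate configurations already handled. In fact this worry does not arise: the hypothesis is that the homotopy $F$ both preserves $\lambda$ and is transverse to it, which forces $F^{-1}(\lambda)$ to have a product structure $[a,b]\times K$ (a crossing can only appear or disappear through a tangency, which transversality forbids, and the sign at each crossing cannot flip while $p_2\circ F^{-1}\circ f_t$ stays locally monotone). Consequently a good subdivision $l=l_0<\cdots<l_n=m$ for one stage is automatically a good subdivision for all stages, with the same signs, and the paper's observation then closes the argument directly: each path $t\mapsto f_t(l_i)$ lies in $M_0$, hence stays in one plaque, so $\nu\bigl(f_t|_{[l_i,l_{i+1}]}\bigr)$ is constant in $t$ and the sum is constant. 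Your flow-box computation establishes the same constancy of $\beta$ on each piece; if you invoke the product structure of a $\lambda$-preserving transverse homotopy at the outset, the discussion of sign-pattern changes can simply be deleted and your proof collapses to the paper's.
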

\begin{proof}
 We notice that if we choose a second set of $l_i'$, there is an arc between $l_i$ and $l_i'$ which lies in $M_0$. By the properties of $ \nu$, we may move the endpoints of a transversal in $M_0$ without changing $\nu$. Also, the definitions of $\nu$ do not depend on the choice of parameter. Hence the two definitions of $\nu$ agree. The additive property under subdivision of transversals and invariance under change of orientation are immediate  from the definition. 
 
 To see the  invariance under homotopies, consider a  homotopy
 \[
 F: R^\sharp=[a,b] \times [c,d]  \rightarrow  F(R^\sharp) =R\subset M; \ F=F(t,s)
 \]
 and set $f_t=F(t,.)$. 
 Now consider a good subdivision $l =l_0 <l_1 < ...< l_n = m$  of $[l,m]$
  and note that because the homotopy preserves the lamination, the end points
  $f_t(l_i)$ all lie in the same plaque for $t \in [a,b]$.  Therefore,
  \[ 
  \nu(f_t \big |_{[l_i l_{i+1}]})=\nu(f_b \big |_{[l_i l_{i+1}]}).
  \]
  The rest follows from the additive property of $\nu$ with respect to subdivisions.
  \end{proof}
 
 For the next definition, see \cite[page 120]{bonahon2}.
 \begin{definition}\label{deftransversecocycle}
 A {\it{transverse cocycle}} $c$ for an oriented lamination $\lambda$ is a map 
 \[
 c: \{ admissible \  transversals \} \rightarrow \R 
 \]
 which satisfies the following properties: 
 \begin{itemize}
\item $(i)$ $c(f) = c(f_1) + c(f_2)$ when $f$ is decomposed into two subarcs as in Lemma~\ref{Mproposition7}.
\item $(ii)$ $c(f) = c (f')$ when $f$ is carried into $f'$ by a homotopy which preserves $ \lambda$ and is transverse to the foliation.
\item $(iii)$ $c(f) = c(f^-)$ where $f^-$ denotes $f$ with the reverse parametrization.
\end{itemize}
\end{definition}

Lemma~\ref{Mproposition7} now implies immediately:

\begin{theorem}\label{thm:tranco}A function $\tilde v$ satisfying properties $(i)$-$(iii)$ defines a transverse cocycle $\nu$. 
\end{theorem}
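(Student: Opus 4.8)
The plan is to observe that the three defining properties of a transverse cocycle in Definition~\ref{deftransversecocycle} are literally the content of Lemma~\ref{Mproposition7}, so the only genuine work left is to confirm that $\nu$ is a well-defined function on the set of admissible transversals before quoting that lemma.

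First I would record well-definedness of $\nu(f)$. For an admissible transversal $f$ that is globally positively (resp.\ negatively) transverse, Definition~\ref{MDefinition6} sets $\nu(f) = \pm\beta(S_m,S_l)$; property $(ii)$ (constancy of $\tilde v$ on plaques) is exactly what makes $\tilde v(S)$ meaningful, property $(iii)$ (local boundedness) guarantees the plaque values $a_j$ are finite so that $\beta(S,S')=\tilde v(S)-\tilde v(S')\in\R$, and property $(i)$ ($\alpha$-equivariance of $\tilde v$) shows $\beta$ is invariant under the deck action, so the value is independent of the chosen lift $\tilde f$. For a general admissible transversal one uses Lemma~\ref{MLemma13} to produce a good subdivision and sums the contributions; Lemma~\ref{Mproposition7} then shows this sum does not depend on the particular good subdivision, so $\nu$ is an unambiguous map $\{\text{admissible transversals}\}\to\R$.

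Then I would match axioms directly: axiom $(i)$, additivity under splitting $f$ into two subarcs meeting $M_0$ at the splitting point, is the second assertion of Lemma~\ref{Mproposition7}; axiom $(ii)$, invariance under homotopies preserving $\lambda$ and transverse to it, is the third assertion; axiom $(iii)$, invariance under reversal of parametrization, is the last assertion. No further estimate or construction is needed.

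The main obstacle is not in this theorem — it is essentially bookkeeping — but was already dispatched in proving Lemma~\ref{Mproposition7}: the delicate point there is the existence of a good subdivision (Lemma~\ref{MLemma13}) together with the fact that moving an endpoint within a principal region of $M_0$ leaves $\nu$ unchanged, which is precisely what lets two different good subdivisions be compared by interpolating through $M_0$. If one wanted the argument here fully self-contained, the only step requiring care is to refine any two good subdivisions (or the two-arc decomposition in axiom $(i)$) to a common subdivision with all division points lying in $M_0$, which is possible because $M_0=M\setminus\lambda$ is open, so division points can always be perturbed off $\lambda$.
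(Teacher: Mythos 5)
Your proposal is correct and matches the paper's approach exactly: the paper deduces Theorem~\ref{thm:tranco} immediately from Lemma~\ref{Mproposition7}, precisely because the three axioms of Definition~\ref{deftransversecocycle} are the three assertions of that lemma, with well-definedness supplied by Lemma~\ref{MLemma13} and the $\pi_1$-invariance of $\beta$. Your extra remarks on refining good subdivisions through $M_0$ are accurate but just spell out what was already established in the proof of Lemma~\ref{Mproposition7}.
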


The following is Thurston's definition of transverse measure, more or less in Thurston's own words. (See \cite[Section 8.6]{thurston2}.)
\begin{definition}A transverse measure $\nu$ for a geodesic lamination $\lambda$ means a measure  defined on each local leaf space $[c,d]$ of every flow box,
in such a way that the coordinate changes are measure preserving. Alternatively one
may think of $\nu$ as a measure defined on every admissible (unoriented) transversal 
to $\lambda$, supported on the intersection of the transversal with the lamination and invariant under local projections along leaves of $\lambda$. 

In this paper we use this definition except we allow the support of the measure to possibly be strictly contained in the intersection of the transversal with the lamination. It is straightforward that a transverse cocycle $c$ is  a transverse measure iff $c(f) \geq 0$ for every $f$ positively transverse to $\lambda$ (cf. \cite[Proposition 18]{bonahon2}.)
\end{definition}

 \subsection{The transverse measure on $\lambda_u$} We now go back to the sequence
$v_q $ of $q$-harmonic sections  converging  as in Theorem~\ref{lemma:limmeasures2} and Theorem~\ref{thmlegr} to a fixed least gradient section $v$ along a sequence $q \rightarrow 1$. Also,   
$\lambda_u$  is the geodesic lamination of maximum stretch of the $\infty$-harmonic map $u$ constructed in Section~\ref{sect:crandal}. 
The main theorem of the section is:

\begin{theorem}\label{transmeasure}The least gradient map $v$ induces a transverse measure $\nu$ on the geodesic lamination $\lambda_u$. 
\end{theorem}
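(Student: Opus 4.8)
## Proof Proposal for Theorem~\ref{transmeasure}

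The plan is to assemble this result from the machinery already in place, with the only new input being positivity. First I would verify that the least gradient section $v$ (equivalently, the equivariant function $\tilde v$ on $H^2$) satisfies the three hypotheses $(i)$--$(iii)$ of Section~\ref{tranco}. Equivariance under $\alpha$ is Theorem~\ref{lemma:limmeasures2}$(ii)$, and local boundedness is Theorem~\ref{lemma:limmeasures2}$(i)$. The substantive point is condition $(ii)$: that $\tilde v$ is constant on each plaque $\tilde S_j$ of $\tilde M \setminus \tilde\lambda_u$. This is where Corollary~\ref{currentsupportfiber} and Theorem~\ref{thm:supptmeasure} do the work --- the current $V = dv$ is supported on $\lambda_u$, so on any open plaque $S \subset M_0 = M \setminus \lambda_u$ we have $dv \equiv 0$ as a distribution; since $\tilde v \in BV_{loc}$ with vanishing distributional derivative on the connected open set $\tilde S_j$, it is a.e.\ equal to a constant $a_j$ there, and after modification on a null set $\tilde v$ is genuinely constant on each plaque. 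Once $(i)$--$(iii)$ hold, Theorem~\ref{thm:tranco} immediately produces a transverse cocycle $\nu$ on $\lambda_u$ via $\nu(f) = \sum_i \nu(f_i)$ over a good subdivision (which exists by Lemma~\ref{MLemma13}), with $\nu(f_i) = \pm\beta(S_m, S_l) = \pm(\tilde v(S_m) - \tilde v(S_l))$.

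The heart of the proof is then showing that $\nu$ is in fact a transverse \emph{measure}, not merely a cocycle. By the criterion quoted from \cite[Proposition 18]{bonahon2}, it suffices to prove $\nu(f) \geq 0$ for every $f$ that is positively transverse to $\lambda_u$. Here I would exploit the approximation by the $q$-harmonic sections $v_q$. For a positively transverse admissible arc $f$ contained in a single oriented flow box, with endpoints $f(l), f(m) \in M_0$, one has
\[
\nu(f) = \tilde v(\tilde f(m)) - \tilde v(\tilde f(l)) = \lim_{q \to 1}\bigl(\tilde v_q(\tilde f(m)) - \tilde v_q(\tilde f(l))\bigr) = \lim_{q \to 1} \int_l^m \frac{d}{dt}(\tilde v_q \circ \tilde f)\, dt = \lim_{q\to 1}\int_{\tilde f} d\tilde v_q,
\]
using the $L^s_{loc}$ (hence a.e., after passing to a further subsequence) convergence $\tilde v_q \to \tilde v$ from Theorem~\ref{lemma:limmeasures2}, valid since the endpoints lie off $\lambda_u$ where $\tilde v$ is continuous. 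So I must show $\int_{\tilde f} d\tilde v_q \geq -\epsilon_q$ with $\epsilon_q \to 0$. Recall $dv_q = V_q = |U_p|^{p-2}*U_p = k_p^{p-1}|du_p|^{p-2}*du_p$. Pairing $*du_p$ against the tangent vector of a curve positively transverse to the leaves of $\lambda_u$ --- which, on the stretch locus, is where $du_p$ concentrates and points in the leaf direction --- should give a sign. Concretely, $\langle *du_p, \dot{f}\rangle = \langle du_p, J\dot f\rangle$ where $J$ is rotation by $\pi/2$ consistent with the chosen orientation; if $f$ is positively transverse then $J\dot f$ points in the direction of increasing $u_p$ along the nearby leaves, so the integrand is $\geq 0$ up to contributions from the region $\{|du_p| < \lambda < 1\}$, which vanish in the limit by Proposition~\ref{prop:supptmeasure0} together with the flow-box structure of Proposition~\ref{prop:flow}.

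The main obstacle, as I see it, is making that last sign argument rigorous \emph{uniformly in the approximation}, because the flow box $F$ of Proposition~\ref{prop:flow} is built from $\lambda_u$ (the limiting lamination), while $du_p$ refers to the $p$-harmonic maps, whose nodal sets need not align with $\lambda_u$ at finite $p$. One must argue that in the adapted coordinate system $(\tilde u_p, \tilde v_q)$ the level sets of $u_p$ are $C^{1,\alpha}$ curves transverse to $f$ with the correct orientation (orientation was fixed in Section~\ref{adapted} so that $du_p \wedge dv_q > 0$), so that $\int_{\tilde f} dv_q$ counts signed crossings with a definite sign on the part of $f$ lying in $\{|du_p| \geq \lambda\}$; the complementary part is controlled by a flux estimate $\int_{f \cap \{|U_p| < \lambda\}} |U_p|^{p-1} \to 0$ coming from Proposition~\ref{prop:supptmeasure0} applied along transversals. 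A cleaner route, which I would try in parallel, is to avoid pointwise crossing arguments entirely: extend $f$ to a thin flow-box neighborhood and integrate $dv_q$ against a nonnegative bump form supported there, using Stokes' theorem and the fact that $dv_q$ is closed to reduce $\int_{\tilde f} dv_q$ to a weighted integral of $|U_p|^{p-2}*U_p$ whose sign is governed by the orientation of the foliation by level sets of $u_p$ relative to the positive transversal direction. Either way, once positivity of $\nu(f)$ is established for $f$ inside a single flow box, the additivity in Lemma~\ref{Mproposition7} extends it to all positively transverse admissible $f$, and \cite[Proposition 18]{bonahon2} upgrades the cocycle $\nu$ to a transverse measure on $\lambda_u$, completing the proof.
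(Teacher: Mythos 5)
Your proposal takes essentially the same route as the paper: constancy of $v$ on the plaques via the concentration results, the transverse-cocycle framework of Section~\ref{tranco} together with Bonahon's nonnegativity criterion, and positivity obtained by smearing $dv_q=|U_p|^{p-2}*U_p$ against a nonnegative bump weight over a flow-box neighborhood of the transversal --- your ``cleaner route'' is precisely the paper's computation with the weight $\Xi^\sharp$. The finite-$p$ alignment issue you flag is not given a separate treatment in the paper, which simply asserts $*du_p\left(\frac{dF}{ds}\right)>0$ from the definition of positive transversality.
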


Let   $\sigma: \tilde M \rightarrow M$ denote the universal cover, and denote the lift of $v_q$ by $\tilde v_q$, the lift of $v$ by $\tilde v$ and so forth.
 Let $M \backslash  \lambda_u = M_0$ and $\tilde M_0 = \sigma^{-1}(M_0)$. 
By Theorem~\ref{straightline},
\begin{equation}\label{MLemma1}
M_0 = |du|^{-1}([0,L)) \ \mbox{and} \ \tilde M_0 = |d\tilde u|^{-1}([0,L).
\end{equation}
The lamination $\lambda_u$ has in our context a natural orientation given by $grad\ u$.
Let $\tilde M_0 =\bigcup S_j$ where $S_j$ are the open connected components of $\tilde M_0$.
 \begin{lemma}\label{MLemma2} $\tilde v(x) = a_j$ is constant for $x$ in the open plaque $S_j$ and the constants $a_j$ are locally bounded in $\tilde M$.
Moreover, the sequence $\tilde v_{q_j}$ converges to the constant $a_j$ in 
$W^{1,1}_{loc}(S_j)$. 
\end{lemma}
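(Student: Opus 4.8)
The plan is to prove Lemma~\ref{MLemma2} in three parts: (a) $\tilde v$ is constant on each plaque $S_j$; (b) the constants $a_j$ are locally bounded; (c) the convergence $\tilde v_{q_j} \to a_j$ is in $W^{1,1}_{loc}(S_j)$.

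For (a), recall from Corollary~\ref{currentsupportfiber} that $\lim_{q \to 1}\int_M |{*du} \wedge dv_q|\,{*1} = 0$, and from Theorem~\ref{thm:supptmeasure} that the support of $V = dv$ lies in $\lambda_u$. The key observation is that on $M_0 = |du|^{-1}([0,L))$ the form $*du$ is nonvanishing, so up to scaling $*du$ spans a line field transverse to $du$; since $dv_q \rightharpoonup dv$ and $dv$ has support in $\lambda_u$, while also (via the adapted coordinate system from Section~\ref{adapted}) $dv_q$ is, away from critical points, proportional to $*du_p$, we get that the limiting measure $dv$ restricted to $\tilde M_0$ vanishes. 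Hence $\tilde v$ is locally constant on each connected open plaque $S_j$, giving $\tilde v \equiv a_j$ there. I would be slightly careful here to argue that $dv$ genuinely vanishes (as a current) on the open set $S_j$ — this follows because $S_j \subset \tilde M \backslash \tilde\lambda_u$ and $\mathrm{supp}(dv) \subset \lambda_u$ from Theorem~\ref{thm:supptmeasure} (or its equivariant version, Theorem~\ref{thm:equivsit}) — and that a $BV_{loc}$ function with vanishing distributional derivative on a connected open set is a.e.\ constant there.

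For (b), the local boundedness of the $a_j$'s is essentially inherited from the local $L^\infty$ bound on $\tilde v$ established in Theorem~\ref{lemma:limmeasures2}(i): on any compact $W \subset \subset \tilde M$ we have $|\tilde v|_{L^\infty(W)} \le C_W$, and since $\tilde v = a_j$ a.e.\ on $S_j \cap W$ whenever $S_j \cap W \neq \emptyset$ has positive measure, we get $|a_j| \le C_W$ for all such $j$. For (c), since $\tilde v_q \to \tilde v = a_j$ strongly in $L^s_{loc}$ (Theorem~\ref{lemma:limmeasures2}) and $\tilde v_{q}$ restricted to $S_j$ solves the $q$-harmonic equation, I would combine the $L^1_{loc}$ convergence of $\tilde v_{q_j}$ with an $L^1_{loc}(S_j)$ bound on $d\tilde v_{q_j}$: on any $W' \subset\subset S_j$, the measure $|dv_q|(W')$ converges to $|dv|(W') = 0$ by the lower-semicontinuity/support argument (as in the proof of Theorem~\ref{thm:supptmeasure}, using Proposition~\ref{prop:supptmeasure0} since $|U| \le \lambda < 1$ on $W'$ compactly inside $M_0$), so $|d\tilde v_{q_j}|_{L^1(W')} \to 0$. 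Together with $\tilde v_{q_j} \to a_j$ in $L^1(W')$ this gives $W^{1,1}_{loc}(S_j)$ convergence.

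The main obstacle I anticipate is part (c), specifically getting the $L^1_{loc}(S_j)$ bound (indeed, decay to zero) of $d\tilde v_{q_j}$ in a way that is uniform on compact subsets \emph{up to}, but not including, the boundary leaves of $S_j$. The estimate from Proposition~\ref{prop:supptmeasure0} requires a test function supported where $|U| \le \lambda < 1$, i.e.\ on sets compactly contained in the open set $M_0$; this is fine for $W' \subset\subset S_j$ but one must resist the temptation to claim anything near $\partial S_j \subset \tilde\lambda_u$. So the statement is genuinely local in the interior of each plaque, and the proof should be phrased accordingly — exhausting $S_j$ by compact subsets and applying the concentration-of-measure estimates on each. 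A secondary technical point worth stating carefully is that $S_j$ may be noncompact (it is the universal cover of a principal region $S$), but this causes no difficulty since all assertions are local.
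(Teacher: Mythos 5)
Your proof is correct and rests on the same estimates as the paper: Proposition~\ref{prop:supptmeasure0} applied on compact subsets of the plaque (where $|U|\le\lambda<1$), together with Theorem~\ref{lemma:limmeasures2} for the $L^s_{loc}$ convergence and the local $L^\infty$ bound. The paper obtains constancy and the $W^{1,1}_{loc}$ convergence in one stroke --- applying Proposition~\ref{prop:supptmeasure0} to $\chi_{\tilde B}$ for a closed ball $\tilde B\subset S_j$ and using the identity $|d\tilde v_q|^q=|U_p|^p$ --- whereas you first invoke the support Theorem~\ref{thm:supptmeasure} (itself a consequence of the same proposition) to get constancy, then separately prove the $L^1$ decay of $d\tilde v_q$; this is equivalent, though the detour through Corollary~\ref{currentsupportfiber} and the line-field remark in part (a) is unnecessary and should be dropped.
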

\begin{proof}
Let $\tilde B$ in $S_j$ be a closed ball in $S_j$ and let $\chi_{\tilde B}$ denote its characteristic function. From Proposition~\ref{prop:supptmeasure0}, by (\ref{normintv1}), (\ref{normintv2}) and Lemma~\ref{klemma1}, 
\[
\lim_{q \rightarrow 1} \int_{\tilde B} |d \tilde v_q|^q *1 =
L^{-1} \lim_{q \rightarrow 1} \int |U_p|^p \chi_{\tilde B}*1 = 0.
\]
By combining with Theorem~\ref{lemma:limmeasures2}, the  $\tilde v_q $ converge to $\tilde v$ in $W^{1,1}_{loc}(\tilde B)$ and also in $L^s_{loc}(\tilde B)$ for all $s$ where $d\tilde v=0$. Thus,  $\tilde v=a_j$ in $S_j$.
\end{proof}

\begin{transmeasure}
It suffices to show $\nu$ is non-negative on positive transversals. Let 
  $F: R^\sharp=[a, b] \times [0,1]  \rightarrow M$ a smooth map such that $f_t (s) = F(t,s)$  are positively transverse to to $\lambda_u$ and $f=f_b$. Notice that we don't require $F$ to be a flow box as we cannot simultaneously assume that $F$ is smooth. See Remark~\ref{nonsmooth}.
  %  -----------------------------------
%  Then $\nu(f_t)$ is independent of $t$ and $\nu =
%\nu(f_t ) \geq 0$.
%
% The fact that $\nu(f_t)$ is independent of $t$ follows from Lemma~\ref{Mproposition7}. \\
%----------------------------------------- 
 We will use the fact that $v_q \rightarrow v$ as $q \rightarrow 1$ in $L^1_{loc}$. Also, the image of $F$ is simply connected and we may choose real valued representatives of $v_q$, $u_p$, $v$ and $u$  rather than working in the cover. 
%Choose $c$ so that:
%\begin{itemize}
%\item $(i)$ $F( [a,b] \times [0,c]) = R_0$ in $M \backslash  \lambda_u$
%\item$(ii)$ $v$ is identically $a_0$ on $R_0$
%\item$(iii)$ $F( [a,b] \times [1-c,1])=R_1$ in $M \backslash  \lambda_u$
%\item$(iv)$ $v$ is identically $a_1$ on $R_1$.
%\end{itemize}
%Our goal is to show that $\nu =a_1 - a_ 0 \geq 0$. 
%By the chain rule
%\[
%\frac{d}{ds}v_q(F(t,s)) = (dv_q)_{F(t,s)}\left(\frac{dF}{ds}\right).
%\]

Choose $R_0^\sharp = [a,b] \times [0,c]$, $R_1^\sharp=[a,b]  \times [1-c,1]$ so that $F(R_i^\sharp) \subset M_0$ and $v$ is constant equal to $a_i$ on $R_i=F(R_i^\sharp)$. We have to show $a_1-a_0 \geq 0$. It will be convenient in the computations to write  $ v_q\circ F= v^\sharp_q$ and similarly for any other function defined on a subset of $R$. Choose a non negative cut-off function $\xi^\sharp \in C^\infty_0(R^\sharp)$ such that $\xi^\sharp (t,s) = \xi^\sharp (t, 1-s)$ and
\begin{eqnarray}\label{Ksi1}
\mu:=\int_{R_i^\sharp}\xi^\sharp(t,s)dtds > 0.
\end{eqnarray}
By the chain rule, 
\[
\frac{dv^\sharp_q}{ds}  = (d v_q)\circ F \frac{dF}{ds},
\]
hence by integrating in  $s$, we get
\[
 v_q^\sharp(t,1-\tau) -  v_q^\sharp(t,\tau) = \int_\tau^{1 - \tau} (dv_q)\circ F \left( \frac{dF}{ds} \right)ds.
\]
%\[
%\int_a^b v_q^\sharp(t,1-\tau)dt - \int_a^b v_q^\sharp(t,\tau)dt = \int_a^b\int_\tau^{1 - \tau} (dv_q)\circ F \left( \frac{dF}{ds} \right)dsdt.
%\]
Now multiply by $\xi^\sharp$ and integrate in $t$ from $a$ to $b$ and in $\tau$ from 0 to $c$ to obtain
\begin{eqnarray}\label{Ksi2}
\lefteqn {\int_{R_1^\sharp} \xi^\sharp(t,s)v^\sharp_q(t,s) dtds - \int_{R_0^\sharp} \xi^\sharp(t,s)v^\sharp_q(t,s)   dtds} \nonumber \\
&=&\int_a^b\int_0^c\int_\tau^{1 - \tau} (dv^\sharp_q)_{F(t,s)}\left(\frac{dF(t,s)}{ds}\right)ds\xi^\sharp(t,\tau)d\tau dt\\
&=&\int_{R^\sharp} (dv^\sharp_q)_{F(t,s)}\left(\frac{dF(t,s)}{ds}\right) \Xi^\sharp(t,s) dt ds. \nonumber
\end{eqnarray}
Here the positive function $\Xi^\sharp (t,s)$ can be explicitly computed from interchanging integration in $s$ and $\tau$. For $s < 1/2$
\[ 
\Xi^\sharp(t,s) = \int_0^{\min(s,c)} \xi^\sharp(t,\tau) d\tau= \Xi^\sharp(t,1-s).
\]
We will not use the explicit formula, however note that $\Xi^\sharp$ has compact support in the interior of $R^\sharp$ and hence $\Xi= \Xi^\sharp \circ F^{-1}$ has compact support in the interior of $R$.
By using  (\ref{normintv2}), (\ref{Ksi2}) implies
\begin{eqnarray*}
\lefteqn {\int_{R_1^\sharp} \xi^\sharp(t,s)v^\sharp_q(t,s) dtds - \int_{R_0^\sharp} \xi^\sharp(t,s)v^\sharp_q(t,s)   dtds}\\
&=&\int_{R} dv_q\left(\frac{dF}{ds} \circ F^{-1} \right)  \Xi  J(F^{-1})*1\\
&= &  \int_{R} |U_p|^{p-2} *U_p \left(\frac{dF}{ds}\circ F^{-1} \right)\Xi  J(F^{-1})*1.
\end{eqnarray*}
By (\ref{Ksi1}) and the fact that $ v_q \rightarrow  v$ in $L^1_{loc}$, the left-hand side has the limit $\mu(a_1-a_2)$.  
By Proposition~\ref{kprop 4}  and Lemma~\ref{klemma1}, 
\begin{eqnarray*}
&&\lim_{p \rightarrow \infty}\int_{R}|U_p|^{p-2}*U_p \left(\frac{dF}{ds}\right)(\Xi \circ F^{-1})J(F)*1\\
&=& L^{-1}\lim_{p \rightarrow \infty}\int_{R}|U_p|^{p-2}*du_p \left(\frac{dF}{ds}\right)(\Xi \circ F^{-1})J(F)*1.
\end{eqnarray*}
However, in the definition of positively transverse $*du_p \left(\frac{dF}{ds}\right) > 0$,  the Jacobian $J(F)>0$ and $\Xi \geq 0$ with $\Xi > 0$ on a set of positive measure. So the right hand side is the limit of positive numbers; hence the limit must be non-negative.
By comparing with the left hand side, we obtain that  $a_1 - a_0 \geq 0$. 
\end{transmeasure}

  \begin{remark}We do not claim that the limit is positive. There can be leaves of $ \lambda_u$ on which the transverse measure vanishes.
\end{remark}
  
  We end the section by proving a general theorem relating the notion of transverse cocycles with functions of bounded variation in the case when the cocycle is non-negative. More precisely, we show:

  \begin{theorem}\label{transcoismeasthm} Assume $\lambda$ is an oriented geodesic lamination and $\tilde v: \tilde M \rightarrow \R$ satisfies properties $(i)$-$(iii)$ as in Section~\ref{tranco}. If the transverse cocycle $\nu$ associated to $\tilde v$ via Theorem~\ref{thm:tranco} is a transverse measure, then $\tilde v$ is locally of bounded variation.
  \end{theorem}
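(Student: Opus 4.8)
The plan is to localize inside flow boxes and reduce everything to the elementary remark that a bounded real function which is monotone on each of finitely many subintervals has finite total variation. Since $BV$ is a local property, it is enough, given $W\subset\subset\tilde M$, to bound $|\tilde v|_{BV}$ on each of finitely many sets covering $W$. Away from $\tilde\lambda$ there is nothing to do: $\tilde v$ is constant on each open plaque, hence locally constant on $\tilde M\setminus\tilde\lambda$ and thus of bounded variation there. Near $\tilde\lambda$, I would use Proposition~\ref{prop:flow}: a neighbourhood of $\lambda$ in $M$ is covered by finitely many flow boxes $F\colon R^\sharp=[a,b]\times[c,d]\to R\subset M$ of the fixed oriented atlas; lift each such box to a map $\tilde F\colon R^\sharp\to\tilde R\subset\tilde M$ (possible because $R^\sharp$ is simply connected), so that $\tilde F^{-1}(\tilde\lambda)=[a,b]\times K$ with $K\subset(c,d)$ closed of Hausdorff dimension $0$. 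Because $\tilde F$ is bi-Lipschitz, $\tilde v|_{\tilde R}\in BV(\tilde R)$ iff $v^\sharp:=\tilde v\circ\tilde F\in BV(R^\sharp)$, so it suffices to bound $|v^\sharp|_{BV(R^\sharp)}$.

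The first point is that $v^\sharp$ depends only on the transverse coordinate. For each connected component $I$ of $[c,d]\setminus K$ the strip $[a,b]\times I$ is connected and disjoint from $\tilde F^{-1}(\tilde\lambda)$, hence $\tilde F([a,b]\times I)$ lies in a single plaque $\tilde S_j\subset\tilde M\setminus\tilde\lambda$, on which $\tilde v\equiv a_j$; since $[a,b]\times K$ is $2$-dimensionally null, there is a bounded function $g\colon[c,d]\to\R$, constant on each component of $[c,d]\setminus K$ and bounded by $\|\tilde v\|_{L^\infty(\tilde R)}$, with $v^\sharp(t,s)=g(s)$ for a.e.\ $(t,s)\in R^\sharp$. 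Hence $|v^\sharp|_{BV(R^\sharp)}\le C(b-a)\bigl(\|g\|_{L^1([c,d])}+\mathrm{Var}(g;[c,d])\bigr)$, and the whole problem reduces to showing $\mathrm{Var}(g;[c,d])<\infty$.

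Now I would bring in the hypothesis that $\nu$ is a transverse measure. Fix $t_0\in[a,b]$ and $c',d'\in[c,d]\setminus K$ with $c'$ near $c$ and $d'$ near $d$ (possible, $K$ being nowhere dense). The path $f(s):=\tilde F(t_0,s)$, $s\in[c',d']$, projects to an admissible transversal in $M$ with endpoints in $M_0$, and for $s\notin K$ the plaque containing $\tilde F(t_0,s)$ is the one whose $\tilde v$-value is $g(s)$; so, on a sub-arc on which $f$ is positively transverse to $\lambda$, Definition~\ref{MDefinition6} together with the non-negativity of $\nu$ gives $g(s')-g(s)=\nu(f|_{[s,s']})\ge 0$ whenever $s<s'$ lie in that sub-arc and avoid $K$, i.e.\ $g$ is non-decreasing there; similarly $g$ is non-increasing on a negatively transverse sub-arc. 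By Lemma~\ref{MLemma13} the interval $[c',d']$ admits a good subdivision for $f$ into finitely many, say $n$, such monotone pieces (and, since the flow box is oriented, one can in fact take $n=1$). A bounded monotone function on an interval has total variation $\le 2\|\tilde v\|_{L^\infty(\tilde R)}$, and summing over the $n$ pieces (with one jump of the same size allowed at each division point),
\[
\mathrm{Var}(g;[c',d'])\ \le\ 4n\,\|\tilde v\|_{L^\infty(\tilde R)}\ <\ \infty .
\]
Letting $c'\downarrow c$ and $d'\uparrow d$ gives $\mathrm{Var}(g;[c,d])<\infty$, and summing over the finitely many flow boxes covering $W$ (plus the trivial contribution from $W$ minus a neighbourhood of $\tilde\lambda$) yields $|\tilde v|_{BV(W)}<\infty$.

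The main obstacle is exactly this last step: converting the purely topological hypothesis ``$\nu\ge 0$ on positive transversals'' into analytic monotonicity and then into a \emph{finite} variation bound. The care needed is three-fold: (a) checking that a sub-arc of an admissible positively transverse transversal is again admissible and positively transverse, so that the transverse-measure inequality applies to it; (b) matching the plaque-value function read along the transversal with the two-variable function $v^\sharp$, which is what makes the product structure of the flow box (Proposition~\ref{prop:flow}) usable; and (c) ensuring the number of monotone arcs stays bounded as the transversal is exhausted, which is where orientability of the lamination (via Lemma~\ref{MLemma13}, or directly via the oriented atlas) is essential --- without finiteness, a bounded piecewise-monotone function need not be of bounded variation. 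That $\partial F/\partial s$ is merely $L^\infty$ (Remark~\ref{nonsmooth}) causes no trouble, since only the bi-Lipschitz character of $F$ is used.
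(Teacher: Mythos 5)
Your proposal is correct and follows essentially the same route as the paper: reduce to an oriented flow box, observe that $v^\sharp$ is a function of the transverse coordinate $s$ alone, use orientability plus non-negativity of $\nu$ to conclude that this one-variable function is monotone (hence $BV$), and transfer back via the bi-Lipschitz chart. The only cosmetic differences are that you pass through Lemma~\ref{MLemma13} (good subdivisions) before observing $n=1$, and you retreat to a subinterval $[c',d']$ to guarantee admissibility — neither step is needed, since in an oriented flow box the transversal $f_b$ is already positively transverse throughout and has endpoints in $M_0$ by Definition~\ref{flowbox} (which requires $K\subset(c,d)$), which is exactly what the paper's proof uses directly.
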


%  The local boundedness of $\tilde v$ implies that $\tilde v \in L^1_{loc}(M)$. We are next going to show that the measure $d\tilde v$ has bounded variation.
\begin{proof} 
  Since the problem is local we will work locally in $M$ instead of $\tilde M$ and consider $v$ instead of $\tilde v$. Let
   $F=F(t,s): R^\sharp=[a,b] \times [c,d]  \rightarrow  R \subset M$ be
 a flow box as in (\ref{eqn:flowbox}), set $f_t(s) = F(t,s)$ and consider the fixed transversal $f = f_b$. By definition,  $f_b$ is  positively oriented with respect to the oriented lamination $\lambda$ and since $\nu$ is non-negative by assumption, the function 
\begin{eqnarray}\label{gsharp}
g^\sharp(s) &: =&\int_c^sf^*( d\nu)
 =\int_{\{b\} \times [c,s]} d\nu \nonumber \\
& =& \int_c^s d\nu \ \ \mbox{(by a slight abuse of notation)} 
\end{eqnarray}
is non-decreasing. Furthermore,
\begin{equation}\label{formvsharp}
v^\sharp(t,s)=v^\sharp(t,c)+g^\sharp(s).
\end{equation}
In order to show (\ref{formvsharp}), assume that $F(t,c)$ is in the plaque $S_0$ and  $v^\sharp(t,c)=v^\sharp(b,c)=a_0$ and $F(t,s)$ is in the plaque $S$ and  $v^\sharp(t,s)=v^\sharp(b,s)=a$. Since the transversal $f=f_b$ is positively oriented with respect to the lamination, we have by Definition~\ref{MDefinition6} that $g^\sharp(s)=\beta(S,S_0)=a-a_0$. Hence  (\ref{formvsharp}) follows.

Since the measure $\nu$ is positive, the function $g^\sharp$ is monotone and hence of bounded variation. Formula ~(\ref{formvsharp}) then implies that $v^\sharp$ is of bounded variation. Since $\tilde v=v^\sharp \circ F^{-1}$, \cite[Theorem 3.16]{ambrosio} implies that $\tilde v$ is locally of bounded variation with and  $|d \tilde v| \leq F_*|dv^\sharp|$ locally.
\end{proof}

\section{From transverse measures to functions of bounded variation}\label{ruelles}
In the previous sections we showed that, given an oriented geodesic lamination $\lambda$ in a hyperbolic surface $M$ and  a locally bounded function $v$, which  is  constant on the plaques of $M_0=M \backslash \lambda$, we can construct a transverse cocycle $\nu$. Moreover, if $\nu$ is non-negative, then $\nu$ is a transverse measure and this forces $v$ to be of bounded variation. In this section we will start with a transverse measure $\nu$ on $\lambda$ and we will construct  $v$ as a primitive of BV to the Ruelle-Sullivan current. We continue to assume throughout the section that $M$ is a closed hyperbolic surface.

 \subsection{The Ruelle-Sullivan current} 
 
 In 1975 Ruelle-Sullivan \cite{sullivan} constructed a current for a transverse measure on a partial foliation. The next construction follows theirs (with less regularity for $F$), but we repeat it for completeness.

 \begin{definition}\label{integration} Let $\Lambda=(\lambda, \nu)$ be an oriented measured geodesic lamination and $F_i: R_i^\sharp=[a_i,b_i] \times [c_i,d_i] \rightarrow R_i= F_i(R_i^\sharp) \subset M$ be flow boxes as in Definition~\ref{flowbox} covering a neighborhood $\mathcal U$ of $\lambda$.
 Define an 1-current
 $T_\Lambda$  by setting
 \[
 T_\Lambda(\phi)=\sum_i \int_{(c_i,d_i)}\left(\int_{[a_i,b_i] \times \{ s\}}F_i^*(\phi_i) \right)d\nu(s); \ \ \phi=\sum_i \phi_i
 \]
 where $\phi \in \mathcal D^1(\mathcal U)$ and $\phi_i \in \mathcal D^1(R_i)$.
 \end{definition}
 
 \begin{theorem}\label{welldefcur} $T_\Lambda$ is a well defined 1-current. Furthermore, $T_{\Lambda} $ is closed and thus defines an element
 \[
 [T_{\Lambda}] \in H_1(M, \R).
 \]
 \end{theorem}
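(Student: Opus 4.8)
The plan is to establish well-definedness through a comparison lemma for overlapping flow boxes, then to verify continuity and closedness directly, and finally to use the de Rham pairing. The crux, which I would prove first, is the following comparison: for a smooth $1$-form $\phi$ supported in the interior of $R_i\cap R_j$, with $R_i,R_j$ flow boxes of our fixed oriented atlas, I claim
\[
I_i(\phi):=\int_{(c_i,d_i)}\Big(\int_{[a_i,b_i]\times\{s\}}F_i^{*}\phi\Big)d\nu(s)
\;=\;\int_{(c_j,d_j)}\Big(\int_{[a_j,b_j]\times\{s\}}F_j^{*}\phi\Big)d\nu(s)=:I_j(\phi).
\]
To prove it I would consider the transition homeomorphism $G=F_j^{-1}\circ F_i$ on $F_i^{-1}(R_i\cap R_j)$; it is bi-Lipschitz and carries $F_i^{-1}(\lambda)$ onto $F_j^{-1}(\lambda)$. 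Because the transverse sets $K_i,K_j$ have Hausdorff dimension $0$, hence are totally disconnected, $G$ sends each horizontal segment $[a_i,b_i]\times\{s\}$ with $s\in K_i$ into a single horizontal segment $[a_j,b_j]\times\{h_{ij}(s)\}$, where $h_{ij}$ is precisely the holonomy map of $\lambda$ between the two local leaf spaces; the oriented atlas forces $G$ to be orientation preserving in the first variable. Then, writing $F_i^{*}\phi=G^{*}(F_j^{*}\phi)$ and changing variables along each such segment — the portion of $[a_j,b_j]\times\{h_{ij}(s)\}$ outside $G([a_i,b_i]\times\{s\})$ lies outside the support of $F_j^{*}\phi$, so it contributes nothing — one gets $\int_{[a_i,b_i]\times\{s\}}F_i^{*}\phi=\int_{[a_j,b_j]\times\{h_{ij}(s)\}}F_j^{*}\phi$ for $s\in K_i$, and since the transverse measure is invariant under holonomy, $(h_{ij})_{*}\nu=\nu$, the substitution $s\mapsto h_{ij}(s)$ in the outer integral finishes it. Only the fundamental theorem of calculus for Lipschitz functions and $C^{0}$ bounds enter here, so the limited regularity of the $F_i$ (recall $\partial F/\partial s$ is merely $L^\infty$) is harmless.

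Granting this comparison, well-definedness of $T_\Lambda$ is routine. Given $\phi\in\mathcal D^{1}(\mathcal U)$ and a partition of unity $\{\rho_i\}$ subordinate to $\{R_i\}$, the definition reads $T_\Lambda(\phi)=\sum_i I_i(\rho_i\phi)$. For a second partition of unity $\{\rho_j'\}$ subordinate to a possibly different family $\{R_j'\}$ of oriented flow boxes, I would write $\rho_i\phi=\sum_j\rho_j'\rho_i\phi$ with each term supported in $R_i\cap R_j'$, apply linearity and the comparison lemma to get $I_i(\rho_i\phi)=\sum_j I_i(\rho_j'\rho_i\phi)=\sum_j I_j'(\rho_j'\rho_i\phi)$, and then sum over $i$ and interchange the finite sums, obtaining $\sum_i I_i(\rho_i\phi)=\sum_j I_j'(\rho_j'\phi)$. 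Thus $T_\Lambda$ depends neither on the partition of unity nor on the chosen oriented atlas. Linearity is clear, and the estimate $|T_\Lambda(\phi)|\le\sum_i(b_i-a_i)\,\nu([c_i,d_i])\,\sup|F_i^{*}(\rho_i\phi)|\le C\|\phi\|_{C^{0}}$ — finitely many boxes, each transverse slice of finite $\nu$-mass, the $F_i$ bi-Lipschitz — shows $T_\Lambda$ is continuous of order $0$, so it is a $1$-current. Moreover, if $\phi$ vanishes on a neighborhood of $\lambda$, then for every $s\in\operatorname{supp}\nu\subset K_i$ the segment $[a_i,b_i]\times\{s\}$ is carried into $\lambda$, so $F_i^{*}\phi$ vanishes there and $T_\Lambda(\phi)=0$; hence $\operatorname{supp}T_\Lambda\subset\lambda$, and setting $T_\Lambda(\phi):=T_\Lambda(\chi\phi)$ for a cutoff $\chi\equiv1$ near $\lambda$ with $\operatorname{supp}\chi\subset\mathcal U$ extends $T_\Lambda$ to a $1$-current on all of $M$.

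For closedness I would take $\psi\in C^{\infty}(M)$ and $\{\rho_i\}$ as above with $\sum_i\rho_i\equiv1$ near $\lambda$. Since $d\psi-\sum_i\rho_i\,d\psi$ vanishes near $\lambda$, one has $T_\Lambda(d\psi)=\sum_i I_i(\rho_i\,d\psi)$; writing $\rho_i\,d\psi=d(\rho_i\psi)-\psi\,d\rho_i$, each $I_i(d(\rho_i\psi))$ vanishes because $\int_{[a_i,b_i]\times\{s\}}F_i^{*}d(\rho_i\psi)=(\rho_i\psi)(F_i(b_i,s))-(\rho_i\psi)(F_i(a_i,s))=0$, the endpoints lying on $\partial R_i$ where the compactly supported $\rho_i$ vanishes. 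Hence $T_\Lambda(d\psi)=-\sum_i I_i(\psi\,d\rho_i)=-T_\Lambda\big(\psi\,d(\sum_i\rho_i)\big)$, which is $0$ because $\sum_i\rho_i$ is locally constant near $\lambda\supset\operatorname{supp}T_\Lambda$; so $\partial T_\Lambda=0$. A closed $1$-current on a closed manifold pairs with closed $1$-forms through a pairing that factors through de Rham cohomology, since $T_\Lambda(\omega+d\eta)=T_\Lambda(\omega)+\partial T_\Lambda(\eta)=T_\Lambda(\omega)$; this gives a linear functional on $H^{1}(M,\R)\cong H^1_{\mathrm{dR}}(M)$, i.e.\ a class $[T_\Lambda]\in H_1(M,\R)$. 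The main obstacle is the comparison lemma of the first paragraph: one must verify carefully that the change of flow box acts on the transverse coordinate exactly by the holonomy of $\lambda$ — so that invariance of $\nu$ under holonomy is precisely the hypothesis used — and that the support of $\phi$ confines every leaf-segment integral to the overlap even though the segments in the two boxes match up only partially.
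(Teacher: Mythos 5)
Your proposal is correct and follows essentially the same route as the paper: well-definedness via the transition functions of overlapping flow boxes together with holonomy-invariance of $\nu$ and a partition-of-unity reduction, and closedness via the fundamental theorem of calculus applied to the inner integral over each horizontal leaf segment. You are somewhat more explicit than the paper on two points it leaves implicit — that the transition map acts on the transverse coordinate exactly by the holonomy (and that partial overlap of leaf segments is harmless because of the support of $\phi$), and the decomposition $\rho_i\,d\psi=d(\rho_i\psi)-\psi\,d\rho_i$ in the closedness step — but these are refinements of the same argument, not a different one.
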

 \begin{proof}
 First, note that because $\frac{\partial F_i}{\partial t}$ is continuous, 
 \[
 s \mapsto \int_{[a_i,b_i] \times \{ s\}}F_i^*(\phi)= \int_{a_i}^{b_i} ({F_i}_s)^*\phi
 \]
 is a continuous function in $s$, so
 we can integrate against a Radon measure. To show it is independent of the choice of flow box, first consider the case where $ \phi$ is compactly supported in the intersection of two flow boxes $F$ and $F'$.
 Then,
 \begin{eqnarray*}
 \int_c^d\int_a^bF_s^*(\phi)d\nu(s)&=&\int_c^d\int_a^b({F'}^{-1}F)_s^*{F'}_s^*(\phi)d\nu(s)\\
 &=&\int_{a'}^{b'}\int_{c'}^{d'}{F'}_s^*(\phi)({F}^{-1}F')_s^*(d\nu(s))\\
 &=&\int_{a'}^{b'}\int_{c'}^{d'}{F'}_s^*(\phi)d\nu(s),\\
 \end{eqnarray*}
 the last equality because the transverse measure is invariant under the transition functions ${F}^{-1}F'$.
 We can reduce the general case to this case, as follows: Consider two atlases consisting of flow boxes $\{F_i\}$ and $\{F'_{i'}\}$ and let  $\{\xi_i\}$ and $\{\xi_{i'}\}$ be partitions of unity subordinate to the above covers. We can write 
 \[
 \phi=\sum_{i,i'}\xi_i\xi_{i'} \phi.
 \]
By the previous case,
 \[
 \int_c^d\int_a^bF_s^*(\xi_i\xi_{i'}\phi)d\nu(s)=\int_{a'}^{b'}\int_{c'}^{d'}{F'}_s^*(\xi_i\xi_{i'}\phi)d\nu(s),
 \]
 thus by summing over $i,i'$ we obtain the desired equality.
 To show it is closed note that if $f \in \mathcal D^0(R_i)$ is supported in one flow box, 
 \[
 \int_{[a_i,b_i] \times \{ s\}}F_i^*(df)=\int_{a_i}^{b_i} \frac{\partial}{\partial t} (f \circ F_i)(t,s)dt=0
 \]
 hence
 \[
 T_\Lambda (df)=0.
 \] 
 \end{proof}
 
% \begin{theorem} \label{equalcurrents} If $\Lambda_u= (\lambda_u, \nu)$ is the oriented measured lamination of Theorem~\ref{transmeasure}, then as currents
%\[
%T_{\Lambda_u}=dv.
%\]
%\end{theorem}
%\begin{proof} {\bf{To be added from the previous version}}.
%\end{proof}

\subsection{Constructing a primitive of the Ruelle-Sullivan current} 
In Theorem~\ref{transcoismeasthm}, we showed that a cocycle which defines a transverse measure is associated to a function of bounded variation.  For example, if the cocycle is non-negative, then by a theorem of Bonahon it is a transverse measure.  We now show directly that the cocycle of Bonahon and the corresponding function of bounded variation can be constructed directly from the Ruelle-Sullivan current.

\begin{theorem}\label{conversetomeasure} Given an oriented measured geodesic  lamination $\Lambda=(\lambda, \nu)$, there exists a flat real affine rank 1 bundle  $L$ and a section $v: M \rightarrow L$ of bounded variation  such that
\begin{equation}\label{T=dv}
T_\Lambda=dv.
\end{equation}
\end{theorem}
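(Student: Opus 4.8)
The plan is to build $v$ locally as an antiderivative of the Ruelle--Sullivan current in flow boxes, check that the locally defined functions glue up to a locally constant (in fact an additive) correction coming from the periods of $T_\Lambda$, and then assemble them into a section of a flat affine line bundle. First I would fix the finite oriented atlas of flow boxes $\{F_i : R_i^\sharp = [a_i,b_i]\times[c_i,d_i] \to R_i\}$ covering a neighborhood $\mathcal U$ of $\lambda$, exactly as in Definition~\ref{integration}. Inside a single flow box $F_i$, define
\[
v_i^\sharp(t,s) = \int_{c_i}^s d\nu
\]
(using, as in \eqref{gsharp}, the transverse measure $\nu$ restricted to the local leaf space $[c_i,d_i]$), and set $v_i = v_i^\sharp \circ F_i^{-1}$ on $R_i$. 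Because $\nu$ is a positive measure, $v_i^\sharp$ is non-decreasing in $s$, hence of bounded variation in $s$; since it is independent of $t$ it is trivially of bounded variation in $t$, and by \cite[Theorem 3.16]{ambrosio} (Fubini-type characterization of $BV$) together with the fact that $F_i$ is bi-Lipschitz, $v_i$ is of bounded variation on $R_i$ with $|dv_i| \le (F_i)_*|dv_i^\sharp|$. By construction $v_i$ is constant on the plaques of $R_i \cap M_0$ and, comparing with Definition~\ref{integration}, one checks directly that $dv_i = T_\Lambda$ as currents on $R_i$: integrating a test $1$-form $\phi_i$ supported in $R_i$ against $dv_i$ and using the flow-box coordinates reproduces $\int_{(c_i,d_i)}\big(\int_{[a_i,b_i]\times\{s\}}F_i^*\phi_i\big)d\nu(s)$, which is $T_\Lambda(\phi_i)$.

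Next I would compare $v_i$ and $v_j$ on an overlap $R_i \cap R_j$. Since $d(v_i - v_j) = T_\Lambda - T_\Lambda = 0$ there and $v_i - v_j$ is locally constant on the (connected, by the plaque structure, if we refine the cover) overlaps, $v_i - v_j = c_{ij}$ is a constant on each component of $R_i \cap R_j$. These constants satisfy the cocycle condition $c_{ij} + c_{jk} = c_{ik}$, so they are the transition data of a flat real affine rank-$1$ bundle $L \to \mathcal U$; the $v_i$ patch to a global section $v$ of $L$ over $\mathcal U$ with $T_\Lambda = dv$ there. Away from $\lambda$ the current $T_\Lambda$ vanishes, so I extend $v$ over $M \setminus \lambda$ by declaring it locally constant on each component of $M_0$, with values forced by the already-constructed $v$ near $\lambda$; since $M_0$ has finitely many components (it is the complement of a geodesic lamination on a closed surface, cf. \cite[Lemma 4.3]{casson}) this is consistent, possibly after passing to the universal cover to handle the monodromy, where $\tilde v : \tilde M \to \R$ is genuinely real-valued, $\pi_1(M)$-equivariant under a representation $\alpha$, locally bounded, and constant on plaques --- precisely the setup of Theorem~\ref{thm:tranco}. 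The local $BV$ bounds from the previous paragraph, combined with the fact that $T_\Lambda$ has locally finite mass (its mass on a flow box is $\le \sum_i |dF_i/dt|_{L^\infty}\,\nu([c_i,d_i]) < \infty$), give that $v$ is locally of bounded variation. Finally $dv = T_\Lambda$ globally because it holds on each member of a cover.

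The main obstacle I expect is the gluing/monodromy bookkeeping: the transverse measure $\nu$ only pins down $v$ up to an additive constant on each plaque, and there is no canonical global choice, which is exactly why the statement produces a section of a flat affine bundle rather than a function. Making the transition constants $c_{ij}$ well defined requires knowing the overlaps are nice enough (connected components, refinement of the atlas so that each overlap meets at most controllably many plaques), and one must verify the affine structure is consistent with the $\pi_1(M)$-action so that the representation $\alpha$ of Definition~\ref{deftransversecocycle}/Theorem~\ref{thm:tranco} reappears as the holonomy of $L$. A secondary technical point is the low regularity of the flow boxes: $\partial F/\partial s$ is only $L^\infty$ (Remark~\ref{nonsmooth}), so the change-of-variables and Fubini arguments must be run with bi-Lipschitz --- not smooth --- coordinate changes, which is why I invoke \cite[Theorem 3.16]{ambrosio} rather than a naive chain rule; but since $\partial F/\partial t$ \emph{is} continuous (Proposition~\ref{prop:flow}), the integrand $s \mapsto \int_{a_i}^{b_i}(F_{i,s})^*\phi$ is continuous and can be integrated against the Radon measure $\nu$, which is all that the definition of $T_\Lambda$ and the construction of $v_i^\sharp$ actually need.
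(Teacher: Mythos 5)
Your local construction in a flow box (set $v_i^\sharp(t,s)=\int_{c_i}^s d\nu$, push forward by $F_i^{-1}$, invoke monotonicity of $g^\sharp$ and the bi-Lipschitz chart plus \cite[Theorem 3.16]{ambrosio} for the $BV$ bound, and check $dv_i=T_\Lambda$ by the explicit Fubini/integration-by-parts computation) is exactly the paper's argument, so up to and including the observation that $v_F-v_{F'}$ is locally constant on overlaps you are in lockstep with the text. Where you genuinely diverge is the global assembly. You propose a \v{C}ech-style gluing: record the transition constants $c_{ij}$, verify the cocycle identity, read off a flat affine $\R$-bundle $L$, extend by constants over the plaques of $M_0$, and pass to $\tilde M$ to tame the monodromy. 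The paper instead mollifies: it picks a smoothing $T^\epsilon$ of $T_\Lambda$ (which remains closed because $T_\Lambda$ is), pulls back to the universal cover, applies the Poincar\'e lemma to get a smooth $\alpha^\epsilon$-equivariant primitive $\tilde v^\epsilon$, and then uses the locally finite mass of $T_\Lambda$ to extract a $BV_{\mathrm{loc}}$ weak limit $\tilde v$ equivariant under the limiting representation $\alpha$; the bundle $L$ is the flat affine bundle of $\alpha$. Your route is more cohomological and arguably more transparent about \emph{why} an affine bundle appears, but it pushes the technical burden into cover bookkeeping: the flow boxes only cover a neighborhood of $\lambda$, their overlaps need not be connected (so $c_{ij}$ is not a single number until you refine to a good cover), and the extension over the not-necessarily-simply-connected plaques of $M_0$ needs the fact --- which you should state --- that $\alpha$ restricted to loops avoiding $\lambda$ vanishes because such loops have Poincar\'e-dual forms disjoint from $\mathrm{supp}\,T_\Lambda$. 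The paper's smoothing argument sidesteps the cover combinatorics entirely at the cost of a compactness step ($BV_{\mathrm{loc}}$ weak-$\ast$ convergence needs a uniform mass bound, supplied by the finite mass of $T_\Lambda$), and it automatically produces the equivariance and the identification of the holonomy with $\alpha$ from Theorem~\ref{thm:tranco}. Both are legitimate; if you want to keep your route, you should explicitly refine the atlas of flow boxes to a good cover of $M$ (adding charts in the interior of $M_0$ where $T_\Lambda=0$), define $c_{ij}$ component-by-component, and note that the resulting class in $H^1(M,\R)$ is the $\alpha$ of Theorem~\ref{thm:tranco} so that the two constructions of $L$ agree.
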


\begin{proof}

Let $F=F(t,s): R^\sharp=[a,b] \times [c,d]  \rightarrow  R \subset M$
be a flow-box as in (\ref{eqn:flowbox}). 
First we are going to consider the local problem 
and construct $v=v_F$ on the image  of $F$.
Let $f_t(s) = F(t,s)$ and consider the fixed transversal $f = f_b$.  
Consider the non-decreasing function $g^\sharp(s)$ defined as in (\ref{gsharp})
by
$g^\sharp(s) =\int_c^sf^*( d\nu)=
 \int_c^s d\nu $
%We also parameterize the $t$ variable so that $|\frac{d \lambda_0}{dt} | = 1$ for all leaves $\lambda_0$ in $\lambda$ which intersect $R$.
%Let 
%\[
%K = \{k \in [c,d]: f(k) \in \lambda \}.
%\]
% $K$ is a closed set of measure 0 and Haussdorff dimension 0, and $g^\sharp$ is constant on the connected components of $[c,d] \backslash K$. Denote the leaf of $\lambda$ which passes through $k \in K$ by $\lambda_k$.
%Throughout this section the maps $F$ and $f_t $ identify functions on $[a,b] \times [c,d]$ or a subset with functions on $R$. As in the previous sections, we use $g^\sharp = f^*g$, $v^\sharp = F^*v$
%and so forth consistently. Remember, however, that $F$ is only Lipschitz although we may assume $f $ is smooth.
%Let
and let
\begin{equation}\label{vsharp}
v^\sharp(t,s) = g^\sharp(s). 
\end{equation}
We define
$v_F:=v^\sharp \circ F^{-1}$ in the image of the flow box $F$ in terms of $g^\sharp$ as above. 
Note that by the invariance of $\nu$ under homotopies, $v_F$
is constant on the plaques in the image of $F$. Also
 $v_F$ is  bounded. Furthermore, $v^\sharp$ is of bounded variation, because $g^\sharp$ is monotone and  
 as in the proof of Theorem~\ref{transcoismeasthm}, we can conclude that $ v_F$ is of bounded variation.
 
 Note that for compactly supported $\phi^\sharp=\phi_1dt+\phi_2ds \in \mathcal D^1(R)$,  
 \begin{eqnarray*}\label{dvsharp1}
  \lefteqn{\int_{(c,d)}\left(\int_{[a,b] \times \{ s\}}\phi^\sharp \right)d\nu(s)} \nonumber\\
 &=&- \int_{(c,d)}\frac{d}{ds}\left(\int_{[a,b] \times \{ s\}}\phi^\sharp \right) g^\sharp ds \ \ (\mbox{by definition of} \ g^\sharp) \nonumber\\
 &=& -\int_c^d\left(\int_a^b \frac{\partial}{\partial s}\phi_1(t,s)dt \right) g^\sharp(s) ds \nonumber\\
 &=& -\int_c^d\left(\int_a^b \frac{\partial}{\partial s}\phi_1(t,s)dt+ \frac{\partial}{\partial t}\phi_2(t,s)dt \right) g^\sharp(s) ds 
 \ \ (\phi_2 \ \mbox{compactly supported}) \\
 &=& -\int_c^d \left(\int_a^b \frac{\partial}{\partial s}\phi_1(t,s)dt+ \frac{\partial}{\partial t}\phi_2(t,s)dt \right)  v^\sharp(t,s)dt ds \nonumber\\
 &=& \int_c^d \int_a^b v^\sharp d \phi^\sharp  dt ds \nonumber\\
 &=& dv^\sharp(\phi^\sharp). \nonumber
  \end{eqnarray*}
 
 By Definition~\ref{integration}, this implies that in the interior of a flow box $F$ (\ref{T=dv}) holds.
 If $F: R^\sharp \rightarrow M$ and $F': {R'}^\sharp \rightarrow M$ are two flow boxes  which intersect in a non-empty connected set which contains a ball, then on the overlap
 \begin{equation}\label{globaltheory}
 dv_F = dv_F'=T_\Lambda
 \end{equation}
 hence 
$v_F = v_F' + c.$

We now proceed with constructing a flat affine  line bundle $L$ over the surface $M$ and a global section $v: M \rightarrow L$ formed by piecing together the local primitives of the Ruelle-Sullivan current $T=T_\Lambda$.
 
Choose a smoothing $T^\epsilon$ of $T$,
 $d (T^\epsilon)=(d T)^\epsilon=0$
and let
 $\tilde T^\epsilon =\sigma^*(T^\epsilon)$ be the pullback to the universal cover. Let  $\tilde v^\epsilon$ be a primitive of  $\tilde T^\epsilon $ equivariant under representations
 $\alpha^\epsilon: \pi_1(M) \rightarrow \R.$
 By the Poincare Lemma, we can write
 $\tilde T^\epsilon=   d\tilde v^\epsilon$,
 where $v^\epsilon$ is a smooth real valued function equivariant under the representation 
 $\alpha^\epsilon$. As is Section~\ref{qgoesto1},
 $\alpha^\epsilon \rightarrow \alpha$ and
 the convergence as distributions $d\tilde v^\epsilon=\tilde T^\epsilon \rightarrow T$ and weak compactness in the space BV, implies that up to a constant $\tilde v^\epsilon \rightharpoonup \tilde v$  in $BV_{loc}(\tilde M)$ where $\tilde v$ is equivariant under $\alpha$.
  If $L$ denotes the flat affine line bundle associated to 
 $\alpha$, then $v$ is a section of $L$ of bounded variation.
\end{proof}

\begin{corollary}Let $\nu$ denote the measure on the lamination $\lambda_u$ constructed in Theorem~\ref{transmeasure} associated to the least gradient map $v$. If $\Lambda_u=(\lambda_u, \nu)$, then the Ruelle-Sullivan current $T_{\Lambda_u}=dv$.
\end{corollary}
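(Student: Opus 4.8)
The plan is to identify the primitive produced by Theorem~\ref{conversetomeasure} with the least gradient section $v$ of Theorem~\ref{transmeasure}, by verifying the equality $T_{\Lambda_u}=dv$ directly, working locally in a flow box. First recall the set-up: by Theorem~\ref{transmeasure}, $v$ induces a transverse measure $\nu$ on the lamination $\lambda_u$ oriented by $\mathrm{grad}\,u$, so $\Lambda_u=(\lambda_u,\nu)$ is a genuine oriented measured geodesic lamination, and Theorem~\ref{conversetomeasure} applies and yields a flat affine line bundle together with a section $v'$ of bounded variation with $T_{\Lambda_u}=dv'$. Both $T_{\Lambda_u}$ and $dv$ are honest $1$-currents on $M$, so it suffices to prove $T_{\Lambda_u}=dv$; this forces $dv'=dv$ and, in particular, the bundle of Theorem~\ref{conversetomeasure} to be isomorphic to the one carrying $v$.

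Then I would check the identity on the interior of a single flow box. Cover a neighbourhood $\mathcal U$ of $\lambda_u$ by finitely many flow boxes $F\colon R^\sharp=[a,b]\times[c,d]\to R\subset M$ as in Proposition~\ref{prop:flow}; since $F([a,b]\times\{c\})$ is connected and disjoint from $\lambda_u$, it lies in a single plaque. In the interior of $R$, the local part of the proof of Theorem~\ref{conversetomeasure} (formulas \eqref{gsharp} and \eqref{vsharp}) gives $T_{\Lambda_u}=dv_F$, where $v_F=v^\sharp_F\circ F^{-1}$ and $v^\sharp_F(t,s)=g^\sharp(s)=\int_c^s f^*(d\nu)$ with $f=f_b$. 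On the other hand, formula \eqref{formvsharp} in the proof of Theorem~\ref{transcoismeasthm}, applied to the least gradient section $v$ itself, states $v^\sharp(t,s)=v^\sharp(t,c)+g^\sharp(s)$ with the \emph{same} jump function $g^\sharp$, precisely because $\nu$ is the transverse cocycle attached to $v$ through \eqref{MDefinition3} and Definition~\ref{MDefinition6}. By Lemma~\ref{MLemma2} the quantity $v^\sharp(\cdot,c)$ is constant on $[a,b]$, so $v^\sharp$ and $v^\sharp_F$ differ by a constant; hence $dv=dv_F=T_{\Lambda_u}$ in the interior of every flow box, and therefore on all of $\mathcal U$.

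It remains to extend the equality across $M_0=M\setminus\lambda_u$. There $dv=0$, since $v$ is locally constant on the plaques by Lemma~\ref{MLemma2} (consistently with Theorem~\ref{thm:supptmeasure}), and $T_{\Lambda_u}=0$ as well: in Definition~\ref{integration} the inner integral is evaluated on the leaves $[a_i,b_i]\times\{s\}$ only for $s$ in the support of $\nu$, and each such leaf is mapped into $\lambda_u$, so a test form supported in $M_0$ pulls back to zero. The two currents thus agree on $\mathcal U$ and on $M_0$, and $\{\mathcal U,M_0\}$ is an open cover of $M$; a partition of unity subordinate to this cover gives $T_{\Lambda_u}=dv$ on $M$, which is the assertion of the corollary.

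The only delicate point is bookkeeping rather than analysis: one must make sure that the function $g^\sharp$ appearing in the two arguments is literally the same, i.e. that the pulled-back transverse measure $f^*(d\nu)$ used to build $T_{\Lambda_u}$ in a flow box coincides with the increment $s\mapsto\tilde v(S_s)-\tilde v(S_c)$ of $v$ along the transversal. This is exactly what Definition~\ref{MDefinition6} together with \eqref{formvsharp} say, so the verification is a matter of careful cross-referencing of the earlier constructions; no new estimate enters, which is why the statement is phrased as a corollary.
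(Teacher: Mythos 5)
Your proof is correct and takes essentially the same route as the paper: the key step in both is to observe that the primitive built in Theorem~\ref{conversetomeasure} via~(\ref{vsharp}) and the restriction of the least gradient section to a flow box via~(\ref{formvsharp}) involve the same monotone function $g^\sharp$ and thus differ by a constant, hence have the same differential. You are somewhat more careful than the paper's terse three-line argument in spelling out why $v^\sharp(\cdot,c)$ is constant in $t$ and in handling the extension across $M_0$ by a partition of unity, but the underlying idea is identical.
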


\begin{proof}The measure $\nu$ is related with the least gradient map $v$ by formulas (\ref{gsharp}) and (\ref{formvsharp}). The Ruelle-Sullivan current associated to $\nu$ is given by the derivative of a new function $v$ given by (\ref{vsharp}) which only differs from (\ref{formvsharp}) by a constant. Thus $T_{\Lambda_u}=dv$. 
\end{proof}

\subsection{The decomposition in terms of functions of bounded variation}
In the previous sections we showed how transverse measures  correspond to  functions of bounded variation. In this section we will explore how different types of leaves of the lamination correspond to different types of functions of bounded variation. 

First
recall  that
a leaf  $\lambda_0$ of $\lambda$ is called {\it{isolated}}  if for each $x \in \lambda_0$ there exists a neighborhood $U$ of $x$ such that $(U, U \cap \lambda_0)$  is homeomorphic to (disc, diameter) \cite[Definition p.46]{casson}.  
 A  geodesic lamination $\lambda$ is called {\it{minimal}}, if it is minimal with respect to inclusion.

\begin{theorem}\label{cassonmin}\cite[Theorem 4.7 and Corollary 4.7.2]{casson}  A geodesic lamination is minimal iff each leaf is dense. Any geodesic lamination is the union of finitely many minimal sub-laminations and of finitely many infinite isolated leaves, whose ends spiral along the minimal sub-laminations.
\end{theorem}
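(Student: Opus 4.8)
\emph{Proof sketch.} The plan is to derive everything from the compactness of the space of complete geodesics of the closed hyperbolic surface $M$ in the Hausdorff topology, together with the elementary fact that a limit of pairwise disjoint simple complete geodesics is a simple complete geodesic that is disjoint from, or equal to, each of them. This fact implies that for any union of leaves $A$ of a geodesic lamination $\lambda$, the closure $\overline{A}$ in $M$ is again a disjoint union of simple complete geodesics, hence a sublamination of $\lambda$. With this, the equivalence is immediate: if every leaf is dense and $\mu\subseteq\lambda$ is a nonempty sublamination, then $\mu$ is closed and contains a leaf $\ell$, so $\lambda=\overline{\ell}\subseteq\mu$ and $\mu=\lambda$, whence $\lambda$ is minimal; conversely, if $\lambda$ is minimal and $\ell$ is any leaf, then $\overline{\ell}$ is a nonempty sublamination, hence equals $\lambda$, so $\ell$ is dense.

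For the structure statement I would argue in three steps. First, \emph{existence of minimal sublaminations}: order the nonempty sublaminations of $\lambda$ by reverse inclusion; the intersection of a chain is nonempty by compactness and is a sublamination (a leaf of $\lambda$ lies entirely inside or entirely outside any given sublamination), so Zorn's lemma yields minimal elements, and in fact every sublamination contains one. Second, \emph{finiteness}: a geodesic lamination has zero area, so the complementary regions of $\lambda$ are disjoint open subsurfaces whose areas sum to $\mathrm{area}(M)=-2\pi\chi(M)$; since the metric completion of each is a hyperbolic surface with geodesic boundary, of area at least $\pi$ (the area of an ideal triangle, the smallest such region), there are at most $-2\chi(M)$ of them. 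Distinct minimal sublaminations are disjoint; those among them that are closed geodesics are topologically bounded in number, and each non-closed minimal sublamination is carried by a compact subsurface of negative Euler characteristic that may be chosen disjoint from those of the other minimal components, so the union $N$ of all minimal sublaminations has finitely many components and is therefore closed.

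Third, \emph{the remaining leaves}: a leaf $\ell$ of $\lambda\setminus N$ is disjoint from $N$ and is not a closed geodesic (a closed leaf is its own minimal sublamination), so $\ell$ is a properly embedded complete geodesic in one of the finitely many complementary regions of $N$, and the limit set of each of its two rays is a sublamination of $\lambda$; one then shows these limit sets lie in $N$, so that both ends of $\ell$ spiral along the minimal sublaminations. The leaves of $\lambda\setminus N$ are pairwise disjoint simple complete geodesics lying in complementary regions of $N$, each a surface of finite type, and a surface of finite type contains only a topologically bounded number of pairwise disjoint simple complete geodesics; hence there are finitely many such leaves, and this finiteness in turn forces each to be isolated in $\lambda$, giving the asserted decomposition. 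The delicate point is precisely this last step: ruling out leaves of $\lambda\setminus N$ that are accumulated on by other leaves and verifying that the ray limit sets land in $N$ and contain no further non-minimal leaves. I would handle it by applying the finite-type bound inside successively simpler complementary regions while iterating the passage to limit sets; this bookkeeping is the main obstacle and is essentially the content of the cited argument of Casson and Bleiler.
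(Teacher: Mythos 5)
The paper does not prove this statement: it is quoted verbatim as a cited result of Casson and Bleiler (\cite[Theorem~4.7 and Corollary~4.7.2]{casson}), so there is no ``paper proof'' to compare against. Your sketch is broadly following the Casson--Bleiler route (minimality via closures of leaves, Zorn plus compactness for existence of minimal pieces, an area/Euler-characteristic bound for finiteness of minimal components), and the first two parts are essentially correct and standard.

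There is, however, a genuine gap in the third step, and it is located exactly where you flag the ``delicate point.'' The auxiliary claim you rely on --- that ``a surface of finite type contains only a topologically bounded number of pairwise disjoint simple complete geodesics'' --- is false. A hyperbolic surface with geodesic boundary can contain infinitely many pairwise disjoint simple complete geodesics, for example a nested family all of whose ends spiral onto a single boundary geodesic. So you cannot conclude finiteness of the leaves of $\lambda\setminus N$ this way, and in particular you cannot deduce ``finiteness $\Rightarrow$ isolated'' as a corollary. The correct order of logic is the reverse: one first shows that any leaf $\ell$ of $\lambda$ outside $N$ must be isolated, by arguing that $\overline{\ell}\setminus\ell$ is a nonempty sublamination (since $\ell$ is not closed), that it therefore contains a minimal sublamination $\mu\subseteq N$, and that any leaf of $\lambda$ accumulating on $\ell$ would force $\ell$ itself to lie in the closure of a minimal piece and hence in $N$. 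Once isolation is established, finiteness follows from the bound on the number of complementary regions of $\lambda$ (your area argument), since each isolated leaf is a boundary leaf of a complementary region and each complementary region has only finitely many boundary leaves. It is precisely this analysis of how leaves can accumulate --- not a count of disjoint geodesics in a finite-type subsurface --- that carries Casson and Bleiler's Theorem~4.7 and Corollary~4.7.2, so I would rework the last step along those lines.
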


%------------------------------
%There is a bit say about the analysis interpretation of transverse measures. 
%%Recall that $\lambda_u$ was an oriented geodesic lamination, and $\nu$  a transverse measure for $\lambda_u$.
%The next discussion applies to any oriented geodesic lamination $\lambda$ with a transverse measure $\nu$. 
%Let $\tilde \lambda$ denote the lift of $\lambda$ to the universal cover $\sigma: \tilde M \rightarrow M$. 
% From the definition of transverse measure, we see immediately that if $f$ is a transversal that intersects $\lambda$, in a point  on an isolated once,  any transverse measure on $f$ will put a non negative delta function at the point of intersection. By the homotopy invariance of a transverse measure, this is constant along $\lambda_0$. Define this to be the jump at $\lambda_0$.
In the presence  of a transverse measure $\nu$ one can easily characterize isolated leaves by using the homotopy invariance of the measure. 
If a leaf is closed, then $\nu$ is an atomic measure i.e a delta function supported at the point of intersection of the transversal and the corresponding function of bounded variation $v$ is a jump function. On the other hand, a spiraling isolated leaf $\lambda_0$ cannot support a non-zero measure because a transversal crossing $\lambda_0$  at a limit point  would have infinite measure, as it crosses $\lambda_0$ an infinite number of times.

In view of the above, from now on we will assume that the oriented lamination $\lambda$ is a disjoint union of finitely many minimal sub-laminations,  and
 we will  explore the dichotomy between closed leaves and infinite non-isolated leaves in terms of functions of bounded variation.

Recall that there are three types of functions of bounded variation defined on a ball in a Riemannian manifold:\\ 
$(i)$ Functions in the Sobolev class $W^{1,1}$.\\
$(ii)$ Jump functions across a 
(countably) rectifiable set of codimension 1.\\
 $(iii)$ Cantor functions, which are
continuous functions with derivative zero on a dense open set.
  A nice description of Cantor functions can be
found in \cite{cantorexp}.

In fact, the derivative of any function of bounded variation can be decomposed into these three types, according to the following theorem
(cf. \cite[Theorem 3.78 and Proposition 3.92]{ambrosio}).

\begin{theorem}\label{decocantor1}
Let  $v: B \rightarrow \R$  be a function of bounded variation  defined in a ball $B$. Then, there is a canonical decomposition  
\[
dv = (dv)_0 + (dv)_{jump} + (dv)_{cantor}
\]
where: 
\begin{itemize}
 \item $(i)$ The measure $(dv)_0$ is absolutely continuous with respect the Lebesgue measure
and the measure $(dv)_{jump} + (dv)_{cantor}$ is singular with respect to Lebesgue measure.
\item $(ii)$ $(dv)_{cantor}$  vanishes on  every Borel set  with $\sigma$-finite $n-1$ Hausdorff measure.
\item $(iii)$   $(dv)_{jump}$ is computed as a measure of a jump discontinuity on a countably $n-1$ dimensional rectifiable set.
\end{itemize}
\end{theorem}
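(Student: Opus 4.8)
The statement is the classical De Giorgi--Federer--Vol'pert structure theorem for $BV$ functions, and the plan is to reconstruct its proof in the form stated. First I would apply the Lebesgue--Radon--Nikodym decomposition of the $\R^n$-valued Radon measure $Dv$ with respect to $n$-dimensional Lebesgue measure $\mathcal L^n$ on $B$: there is a unique splitting $Dv = (dv)_0 + D^s v$ with $(dv)_0 = \nabla v\,\mathcal L^n \ll \mathcal L^n$ and $D^s v \perp \mathcal L^n$. This produces the absolutely continuous part of (i); it then remains to subdivide the singular part $D^s v$ and to identify the two pieces geometrically.

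The core step is the analysis of the approximate discontinuity set. Let $S_v \subset B$ be the set of points at which $v$ has no approximate limit, and let $J_v \subset S_v$ be the \emph{jump set}: the points $x$ admitting a unit vector $\nu_v(x)$ and values $v^+(x)\neq v^-(x)$ such that $v$ has one-sided approximate limits $v^\pm(x)$ on the two half-balls cut out at $x$ by the hyperplane orthogonal to $\nu_v(x)$. I would then quote the Federer--Vol'pert theorem, \cite[Theorem 3.78]{ambrosio}: $S_v$ is countably $\mathcal H^{n-1}$-rectifiable, $\mathcal H^{n-1}(S_v\setminus J_v)=0$, and the restriction of $Dv$ to $S_v$ is
\[
Dv\llcorner S_v \;=\; (v^+-v^-)\,\nu_v\,\mathcal H^{n-1}\llcorner J_v .
\]
Then I would set $(dv)_{jump} := D^s v\llcorner J_v$ and $(dv)_{cantor} := D^s v\llcorner (B\setminus S_v)$. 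Because $S_v$ has $\sigma$-finite $\mathcal H^{n-1}$-measure it is $\mathcal L^n$-negligible, so $(dv)_0$ does not charge $S_v$ and hence $D^s v\llcorner S_v = Dv\llcorner S_v$; together with $\mathcal H^{n-1}(S_v\setminus J_v)=0$ and the displayed formula (which is absolutely continuous with respect to $\mathcal H^{n-1}$) this yields $D^s v = (dv)_{jump} + (dv)_{cantor}$, proving (i), while the displayed formula is exactly assertion (iii).

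It then remains to prove (ii), that $(dv)_{cantor}$ charges no set $E$ of $\sigma$-finite $\mathcal H^{n-1}$-measure; this is \cite[Proposition 3.92]{ambrosio}, and I would run its argument. Since $(dv)_{cantor}$ is concentrated on $B\setminus S_v$, after intersecting with $B\setminus S_v$ one may assume $E\cap S_v=\emptyset$ and, passing to a countable exhaustion, $\mathcal H^{n-1}(E)<\infty$. A Federer-type density comparison between the positive measure $|(dv)_{cantor}|$ and $\mathcal H^{n-1}\llcorner E$ then shows that if $|(dv)_{cantor}|(E)>0$ the upper $(n-1)$-density $\Theta^{*}_{n-1}(|Dv|,x)$ is strictly positive on a non-$\mathcal H^{n-1}$-negligible subset of $E$; but for a $BV$ function every such point lies in $S_v$ up to an $\mathcal H^{n-1}$-null set, contradicting $E\cap S_v=\emptyset$. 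Hence $(dv)_{cantor}(E)=0$. The main obstacle is the Federer--Vol'pert theorem itself --- the rectifiability of $S_v$, the identity $\mathcal H^{n-1}(S_v\setminus J_v)=0$, and the explicit jump representation of $Dv\llcorner S_v$ --- whose proof proceeds through De Giorgi's structure theorem for sets of finite perimeter applied to the superlevel sets $\{v>t\}$, reassembled by the coarea formula $|Dv| = \int_{\R} |D\chi_{\{v>t\}}|\,dt$; everything else above is bookkeeping with Radon measures. Since this material is entirely classical, in the paper we simply cite \cite[Theorem 3.78 and Proposition 3.92]{ambrosio} and record the decomposition in the form (i)--(iii), which is all that is needed to discuss the jump and Cantor components of the least gradient map $v$.
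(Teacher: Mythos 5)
Your proposal is correct and matches the paper exactly: the paper states the decomposition and simply cites \cite[Theorem 3.78 and Proposition 3.92]{ambrosio} without proof, which is precisely what you note at the end, and your sketch of the Radon--Nikodym splitting, the Federer--Vol'pert jump-set analysis, and the density argument for the Cantor part is the standard proof that those cited results encapsulate.
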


\begin{corollary}\label{decocantor2} Let $v$ be the  primitive of the Ruelle-Sullivan current  associated to an oriented measured geodesic lamination
$\Lambda=(\lambda, \nu)$. In the decomposition of the measure $dv$, 
\begin{itemize}
\item $(i)$ $(dv)_0 = 0$.
\item $(ii)$ $(dv)_{jump}$ is supported on closed isolated leaves. 
\item $(iii)$ $(dv)_{cantor}$ is supported on minimal laminations which are not closed isolated leaves.
\end{itemize}
\end{corollary}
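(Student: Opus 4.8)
The plan is to deduce the corollary directly from Theorem~\ref{decocantor1} applied to the local primitives $v_F$ of the Ruelle-Sullivan current $T_\Lambda$ constructed in Theorem~\ref{conversetomeasure}, using the structure theory of geodesic laminations in Theorem~\ref{cassonmin}. Since all three assertions are local (the decomposition $dv=(dv)_0+(dv)_{jump}+(dv)_{cantor}$ is canonical and compatible with restriction to balls), I would fix a flow box $F=F(t,s):R^\sharp=[a,b]\times[c,d]\to R\subset M$ as in~(\ref{eqn:flowbox}) and work with $v^\sharp(t,s)=g^\sharp(s)=\int_c^s d\nu$ from~(\ref{vsharp}), so that $dv^\sharp$ is (the pullback of) a measure supported on $[a,b]\times K$ where $K$ has Hausdorff dimension $0$.

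First I would prove $(i)$: $(dv)_0=0$. From~(\ref{gsharp})--(\ref{vsharp}), $g^\sharp$ is a monotone function whose distributional derivative is the measure $\nu$ supported on the zero-Hausdorff-dimensional set $K=F^{-1}(\lambda)\cap(\{b\}\times[c,d])$. In particular $\nu$ gives zero mass to any set of positive Lebesgue measure, so $dv^\sharp$ is singular with respect to Lebesgue measure; by Theorem~\ref{decocantor1}$(i)$ this forces $(dv^\sharp)_0=0$, and patching over the finite atlas of flow boxes gives $(dv)_0=0$ globally. Next, for $(ii)$ and $(iii)$, I would invoke Theorem~\ref{cassonmin}: under the standing assumption that $\lambda$ (here $\lambda_u$, or any $\lambda$ carrying $\nu$) is a disjoint union of finitely many minimal sublaminations, each such component is either a single closed geodesic (an isolated closed leaf) or a minimal lamination all of whose leaves are infinite and dense. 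Decompose $\nu=\sum_j\nu_j$ according to these finitely many components; correspondingly $dv=\sum_j dv_j$ with each $dv_j$ supported on one component.

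For a component that is a closed isolated leaf, a transversal meets it in a single point (locally), so $\nu_j$ is a sum of atoms and $g^\sharp$ has a genuine jump discontinuity along the $1$-dimensional rectifiable arc $F(\{b\}\times K_j)$; hence this piece contributes only to $(dv)_{jump}$, giving $(ii)$. For a minimal component $\lambda_j$ that is not a closed leaf, I would argue that $\nu_j$ is \emph{non-atomic}: an atom at a point $p$ of a transversal would, by homotopy invariance of the transverse measure (Lemma~\ref{Mproposition7}) and the fact that the leaf through $p$ is dense and returns to any transversal infinitely often, force infinite total mass on a compact transversal — a contradiction. A non-atomic monotone $g^\sharp$ whose derivative is moreover supported on the $\sigma$-finite-$\mathcal H^{n-1}$-null set $K_j$ (indeed $\dim_H K_j=0<1$) is continuous and has $(dv_j^\sharp)_{jump}=0$ and $(dv_j^\sharp)_0=0$, so by the decomposition $dv_j^\sharp=(dv_j^\sharp)_{cantor}$; this yields $(iii)$. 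The main obstacle I anticipate is the bookkeeping needed to see that the \emph{canonical} decomposition of Theorem~\ref{decocantor1} respects the splitting of $\lambda$ into components and the transition between flow boxes — i.e., that "supported on" is well-defined globally on $M$ and that the jump set of $v$ is exactly the union of the closed leaves, no more; this requires checking that on overlaps $v_F$ and $v_{F'}$ differ by a constant (which is~(\ref{globaltheory})) so their jump/Cantor parts agree, and that the atoms of $\nu$ are precisely located on closed leaves, for which the homotopy-invariance argument above is the key input.
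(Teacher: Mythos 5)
Your proposal is correct and follows essentially the same route as the paper: $(dv)_0=0$ because $dv$ is supported on the lamination (a Lebesgue-null set); then Theorem~\ref{cassonmin} splits $\lambda$ into closed leaves and non-closed minimal components, and in a flow box the atoms of $\nu=dg^\sharp$ are located exactly at the (finitely many) closed-leaf intersection points, with the homotopy-invariance/density argument ruling out atoms on non-closed minimal components, so the jump part sits on the closed leaves and the Cantor part on the rest. You are a bit more explicit than the paper about the no-atoms claim and the patching over flow boxes, but the underlying argument is the same.
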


%% BV functions have a standard decomposition.  Here we can work in a ball $B$ in $M$ and use a local definition of $v: B \rightarrow \R$, since it is not a global theorem.  A BV function defined in a neighborhood of $B$  has a canonical decomposition $v = v_0 + v_{jump} + v_{cantor}$, where $v_0$ in $W^{1,1}(B)$, $v_{jump}$ is locally constant with jumps along rectifiable sets of codimension 1 and $v_{cantor}$ is continuous with derivative zero on an open dense set.  See  \cite[Theorem 3.77 and Proposition 3.92]{ambrosio} and \cite{evans}  for precise definitions and details.
%%
%%George:  I can?t get those books off the web. Possibly available from the Princeton University library when I get back. I do not understand the u notes Camillo sent to me, but they are also unpublished (not on the web)  I will work at getting the statement of this theorem precise.  Perhaps we don?t have to define everything exactly.  By the way, the theorem is true for vector-valued functions and there is something about a rank 1 theorem.  Might turn out to be very useful mapping surfaces to surfaces.

%\begin{theorem}\label{Mtheorem19} The canonical division of $v$ obtained as in section 4 is $v = v_{jump} + v_{cantor}$, where the support of $dv_{jump}$ is on the closed geodesics $\lambda_ {periodic}$ and $dv_{cantor}$ is supported on $\lambda - \lambda_{0 periodic}$.
%\end{theorem}
\begin{proof}
The support of $dv$ lies on the lamination, which is of measure 0, so 
$(dv)_0= 0$.
  By the decomposition Theorem~\ref{cassonmin}, we can write  $\lambda$
 as a disjoint union of minimal sublaminations $\lambda_1$ which is the disjoint union of  closed leaves and $\lambda_2$ which is the disjoint union of all sublaminations consisting  of minimal non-isolated leaves. Given a flow box $F$, there is a finite number of points in $K=\{k_1,...,k_n \}$ corresponding to closed isolated leaves  and write $c=k_0< k_1 <... < k_{n+1} =d$. These coincide with the atoms of the measure $\nu$, hence by \cite[Corollary 3.33]{ambrosio} the discontinuity set of $dg^\sharp=d\nu$ must be equal to the set 
 $\{ k_1,..., k_n\}$. Since $v^\sharp(s,t)=g^\sharp(s)$, it follows that the discontinuity set of $dv$ is equal to $\lambda_1$. By 
 \cite[Definition 3.91]{ambrosio}, $(dv)_{jump}$ is supported on $\lambda_1$ and $(dv)_{cantor}=dv-(dv)_{jump}$ on $\lambda_2$.
\end{proof}

Actually we can prove a stronger statement

\begin{proposition} If $v$ is the primitive for the Ruelle-Sullivan current associated to an oriented measured geodesic lamination $\Lambda$, then 
\[
v = v_1 + v_2
\]
 and $dv_1=(dv)_{jump}$, $dv_2=(dv)_{cantor}$. Here $v_1$ is a jump function and $v_2$ is a Cantor function.
\end{proposition}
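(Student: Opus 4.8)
The plan is to carry out the splitting first at the level of the transverse measure and then transport it through the construction of Theorem~\ref{conversetomeasure}. Write the Lebesgue decomposition $\nu = \nu_1 + \nu_2$ of the transverse measure into its atomic part $\nu_1$ and its continuous (non-atomic) part $\nu_2$. In a flow box $F=F(t,s)$ the proof of Theorem~\ref{conversetomeasure} identifies $v^\sharp(t,s)=g^\sharp(s)$ with $g^\sharp(s)=\int_c^s d\nu$; under the bi-Lipschitz chart $F$ the atoms of $\nu$ correspond exactly to the jumps of $g^\sharp$, i.e.\ to the discontinuity set of $dv$, which by Corollary~\ref{decocantor2} is $\lambda_1$ (the union of the closed leaves among the minimal sublaminations of $\lambda$, using Theorem~\ref{cassonmin} and our standing assumption). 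Hence $\operatorname{supp}\nu_1\subset\lambda_1$ and $\operatorname{supp}\nu_2\subset\lambda_2$, where $\lambda_2$ is the union of the remaining minimal sublaminations. Since the flow-box transition maps preserve $\nu$ and the atomic/continuous decomposition of a Radon measure is canonical, they preserve $\nu_1$ and $\nu_2$ separately, so $\Lambda_1=(\lambda_1,\nu_1)$ and $\Lambda_2=(\lambda_2,\nu_2)$ are again oriented measured geodesic laminations, and $T_\Lambda = T_{\Lambda_1}+T_{\Lambda_2}$ directly from Definition~\ref{integration}.

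Next I would apply Theorem~\ref{conversetomeasure} to $\Lambda_1$ and to $\Lambda_2$, obtaining flat real affine line bundles $L_1,L_2$, representations $\alpha_1,\alpha_2:\pi_1(M)\to\R$, and sections $v_1,v_2$ of bounded variation with $dv_i=T_{\Lambda_i}$. I claim $dv_1=(dv)_{jump}$ and $dv_2=(dv)_{cantor}$. This is local: in a flow box the proof of Theorem~\ref{conversetomeasure} gives $v_i^\sharp(t,s)=g_i(s)$ with $g_i(s)=\int_c^s d\nu_i$, and $g^\sharp = g_1 + g_2$ is precisely the splitting of the monotone function $g^\sharp$ into its jump part $g_1$ (a step function with jumps at the finitely many points of $K$ on closed leaves) and its continuous part $g_2$; moreover $g_2$ has a.e.\ vanishing derivative because $(dv)_0=0$ by Corollary~\ref{decocantor2}$(i)$, equivalently $g^\sharp$ has no absolutely continuous part (a property preserved by the bi-Lipschitz chart $F$, by \cite[Theorem 3.16]{ambrosio}). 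So in the notation of Theorem~\ref{decocantor1} applied to $v^\sharp$ we have $dv_1^\sharp=(dv^\sharp)_{jump}$ and $dv_2^\sharp=(dv^\sharp)_{cantor}$; composing with $F^{-1}$, which preserves this decomposition, yields $dv_1=(dv)_{jump}$, $dv_2=(dv)_{cantor}$ in the image of $F$, hence globally since these currents are globally defined.

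Finally I would reconcile $v$ with $v_1+v_2$: on the universal cover $d\tilde v=\tilde T_\Lambda = \widetilde{(dv)_{jump}}+\widetilde{(dv)_{cantor}} = d\tilde v_1 + d\tilde v_2$, so $\tilde v - \tilde v_1 - \tilde v_2$ has vanishing distributional derivative and is therefore constant; absorbing this constant into $v_1$ gives $v=v_1+v_2$ (and $\alpha=\alpha_1+\alpha_2$, recovering $L$ from $L_1,L_2$). The local formulas then give the types: $v_1 = g_1\circ p_2\circ F^{-1}$ with $g_1$ a pure step function, so $v_1$ is locally constant off $\lambda_1$ and jumps across $\lambda_1$, i.e.\ a jump function; and $v_2 = g_2\circ p_2\circ F^{-1}$ with $g_2$ continuous, monotone and with a.e.\ vanishing derivative, so $v_2$ is a Cantor function. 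The step I expect to be the main obstacle is the bookkeeping in the last two paragraphs: verifying that $\nu_1$ and $\nu_2$ are genuinely projection-invariant transverse measures (so that Theorem~\ref{conversetomeasure} applies to each), and that the three local primitives $v,v_1,v_2$ patch consistently across flow boxes into sections of the respective flat bundles — everything else reduces to the one-variable theory of monotone functions together with results already established in the paper.
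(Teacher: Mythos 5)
Your proposal follows the same route as the paper's own proof: decompose the lamination into the union $\lambda_1$ of closed leaves and the union $\lambda_2$ of the remaining minimal components, observe $T_\Lambda=T_{\Lambda_1}+T_{\Lambda_2}$, take primitives $v_i$ via Theorem~\ref{conversetomeasure}, and invoke uniqueness of primitives up to a constant to identify $v_1+v_2$ with $v$; finally match $dv_1$, $dv_2$ with the jump and Cantor parts via Corollary~\ref{decocantor2}. What you add beyond the paper's terse argument is worthwhile bookkeeping that the paper leaves implicit: you phrase the split at the level of the transverse measure (Lebesgue decomposition $\nu=\nu_1+\nu_2$ into atomic and continuous parts), you check that the flow-box transition maps preserve $\nu_1$ and $\nu_2$ separately so that $\Lambda_1$ and $\Lambda_2$ are genuinely measured laminations to which Theorem~\ref{conversetomeasure} applies, you reduce the identification $dv_i^\sharp=(dv^\sharp)_{jump},(dv^\sharp)_{cantor}$ to the classical one-variable decomposition of a monotone function (using $(dv)_0=0$ from Corollary~\ref{decocantor2}$(i)$ to see the continuous part is Cantor), and you note that bi-Lipschitz charts preserve the decomposition by \cite[Theorem 3.16]{ambrosio} and that the factors of automorphy add, $\alpha=\alpha_1+\alpha_2$. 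None of this changes the underlying strategy, but it makes explicit several steps the paper compresses into a single sentence.
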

\begin{proof}
% The support of $(dv)_{jump}$ is on closed geodesics and the support of $(dv)_{cantor}$ is on the union of minimal laminations in which every leaf is dense. By the decomposition Theorem~\ref{cassonmin}, these are disjoint measured laminations. Let $v_{jump}$ be a primitive for $(dv)_{jump}$ and $v_{cantor}$ for $(dv)_{cantor}$. Hence the proposition follows since the primitives are unique (up to an additive constant).
% \end{proof}
% 
% -----------------
% \begin{proof}The support of $dv$ lies on the lamination $\lambda$, which is of measure 0, so 
%$(dv)_0= 0$.
%  By the decomposition Theorem~\ref{cassonmin} we can write  $\Lambda$
% as a disjoint union of minimal sublaminations $\Lambda_1$ which is the union of sublaminations consisting of closed leaves and $\Lambda_2$ which is the union of all sublaminations consisting  of minimal non-isolated leaves. The lamination $\Lambda_1$ is a set which is 1-rectifiable, whereas  $\Lambda_2$ is purely unrectifiable. Hence 
% \[
% (dv)_{jump}=dv \Big |_{\Lambda_1} and \ (dv)_{cantor}=dv \Big |_{\Lambda_2}.
% \]
% 
Let  $\Lambda_i=(\lambda_i, \nu)$ where $\lambda_i$ as in the previous Corollary and $T_{\Lambda_i}$  the  Ruelle-Sullivan currents corresponding to $\Lambda_i$.  Then,
 $T_{\Lambda_i}$ is closed and let $v_i$ be their primitives, $dv_i=T_{\Lambda_i}$ as in Theorem~\ref{conversetomeasure}. Since
 $T_{\Lambda}=T_{\Lambda_1}+T_{\Lambda_2}$ and since the primitives are unique (up to an additive constant),
 $v=v_1+v_2$. By construction, $dv_i$ is supported on $\lambda_i$ and thus $dv_1=(dv)_{jump}$ and $dv_2=(dv)_{cantor}$.
 % With the notation as on Proposition~\ref{discrleav2}, in a given flow-box, there is a finite number of points in $K=\{k_1,...,k_n \}$ corresponding to closed isolated leaves  and write $c=k_0< k_1 <... < k_{n+1} =d$. The function $g^\sharp=a_j$ is constant in the open intervals $I_j =(k_j, k_{j+1})$,  so
% $v = a_j$ constant on the rectangular shaped region $R_j=F([a,b] \times I_j)$ bordered by the two end transversals and the geodesics $\lambda_j$ and $\lambda_{j+1}$ which pass through $k_j$ and $k_{j+1}$. Thus $v_1=:v_{jump}$  is a jump function and $dv_{jump}$ is a jump discontinuity measure. 
% 
% Notice that 
%  
% Note that $v_2$ is constant on the open plaques in $M_0$ corresponding to the minimal, non-isolated components of the lamination, hence $v_2=v_{cantor}$ must be a Cantor function.
 \end{proof}
% 
%  \begin{lemma} Suppose that the support of the Ruelle-Sullivan current $T$ is on a set $S = S_1 \cup S_2$ where $S_1 \cap S_2 = \emptyset$. Then $T=T_1+T_2$, where $T_j$ has support on $S_j$ and $v = v_1 + v_2$, where $dv_j = T_j$.
% \end{lemma}
%\begin{proof} Let $T_j = \phi_j T$ where $\phi_1$ is a smooth function such that $\phi_1(x) = 1$ for $x \in S_1$ and $\phi_1(x) = 0$ for $x \in S_2$ and vice versa for $\phi_2$. Then $T_j$ is closed and the primitive for $T_j$ is constructed as in Theorem~\ref{conversetomeasure}. Then $v = v_1 + v_2$ has the requisite properties since $v$ is unique up to a constant.
%\end{proof}

 This also applies to the transverse least gradient measures obtained from best Lipschitz maps. Note that the cohomology classes associated with the transverse measures and laminations add.

\begin{corollary} Let $\Lambda=(\lambda, \nu)$ be an oriented lamination without isolated leaves. Then the primitive $v$ of the Ruelle-Sullivan current $T_\Lambda$ defines a continuous but not absolutely continuous section $v: M \rightarrow L$ whose derivative is zero almost everywhere.
\end{corollary}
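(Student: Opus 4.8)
The plan is to combine the previous Corollary (which characterizes the three pieces of $dv$ when $\lambda$ has no closed isolated leaves) with the classical structure theory of $BV$ functions. Since $\Lambda=(\lambda,\nu)$ has no isolated leaves, every minimal sublamination of $\lambda$ consists of non-isolated leaves, so by Corollary~\ref{decocantor2}(ii) we have $(dv)_{jump}=0$, and by part (i) we have $(dv)_0=0$. Hence $dv=(dv)_{cantor}$. First I would invoke Theorem~\ref{conversetomeasure} to get that $v$ is (locally) of bounded variation, together with the flat affine bundle $L$ and the section realizing $T_\Lambda=dv$. Then I would record that $dv=(dv)_{cantor}$ is by definition singular with respect to Lebesgue measure and vanishes on sets of $\sigma$-finite $n-1$ Hausdorff measure (Theorem~\ref{decocantor1}(ii)); in particular its Radon--Nikodym density with respect to Lebesgue measure is zero, i.e. the approximate (hence a.e.\ classical) derivative of $v$ vanishes almost everywhere.

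Next I would address continuity. A $BV$ function is continuous precisely when its jump part vanishes — more carefully, the discontinuity set of a $BV$ function is $(n-1)$-rectifiable and carries exactly the jump part of the measure. Working locally in a flow box $F$ as in the proof of Corollary~\ref{decocantor2}, we have $v^\sharp(t,s)=g^\sharp(s)$ with $g^\sharp(s)=\int_c^s d\nu$, so the discontinuities of $v^\sharp$ are exactly the atoms of $\nu$ on the transversal. Since $\lambda$ has no isolated leaves, a transversal crossing a non-isolated leaf meets it at an accumulation of intersection points, and any atom of $\nu$ there would force, by the homotopy invariance of $\nu$, infinitely many equal atoms and hence infinite mass on a compact transversal — contradiction (this is exactly the argument already given in the text for why spiraling leaves support no measure, applied here to minimal non-isolated laminations). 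Therefore $\nu$ is atomless, $g^\sharp$ is continuous, $v^\sharp$ is continuous, and so $v=v^\sharp\circ F^{-1}$ is continuous; patching over the finitely many flow boxes gives continuity of the global section $v$.

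Finally, $v$ is not absolutely continuous: an absolutely continuous $BV$ function has $dv=(dv)_0$, i.e. $dv\ll$ Lebesgue measure, which would force $dv=0$ and hence $v$ locally constant, contradicting that $\nu\neq 0$ (so $g^\sharp$ is non-constant on some transversal). Thus $v$ is continuous, has vanishing derivative a.e., but is not absolutely continuous — it is a Cantor-type function, as claimed.

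The main obstacle I expect is the bookkeeping in the step showing $\nu$ is atomless: one must be careful that "no isolated leaves" genuinely rules out atoms, using that each minimal sublamination is either a single closed leaf (excluded) or has all leaves dense and non-isolated, so that any transversal meets such a leaf infinitely often within a compact arc; the homotopy invariance of $\nu$ along leaves then transports a putative atom to infinitely many atoms of equal mass. This is essentially contained in the discussion already in Section~\ref{ruelles}, so it should go through, but it is the one place where the hypothesis is really used and where care is needed.
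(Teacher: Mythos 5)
Your proof is correct, but it takes a more hands-on route than the one the paper's structure suggests. The paper places this corollary immediately after the Proposition asserting $v=v_1+v_2$ with $dv_1=(dv)_{jump}$, $dv_2=(dv)_{cantor}$, $v_1$ a jump function and $v_2$ a Cantor function; the intended argument is a one-liner: with no isolated leaves, Corollary~\ref{decocantor2}(ii) gives $(dv)_{jump}=0$, so $v_1$ is constant and $v=v_2+\mathrm{const}$ is a Cantor function, which by definition is continuous with derivative zero on the dense open set $M\setminus\lambda$ of full measure, and is not in $W^{1,1}$ since $dv=(dv)_{cantor}$ is a nonzero singular measure.

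What you do instead is re-derive the two nontrivial conclusions (continuity, not absolutely continuous) directly from the local flow-box model $v^\sharp(t,s)=g^\sharp(s)$ with $g^\sharp(s)=\int_c^s d\nu$. This buys you independence from the unnamed Proposition, at the cost of having to verify that $\nu$ is atomless. Your atomlessness argument is sound — given an atom on a non-closed leaf $\ell$ of a minimal sublamination, denseness of $\ell$ forces it to recross the same short transversal at points $k_n\to k_0$, and holonomy invariance of $\nu$ along $\ell$ transports the atom to every $k_n$, giving infinite mass — but note that this fact is already implicit in (the proof of) Corollary~\ref{decocantor2}(ii), which identifies the atoms of $\nu$ precisely with the closed isolated leaves. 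You could have simply cited that instead of re-proving it. Also, for the a.e.\ differentiability you pass through the approximate derivative via the Radon--Nikodym density of $(dv)_{cantor}$; it is more direct (and what the paper uses) to observe that $v$ is locally constant on the open set $M\setminus\lambda$, which has full Lebesgue measure since $\lambda$ is a geodesic lamination, so the classical derivative exists and vanishes there. Both routes are valid; yours is self-contained, the paper's is shorter.

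One small caution on phrasing: ``an absolutely continuous $BV$ function has $dv=(dv)_0$'' should be read as ``$v\in W^{1,1}_{loc}$ iff $dv$ is absolutely continuous with respect to Lebesgue measure, in which case $(dv)_{jump}=(dv)_{cantor}=0$.'' With that reading your contradiction $dv=(dv)_0=0$ versus $T_\Lambda\neq 0$ (since $\nu$ is a nontrivial transverse measure) is exactly right.
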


\section{Conjectures and Open Problems}\label{conjectures}
As mentioned already the authors' motivation for this paper is understanding Thurston's work of best Lipschitz maps between surfaces. As such, the results of this paper only serve as a toy problem in understanding the more difficult problem of best Lipschitz maps between surfaces.
This paper is by no means complete and is only meant to be the preliminary part of a more thorough study.   
This section contains some  suggestions for new directions for research. 

The main topic of study in this paper is $\infty$-harmonic maps from hyperbolic manifolds to $S^1$ and their maximum stretch laminations. The theory of $\infty$-harmonic functions has been thoroughly worked out for Euclidean metrics but so far no work has been done for variable metrics. For example, there is no reference of viscosity solutions for other than flat metrics and there is no reference for the equivalence with the notion of comparison with cones. In Section~\ref{sect:crandal} (cf. Proposition~\ref{comocones1}), we worked out the bare minimum of what we needed from the theory of comparison with cones in order to obtain our results on  geodesic laminations. However, the theory  is far from complete. In particular, for the sake of simplicity, we only considered the hyperbolic metric, thus leaving the theory for general metrics as a conjecture: 

\begin{conjecture} Develop the theory of $\infty$-harmonic functions for  general  Riemannian metrics. Most of the known results about $\infty$-harmonic functions (including the theorems of Crandall on gradients \cite{crandal} and the regularity results of  Evans-Savin \cite{evans-savin} and Evans-Smart \cite{evans-smart}) should carry over to this case.
 In particular, show that
 Theorem~\ref{straightline} holds for any Riemannian manifold $(M,g)$.
\end{conjecture}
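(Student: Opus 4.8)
The plan is to reduce the conjecture to its one genuinely new ingredient. A reading of Section~\ref{sect:crandal} shows that hyperbolicity enters the proof of Theorem~\ref{straightline} in exactly one place: the explicit radial $p$-harmonic ``cone'' functions $f_p(t)$ and the comparison-with-cones statements (Proposition~\ref{comocones1} and the subsequent Corollary and Proposition labelled \ref{comocones2}) built from them. Everything else already holds for a closed Riemannian manifold of any dimension: the existence and $C^{1,\alpha}$ regularity of the $u_p$, the construction of the variational $\infty$-harmonic limit $u$ and its \emph{absolutely minimizing} property (Theorem~\ref{thm:limminimizer}, stated and proved there for general $(M,g)$), and the nonemptiness and closedness of $\lambda_u$ (Lemma~\ref{realized}). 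Thus it suffices to prove, for general $(M,g)$: (a) comparison with cones (equivalently, comparison with distance functions) for $u$; and (b) the local rigidity statement that the equality case of the best-Lipschitz inequality along geodesics forces two geodesics through a point of $\lambda_u$ to meet at angle $0$ or $\pi$. Once (a) and (b) are in hand, the proof of Theorem~\ref{straightline} — and then of Theorem~\ref{K=L} via compactness — goes through essentially verbatim.

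For (a) I would avoid redeveloping viscosity-solution theory for variable metrics, which would require porting Jensen's uniqueness theorem and the Crandall--Evans--Gariepy / Armstrong--Smart arguments to the Riemannian $\infty$-Laplacian $\langle \nabla^2 u \, \nabla u, \nabla u \rangle = 0$, and checking that the lower-order curvature terms appearing through the Bochner identity drop out in the limit $p \to \infty$. Instead I would use the metric-space route: any closed Riemannian manifold is a geodesic (length) space, $u$ is absolutely minimizing in the sense of Theorem~\ref{thm:limminimizer}(ii), and on a geodesic space an absolutely minimizing function is characterized by comparison with cones — this is the content of the AMLE theory on length spaces (Juutinen; Champion--De Pascale; Peres--Schramm--Sheffield--Wilson). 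One works inside balls of radius smaller than the injectivity radius, so that the cone $x \mapsto A + B\, d(x,x_0)$ is Lipschitz with slope $|B|$ and geodesics issuing from $x_0$ realize distance; with this in place Proposition~\ref{comocones1} and its corollaries transfer unchanged.

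Granting (a), the proof of the maximal-stretch-direction statement (Proposition~\ref{comocones2}) uses only comparison with cones, the fact that $L$ is the best Lipschitz constant, and that short geodesics realize distance, so it transfers directly: every $x_0 \in \lambda_u$ lies on a maximal-stretch geodesic segment in $\lambda_u$. For (b), one is reduced to the following: if $\lambda_1, \lambda_2$ are unit-speed geodesics with $\lambda_1(0) = \lambda_2(0) = x_0$ and $d(\lambda_1(t), \lambda_2(s)) = |t-s|$ for all small $t, s$ of the appropriate signs, then the geodesics coincide (angle $0$) or are opposite (angle $\pi$). Since $u$ is $L$-Lipschitz at every scale, the inequality $d(\lambda_1(t), \lambda_2(s)) \ge |u(\lambda_1(t)) - u(\lambda_2(s))|/L = |t-s|$ holds at all scales; combined with the Euclidean comparison in normal coordinates at $x_0$, where to leading order $d(\lambda_1(t), \lambda_2(s))^2 = t^2 + s^2 - 2ts\cos\theta$, which equals $(t-s)^2$ for $ts > 0$ only when $\cos\theta = 1$, one gets $\theta \in \{0, \pi\}$; the higher-order corrections do not obstruct the argument because the identity is assumed for a whole interval of scales, not merely infinitesimally. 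The conclusion that an entire geodesic meeting $\lambda_u$ lies in $\lambda_u$ is then identical to the hyperbolic case.

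The main obstacle is the black box in (a): one must extract from the AMLE literature, or reprove, the equivalence ``absolutely minimizing $\Longleftrightarrow$ comparison with cones'' in precisely the Riemannian setting, being careful that comparison is claimed only at scales below the injectivity radius — so that the cut-locus singularities of $x \mapsto d(x,x_0)$ and possible positive curvature do not interfere — and that the ``extremality'' of the distance function used in the proof is the local one. A secondary technical point is making the angle-rigidity step in (b) uniform in the scale; this is what the all-scales Lipschitz bound above is designed to supply. Finally, I should emphasize that this reduction delivers only the geometric conclusion, Theorem~\ref{straightline}, for general $(M,g)$; the remaining assertions of the conjecture — a full viscosity theory, Crandall's gradient theorem, and the Evans--Savin / Evans--Smart regularity for variable metrics — are genuinely open and would require substantial new PDE work.
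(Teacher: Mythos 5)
This statement is a \emph{conjecture} posed by the authors at the end of their paper; the paper offers no proof for it, so there is no paper-proof against which to compare your proposal. What I can do is assess the proposal as a sketch for the open problem.

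Your reduction is sensible and correctly localizes the dependence on hyperbolicity. In the paper's proof of Theorem~\ref{straightline}, the hyperbolic metric enters only through the explicit radial $p$-harmonic cone functions $f_p(t)$ and the comparison-with-cones statements built from them; the absolutely-minimizing property (Theorem~\ref{thm:limminimizer}$(ii)$), the nonemptiness/closedness of $\lambda_u$ (Lemma~\ref{realized}), and the maximal-stretch-direction step (Proposition~\ref{comocones2}) are already stated for general closed Riemannian manifolds or transfer with no change. Your step~(b) is also essentially the paper's argument; it can in fact be made cleaner and scale-free than the Taylor expansion you propose: the equality $d(\lambda_1(t),\lambda_2(s)) = |t| + |s|$ for the appropriate sign choices forces the concatenated path through $x_0$ to be a length minimizer, hence (within the injectivity radius) a geodesic, which has no corner at $x_0$; this is a purely metric argument and avoids any worry about higher-order corrections.

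The real gap is step~(a), and you are honest about it: you are appealing to the length-space AMLE literature (Champion--De~Pascale, Peres--Schramm--Sheffield--Wilson, Juutinen) for the equivalence ``absolutely minimizing $\Leftrightarrow$ comparison with cones.'' This requires care on two points that your sketch glosses over. First, the paper's $u$ is the variational limit of $S^1$-valued $p$-harmonic maps and the absolute-minimality in Theorem~\ref{thm:limminimizer}$(ii)$ is for the local Dirichlet problem on balls where the map lifts to a real-valued function; one must check that this is exactly the hypothesis the length-space theorems need (in particular that it implies comparison with cones \emph{from both sides}, not merely one-sided). Second, comparison with the cone $A + B\,d(\cdot,x_0)$ only behaves well inside balls where $d(\cdot,x_0)$ is smooth and geodesics to $x_0$ are unique; below the injectivity radius this is fine, but the cut locus and, for positively curved metrics, conjugate points mean the statement is genuinely local. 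These checks are plausible but not automatic, and carrying them out carefully is the substantive work hiding behind the black box. You also correctly flag that your sketch delivers at most the ``in particular'' clause of the conjecture (Theorem~\ref{straightline} for general $(M,g)$); the remaining parts --- a full viscosity theory for the Riemannian $\infty$-Laplacian, Crandall's gradient theorem, the Evans--Savin and Evans--Smart regularity --- are not reached by this route and would require genuinely new PDE arguments.
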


There are two uniqueness theorems which we believe to be true, but cannot prove.
\begin{conjecture}\label{Conjecture 2} The $\infty$-harmonic map $u: M \rightarrow S^1$ in a homotopy class is unique up to rotation in $S^1$.
\end{conjecture}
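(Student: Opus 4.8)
The plan would be to reduce the statement to a \emph{strong} comparison (tangency) principle for $\infty$-harmonic functions on the hyperbolic cover, and then to obtain that principle by pushing the comparison-with-cones analysis of Section~\ref{sect:crandal} to its full strength. Suppose $u_1,u_2\colon M\to S^1$ are both $\infty$-harmonic in the fixed homotopy class; by Theorem~\ref{thm:limminimizer}(ii) they share the best Lipschitz constant $L$. Lift them to $\rho$-equivariant functions $\tilde u_1,\tilde u_2$ on the hyperbolic cover $\tilde M$. The difference $w:=\tilde u_1-\tilde u_2$ satisfies $w(\gamma\tilde x)=w(\tilde x)$, so it descends to a continuous function on the compact manifold $M$ and attains a maximum $m$. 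Since two maps differing by a rotation of $S^1$ lift to functions differing by an additive constant, it suffices to prove that the closed coincidence set $E=\{w=m\}$ is all of $M$, i.e.\ that $w$ is constant.

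The heart of the argument is the following statement, which I would aim to prove for the hyperbolic metric exactly as in the Euclidean case (Jensen; Crandall--Evans--Gariepy; Armstrong--Crandall--Julin--Smart): if $a,b$ are $\infty$-harmonic on a geodesic ball $B\subset\tilde M$ with $a\le b$ on $\bar B$ and $a(x_0)=b(x_0)$ at an interior point $x_0$, then $a\equiv b$ on $B$. Granting this, if $E\ne M$ pick $x_0\in\partial E$ and lift a small ball $B$ around it; since $w\le m$ globally we have $\tilde u_1\le \tilde u_2+m$ on $\bar B$ with equality at the center, so the strong comparison principle forces $\tilde u_1\equiv\tilde u_2+m$ on $B$, contradicting $x_0\in\partial E$. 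Hence $E=M$ and the maps agree up to rotation. An alternative packaging of the same input is to show that the entire family $(u_p)_{p\to\infty}$ converges rather than merely along subsequences: each $u_p$ is the unique minimizer of $J_p$, and every subsequential limit is a local absolutely-minimizing map for its own boundary data (Theorem~\ref{thm:limminimizer}(ii)), so a suitable uniqueness/rigidity statement for such maps would identify all limits; this route, however, runs into precisely the same global-rigidity difficulty.

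The main obstacle is exactly the input I have bracketed. First, the comparison theory for the $\infty$-Laplacian with a non-flat (hyperbolic) metric is not developed — Section~\ref{sect:crandal} deliberately proves only what is needed for the lamination statement — so one must first carry out Crandall's cone analysis and a Jensen-type uniqueness theorem in the hyperbolic setting. Second, and more seriously, the \emph{strong} form (coincidence at a single interior point propagating) is substantially more delicate than the ordinary comparison principle, and it is nontrivial even over $\R^n$. Third, the constraint here is topological rather than a Dirichlet condition, so there is no boundary data to anchor the comparison; this is precisely why the strong one-point-touching version is required rather than the usual formulation, and why no existing Euclidean uniqueness theorem applies off the shelf. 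I expect that essentially all the work lies in establishing this hyperbolic strong-comparison principle; once it is in place the reduction above is routine.
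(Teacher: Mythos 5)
This statement is labeled a \emph{conjecture} in the paper; the authors explicitly say they believe it but cannot prove it, and they offer no proof for me to compare yours against. What you have written is therefore a proposed strategy rather than a proof, and you are candid about that. A few remarks on where it stands.

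Your reduction is the natural one and is almost certainly what the authors also considered: lift to $\rho$-equivariant $\tilde u_1,\tilde u_2$ on $\tilde M$, note that $w=\tilde u_1-\tilde u_2$ is $\pi_1$-invariant and so descends to the compact $M$ where it attains a maximum $m$, and then try to show the closed coincidence set $\{w=m\}$ is also open by a tangency (strong comparison) principle at an interior touching point of the two $\infty$-harmonic functions $\tilde u_1$ and $\tilde u_2+m$. The difficulty you have isolated is real, but it is worth stating more sharply: the strong comparison principle you need --- that two $\infty$-harmonic functions which are ordered and touch at one interior point must coincide on a neighborhood --- is itself a well-known \emph{open problem}, already over Euclidean domains in $\R^n$; it is strictly stronger than Jensen's uniqueness theorem for the Dirichlet problem, which only uses ordering on the boundary and gives no information about interior touching. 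So the proposal reduces Conjecture~\ref{Conjecture 2} to another open problem, arguably a harder and more central one, rather than to something known. In addition, your emphasis on the hyperbolic-versus-Euclidean issue as an obstacle is somewhat misplaced relative to the authors' own reading of the difficulty: in Section~\ref{conjectures} they point out explicitly that the obstruction ``has nothing to do with the hyperbolic metric,'' and that the same difficulty is already present for $\infty$-harmonic maps from a planar annulus to $S^1$ with Dirichlet data; the essential problem is the $S^1$ target (equivalently, the absence of a usable boundary condition to anchor the comparison). Your reduction via the periodic difference $w$ is precisely the right move to confront the $S^1$ target, and once one has it the hyperbolic metric is a secondary concern; the real gap is the missing interior tangency principle, which your argument must assume rather than supply.
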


%Given two such maps $u$ and $u'$, one can consider their difference in the universal cover $\tilde u'- \tilde u : \tilde M 
%\rightarrow \R$. One can try to use maximum principle arguments, but the problem is that the maximum might well be obtained on a closed set which is not homotopically trivial and has no neighborhood diffeomorphic to an open domain in the plane. This conjecture turns out not to be a serious deficiency, as Theorem~\ref{K=L} gives another characterization of the lamination on which $|du| = L$. This set is geometrically determined and must be the same for any two $\infty$-harmonic maps $u$ and $u'$  (see Corollary~\ref{strsuppp}). Moreover
%$du = du'$ on the support of any limiting measure $dv$.

The uniqueness proofs do not carry over for maps into $S^1$.  They are based on constructions which involve taking the maximum of $u$. This has nothing to do with the hyperbolic metric. One would meet the same problems in the following problem in Euclidean space.
Let $\Omega$ be an annular region in $\R^2$, choose a map $u_0: \Omega \rightarrow S^1$, let $b =u_0 \big |_{\partial \Omega}$.  Find the $\infty$-harmonic map $u$ with $u \big |_{\partial \Omega}=b$.   Existence and regularity are straightforward. Is $u$ unique?

\begin{conjecture}\label{Conjecture 3}  The  BV section $v: M \rightarrow L$ in Theorem~\ref{transmeasure}  is unique and  the limit v is equally distributed.
\end{conjecture}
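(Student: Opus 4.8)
The plan is to reduce both assertions to a rigidity property of the transverse measure attached to $v$, and then to the ergodic structure of the maximal stretch lamination $\lambda_u$. \emph{Step 1: from $v$ to a transverse measure.} By Corollary~\ref{currentsupportfiber} and Corollary~\ref{withoutcont}, any sequential limit satisfies $*du\wedge dv=0$ and $\int_M du\wedge dv=1$, while $\|dv\|=1/L$ by Theorem~\ref{thmlegr} and $\mathrm{supp}(dv)\subset\lambda_u$ by Theorem~\ref{thm:supptmeasure}. Since $|du|=L$ along the leaves of $\lambda_u$ (Theorem~\ref{straightline}) and the unit conormal $1$-form $\omega=*du/|du|$ of $\lambda_u$ is well defined and Lipschitz near $\lambda_u$ (Lemma~\ref{lemma:flow}), these identities force $dv=\omega\,\mu$ for a nonnegative Radon measure $\mu$ on $\lambda_u$ with total mass $\mu(\lambda_u)=1/L$; closedness of $dv$ makes $\mu$ invariant under the holonomy of $\lambda_u$, i.e.\ a transverse measure in the sense of Section~\ref{tranco}, with Ruelle--Sullivan class $[dv]=\alpha$. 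Conversely $\mu$ recovers $v$ through the primitive construction of Theorem~\ref{conversetomeasure} once the normalization $\int_{\digamma}\tilde v\,*1=0$ is imposed. Thus uniqueness of $v$ is equivalent to: $\lambda_u$ carries a unique nonnegative transverse measure with Ruelle--Sullivan class $\alpha$.

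\emph{Step 2: uniqueness and equidistribution of $\mu$.} Decompose $\lambda_u$ by Theorem~\ref{cassonmin} into finitely many minimal pieces $\lambda_1,\dots,\lambda_k$ (closed leaves counting as one-leaf pieces); the transverse measures carried by $\lambda_u$ form a finite-dimensional cone $C=\bigoplus_i C_i$. I would show the Ruelle--Sullivan map $C\to H^1(M,\R)$, $\mu\mapsto[\omega\mu]$, is injective: for closed leaves this is homological independence of the geodesics of $\lambda_u$, and in general it should follow from chain recurrence of $\lambda_u$ (cf.\ the discussion following Corollary~\ref{strsuppp}) together with non-degeneracy of the Thurston intersection pairing on measures carried by a fixed lamination. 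Injectivity then identifies $\mu$ from $\alpha$, and in particular shows that $\mu$ charges every minimal piece of $\lambda_u$ with weights pinned by $\alpha$; this is the precise content I would attach to the phrase ``$v$ is equally distributed'', equivalently that the normalized gradient flow of $u$ equidistributes its orbits with respect to the flow measure determined by $du$ and $dv$ on $\mathrm{supp}(dv)=\lambda_u$.

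\emph{A softer alternative} avoids injectivity and uses the least gradient characterization directly. If $v_1,v_2$ are two such limits, then along the segment $v_t=(1-t)v_1+tv_2$ convexity and $\|dv_i\|=1/L$ (Theorem~\ref{thmlegr}) force $\|dv_t\|\equiv 1/L$, so $t\mapsto\|dv_t\|$ is affine; combined with $\int_M du\wedge dv_i=1$ (Corollary~\ref{withoutcont}) and $dv_i=\omega\mu_i$ this should force $v_1-v_2\equiv\mathrm{const}$, hence $0$ after normalization. To close the ``no starved leaf'' gap in this approach one uses that $|U_p|^p$ concentrates uniformly over all of $\lambda_u$ (Proposition~\ref{prop:supptmeasure0}, Proposition~\ref{maxest}), so that $|dv_q|$-mass cannot disappear from a neighborhood of any leaf in the limit $q\to1$.

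\emph{Main obstacle.} Two genuinely hard points remain. First, one must upgrade subsequential convergence $v_q\rightharpoonup v$ to convergence of the whole family, which requires the selection of $\alpha$ — and, since $\alpha$ is manufactured from $*du$, of the $\infty$-harmonic map $u$ itself — to be canonical; this is essentially Conjecture~\ref{Conjecture 2}. Second, the injectivity and full-support statements for $\lambda_u$ require an ergodic theory of maximal stretch laminations (unique ergodicity on each minimal piece and homological independence of distinct pieces) going beyond what Thurston and Bonahon record; indeed, for a general oriented lamination homological degeneracies of the components would permit several transverse measures in a fixed class, so part of the assertion is precisely that the maximal stretch lamination avoids these. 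In our assessment either point is as deep as Conjecture~\ref{Conjecture 2}, which is why we state it only as a conjecture.
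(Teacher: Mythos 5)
This item is stated as a conjecture in the paper, and the paper supplies no proof; it supplies only a discussion of what the two parts of the conjecture mean (uniqueness of the cohomology class $\alpha$, then uniqueness of $v$ given $\alpha$) and an explicit description of what ``equally distributed'' is meant to say. Your proposal is likewise not a proof, which is appropriate, but your reading of the statement is off in a way that changes which mathematical problem you are attacking.

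The paper's point is precisely the \emph{opposite} of the injectivity you aim for in Step 2. The authors observe that ``a lamination which is not connected has many transverse measures,'' i.e.\ the Ruelle--Sullivan map from the cone of transverse measures on $\lambda_u$ to $H^1(M,\mathbb{R})$ is not expected to be injective, so fixing $\alpha$ does \emph{not} pin down $\mu$. The content of ``equally distributed'' is a prescription for which point of the fiber of that map the variational construction selects: in the model case where $\lambda_u$ is a finite union of closed geodesics $\gamma_j$ on which $\tilde u(\gamma_j(t))=Lt$ with $0\le t\le n(j)/L$, the conjecture is that the atomic weight on $\gamma_j$ is $\delta/n(j)$, so that a level set of $u$ (which meets $\gamma_j$ about $n(j)$ times) receives the same total jump $\delta$ from each geodesic. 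This is a quantitative normalization, not a uniqueness-in-cohomology statement. Your sentence ``part of the assertion is precisely that the maximal stretch lamination avoids these [degeneracies]'' is therefore not what is claimed; the assertion is instead a rule for resolving them. If injectivity held, the ``equally distributed'' clause would be vacuous, which makes clear it is not the intended reading.

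Your ``softer alternative'' has a genuine gap that you partially flag: convexity plus $\|dv_i\|=1/L$ gives $\|dv_t\|=1/L$ for the segment $v_t$, but when the lamination has several minimal components (or homologous closed leaves) the set of least gradient sections in a fixed class is an honest simplex, and nothing in $\int_M du\wedge dv_i=1$ or $dv_i=\omega\mu_i$ forces a unique extreme point. Ruling out ``starved'' leaves via concentration of $|U_p|^p$ is plausible heuristically but Proposition~\ref{prop:supptmeasure0} only controls mass \emph{off} $\lambda_u$; it says nothing about how mass is apportioned among components of $\lambda_u$, which is exactly where the difficulty lives. Your final paragraph, identifying the dependence on Conjecture~\ref{Conjecture 2} and on finer dynamical input, is the part that aligns with the paper's own assessment of why this is open.
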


 This is two problems. The first part is to show the cohomology class of $L$ is unique, and the second is the analogous of Conjecture~\ref{Conjecture 2} for $v$ instead of $u$. Recall from Theorem~\ref{thm:equivsit} that given a cohomology class $\rho \in H^1(M, \R)$ there is an associated cohomology class $\alpha \in  H^1(M, \R)$ representing the cohomology class of $L$. It can be seen that $\alpha$ is unique and the map $\rho \mapsto \alpha$ is a well defined map $H^1(M, \R) \rightarrow H^1(M, \R)$ (cf. \cite{daskal-uhlen3}).
  
  The second part is that, given $\alpha$ (or equivalently $L$), the least gradient map $v: M \rightarrow L$ is unique. This is a serious deficiency, since $v$ determines a transverse measure, and a lamination which is not connected has many transverse measures. We are conjecturing that we obtain the measure which gives equal weight to the components. 
  The best way to think of this is the case where the maximum stretch lamination consists of a finite number of closed geodesics $\gamma_ j$ all of which are parameterized locally with $\tilde u(\gamma_j (t)) = Lt$ where
$0 \leq  t \leq n(j)/L$. We then conjecture these geodesics have jumps $\delta/n(j)$. This means that, as you pass around the fiber, the jumps on each geodesic of $v$ are equally distributed. As we saw, the non-closed isolated leaves of the lamination have no jumps, but the idea can be extended to the Cantor components which have no isolated leaves.

\begin{problem}\label{Problem 5}
 We know by Theorem~\ref{thmlegr} that any weak limit $v$ of the $q$-harmonic functions $v_q$ is of least gradient. Use the map $v$ to prove directly  that the support of $dv$ is a geodesic lamination, bypassing the need to use any properties of $\infty$-harmonic functions and our proof that the best Lipschitz constant is achieved on a geodesic lamination. 
  \end{problem}

  A partial converse to the statement of Problem~\ref{Problem 5} should also hold:

  \begin{conjecture} Suppose that $\lambda$ is an arbitrary oriented lamination on $M$ with a transverse measure.  Let $\tilde v: \tilde M \rightarrow \R$ be a primitive for the Ruelle Sullivan current associated with the transverse measure. For any ball $B \subset \tilde M$, 
  $\tilde v \big|_B$ is of least gradient.
  \end{conjecture}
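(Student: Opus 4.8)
The plan is to realize $\tilde v|_B$ as a \emph{calibrated} least gradient function, in direct analogy with the proof of Theorem~\ref{thmlegr}. The object to construct is a closed $1$-form $\Phi$, defined and Lipschitz on a neighborhood $U$ of $\tilde\lambda$ inside $B$, with $|\Phi|\le 1$ pointwise and with $\Phi$ equal, along $\tilde\lambda$, to the $1$-form $g(X,\cdot)$ metrically dual to the oriented unit tangent field $X$ of the leaves, oriented so that $X^{\flat}\wedge\vec{(d\tilde v)}\ge 0$. Granting such a $\Phi$, the rest is formal: mollify $\Phi$ to a smooth closed form $\Phi^{\varepsilon}$ with $|\Phi^{\varepsilon}|\le 1+o(1)$, and for a Lipschitz $\phi$ supported in $U$ with $\phi=0$ near $\partial U$ write
\[
(1+o(1))\,\|d(\tilde v+\phi)\|_{B}\;\ge\;\int_{B}\Phi^{\varepsilon}\wedge d(\tilde v+\phi)\;=\;\int_{B}\Phi^{\varepsilon}\wedge d\tilde v,
\]
the cross term vanishing because $\Phi^{\varepsilon}$ is closed and $\phi$ is compactly supported. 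Letting $\varepsilon\to 0$ and using that $\Phi$ restricted to $\mathrm{supp}(d\tilde v)=\tilde\lambda$ is exactly the oriented unit leaf tangent, while $d\tilde v$ is the coherently oriented jump measure across $\tilde\lambda$, gives $\int_{B}\Phi\wedge d\tilde v=\|d\tilde v\|_{B}$, hence $\|d(\tilde v+\phi)\|_{B}\ge\|d\tilde v\|_{B}$. General $BV$ competitors are reached by approximating by Lipschitz ones together with lower semicontinuity of the total variation; and since $\tilde v$ is locally constant on $B\setminus\tilde\lambda$, it suffices to control competitors supported near $\tilde\lambda$, so $\Phi$ is only needed on $U$.

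Producing $\Phi$ \emph{locally} near $\tilde\lambda$ should be routine. Cover a neighborhood of $\lambda$ by the flow boxes of Proposition~\ref{prop:flow}, $F(t,s)=\exp_{f(s)}(t\,n(s))$ with $n$ the Lipschitz unit normal field along the transversal $f$. Each curve $t\mapsto F(t,s)$ is a unit-speed geodesic, so its velocity field $X=\partial F/\partial t$ satisfies $\nabla_{X}X=0$ and $|X|\equiv 1$; a one-line computation with the orthonormal frame $\{X,N\}$ then gives $d(g(X,\cdot))(X,N)=-\langle X,[X,N]\rangle=\langle\nabla_{X}X,N\rangle=0$, so $g(X,\cdot)$ is closed, and it is Lipschitz by Lemma~\ref{lemma:flow} together with the Lipschitz regularity of $\partial F/\partial t$ from Proposition~\ref{prop:flow}. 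On $\tilde\lambda$ this $1$-form is the oriented unit leaf tangent by construction of the oriented atlas. The subtlety is that the local forms $g(X,\cdot)$ built in different flow boxes agree \emph{only on $\lambda$}, which has measure zero, so they do not patch to a global $L^{\infty}$ form.

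Thus the real content — and the step I expect to be the main obstacle — is to assemble a \emph{single} closed Lipschitz $1$-form $\Phi$ on $U$ with $|\Phi|\le 1$ and $\Phi|_{\tilde\lambda}=g(X,\cdot)$. On simply connected pieces this is equivalent to finding a $1$-Lipschitz function $\psi$ that increases at unit speed along every leaf (with the given orientation), i.e.\ a best Lipschitz function whose maximal-stretch set realizes $\tilde\lambda$ with the leaves as gradient lines. On each complementary plaque $S$ — a contractible region bounded by complete geodesics — this is a McShane–Whitney extension of the data ``arc length along the boundary leaves'': that data is $1$-Lipschitz in a collar of each boundary leaf, and the only obstruction to extending it $1$-Lipschitzly across $S$ is at the ideal vertices of $S$, where two boundary leaves converge exponentially and their arc-length parameters must therefore stay asymptotically comparable — which holds precisely when the orientation of $\lambda$ sends the two leaves coherently toward, or coherently away from, each such vertex. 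For the jump part of $\nu$ (closed leaves) there are no ideal vertices and the extension is immediate, so the difficulty is concentrated in the minimal Cantor components. I would therefore try either (a) to prove that the relevant orientation is automatically vertex-compatible on every complementary region, or (b) to bypass the construction by realizing $(\lambda,\nu)$ as the maximal-stretch data of a genuine best Lipschitz map for the cohomology class $[T_{\Lambda}]$ (via the map $\rho\mapsto\alpha$ discussed after Corollary~\ref{strsuppp}) and then quoting Theorem~\ref{thmlegr} and Corollary~\ref{strsuppp} directly. Either way, matching the orientation of the lamination to the calibration is where the work lies, consistent with the paper's own remark that an appropriate choice of orientation is needed.
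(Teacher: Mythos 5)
You should first note that the paper itself records this statement only as a \emph{conjecture}, with no proof: the sole guidance offered is the one-line remark following it that the superlevel sets $\{\tilde v\ge t\}$ in $B$ have geodesic boundary, from which least gradient ``should'' follow as in the Euclidean theory via the coarea formula. That route is genuinely different from yours and potentially cheaper, because for a.e.\ $t$ it asks only that the single set $\{\tilde v\ge t\}$ be perimeter-minimizing in $B$ relative to its own trace, rather than requiring one object (a calibration) adapted to the entire lamination at once. Your local step is nevertheless correct: since $X=\partial F/\partial t$ is a unit geodesic field, $d\bigl(g(X,\cdot)\bigr)(X,N)=\langle\nabla_X X,N\rangle=0$, so $g(X,\cdot)$ is closed, of unit length, and Lipschitz in each flow box of Proposition~\ref{prop:flow}.

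The gap you then flag is real, and neither of your two proposed repairs closes it. For (a): coherence of the orientation at an ideal vertex of a plaque is \emph{necessary} but not \emph{sufficient} for the arc-length data to extend $1$-Lipschitzly. If $\ell_1,\ell_2$ are boundary leaves asymptotic to the same ideal point and both oriented toward it, then $\psi(\ell_i(t))=t+c_i$ while $d(\ell_1(t),\ell_2(t))\to 0$, so $1$-Lipschitzness forces $c_1=c_2$ \emph{exactly}, not merely ``asymptotically comparably''; on a Cantor minimal component this imposes an uncountable family of rigid matching conditions on the additive constants that the orientation alone does not settle. For (b): the correspondence $\rho\mapsto\alpha$ of Theorem~\ref{thm:equivsit} runs from a cohomology class to a pair $(u,v)$; to invoke Theorem~\ref{thmlegr} here you would need the \emph{converse}, namely that the given $(\lambda,\nu)$ is realized on the nose as the maximal-stretch lamination \emph{and} the dual transverse measure of some best Lipschitz map. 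The paper establishes neither: Corollary~\ref{strsuppp} gives only containment of supports, and says nothing about which transverse measure appears. So your proposal correctly isolates where the work lies, but — like the paper — leaves the conjecture open.
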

  
  The main point in the above statement is that the boundary of the sets $\tilde v \geq t$ in $B$ are geodesics. From this, one should be able to deduce like in the Euclidean case that $\tilde v$ is a locally a map of least gradient. 

\begin{thmconj}\label{Theorem-Conjecturepun} The results of this paper extend to surfaces with punctures. 
\end{thmconj}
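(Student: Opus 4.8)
The plan is to reduce the punctured case to the compact case by showing that, under the natural peripheral hypothesis on $\rho$, every object built in the paper --- the $\infty$-harmonic map $u$, its maximum stretch lamination $\lambda_u$, the dual least gradient section $v$, and the resulting transverse measure --- is concentrated in a fixed compact subsurface (the thick part), so that the estimates of Sections~\ref{sect:pharmmaps}--\ref{consmeasure} and the topology of Sections~\ref{sect7}--\ref{ruelles} apply essentially verbatim once the behaviour at the cusps is controlled. First I would set up the geometry: $M$ is a complete hyperbolic surface of finite area, $\tilde M = H^2$, $\pi_1(M)$ is free, and $\gamma_1,\dots,\gamma_k$ are the parabolic peripheral classes, each cusp isometric to a standard horoball quotient in which horocycles at depth $t$ have length of order $e^{-t}$. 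One is \emph{forced} to assume $\rho(\gamma_i)=0$ for every $i$: if $\rho(\gamma_i)\neq 0$, an equivariant Lipschitz $\tilde f$ would have to vary by a fixed nonzero amount along arbitrarily short horocycles, so no finite Lipschitz constant exists. Granting $\rho(\gamma_i)=0$, equivariant functions descend near each cusp to honest functions on a punctured disc, and we minimize $J_p$ over this class.

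The main obstacle I expect is the analytic one: carrying Section~\ref{sect:pharmmaps} over to the non-compact finite-volume setting, i.e. existence of the minimizer $u_p$ and the maximum estimate, Proposition~\ref{maxest}. For existence, a minimizing sequence cannot lose energy into a cusp, because on a cusp neighbourhood replacing $\tilde f$ by its average over horocycles, or truncating it to a constant deep in the cusp, does not increase $J_p$ --- this uses the finite volume of the cusp together with a one-dimensional Poincar\'e inequality along the horocycle foliation --- so the usual weak compactness and lower semicontinuity argument can be run on a compact exhaustion of $M$. The same truncation comparison shows that $|du_p|$ (and hence, via the conjugate construction, $|dv_q|$) decays exponentially into each cusp, uniformly in $p$. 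For the maximum estimate one repeats the Bochner--Moser iteration; the only new ingredient is that the Sobolev embedding constant must be uniform over $M$, which holds for hyperbolic cusps, or else one simply localizes the iteration to the thick part using the cusp decay just obtained. Once this decay is in place, the concentration-of-measure arguments of Section~\ref{consmeasure} go through unchanged, since the quantities they use --- the topological constants $c_\Xi$, the global integrations by parts --- are insensitive to the cusps.

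The geometry and topology then follow with little change. Passing $p\to\infty$ gives a best Lipschitz map $u$ with $L=|du|_{L^\infty}>0$ whenever $\rho\neq 0$, and the cusp decay forces $L_u(x)\to 0$ as $x$ leaves the thick part, so $\lambda_u=\{L_u=L\}$ is compact --- consistent with the classical fact that geodesic laminations on finite-area surfaces are compact. Comparison with cones (Proposition~\ref{comocones1}) is purely local on $H^2$, so Theorem~\ref{straightline} applies and $\lambda_u$ is a genuine compact geodesic lamination; the recurrence argument in Theorem~\ref{K=L} only needs a leaf to stay in a compact set, which it does, so $K=L$ persists. For the dual problem in dimension two, the form $|du_p|^{p-2}*du_p$ is closed and its period around $\gamma_i$ equals its integral over a horocycle of length tending to $0$ on which $|du_p|$ is bounded, hence vanishes; therefore $\alpha_q(\gamma_i)=0$, and in the limit $\alpha(\gamma_i)=0$ as well, so $v$ and its transverse cocycle see only the compact lamination. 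Consequently Section~\ref{sect7} (flow boxes, good subdivisions, Bonahon's non-negativity criterion, Theorem~\ref{transmeasure} and Theorem~\ref{bird}) and Section~\ref{ruelles} (the Ruelle--Sullivan current, its bounded variation primitive in Theorem~\ref{conversetomeasure}, and the jump/Cantor decomposition) carry over without modification, as they only ever use a finite flow-box cover of a compact lamination. The outcome is the punctured-surface analogue of Theorem~\ref{thm:equivsit}, Theorem~\ref{transmeasure}, Theorem~\ref{bird} and Theorem~\ref{conversetomeasure}, with all the cohomology classes involved lying in the subspace of $H^1(M,\R)$ that vanishes on the peripheral curves.
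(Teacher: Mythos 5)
Your plan is considerably more detailed than anything the paper commits to: the statement is explicitly labeled a Theorem-Conjecture and is supported in the text only by the one-line remark that most of the arguments are local and carry over without significant change, so there is no paper proof to compare against. On the positive side, you correctly isolate the forced peripheral hypothesis $\rho(\gamma_i)=0$, the truncation/averaging argument that prevents a minimizing sequence for $J_p$ from losing energy into the finite-volume cusps, the locality of comparison-with-cones and of the flow-box machinery, and the observation that the classes $\rho$ and $\alpha$ must land in the subspace of $H^1(M,\R)$ annihilating the peripheral curves. Those are exactly the right points to raise, and they are not stated anywhere in the paper.

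There are, however, two linked gaps in the analytic part which as written would cause the Section~\ref{sect:pharmmaps} and Section~\ref{consmeasure} estimates to fail. First, the claim that the Sobolev constant for $W^{1,2}(M)\hookrightarrow L^{2a}(M)$ "holds for hyperbolic cusps" is false. On the standard cusp $\{y>1\}/(x\sim x+1)$ with measure $dx\,dy/y^2$, the functions $y^a$ for $a$ slightly less than $1/2$ lie in $W^{1,2}$ (both $\int y^{2a-2}\,dy$ and $\int a^2 y^{2a-2}\,dy$ converge) but fail to lie in $L^q$ as soon as $q>1/a$, since $\int y^{qa-2}\,dy$ diverges. After multiplying by a cutoff supported deep in the cusp this shows $W^{1,2}(M)\not\hookrightarrow L^q(M)$ for any $q>2$ on a cusped surface, so the Moser iteration in the proof of Proposition~\ref{maxest} cannot be run globally. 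Second, the fallback of localizing to the thick part "using the cusp decay just obtained" is circular: the truncation comparison produces only $L^p$-smallness of $du_p$ deep in the cusp, and upgrading that to a pointwise exponential bound uniform in $p$ requires precisely the kind of interior $C^{1,\alpha}$ or Harnack estimate whose constant must not degenerate as the injectivity radius collapses --- the very control the failed Sobolev inequality was meant to give. A plausible repair is to pass to the $\Z$-cover of the cusp (where balls of fixed hyperbolic radius have bounded geometry and contain many fundamental domains), use the $x$-periodicity of $\tilde u_p$ granted by $\rho(\gamma_i)=0$, and deduce pointwise gradient decay from the interior estimate for $p$-harmonic functions applied on those balls; but this is exactly the step your sketch asserts rather than proves, and it is exactly the step that distinguishes the punctured case from the closed one. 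The remaining, more minor, point is that the compactness of $\lambda_u$ should be derived from this decay and not from a "classical fact" --- geodesic laminations on finite-area surfaces need not be compact (an isolated bi-infinite leaf running out to a cusp is a lamination), though laminations carrying a transverse measure of full support are.
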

Throughout the paper we restricted ourselves to the case of closed manifolds. However, Thurston's theory works also for laminations on surfaces with punctures. Most of the results in this paper are local and  carry through also for punctured surfaces without significant change.

The next problem is the analogue of Thurston's construction \cite{thurston} adapted to our situation. Before we state the problem we need some notation. Given a cohomology class $\rho \in H^1(M,\R)$ and  a hyperbolic metric $g$ on $M$ we can consider $K$ as in (\ref{normlength}) defined on the space of measured laminations $\mathcal M \mathcal L$. Equivalently, let $\mathcal M \mathcal L_\rho$ denote the space of measured geodesic laminations whose homology class is dual to $\rho$, i.e measured laminations $\lambda$ subject to the topological constraint $\rho(\lambda)=1$. On this space we consider the length functional
\[
l_g: \mathcal M \mathcal L_\rho \rightarrow \R
\] 
associating to a measured lamination $\lambda$ its length $l_g(\lambda)$ with respect to $g$.
\begin{problem}\label{toyprobm} Compute critical points of the  function $l_g$ and study the connection with $\rho$-equivariant best Lipschitz functions and their maximum stretch laminations. Carry through Thurston's construction for this case. 
\end{problem}

This should be a very doable problem.
%\begin{problem}\label{Conjecture 6} Given a line bundle $E$ over $ M$,  there exists an $\infty$-harmonic section $u: M\rightarrow E$ and a dual least gradient section $v: M \rightarrow L$ for a line bundle $L$.
%\end{problem}
%%This conjecture bears connection with Thurston's work \cite{thurston} and we will come back later.
% Prescribing the line bundle $E$ for $u: M\rightarrow E$ is an analogue of the Dirichlet problem for $u$. Prescribing the bundle $v: M \rightarrow L$ is an analogue of a Neuman problem for $u$. For finite $1< q \leq p < \infty$, both Dirichlet and Neumann problems have solutions (unique up to equivalence). We are asking the question for $p = \infty$, $q = 1$.
%\begin{conjecture}\label{Conjecture 7}
% A transverse measure to a geodesic lamination determines a singular quadratic differential. Formulate this properly and describe a least gradient quadratic differential.
%\end{conjecture}

\begin{problem}\label{Problem 8} The gradient field for a p harmonic function $u_p$ determines an interval exchange map on a regular fiber $u_p^{-1}(t)$. Study the invariants of these as $p \rightarrow \infty$.
 \end{problem}
 We refer the reader to the very readable paper of Masur \cite{masur}.

\begin{problem}\label{Conjecture 10} Investigate the theory of $p$-harmonic maps, $\infty$-harmonic maps and least gradient maps into trees and their duality. 
  \end{problem}
%  Some aspects of the theory of  \cite{gromov-schoen}, \cite{korevaar-schoen1} and \cite{korevaar-schoen2} go through without much trouble. By utilizing the notion of energy of maps to trees developed in \cite{gromov-schoen}, \cite{korevaar-schoen1} and \cite{korevaar-schoen2}   the analysis  should carry  over to this case.
For a  combinatorial approach to best Lipschitz maps to trees, see \cite{naor-sheffield}. 

\begin{problem}\label{Problem 12}
 Develop a theory of $\infty$-harmonic maps $u: M^3 \rightarrow S^1$ where $M^3$ is a hyperbolic 3-manifold.
 \end{problem}

From Section~\ref{sect:crandal} we have shown that the set of maximum stretch $L_u = L$ is a geodesic lamination. However, the geometry of the dual problems or the two form
$dv_q =|du_p|^{p-2}*du_p$ and the limit $q \rightarrow 1$ is unexplored territory. The dual 2-form $*du$ is a transverse area measurement which is far less rigid than length. Purely geometric descriptions of hyperbolic 3-manifolds which fiber over a circle are sorely lacking, so it is worth exploring any possibility.

\begin{problem}\label{Problem 15} Let $M = M^3$ be a hyperbolic manifold which fibers over a circle. Study least gradient maps $v: M \rightarrow S^1$, or more generally equivariant  least gradient maps to $\R$ or trees.
 \end{problem}
 This is a promising problem, since a lot is known about least gradient maps in three dimensions. For the Dirichlet problem for domains in $\R^3$, the level sets of a least gradient function are minimal surfaces; hence we expect this least gradient map to tie into the theory of minimal surfaces in $M$. 
 
 As mentioned already the motivation for this paper was in understanding Thurston's work of best Lipschitz maps between surfaces. As such, the results of this paper only serve as a toy problem.  We conclude by stating the motivating problem and a quick preview of our approach in the forthcoming papers \cite{daskal-uhlen2} and \cite{daskal-uhlen3}:
 
 \begin{problem}\label{Problem 11} Is there an analogous analytical theory of $\infty$-harmonic maps $u: M \rightarrow N$ between hyperbolic surfaces which ties into Thurston's results on the asymmetric metric on Teichm\"uller space using best Lipschitz maps?
\end{problem}
 
 The analysis is entirely lacking for this problem, although there is a  topological theory due to Thurston and his school (cf. \cite{thurston}, \cite{papa} and \cite{kassel}). 
  The main problem in the analysis is the lack of a good notion  of viscosity solutions for systems and this seems out of reach at this point. In a series of follow-up articles we will  bypass this issue and develop a theory analogous to this paper that ties in with Thurston.
  
  Like in this paper, the first step is to define a good notion of $p$-approximations of best Lipschitz maps.  In the case when the target has dimension greater than 1, $p$-harmonic maps is not the right notion since they do not converge to best Lipschitz maps.  We consider  maps minimizing the $p$-Schatten-von Neumann norm of the gradient instead of the $L^p$-norm.  This version of $p$-harmonic maps have even weaker regularity properties and don't satisfy maximum principle. This makes it hard to prove  comparison with cones. We have to
  rely on the result of Gueritaud-Kassel \cite{kassel} in order to show that the maximum stretch set of the infinity harmonic map contains  Thurston's canonical lamination. 
  
  Another difference  with the scalar case is the construction of the dual functions and the limiting measures. When the target is a hyperbolic surface,  $v$ has values in the Lie algebra of $SO(2,1)$ instead of $\R$. We construct these measures by analyzing the conservation laws coming from the symmetries of the target and extend the support argument  to show that $dv$ has support on the canonical lamination.
  
  There are two points that we entirely missed in this paper which we will explore in \cite{daskal-uhlen2} and \cite{daskal-uhlen3}. The first is the role of symmetries of the domain manifold. Much like $dv$, there exist  Radon measures associated to  best Lipschitz maps corresponding to the symmetries of the domain. Again the support of this measure is on the canonical lamination. The second point is the interpretation of the cohomology class of $dv$ as well as its counterpart coming from the symmetries of the domain in terms of the first variation of the Lipschitz constant.


\begin{thebibliography}{ABC}
%%\bibitem[BJW]{barron} E. Barron, R. Jensen and C. Wang. {\it{The Euler Equation and Absolute Minimizers of $L^\infty$ Functionals}}.  Arch. Rational Mech. Anal. 157,  255-283 (2001).
%%\bibitem[Bo]{bonahon}  F. Bonahon. 
%%{\it Geodesic laminations on surfaces.}  Contemporary Mathematics Volume 269 (2001).
%\bibitem[Ar1]{aron1}G. Aronsson. {\it  Extension of functions satisfying Lipschitz conditions}. Ark. Mat. 6,  551-561, (1967).
%\bibitem[Ar2]{aron2}G. Aronsson. {\it On the partial differential equation $u^2_xu_{xx} +2u_xu_yu_{xy} +u^2_yu_{yy} =0$}. Ark. Mat. 7,  395-425, (1968).
%%\bibitem[BPT]{bpapatro}  A. Belkhirat, A. Papadopoulos and M. Troyanov. {\it Thurston's weak metric on the Teichmüller space of the torus}. Trans. Amer. Math. Soc. 357, 3311-3324, (2005).
%%\bibitem [Fe]{federer} H. Federer. {\it Geometric Measure Theory.} Die Grundlagen der Math. Wissenschaften, Band 153, Springer-Verlag, Berlin and New York, (1969).
%\bibitem [NS]{naor} A. Naor and S. Sheffield. {\it Absolutely minimal Lipschitz extensions of tree-valued mappings.} Math. Ann. 354, 3 , 1049-1078, (2012).
%%\bibitem[Sch]{schoen} R. Schoen. {\it Analytic Aspects of the Harmonic Map Problem.}   Seminar on Nonlinear Partial Differential Equations, Mathematical Sciences Research Institute Publications, vol 2. Springer, New York, NY, 321-358 (1984).
%\bibitem[SS]{smart} S. Sheffield and C. Smart.
%{\it Vector-valued optimal Lipschitz extensions.} Communications on Pure and Applied Math. vol LXV 128-154 (2012). 

\bibitem[Al-S]{aless} G. Alessandrini and M. Sigalotti.
{\it Geometric properties of solutions to the anisotropic p-Laplace equation in dimension 2.} Annales Academiae Scientiarum Fennicae Mathematica Vol. 21, 249-266 (2001).
\bibitem[A-F-P]{ambrosio} L. Ambrosio, N. Fusco, D. Pallara. {\it{Functions of bounded variation and free discontinuity problems.}} Oxford Mathematical Monographs (2000).
\bibitem[Ar1]{aronson1} G. Aronsson. {\it On certain singular solutions of the partial differential equation
$u_x^2u_{xx}+2u_xu_yu_{xy}+u_y^2u_{yy}=0$.}
Manuscripta math. 47, 133-151 (1984).
 \bibitem[Ar2]{aronson2} G. Aronsson. {\it Constructon of  singular solutions to the $p$-harmonic  equation and its limit equation for $p=\infty$}.
 Manuscripta math. 56, 135-158 (1986).
 \bibitem[Ar-C-J]{arcrju} G. Aronsson, M. Crandal and P. Juutinen. {\it {A tour of the theory of absolutely minimizing functions.}}
Bulletin of the AMS., Vol. 41, No 4, 439-505 (2004).
 \bibitem[Ar-L]{aronsonlin} G. Aronsson and P. Lindqvist. {\it{On p-Harmonic Functions in the Plane and Their Stream Functions.}} Journal of Differential Equations, Vol. 74, Issue 1, 157-178 (1988).
%  \bibitem[B-B-S]{ballmann}W. Ballmann, M. Brin and R. Spatzier. {\it{ Structure of manifolds of nonpositive curvature II.}} Ann. of Math. 122, 205-235 (1985).
%\bibitem[B-S]{birman}J. Birman and  C. Series. {\it{Geodesics with bounded intersection numbers on surfaces are sparsely distributed.}} Topology 24, 217-225 (1985).
%\bibitem[B-dG-G]{bombieri}E. Bombieri, E. deGiorgi and E. Giusti. {it{ Minimal cones and the Bernstein problem.}} Invent.Math 7, 243-268 (1969).
\bibitem[Bo1]{bonahon1} F. Bonahon. {\it Geodesic laminations on surfaces.} Contemporary Mathematics Volume 269 (2001).
\bibitem[Bo2]{bonahon2} F. Bonahon. {\it Transverse H\"older distributions for geodesic laminations.} Topology Vol. 36, No. 1, 103-122  (1997).
%\bibitem[Bo3]{bonahon} F. Bonahon. {\it Shearing hyperbolic surfaces, bending pleated surfaces and Thurston's symplectic form.} 
%Annales de la faculte des sciences de Toulouse, tome 5, no 2, 233-297 (1996).
%\bibitem[Ca]{calegari} D. Calegari. {\it Foliations and the Geometry of 3-Manifolds.} 
%Oxford Mathematical Monographs. (2007)
\bibitem[Ca-Bl]{casson} A. Casson and S. Bleiler. {\it Automorphisms of surfaces after Nielsen and Thurston.} LMS students texts 9 (1988).
\bibitem[C]{crandal} M. Crandal.
{\it A Visit with the $\infty$-Laplace Equation.} Calculus of Variations and Nonlinear Partial Differential Equations, 75-122 (2008).
\bibitem[DU1]{daskal-uhlen2} G. Daskalopoulos and K. Uhlenbeck. {\it Analytic properties of Minimal Stretch maps and geodesic laminations.} In preparation.
\bibitem[DU2]{daskal-uhlen3} G. Daskalopoulos and K. Uhlenbeck. {\it Analytic properties of Minimal Stretch maps and geodesic laminations II.} In preparation.
 \bibitem[D-M-R-V]{cantorexp}  O. Dovgosheya, O. Martiob, V. Ryazanovaand M. Vuorinen. {\it {The Cantor function.}} Expo. Math. 24,  1-37 (2006).
%\bibitem[doC]{docarmo} M. do Carmo.  {\it {Differential Geometry of Curves and Surfaces.}} Prentice-Hall (1976). 
\bibitem[Ek-Te]{temam}I. Ekeland and  R. Temam. {\it Convex Analysis and Variational Problems.} North-Holland, Amsterdam, (1976).
\bibitem[E-G]{evans} L. Evans and R. Gariepy. {\it{Measure theory and fine properties of functions.}} Studies in Advanced Mathematics, CRC Press (1992).
\bibitem[E-Sv]{evans-savin} L. Evans and O. Savin. {\it $C^{1, \alpha}$ regularity for infinity harmonic functions in two dimensions.}
Calculus of Variations 32(3), 325-347 (2008).
\bibitem[E-Sm]{evans-smart} C. Evans and C. Smart. 
{\it Everywhere differentiability of infinity harmonic functions.} Calculus of Variations and Partial Differential Equations
Volume 42, Issue 12, 289-299 (2011).
\bibitem[Fe]{fenchel} W. Fenchel. {\it On Conjugate Convex Functions.} Canad. J. Math. 1, 73-77, (1949).
\bibitem[Fo]{foster} O. Foster. {\it Lectures on Riemann Surfaces.} Graduate texts in mathematics, Springer-Verlag (1981).
%\bibitem[Giu]{giusti}  E. Giusti {\it Minimal Surfaces and Functions of Bounded Variation.}  Monographs in Mathematics, Vol. 80, Springer Science (1984).
%\bibitem[G-H]{gh} P. Griffiths and J. Harris. {\it Principles of Algebraic Geometry.}  John Willey (1978).
%\bibitem[GS]{gromov-schoen} M. Gromov and R. Schoen. {\it Harmonic maps into singular
%spaces and $p$-adic superrigidity for lattices in groups of rank
%one.} Publ. Math. IHES 76 (1992),  165-246.
%\bibitem[Gor]{gorny} W. Gorny. {\it Planar least gradient problem: existence, regularity and anisotropic case.} Calc. Var.  57:98,  (2018).
\bibitem[Gu-K]{kassel} F. Gueritaud and F. Kassel.
{\it Maximally stretched laminations on geometrically finite hyperbolic manifolds.} Geom. Topol. Volume 21, Number 2, 693-840 (2017).
\bibitem[Je]{jensen} R. Jensen. {\it Uniqueness of Lipschitz extension: minimizing the sup norm of the gradient.} Archive for Rational Mechanics and Analysis 123, 51-74 (1993).
\bibitem[Ju]{juutinen} P. Juutinen. {\it{p-harmonic approximation of functions of least gradient.}} Indiana  Math. Jour. Vol 54, 1015-1030 (2005).
%\bibitem[KS1]{korevaar-schoen1}  N. Korevaar and R. Schoen.  {\it
%Sobolev spaces and harmonic maps into metric space targets. }  Comm. Anal. Geom. 1 (1993) 561-659.
%\bibitem[KS2]{korevaar-schoen2}  N. Korevaar and R. Schoen.  {\it
%Global existence theorem for harmonic maps to non-locally compact spaces.}  Comm.  Anal. Geom. 5 (1997) 333-387.
\bibitem[L]{lindqvist} P. Lindqvist. {\it Notes on the Infinity Laplace Equation.} Springer Briefs in Mathematics (2016).
\bibitem[Man]{manfredi} J. Manfredi. {\it p-harmonic functions in the plane.} Proc. Amer. Math. Soc. 103(2), 473-479 (1988).
\bibitem[Mas]{masur} H. Masur. {\it Interval Exchange Transformations and Measured Foliations.} Ann. of Math. Vol. 115, No. 1,  169-200 (1982).
\bibitem[M-R-L]{mazon} J. Mazon, J. Rossi and S. Segura de Leon. {\it Functions of least gradient and 1-harmonic functions}. Indiana Math J. vol 63, no 4, 1067-1084 (2014).
\bibitem[N-S]{naor-sheffield} A. Naor and S. Sheffield. {\it Absolutely minimal Lipschitz extension of tree-valued mappings.} Math. Ann. vol. 354, 1049-1078 (2012).
\bibitem[Pa-Th]{papa} A. Papadopoulos and G. Theret. {\it On Teichm\"uller's metric and Thurston's asymmetric metric on Teichm\"uller space.}  Handbook of Teichm\"uller Theory, Volume 1, 11, European Math. Soc. Publishing House (2007).
\bibitem[Ru-S]{sullivan} D. Ruelle and D. Sullivan. {\it Currents, flows and diffeomorphisms}. Topology 14, 319-327 (1975).
\bibitem[Si]{simon} L. Simon. {\it Introduction to Geometric Measure Theory.} Tsinghua Lectures (2014).
\bibitem[S-W-Z]{ziemer} P. Sternberg, G. Williams and W. Ziemer. {\it Existence, uniqueness, and regularity for functions of least gradient.}  Jour. Reine Angew. Math. 430, 35-60 (1992).
\bibitem[S-Z]{ziemer2} P. Sternberg and W. Ziemer. {\it The Dirichlet problem for functions of least gradient.} Ni Wei-Ming et al (ed.) Degenerate diffusions. Proceedings of the IMA workshop held at the University of Minnesota, Springer-Verlag IMA Vol Math. Appl. 47.  197-214 (1993).
%\bibitem[S-W-Z]{ziemer3} P. Sternberg, G. Williams and W. Ziemer. {\it Existence, uniqueness and regularity for functions of least
%gradient.} J. Reine Angew. Math. 430, 35-60 (1992).
\bibitem[Thu1]{thurston} W. Thurston. {\it Minimal stretch maps between hyperbolic surfaces.} Preprint arXiv: math/9801039.
\bibitem[Thu2]{thurston2} W. Thurston. {\it The Geometry and Topology of Three-Manifolds.}  Electronic version  http://www.msri.org/publications/books/gt3m/ (2002).
\bibitem[U]{uhlen} K. Uhlenbeck. {\it {Regularity for a class of nonlinear elliptic systems.}} Acta Mathematica 138, 219-240 (1977).
%\bibitem[Wo]{wolfStr} M. Wolf. {\it On The Existence Of Jenkins--Strebel Differentials Using Harmonic Maps From Surfaces To Graphs.}
%Ann. Acad. Scient. Fen. Series A,Vol. 20,  269-278 (1995)
\end{thebibliography}
\end{document}